\documentclass[10pt, reqno]{amsart}
\usepackage{amsfonts}
\usepackage{appendix}
\usepackage[a4paper, centering]{geometry}
\usepackage{xcolor}
\usepackage{amsthm}

\usepackage[utf8]{inputenc}
\usepackage[colorlinks=true,hyperindex,pagebackref,linktocpage=true]{hyperref}
\usepackage{cleveref}
\usepackage{amsfonts}
\usepackage{amssymb}
\usepackage{tikz-cd}
\usepackage[mathscr]{euscript}
\hypersetup{
	colorlinks,
	linkcolor={violet!60!black},
	citecolor={cyan!60!black},
	urlcolor={orange!60!black}
}
\usepackage{stmaryrd}
\usepackage{varwidth}
\theoremstyle{plain}
\newtheorem{theorem}{Theorem}[subsection]
\newtheorem{proposition}[theorem]{Proposition}
\newtheorem{lemma}[theorem]{Lemma}
\newtheorem{corollary}[theorem]{Corollary}

\newtheorem{construction}[theorem]{Construction}

\theoremstyle{definition}
\newtheorem{definition}[theorem]{Definition}
\newtheorem{example}[theorem]{Example}
\newtheorem{remark}[theorem]{Remark}

\newtheorem{assumption}[theorem]{Assumption}

\newcommand{\nc}{\newcommand}
\nc{\on}{\operatorname}

\makeatletter
\newcommand*{\da@rightarrow}{\mathchar"0\hexnumber@\symAMSa 4B }
\newcommand*{\da@leftarrow}{\mathchar"0\hexnumber@\symAMSa 4C }
\newcommand*{\xdashrightarrow}[2][]{%
	\mathrel{%
		\mathpalette{\da@xarrow{#1}{#2}{}\da@rightarrow{\,}{}}{}%
	}%
}
\newcommand{\xdashleftarrow}[2][]{%
	\mathrel{%
		\mathpalette{\da@xarrow{#1}{#2}\da@leftarrow{}{}{\,}}{}%
	}%
}
\newcommand*{\da@xarrow}[7]{%
	\sbox0{$\ifx#7\scriptstyle\scriptscriptstyle\else\scriptstyle\fi#5#1#6\m@th$}%
	\sbox2{$\ifx#7\scriptstyle\scriptscriptstyle\else\scriptstyle\fi#5#2#6\m@th$}%
	\sbox4{$#7\dabar@\m@th$}%
	\dimen@=\wd0 %
	\ifdim\wd2 >\dimen@
	\dimen@=\wd2 %
	\fi
	\count@=2 %
	\def\da@bars{\dabar@\dabar@}%
	\@whiledim\count@\wd4<\dimen@\do{%
		\advance\count@\@ne
		\expandafter\def\expandafter\da@bars\expandafter{%
			\da@bars
			\dabar@ 
		}%
	}%
	\mathrel{#3}%
	\mathrel{%
		\mathop{\da@bars}\limits
		\ifx\\#1\\%
		\else
		_{\copy0}%
		\fi
		\ifx\\#2\\%
		\else
		^{\copy2}%
		\fi
	}%
	\mathrel{#4}%
}
\makeatother

\nc{\Q}{\mathbb{Q}}
\nc{\Z}{\mathbb{Z}}
\nc{\cl}{\mathrm{cl}}

\nc{\fraka}{{\mathfrak a}} \nc{\bba}{{\mathbf a}}
\nc{\frakb}{{\mathfrak b}}
\nc{\frakc}{{\mathfrak c}}
\nc{\frakd}{{\mathfrak d}}
\nc{\frake}{{\mathfrak e}}
\nc{\frakf}{{\mathfrak f}}
\nc{\frakg}{{\mathfrak g}}
\nc{\frakh}{{\mathfrak h}}
\nc{\fraki}{{\mathfrak i}}
\nc{\frakj}{{\mathfrak j}}
\nc{\frakk}{{\mathfrak k}}
\nc{\frakl}{{\mathfrak l}}
\nc{\frakm}{{\mathfrak m}}
\nc{\frakn}{{\mathfrak n}}
\nc{\frako}{{\mathfrak o}}
\nc{\frakp}{{\mathfrak p}}
\nc{\frakq}{{\mathfrak q}}
\nc{\frakr}{{\mathfrak r}}
\nc{\fraks}{{\mathfrak s}}
\nc{\frakt}{{\mathfrak t}}
\nc{\fraku}{{\mathfrak u}}
\nc{\frakv}{{\mathfrak v}}
\nc{\frakw}{{\mathfrak w}}
\nc{\frakx}{{\mathfrak x}}
\nc{\fraky}{{\mathfrak y}}
\nc{\frakz}{{\mathfrak z}}
\nc{\frakA}{{\mathfrak A}}
\nc{\frakB}{{\mathfrak B}}
\nc{\frakC}{{\mathfrak C}}
\nc{\frakD}{{\mathfrak D}}
\nc{\frakE}{{\mathfrak E}}
\nc{\frakF}{{\mathfrak F}}
\nc{\frakG}{{\mathfrak G}}
\nc{\frakH}{{\mathfrak H}}
\nc{\frakI}{{\mathfrak I}}
\nc{\frakJ}{{\mathfrak J}}
\nc{\frakK}{{\mathfrak K}}
\nc{\frakL}{{\mathfrak L}}
\nc{\frakM}{{\mathfrak M}}
\nc{\frakN}{{\mathfrak N}}
\nc{\frakO}{{\mathfrak O}}
\nc{\frakP}{{\mathfrak P}}
\nc{\frakQ}{{\mathfrak Q}}
\nc{\frakR}{{\mathfrak R}}
\nc{\frakS}{{\mathfrak S}}
\nc{\frakT}{{\mathfrak T}}
\nc{\frakU}{{\mathfrak U}}
\nc{\frakV}{{\mathfrak V}}
\nc{\frakW}{{\mathfrak W}}
\nc{\frakX}{{\mathfrak X}}
\nc{\frakY}{{\mathfrak Y}}
\nc{\frakZ}{{\mathfrak Z}}
\nc{\bbA}{{\mathbb A}}
\nc{\bbB}{{\mathbb B}}
\nc{\bbC}{{\mathbb C}}
\nc{\bbD}{{\mathbb D}}
\nc{\bbE}{{\mathbb E}}
\nc{\bbF}{{\mathbb F}} \nc{\bbf}{{\mathbf f}}
\nc{\bbG}{{\mathbb G}}
\nc{\bbH}{{\mathbb H}}
\nc{\bbI}{{\mathbb I}}
\nc{\bbJ}{{\mathbb J}}
\nc{\bbK}{{\mathbb K}}
\nc{\bbL}{{\mathbb L}}
\nc{\bbM}{{\mathbb M}}
\nc{\bbN}{{\mathbb N}}
\nc{\bbO}{{\mathbb O}}
\nc{\bbP}{{\mathbb P}}
\nc{\bbQ}{{\mathbb Q}}
\nc{\bbR}{{\mathbb R}}
\nc{\bbS}{{\mathbb S}}
\nc{\bbT}{{\mathbb T}}
\nc{\bbU}{{\mathbb U}}
\nc{\bbV}{{\mathbb V}}
\nc{\bbW}{{\mathbb W}}
\nc{\bbX}{{\mathbb X}}
\nc{\bbY}{{\mathbb Y}}
\nc{\bbZ}{{\mathbb Z}}
\nc{\calA}{{\mathcal A}}
\nc{\calB}{{\mathcal B}}
\nc{\calC}{{\mathcal C}}
\nc{\calD}{{\mathcal D}}
\nc{\calE}{{\mathcal E}}
\nc{\calF}{{\mathcal F}}
\nc{\calG}{{\mathcal G}}
\nc{\calH}{{\mathcal H}}
\nc{\calI}{{\mathcal I}}
\nc{\calJ}{{\mathcal J}}
\nc{\calK}{{\mathcal K}}
\nc{\calL}{{\mathcal L}}
\nc{\calM}{{\mathcal M}}
\nc{\calN}{{\mathcal N}}
\nc{\calO}{{\mathcal O}}
\nc{\calP}{{\mathcal P}}
\nc{\calQ}{{\mathcal Q}}
\nc{\calR}{{\mathcal R}}
\nc{\calS}{{\mathcal S}}
\nc{\calT}{{\mathcal T}}
\nc{\calU}{{\mathcal U}}
\nc{\calV}{{\mathcal V}}
\nc{\calW}{{\mathcal W}}
\nc{\calX}{{\mathcal X}}
\nc{\calY}{{\mathcal Y}}
\nc{\calZ}{{\mathcal Z}}

\nc{\scrA}{{\mathscr A}}
\nc{\scrB}{{\mathscr B}}
\nc{\scrC}{{\mathscr C}}
\nc{\scrD}{{\mathscr{D} }}
\nc{\scrE}{{\mathscr E}}
\nc{\scrF}{{\mathscr F}}
\nc{\scrG}{{\mathscr G}}
\nc{\scrH}{{\mathscr H}}
\nc{\scrI}{{\mathscr J}}
\nc{\scrJ}{{\mathscr I}}
\nc{\scrK}{{\mathscr K}}
\nc{\scrL}{{\mathscr L}}
\nc{\scrM}{{\mathscr M}}
\nc{\scrN}{{\mathscr N}}
\nc{\scrO}{{\mathscr O}}
\nc{\scrP}{{\mathscr P}}
\nc{\scrQ}{{\mathscr Q}}
\nc{\scrR}{{\mathscr R}}
\nc{\scrZ}{{\mathscr{Z}}}
\nc{\sfF}{{\mathsf{F}}}
\nc{\sfT}{{\mathsf{T}}}
\nc{\sfw}{{\mathsf{w}}}

\nc{\bB}{{\breve{B}}}
\nc{\bcB}{{\breve{\calB}}}
\nc{\bE}{{\breve{E}}}
\nc{\bF}{{\breve{F}}}
\nc{\bG}{{\breve{G}}}
\nc{\bcG}{{\breve{\calG}}}
\nc{\bO}{{\breve{O}}}
\nc{\bT}{{\breve{T}}}
\nc{\bcT}{{\breve{\calT}}}
\nc{\bU}{{\breve{U}}}
\nc{\bcU}{{\breve{\calU}}}
\nc{\bOE}{{\breve{O}_E}}
\nc{\bOF}{{\breve{O}_F}}
\nc{\OF}{{O_F}}
\nc{\kE}{{k_E}}
\nc{\kF}{{k_F}}
\nc{\piE}{{\pi_E}}
\nc{\Ebar}{{\overline{E}}}

\nc{\D}{{\on{D}}}
\nc{\F}{{\on{F}}}

\nc{\bbFbar}{{\overline{\bbF}}}
\nc{\bbFpbar}{{\overline{\bbF}_p}}
\nc{\Fpbar}{{\overline{\bbF}_p}}
\nc{\bbQbar}{{\overline{\bbQ}}}
\nc{\ibar}{{\overline{i}}}
\nc{\jbar}{{\overline{j}}}
\nc{\kbar}{{\overline{k}}}
\nc{\kEbar}{{\overline{k}_E}}
\nc{\kFbar}{{\overline{k}_F}}
\nc{\Sbar}{{\overline{S}}}
\nc{\etab}{{\breve{\eta}}}
\nc{\etabar}{{\overline{\eta}}}
\nc{\TE}{{\prod_{i=1}^m\Spd \bE_i}}

\nc{\T}{{\breve{T}}}
\nc{\B}{{\breve{B}}}
\nc{\G}{{\breve{G}}}
\nc{\U}{{\breve{U}}}
\nc{\N}{{\breve{N}}}

\nc{\ad}{{\on{ad}}}
\nc{\aff}{{\on{af}}}
\nc{\Bun}{{\on{Bun}}}
\nc{\can}{{\on{can}}}
\nc{\cInd}{{\on{cInd}}}
\nc{\coev}{{\on{coev}}}
\nc{\cosupp}{{\on{cosupp}}}
\nc{\CHT}{{\on{CH}_{\Lambda}(\mathsf{T})}}
\nc{\CT}{{\on{CT}}}
\nc{\Def}{{\on{Def}}}
\nc{\Div}{{\on{Div}}}
\nc{\DX}{{\on{Div}^1_X}}
\nc{\DY}{{\on{Div}^1_{\mathcal{Y}}}}
\newcommand{\Dwchi}{\Delta_{w, \chi}}
\nc{\emb}{{\on{emb}}}
\nc{\et}{\textup{\'et}}
\nc{\ev}{{\on{ev}}}
\nc{\Fl}{{\mathrm{Fl}}}
\nc{\Fr}{{\on{Fr}}}
\nc{\Frob}{{\mathrm{Frob}}}
\nc{\Gal}{{\on{Gal}}}
\nc{\glob}{{\on{glob}}}
\nc{\Gm}{{\on{\mathbb{G}_m}}}
\nc{\Gr}{{\on{Gr}}}
\nc{\Hk}{{\on{Hk}}}
\nc{\HV}{{H_{\sigma, b, I, \overline{s}_0, V}}}
\nc{\cHV}{{\calH_{\sigma, b, I, \overline{s}_0, V}}}
\nc{\Hom}{{\on{Hom}}}
\nc{\id}{{\on{id}}}
\nc{\Ind}{{\on{Ind}}}
\nc{\Irr}{{\on{Irr}}}
\newcommand{\Iw}{\on{Iw}}
\nc{\lc}{{\on{lc}}}
\nc{\Lam}{{\underline{\Lambda}}}

\nc{\loc}{{\on{loc}}}
\nc{\Perf}{{\on{Perf}}}
\nc{\Perv}{{\on{Perv}}}
\nc{\pr}{{\on{pr}}}
\nc{\pre}{{\on{pre}}}
\nc{\Qlbar}{{\overline{\bbQ}_{\ell}}}
\nc{\rad}{{\on{rad}}}
\nc{\Rank}{{\on{Rank}}}
\nc{\Rep}{{\on{Rep}}}
\nc{\Reg}{{\on{Reg}}}
\nc{\res}{{\on{res}}}
\nc{\sbar}{{\overline{s}_0}}
\nc{\Sat}{{\on{Sat}}}
\nc{\Sht}{{\on{Sht}}}
\nc{\Spa}{{\on{Spa}}}
\nc{\Spd}{{\on{Spd}}}
\nc{\Stab}{{\on{Stab}}}
\nc{\supp}{{\on{supp}}}
\nc{\tw}{{\on{tw}}}

\numberwithin{equation}{section}

\begin{document}
	
	\title[Depth-zero local Langlands via shtukas in mixed characteristic]{Depth-zero local Langlands via shtukas in mixed characteristic}
	
	\author[Chenji Fu]{Chenji Fu}
	
	\begin{abstract}
		Assuming a smoothness property for mixed-characteristic shtuka spaces and 
		hyperbolic localization for twisted sheaves, we determine the inertial restriction
		of the Fargues-Scholze $L$-parameter for irreducible depth-zero representations. Let $E$ be a
		non-archimedean local field of residue characteristic $p$, let $G$ be an unramified connected
		reductive group over $E$, and let $\pi$ be an irreducible smooth depth-zero representation of
		$G(E)$ over $\overline{\bbF}_{\ell}$ where $\ell$ is a prime number which does not divide $p|\pi_0Z(G)|$. Let $\varphi_{\pi}^{\on{FS}}$ be the Fargues-Scholze $L$-parameter associated to $\pi$. We describe the restriction of $\varphi_{\pi}^{\on{FS}}$ to the inertia subgroup in terms of Deligne-Lusztig theory. We prove this by computing the inertia action on the cohomology of shtuka spaces with depth-zero level structure. As an application, we prove that  the Fargues--Scholze $L$-parameters agree with DeBacker--Reeder $L$-parameters over $\overline{\bbQ}_{\ell}$.
	\end{abstract}	
	
	\maketitle

	\setcounter{tocdepth}{2}
	\tableofcontents

	\section{Introduction}
	
	The local Langlands conjectures predict a map from irreducible smooth representations of a $p$-adic group to $L$-parameters. Many cases have been constructed by
	different methods. Using the cohomology of shtuka spaces, the paper \cite{fargues2021geometrization} constructed such a map for every connected reductive groups and every irreducible smooth representation. One goal of our paper is to compute the corresponding (semisimplified) Fargues-Scholze $L$-parameter $\varphi_{\pi}^{\on{FS}}$ associated with a depth-zero irreducible representation $\pi$ for unramified groups. More precisely, we will show that $\varphi_{\pi}^{\on{FS}}$ is trivial on the wild inertia, and we will describe the restriction of $\varphi_{\pi}^{\on{FS}}$ to the inertia subgroup in terms of Deligne-Lusztig theory (Theorem \ref{Thm_L_param_intro}). Another related goal is to control the cohomology of mixed-characteristic shtuka spaces with depth-zero level structure. Inspired by the paper \cite{eteve2023monodromiques}, assuming the smoothness property in Assumption \ref{Assum_sm}, we reduce the question to control the inertia action on twisted central sheaves on the local Hecke stack for pro-$p$ Iwahori level. Then, following \cite{anschutz2023gaitsgory} and assuming the properties in Assumption \ref{Assum_CT}, we use twisted hyperbolic localization to reduce to the torus case, where the inertia action can be
	computed directly. Our method might shed light on cohomology of locally symmetric spaces with deeper level structures.

	Let $p$ be a prime number. Let $E$ be a non-archimedean local field with residue field $k_E \cong \bbF_q$, where $q=p^r$ is a power of $p$. Let $O_E$ be the ring of integers of $E$.  We fix a uniformizer $\pi_E$ of $O_E$. Let $\bOE$ be the completion of the maximal unramified extension of $O_E$. 
	
	We will mainly work in the world of diamonds and $v$-stacks created by Scholze \cite{scholze2017ecd, scholze2020berkeley}. Let $\ell$ be a prime number which does not divide $p|\pi_0Z(G)|$. Let $\Lambda = \overline{\bbF}_{\ell}$ be the coefficients of sheaves. For $X$ a small $v$-stack, we will write
	$$\scrD_{\et}(X, \Lambda)$$
	for the infinity-category of \'etale sheaves on $X$ with $\Lambda$-coefficients. Sometimes we will omit the coefficient $\Lambda$ and simply denote it by $\scrD_{\et}(X)$.
	
	Let $\breve{S}_0$ denote the $v$-stack $\Spd \bOE$. The $v$-stack $\breve{S}_0$ is also the moduli space of degree $1$ closed Cartier divisors on the curve $\calY_S$ (\cite[Proposition VI.1.2]{fargues2021geometrization}). The $v$-stack $\breve{S}_0$ has two points, the generic point $\breve{\eta}=\Spd \bE$ and the closed point $\overline{s}_0=\Spd \overline{k}_E$. Choose a geometric point $\overline{\eta}$ of $\breve{\eta}$.
	
	Let $\calQ$ be a smooth affine group scheme over $\bOE$. For $I$ a finite set, let
	$$\Sht_{\calQ, (\DY)^I}=\Sht_{\calQ, (\Spd \bOE)^I} \to (\Spd \bOE)^I$$
	be the stack of $\calQ$-shtukas over $(\Spd \bOE)^I$ (see Definition \ref{Def_Sht}). Let 
	$$\Sht_{\calQ, (\DY)^I, \sbar} \to (\Spd \bOE - \sbar)^I=(\Spd \bE)^I$$
	be the stack of $\calQ$-shtukas with $I$ legs and level structure $\sbar$ (see Definition \ref{Def_Sht_depth0}).

	Let $\calG$ be a connected reductive group over $O_E$. We denote $G:=\calG_E$ its generic fiber. Let $\sigma$ be a polysimplex in the Bruhat-Tits building $\calB(G, E)$. Let $\calG_{\sigma}$ be the parahoric group scheme over $O_E$ corresponding to $\sigma$. We have the associated shtuka space $\Sht_{\calG_{\sigma, \bOE}}$, which we simply denote by $\Sht_{\calG_{\sigma}}$.

	Let $N$ be either the empty set or $\sbar$. We use the convention that
	$$\Sht_{\calG_{\sigma}, (\DY)^I, N}=\Sht_{\calG_{\sigma}, (\DY)^I}$$
	when $N$ is the empty set.
	For every $\sigma, I$, we have the leg map
	$$p_{\sigma, I, N}: \Sht_{\calG_{\sigma}, (\DY)^I, N} \to (\Spd \bOE-N)^I$$
	and the map
	$$\epsilon_{\sigma, I, N}: \Sht_{\calG_{\sigma}, (\DY)^I, N} \to \Hk^{\loc}_{\calG_{\sigma}, (\DY)^I} \times_{(\DY)^I} (\Spd \bOE-N)^I$$
	recording the local modification, where 
	$$\Hk^{\loc}_{\calG_{\sigma}, (\DY)^I}$$
	is the local Hecke stack for $\calG_{\sigma}$ with $I$ legs (see Subsection \ref{Subsection_Sht}).
	
	Before continuing, we state our assumptions.
	
	\begin{assumption}\label{Assum} 
		\begin{enumerate}
			\item For  some variant of the map 
			$$\epsilon: \Sht_{\calQ, (\Spd \bOE)^I} \to \Hk^{\loc}_{\calQ, (\Spd \bOE)^I},$$
			we assume that $\epsilon^*$ commutes with iterated nearby cycles over $(\Spd \bOE)^I$. See Assumption \ref{Assum_sm} for details.
			\item We assume that there is a good theory of hyperbolic localizations for twisted sheaves. See Assumption \ref{Assum_CT} for details. 
		\end{enumerate}
	\end{assumption}

	\begin{definition}
		\begin{enumerate}
			\item Choose $\sigma, I$ as before. Let $N:=\sbar$. For $V \in \Rep(\widehat{G}^I)$, the \textit{cohomology of the shtuka space with depth-zero level structure with respect to $V$} is 
			$$\calH_{\sigma, I, \overline{s}_0, V}:=p_{\sigma, I, \sbar, !} \epsilon_{\sigma, I, \sbar}^*\Sat V \in \scrD_{\et}((\Spd \bE)^I, \Lambda),$$
			where $\Sat V \in \scrD_{\et}(\Hk^{\loc}_{\calG_{\sigma}, \breve{\eta}^I}, \Lambda)$ is the Satake sheaf.
			\item We write
			$$H_{\sigma, I, \overline{s}_0, V}:=(\calH_{\sigma, I, \overline{s}_0, V})|_{\Delta(\overline{\eta})} \in \scrD_{\et}(\overline{\eta}, \Lambda)$$
			for the fiber of $\calH_{\sigma, I, \overline{s}_0, V}$ at the generic geometric point $\overline{\eta}$.
		\end{enumerate}
	\end{definition}
	
	The complex $H_{\sigma, I, \overline{s}_0, V}$ carries an action of $(I_E)^I$. The formalism of excursion endows $\calH_{\sigma, I, \overline{s}_0, V}$ with an action of $\calO(Z^1(W_E, \widehat{G})/\widehat{G})$, the ring of functions on the stack of $L$-parameters (Lemma \ref{Lemma_Sht_Exc}).

	We can also choose an element $b$ in the Kottwitz set $B(G)$ and consider the versions of cohomology of shtuka space with fixed $b$, denoted by $\cHV$, etc (see Subsection \ref{Subsection_Sht}). By changing the level structure, there is a $\calG_{\sigma}(k_E)$-action  on $\Sht_{\calG_{\sigma}, (\DY)^I, \sbar}$, hence on $\cHV$ and $\HV$. Let $V_{\sigma}(k_E)$ be the unipotent radical of $\calG_{\sigma}(k_E)$ and $M_{\sigma}(k_E)$ the reductive quotient. We consider the invariants
	$(\HV)^{V_{\sigma}(k_E)}$, which then carries an action of $M_{\sigma}(k_E)$.
	
	\begin{theorem}
		Assume Assumption \ref{Assum_sm} and \ref{Assum_CT}. For every $I, V, \sigma, b$, the $(I_E)^I$-module $(\HV)^{V_{\sigma}(k_E)}$ is tame, i.e., the $(I_E)^I$-action is trivial on the wild inertia $(P_E)^I$.
	\end{theorem}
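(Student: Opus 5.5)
The plan follows the strategy outlined in the introduction. First reduce the $V_{\sigma}(k_E)$-invariants to a pro-$p$ Iwahori level statement, then use Assumption \ref{Assum_sm} to transfer the inertia action to nearby cycles of twisted central sheaves on the local Hecke stack, and finally use Assumption \ref{Assum_CT} to reduce to a torus.

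\textbf{Step 1: pass to pro-$p$ Iwahori level.} Since $\ell \nmid p$, taking $V_{\sigma}(k_E)$-invariants is exact. The invariants therefore equal the cohomology of the quotient shtuka space with level structure $\calG_{\sigma}(\bOE)\to \calG_{\sigma}(\overline{k}_E)\to M_{\sigma}(\overline{k}_E)$; this is a shtuka space for the group scheme $\calQ$ obtained by dilatation, i.e. the kernel of reduction to $M_\sigma$.

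Choose an alcove $\frakA$ with $\sigma\subset \overline{\frakA}$. The $V_\sigma(k_E)$-invariants then embed as a direct summand of the $\calI^+$-level cohomology, i.e. the $U(k_E)$-invariants of the $\frakA$-level cohomology. Concretely, $(\HV)^{V_\sigma(k_E)}$ is a subquotient of $(H_{\frakA,\ldots})^{U_{\frakA}(k_E)}$ via pullback and pushforward along the natural maps of shtuka spaces, which are finite étale torsors for finite groups of order prime to $\ell$ up to the reductive part. Tameness passes to direct summands and subquotients, so it suffices to treat the pro-$p$ Iwahori case.

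\textbf{Step 2: localize via Assumption \ref{Assum_sm}.} Write the cohomology as $p_!\epsilon^*$ of the Satake sheaf restricted to the generic fibre. The fibre at $\Delta(\overline{\eta})$, together with its $(I_E)^I$-action, is computed by iterated nearby cycles toward the special point. The smoothness assumption says that $\epsilon^*$ commutes with these nearby cycles, and proper base change handles $p_!$ on the relevant quasi-compact pieces (bounded by $V$ and fixed $b$).

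The $(I_E)^I$-action is thus induced by the inertia action on the iterated nearby cycles of $\Sat V$ on the pro-$p$ Iwahori local Hecke stack, which is a twisted (monodromic) central sheaf. It therefore suffices to show that $P_E^I$ acts trivially on these nearby cycles, after pulling back to the $T(\overline{k}_E)$-torsor.

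\textbf{Step 3: reduce to a torus via Assumption \ref{Assum_CT}.} Apply twisted hyperbolic localization (constant term) for a Borel $B\supset T$. Following \cite{anschutz2023gaitsgory}, constant terms commute with nearby cycles and are conservative on central sheaves. This is the step I expect to be the main obstacle: one must show that vanishing of the wild-inertia action on the constant term forces it on the original sheaf, and not merely on its semisimplification. I would try to use that the central sheaf has a filtration whose graded pieces are detected by constant terms (Wakimoto-type filtration), and that $P_E$ is pro-$p$ with $\ell\neq p$. A unipotent action of a pro-$p$ group on $\Lambda$-modules is trivial, so triviality on the graded pieces implies triviality on the sheaf.

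\textbf{Step 4: the torus case.} For $T$ with its pro-$p$ Iwahori $T(\bOE)^{+}$, the local Hecke stack is explicit: a union of points indexed by $X_*(T)$, with a $T(\overline k_E)$-torsor of level structure. The Satake sheaves are rank-one local systems whose nearby cycles are computed directly. The inertia acts through the tame character of $T(\overline{k}_E)$ given by local class field theory for tori (the map $I_E\to T(O_{\breve E})/T^+\cong T(\overline{k}_E)$), which factors through tame inertia. Hence $P_E$ acts trivially, completing the proof.
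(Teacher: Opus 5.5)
Your Step 1 is where the argument breaks. Passing from $\calG_{\sigma,1}$-level to pro-$p$ Iwahori level means taking invariants under the $p$-group $\calI^0(O_E)/\calG_{\sigma,1}(O_E)$, which is an extension of $U_{M_\sigma}(k_E)$ by $V_\sigma(k_E)$. Since $\ell\neq p$, this makes the pro-$p$ Iwahori cohomology $\bigl((\HV)^{V_\sigma(k_E)}\bigr)^{U_{M_\sigma}(k_E)}$ a direct summand of $(\HV)^{V_\sigma(k_E)}$, not the converse; you have the inclusion reversed.

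Taking $U_{M_\sigma}(k_E)$-invariants is Harish-Chandra restriction to the torus, and it is not conservative. It annihilates every cuspidal representation of $M_\sigma(k_E)$, and every Harish-Chandra series attached to a Levi other than $T_{M_\sigma}$. That is exactly the part of $(\HV)^{V_\sigma(k_E)}$ that produces depth-zero supercuspidals, so tameness at pro-$p$ Iwahori level says nothing about it.

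The missing idea is to use the whole family of Deligne--Lusztig restrictions ${}^*R_w$, $w\in W_{M_\sigma}$, which is jointly conservative, and to realize each one geometrically:
\begin{itemize}
\item by Lemma~\ref{Lemma_4512}, ${}^*R_w(\cHV)^{V_\sigma(k_E)}\cong p_!\epsilon^*(\Sat V\boxtimes\nabla_w(\Reg_{T^{wF}}))$ on $\Sht\times^{\calG_\sigma(k_E)}M_\sigma/B_{M_\sigma}$;
\item by Lemma~\ref{Lemma_Sht_Cart}, this space is a closed substack of Iwahori-level shtukas with an auxiliary leg fixed at $\sbar$, mapping to $\Hk^{\loc}_{\calG_C^0}/\mathrm{Ad}_FT_{M_\sigma}$. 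This is the map Assumption~\ref{Assum_sm} actually concerns, not the pro-$p$ Iwahori map you invoke.
\end{itemize}
The sheaf $\nabla_w$ on the extra leg is what turns the local objects into twisted central sheaves $\Psi(\Sat V\boxtimes\calL_\chi)$, with $\chi$ ranging over characters of $\sfT^{wF}$. With plain $\Sat V$ at pro-$p$ Iwahori level and no extra leg, you only ever reach the $w=1$ (Harish-Chandra) part.

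Step 2 also has a real gap. Proper base change does not let you commute $p_!$ with nearby cycles, because $p$ is not proper: for fixed $b$ and bounded $\mu$ these spaces are local Shimura varieties, neither proper nor quasi-compact over the base. Moreover, $p_!$ commutes with the $j_*$ hidden inside $\Psi$ only when $p$ is proper. The paper proves $p_!\Psi\cong\Psi p_!$ as a separate result (Theorem~\ref{Thm_pPsi}) by a fusion argument. It builds an inverse to the canonical map from three ingredients:
\begin{itemize}
\item coevaluation and evaluation for $V^*\otimes V$;
\item monoidality of the twisted central functor;
\item local constancy of $p_!\Psi_{I_3}(\cdots)$ along $\eta^{I_1\sqcup I_2}$ (Proposition~\ref{Prop_lc_nabla}), which ultimately rests on \cite[Corollary IX.2.3]{fargues2021geometrization}.
\end{itemize}

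Your resolution of the worry in Step 3 matches the paper's: the Wakimoto filtration has graded pieces computed by constant terms, and a continuous action of a pro-$p$ group through a unipotent $\Lambda$-group is trivial since $\ell\neq p$.

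In Step 4, the torus case is not a direct computation in mixed characteristic. There is no functorial section of $\Gr_{\calT_1, X\times x}\to X=\Spd\bOE$, because $\ker\theta$ has no functorial generator. The paper constructs the sections only over a perfectoid $\Spd\breve{O}_\infty$, and then descends tameness using two $\bbZ_p$-extensions (Lemma~\ref{Lemma_tame_descent}). Your appeal to class field theory for tori predicts the answer but does not compute the nearby cycles.
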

	
	Moreover, we can describe the $(I_E/P_E)^I$-action on $(\HV)^{V_{\sigma}(k_E)}$. Let $T_{M_{\sigma}}$ be a Frobenius-stable maximal torus of $M_{\sigma}$ and $W_{M_{\sigma}}$ the corresponding Weyl group. Recall that from Deligne-Lusztig theory, for every $s \in (\widehat{T_{M_{\sigma}}}\sslash W_{M_{\sigma}})^{\widehat{F}}(\Lambda)$, there is an associated idempotent element $e_s$ in the group algebra $\Lambda[M_{\sigma}(k_E)]$ (see Subsection \ref{Subsection_finite_red} and \ref{Subsection_padic group}). Using the $M_{\sigma}(k_E)$-action on $(\HV)^{V_{\sigma}(k_E)}$, we can define
	$$e_s(\HV)^{V_{\sigma}(k_E)},$$ 
	the summand of cohomology of shtuka with respect to $s$. On the other hand, the element $s \in (\widehat{T_{M_{\sigma}}}\sslash W_{M_{\sigma}})^{\widehat{F}}(\Lambda)$ corresponds to a character $\chi$ of some Frobenius-stable maximal torus $\sfT^{wF}$ of $M_{\sigma}$, for some $w \in W_{M_{\sigma}}$.
	
	A convenient way to encode the data $\chi$ is as follows.  We choose a nice integral model $\calT$ of some maximal torus in $G$ (see Subsection \ref{section-notation}). Let $\sfT:=\calT_{\overline{k}_E}$ be the geometric special fiber of $\calT$. We define
	\begin{equation}
		\on{CH}_{\Lambda}(\sfT):=\Hom(\pi_1^{\on{t}}(\sfT)_{\ell'}, \Lambda^*)^{\on{tors}},
	\end{equation}
	the group of finite-order characters of the prime-to-$\ell$ tame \'etale fundamental group of $\sfT$ (Definition \ref{Def_CH}). 
	
	We can view $\chi$ as a rank-one tame \'etale local system on $\sfT$ (denoted $\calL_{\chi}$) via pullback along the (variant of) Lang's map
	\begin{equation}
		\sfT \to [\sfT/_{Ad_{wF}} \sfT] \cong [*/\sfT^{wF}].
	\end{equation}
	
	Note that we have canonical isomorphisms
	\begin{equation}
		I_E/P_E \cong \widehat{\bbZ}^{(p)}(1) \cong I_{k_E((t))}/P_{k_E((t))}.
	\end{equation}
	We also have a map
	\begin{equation}
		a: [*/I_{k_E((t))}] \cong \Spd \overline{k}_E((t)) \to \bbG_{m, \overline{k}_E}=\Spd \overline{k}_E[u, u^{-1}]
	\end{equation}
	sending $u$ to $t$.
	Thus, for every cocharacter $\nu: \bbG_m \to \sfT$, we can view $\calL_{\chi}$ as a character of $(I_E/P_E)$ via pullback along $\nu$ and $a$.
	
	\begin{theorem}[See Theorem \ref{Thm_tame}]\label{Thm_1.4}
		Assume Assumption \ref{Assum_sm} and \ref{Assum_CT}.
		Let $\sigma$ be a polysimplex in the Bruhat-Tits building of $G$ over $E$. Let $I$ be a finite set, $V \in \Rep(\widehat{G}^I)$, and $b \in B(G)$. Let $s \in (\widehat{T_{M_{\sigma}}}\sslash W_{M_{\sigma}})^{\widehat{F}}(\Lambda)$ with corresponding character $\chi \in \on{CH}_{\Lambda}(\sfT)$. Then there is a filtration on the complex
		\begin{equation}
			A:=e_s(\HV)^{V_{\sigma}(k_E)}
		\end{equation}
		indexed by $X_*(T)$ such that
		\begin{enumerate}
			\item For every $\mu \in X_*(T)$, the $\mu$-th graded piece decomposes into a direct sum
			\begin{equation}
				\on{gr}_{\mu}(A)=\bigoplus_{\nu}A_{\nu},
			\end{equation}
			where $\nu=(\nu_i)_{i \in I}$ runs over elements in $X_*(T)^I$ such that $\sum_i \nu_i=\mu.$
			\item The $I_E^I$-action on $A_{\nu}$ is tame, i.e., trivial on $P_E^I$. The $(I_E/P_E)^I$-action on $A_{\nu}$ is given by $\boxtimes_{i \in I}a^*\nu_i^*\calL_{\chi}$.
		\end{enumerate}
	\end{theorem}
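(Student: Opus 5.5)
We will follow the strategy outlined in the introduction: reduce from the shtuka space to the local Hecke stack, then from the Hecke stack at pro-$p$ Iwahori level to the torus via twisted hyperbolic localization.

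\emph{Step 1: pass to pro-$p$ Iwahori level.} Taking $V_{\sigma}(k_E)$-invariants and then applying $e_s$ amounts to changing level from $\calG_{\sigma}$ to its pro-unipotent radical. Since $\ell \nmid p$, the invariants are exact. By Deligne--Lusztig theory (Subsection \ref{Subsection_finite_red}), $e_s$ projects onto the part where a Borel $\sfT$ acts through the character $\chi$. Concretely, we realize
\[
A \;\cong\; \bigl(p_{!}\,\epsilon^*\,\Sat V\bigr)_{\chi}
\]
as a $\chi$-monodromic summand of the cohomology of the shtuka space with pro-$p$ Iwahori level $\calI^+$ (the parahoric $\calI$ attached to an alcove in $\sigma$). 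Here $\Sat V$ is replaced by its $\calL_\chi$-twisted version on $\Hk^{\loc}_{\calI^+}$. This uses the Mackey/Harish-Chandra compatibility of $e_s$ with induction from $\sfT^{wF}$, which I would verify at the level of the finite groups.

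\emph{Step 2: nearby cycles.} By Assumption \ref{Assum_sm}, $\epsilon^*$ commutes with iterated nearby cycles over $(\Spd\bOE)^I$. The inertia action on the generic fiber of $p_!\epsilon^*\Sat V$ is then computed by $p_!\epsilon^*$ applied to the iterated nearby cycles of the twisted Satake sheaf. These nearby cycles are the twisted central sheaves $\calZ_\chi(V)$ on the special-fiber Hecke stack for $\calI^+$, with $I_E^I$ acting through monodromy, as in \cite{eteve2023monodromiques}. So it suffices to show that $I_E^I$ acts on $\calZ_\chi(V)$ tamely, with the stated characters on a filtration.

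\emph{Step 3: hyperbolic localization.} Using Assumption \ref{Assum_CT}, apply twisted constant-term functors $\CT_B$ along the Iwasawa (semi-infinite) orbits indexed by $X_*(T)$. These orbits give a filtration of $\calZ_\chi(V)$, hence of $A$ after $p_!\epsilon^*$, with graded pieces $\CT_B^\mu$. Following \cite{anschutz2023gaitsgory}, $\CT_B$ commutes with nearby cycles and with fusion of legs. The $\mu$-piece therefore decomposes over $\nu\in X_*(T)^I$ with $\sum\nu_i=\mu$, which gives (1).

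\emph{Step 4: the torus.} On each summand we are reduced to the torus $\calT$ with pro-$p$ level: the sheaf is $\calL_\chi$ pulled back to the connected component $\nu_i$ of the affine Grassmannian of $T$. The nearby cycle along the leg $i$ is computed by the family $t\mapsto \nu_i(t)\in \sfT$, i.e.\ by pulling back $\calL_\chi$ along $\nu_i\circ a$. This is tame because $\calL_\chi$ is a tame local system, and it gives the character $a^*\nu_i^*\calL_\chi$, proving (2).

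The main obstacle is Step 3. We need the twisted hyperbolic localization to be compatible with nearby cycles over several legs, with inertia actions, and with the sheaf-theoretic realization of $e_s$. Getting the filtration to descend through $p_!\epsilon^*$ also requires exactness of the relevant functors, and both points rest on Assumption \ref{Assum_CT}. I would first treat the case of one leg and then bootstrap to several legs via fusion.
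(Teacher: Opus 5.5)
Your Step 1 is where the argument breaks, and it is only correct when $\sigma$ is an alcove, so that $M_\sigma=\sfT$.

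\begin{itemize}
\item \emph{What Step 1 actually computes.} Passing from $\calG_\sigma$ to pro-$p$ Iwahori level and taking a $\chi$-isotypic part under $\sfT^F$ gives ${^*}R_1(A)=A^{U_{M_\sigma}(k_E)}$, cut by a character of the split torus. It does not give $A$.
\item \emph{What is lost.} Every cuspidal constituent of the Lusztig series $s$ has no $U_{M_\sigma}(k_E)$-invariants and disappears. When $\sigma$ is a vertex, these are exactly the types of depth-zero supercuspidals.
\item \emph{Which torus $\chi$ lives on.} In general the character attached to $s$ lives on $\sfT^{wF}$ with $w\neq 1$. Such a torus does not act on pro-$p$ Iwahori cohomology at all.
\item \emph{The twisted Satake sheaf.} For the same reason your ``$\calL_\chi$-twisted Satake sheaf'' is not defined. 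On the generic fiber $\Hk^{\loc}_{\calI^0,\breve{\eta}^I}=\Hk^{\loc}_{G,\breve{\eta}^I}$, which does not see $\sfT$.
\end{itemize}

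The missing idea is the paper's reduction. Use the jointly conservative family $\{{^*}R_w\}_{w\in W_{M_\sigma}}$. Realize ${^*}R_w(\cHV)^{V_\sigma(k_E)}\cong p_!\epsilon^*(\Sat V\boxtimes\nabla_w(\Reg_{T^{wF}}))$ on $\Sht_{\calG_\sigma,(\DY)^I,\sbar}\times^{\calG_\sigma(k_E)}M_\sigma/B_{M_\sigma}$ (Lemma \ref{Lemma_4512}). Then embed this closedly into Iwahori shtukas with an \emph{extra leg fixed at} $\sbar$, lying over $\Hk^{\loc}_{\calG_C^0}/Ad_FT_{M_\sigma}$ (Lemma \ref{Lemma_Sht_Cart}). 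The level structure thus becomes a leg. The character $\chi$ enters through the $wF$-twisted Lang map and sits on that extra leg. The sheaves to analyze are $\scrZ_\chi(V)=\Psi(\Sat V\boxtimes\calL_\chi[\dim\sfT])$, with $\calL_\chi$ placed on $\Hk_{\calI^0,\sbar}$.

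The later steps also have gaps.

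\begin{itemize}
\item \emph{Step 2.} You use $p_!\Psi\cong\Psi p_!$ without comment. The map $p$ is not proper, and this is one of the paper's main results (Theorem \ref{Thm_pPsi}). It is proved by building an inverse via fusion, i.e.\ coevaluation followed by evaluation. That construction needs monoidality of the twisted central functor. It also needs local constancy of $p_!\Psi_{I_3}(\cdots)$ along $\breve{\eta}^{I_1\sqcup I_2}$ (Proposition \ref{Prop_lc_nabla}, ultimately \cite[Corollary IX.2.3]{fargues2021geometrization}).
\item \emph{Step 3.} The semi-infinite orbits $LU\cdot w\sfT$ are not $I^0$-stable. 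So they filter the \emph{cohomology} of a sheaf, in Mirkovi\'c--Vilonen style, not the object $\scrZ_\chi(V)$ of $\scrD_{\chi,\chi}$. Since $\epsilon$ lands in the Hecke stack, only a filtration in $\scrD_{\chi,\chi}$ can be transported through $p_!\epsilon^*$. That filtration is the twisted Wakimoto filtration:
  \begin{itemize}
  \item centrality $\scrZ_\chi(V)*J_{\nu,\chi}\cong J_{\nu,\chi}*\scrZ_\chi(V)$, together with the support criterion (Lemma \ref{Lemma_3.15'}), shows $\scrZ_\chi(V)\in\on{Wak}_\chi$;
  \item only then are the graded pieces identified with stalks of $\CT^{\chi}_{B^-}$ (Lemmas \ref{Lemma_3.23'} and \ref{Lemma_3.24'});
  \item $\CT^{\chi}_{B^-}$ commutes with $\Psi$ by Assumption \ref{Assum_CT}.
  \end{itemize}
\item \emph{Step 4.} This only works in equal characteristic. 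Over $\Spd\bOE$ in mixed characteristic there is no section of $f_T$ of the form ``$t\mapsto\nu_i(t)$'', because there is no map $a$. Tameness of $\calL_\chi$ alone does not give tameness of the monodromy. The paper builds the section over a perfectoid $\Spd\breve{O}_\infty$, using a generator $\xi$ of $\ker\theta$. It then descends tameness with Lemma \ref{Lemma_tame_descent}, using two totally ramified $\bbZ_p$-extensions whose absolute Galois groups generate $I_E$.
\end{itemize}
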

	
	The above theorem has the following consequences. We refer the reader to Subsection \ref{Subsection_statement} for precise statements.
	
	\begin{theorem}[Theorem \ref{Theorem_action}]
		Assume Assumption \ref{Assum_sm} and \ref{Assum_CT}.
		\begin{enumerate}
			\item For every $I, V, \sigma, b$, the $(I_E)^I$-module $(\HV)^{V_{\sigma}(k_E)}$ is tame, i.e., the $(I_E)^I$-action is trivial on the wild inertia $(P_E)^I$.
			\item For every $s \in (\widehat{T}_{M_{\sigma}}\sslash W_{M_{\sigma}})^{\widehat{F}}(\Lambda)$, the $\calO(Z^1(W_E, \widehat{G})/\widehat{G})$-module 
			$$e_s(\HV)^{V_{\sigma}(k_E)}$$ 
			is supported on $\on{ev}_{\tau_E}^{-1}(s)$, where 
			\begin{equation}
				\on{ev}_{\tau_E}: (Z^1(W_E/P_E, \widehat{G})\sslash \widehat{G})(\Lambda) \to (\widehat{G}\sslash \widehat{G})^{\widehat{F}}(\Lambda) \cong (\widehat{T}\sslash W)^{\widehat{F}}(\Lambda)
			\end{equation}
			is the evaluation map on a topological generator $\tau_E \in I_E/P_E \cong \widehat{\bbZ}^{(p)}(1)$. See Remark \ref{Rmk_depend} for the independence of the choice $\tau_E$.
		\end{enumerate}
	\end{theorem}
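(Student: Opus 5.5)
Part (1) follows from Theorem \ref{Thm_1.4}. Since $\Lambda[M_\sigma(k_E)]$ has the orthogonal decomposition $1=\sum_s e_s$ (Subsection \ref{Subsection_finite_red}), we get
$$(\HV)^{V_\sigma(k_E)}=\bigoplus_s e_s(\HV)^{V_\sigma(k_E)}.$$
It therefore suffices to show that each summand $A=e_s(\HV)^{V_\sigma(k_E)}$ is tame. By Theorem \ref{Thm_1.4}, $A$ carries a filtration indexed by $X_*(T)$ whose graded pieces are direct sums of the $A_\nu$, and $P_E^I$ acts trivially on each $A_\nu$. The group $P_E^I$ is pro-$p$ and $\ell\neq p$, so taking $P_E^I$-invariants is exact on $\Lambda$-modules. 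A filtered object whose graded pieces have trivial action of a pro-$p$ group then has trivial action itself.

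To make this rigorous, first reduce to finitely many filtration steps. For $V$ fixed, only finitely many $\mu$ contribute, because the weights of $V$ bound them; this should follow from the construction in Theorem \ref{Thm_tame}. Then induct on the length of the filtration. The extension class of a trivial $P$-module by a trivial $P$-module lives in $H^1(P,\Lambda)=0$. At the level of complexes, one uses that $R\Gamma(P_E^I,-)$ equals the invariants, so invariants of an extension are an extension of invariants.

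For part (2), let $\tau=\tau_E$ be placed in a single coordinate $i$, i.e. consider the element of $I_E^I$ which is $\tau$ in position $i$ and $1$ elsewhere. Use the standard relation from the excursion formalism (Lemma \ref{Lemma_Sht_Exc}): for $f\in\calO(\widehat G\sslash\widehat G)$, the excursion operator attached to $f$ and the element $\tau$ acts on $\calH_{\sigma,I,\sbar,V}$ through the action of $\tau$ on the cohomology with legs $I\sqcup\{*\}$ for suitable $V'$ built from $V$ and the representation $W$ defining $f$. Fusion reduces this to the one-leg case, and the operator is of the form $\on{coev}\circ(\tau\text{-action})\circ\on{ev}$.

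Next we compute the trace of $\tau$ on $e_s$-pieces. By part (1) and Theorem \ref{Thm_1.4}, on $\on{gr}$ the element $\tau$ acts on $A_\nu$ by the scalar $\chi(\nu_i(\tau))$, via $a^*\nu_i^*\calL_\chi$. The value of $f=\chi_W$ at $\tau$ is then $\sum_\nu \dim W_\nu\,\chi(\nu(\tau))$. This is exactly the evaluation of $f$ at the image of $s$ in $(\widehat T\sslash W)^{\widehat F}$, under the identification of $\chi$ with a semisimple element of $\widehat T$ via $\tau\mapsto$ the pairing $\nu\mapsto \chi(\nu(\tau))$.

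From this we conclude that $f-f(s)$ acts nilpotently on $A$: it is zero on each graded piece and the filtration is finite. Hence the $\calO$-module $A$ is set-theoretically supported on $\ev_\tau^{-1}(s)$. Support here means over $\Lambda$-points, and nilpotence suffices for that.

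The main obstacle is compatibility. The excursion operator $S_{f,\tau}$ is defined through the $W_E$-action on the cohomology of $\Sht$ with an extra leg at $\overline\eta$. One must check that this action, restricted to inertia, is the one whose graded pieces Theorem \ref{Thm_1.4} describes, including for the fused extra leg. One must also check that it commutes with $e_s$ and with $V_\sigma(k_E)$-invariants, which holds because the level-changing action commutes with the leg actions.

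The independence of the choice of $\tau$ is deferred to Remark \ref{Rmk_depend}.
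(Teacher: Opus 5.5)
Your part (1) is correct. You split by the idempotents $e_s$, use the finite filtration of Theorem \ref{Thm_1.4} with $P_E^I$-trivial graded pieces, and note that $P_E^I$-trivial objects are closed under extensions because invariants of a pro-$p$ group are exact for $\ell\neq p$. This is the sense in which the introduction presents the theorem as a consequence of Theorem \ref{Thm_1.4}. The paper's own proof instead cites the underlying inputs directly: Assumption \ref{Assum_sm}, Theorem \ref{Thm_pPsi}, and Theorem \ref{Thm_tame}.

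Part (2) has a genuine gap. It is the step where you conclude that $S_{f,\tau}-f(s)$ vanishes on each graded piece because ``the value of $f=\chi_W$ at $\tau$ is $\sum_\lambda\dim W[\lambda]\,\chi(\lambda(\tau))$''.

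The excursion operator is not an endomorphism of a single cohomology on which Theorem \ref{Thm_1.4} gives you eigenvalues. It is the composite $\ev\circ(1,\tau,1)\circ\coev$ (you wrote the composition in the reverse order). This composite leaves $\calH_{\sigma,b,I,\sbar,V}$, passes via the fusion isomorphism through the cohomology with legs $I\sqcup\{1,2\}$ and coefficients $V\boxtimes W\boxtimes W^*$, and comes back. Theorem \ref{Thm_1.4} produces a filtration on each such cohomology separately. It says nothing about whether $\coev$, $\ev$ and the fusion isomorphism are filtered maps, nor about their associated graded maps with respect to the decompositions $\bigoplus_\nu A_\nu$.

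Knowing that $(1,\tau,1)$ has generalized eigenvalues $\chi(\lambda(\tau))$ on the intermediate cohomology does not determine how the composite acts. Even if everything were filtered and respected the $\nu$-decomposition, the graded composite would be $\sum_\lambda c_\lambda\,\chi(\lambda(\tau))$ with $c_\lambda=\on{gr}(\ev)_\lambda\circ\on{gr}(\coev)_\lambda$ unknown. Getting $c_\lambda=\dim W[\lambda]$ requires $\on{gr}(\coev)=\id\otimes\coev_{W|_{\widehat T}}$ and $\on{gr}(\ev)=\id\otimes\ev_{W|_{\widehat T}}$. Concretely, you need:
\begin{enumerate}
\item the identification $\on{gr}\,\scrZ_\chi(-)\cong\Psi_{f_T}(\Sat((-)|_{\widehat T})\boxtimes\calL_\chi)$ of Theorem \ref{Theorem_central} to be monoidal;
\item that identification to be compatible with fusion and with the monodromy actions;
\item this compatibility to persist through $\epsilon^*$, $p_!$, ${^*}R_w$ and the projection to the $e_s$-part.
\end{enumerate}

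This is not the compatibility you single out as the main obstacle. Identifying the leg action and checking that it commutes with $e_s$ and with $V_\sigma(k_E)$-invariants are the easy parts.

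For (2) the paper does not argue from Theorem \ref{Thm_1.4} at all. It invokes the computation of \cite[Section 4.8.3]{eteve2023monodromiques}, which is carried out on the local Hecke stack. There the monoidal structure of the twisted central functor (the same structure used in Lemma \ref{Lemma_B2}) is available, so the excursion operator can be traced through twisted central sheaves and evaluated in the torus case. To repair your argument you would need to supply this monoidal, fusion-compatible version of the Wakimoto grading, or an equivalent computation of the relevant endomorphism of $\scrZ_\chi(1)$.
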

	
	\begin{theorem}[{Theorem \ref{Thm_LLC}}]\label{Thm_L_param_intro}
		Assume Assumption \ref{Assum_sm} and \ref{Assum_CT}. Let $\pi \in \Irr_{\Lambda}^0G(E)$ be an irreducible depth-zero representation.
		\begin{enumerate}
			\item The corresponding $L$-parameter $\varphi_{\pi}^{\on{FS}}$ is tame. In particular, we can view $\varphi_{\pi}^{\on{FS}}$ as a map from $W_E/P_E$.
			\item The restriction of $\varphi_{\pi}^{\on{FS}}$ to $I_E/P_E$ is determined by its value on the topological generator $\tau_E \in I_E/P_E$, which is computed by the following commutative diagram
			\[\begin{tikzcd}
				{\Irr_{\Lambda}^0G(E)} & {(Z^1(W_E/P_E, \widehat{G})\sslash \widehat{G})(\Lambda)} \\
				{(\widehat{T}\sslash W)^{\widehat{F}}(\Lambda)} & {(\widehat{T}\sslash W)^{\widehat{F}}(\Lambda)}
				\arrow["{\on{LLC}^{\on{FS}}}", from=1-1, to=1-2]
				\arrow["{\on{LS}}"', from=1-1, to=2-1]
				\arrow["{\on{ev}_{\tau_E}}", from=1-2, to=2-2]
				\arrow["{=}", from=2-1, to=2-2].
			\end{tikzcd}\]
		\end{enumerate}
	\end{theorem}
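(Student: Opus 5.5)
The plan is to deduce Theorem \ref{Thm_L_param_intro} from Theorem \ref{Theorem_action} using the compatibility between Fargues--Scholze parameters and excursion operators on the cohomology of shtuka spaces. Let $\pi$ be an irreducible depth-zero representation. By the Moy--Prasad theory of unrefined minimal types, there is a polysimplex $\sigma$ such that $\pi^{\calG_{\sigma}(O_E)^+}\neq 0$, where $\calG_{\sigma}(O_E)^+$ denotes the pro-unipotent radical. This invariant space is a representation of $M_{\sigma}(k_E)$ containing a cuspidal constituent, and we take $s$ to be its Deligne--Lusztig series. By the definition of $\on{LS}$ in Subsection \ref{Subsection_padic group}, $s$ equals $\on{LS}(\pi)$. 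Hence $e_s\pi^{\calG_{\sigma}(O_E)^+}\neq 0$.

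Next we realize $\pi$ in shtuka cohomology. We work with $b=1$ and the excursion/spectral action description of Fargues--Scholze. For any $I$, $V$ and excursion datum $(I,V,\alpha,\beta,(\gamma_i))$, the excursion operator acting on $\pi$ is the scalar given by evaluating the corresponding function on $\varphi_{\pi}^{\on{FS}}$. The plan is to identify the Hecke operator $T_V$ applied to $\cInd_{\calG_{\sigma}(O_E)^+}^{G(E)}\Lambda$, restricted to the generic fibre of the legs, with $\cHV$ for $b=1$, taken as $G(E)$-equivariantly compactly induced data. Taking $\Hom_{G(E)}(-,\pi)$ then matches $e_s(\HV)^{V_{\sigma}(k_E)}$ with the $e_s$-part of $T_V$ applied to $\pi^{\calG_\sigma(O_E)^+}$. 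This identification follows from the description of $\Sht$ at $b=1$ as a quotient of $\Gr\times G(E)/K$ and is compatible with excursion operators by Lemma \ref{Lemma_Sht_Exc}. We then choose $I=\{1,2\}$, $V=W\boxtimes W^{*}$ and $\alpha,\beta$ the coevaluation and evaluation maps. With this choice, $\pi$ appears as a nonzero constituent of a module supported on $\on{ev}_{\tau_E}^{-1}(s)$ by Theorem \ref{Theorem_action}(2). Since the action of $\calO(Z^1(W_E,\widehat{G})/\widehat{G})$ on $\pi$ factors through the point $\varphi_{\pi}^{\on{FS}}$, this forces $\on{ev}_{\tau_E}(\varphi_{\pi}^{\on{FS}})=s$.

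For tameness, part (1), we use Theorem \ref{Theorem_action}(1). The excursion operators built from $\gamma_i\in P_E$ act trivially on the tame module $(\HV)^{V_{\sigma}(k_E)}$, hence act trivially on $\pi$. The character values $\on{tr}(\varphi_{\pi}^{\on{FS}}(\gamma)\mid W)$ for $\gamma$ in $P_E$ therefore equal $\dim W$ for every $W$, and the same holds with $\gamma$ replaced by products with arbitrary elements of $W_E$. Since $\varphi^{\on{FS}}_{\pi}$ is semisimple and $P_E$ is pro-$p$ with $\ell\neq p$, this implies that $\varphi^{\on{FS}}_{\pi}|_{P_E}$ is trivial. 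The statement that $\varphi_\pi^{\on{FS}}|_{I_E/P_E}$ is determined by $\tau_E$ is immediate, since $\tau_E$ topologically generates $I_E/P_E$ and the parameter is continuous.

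The main obstacle is the second step: making precise that $\pi$ contributes to $e_s(\HV)^{V_{\sigma}(k_E)}$ compatibly with excursion operators. This requires matching the level structure $\sbar$ with the group $\calG_{\sigma}(O_E)^+$, and checking that the $M_\sigma(k_E)$-action and the idempotent $e_s$ commute with the excursion action. I would first try to prove this at the level of the compact induction $\cInd_{\calG_\sigma(O_E)^+}^{G(E)}\Lambda$ as an object of $\scrD(\Bun_G)$ supported on $\Bun_G^1$. Centrality of the excursion algebra with respect to endomorphisms of this object, including $\Lambda[M_\sigma(k_E)]$, then gives the compatibility.
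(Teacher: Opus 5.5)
Your proposal is correct and follows the paper's route: the paper deduces Theorem \ref{Thm_LLC} directly from Theorem \ref{Theorem_action} (following \cite[Section 4.8]{eteve2023monodromiques}). The deduction is exactly yours: $\pi$ is a quotient of $e_s\cInd_{\calG_{\sigma}(O_E)^+}^{G(E)}\Lambda$, the spectral action is natural, and the excursion operators with $\gamma_i\in P_E$, resp.\ the functions $\on{tr}(\varphi(\tau_E)\mid W)$, detect tameness, resp.\ $\on{ev}_{\tau_E}$. One tidy-up: the module of which $\pi$ is a constituent should be the $V=\mathbf{1}$ case of $e_s(\HV)^{V_{\sigma}(k_E)}$, which is $e_s\cInd_{\calG_{\sigma}(O_E)^+}^{G(E)}\Lambda$ itself by Lemma \ref{Lemma_Sht_Exc}, not the cohomology for $W\boxtimes W^*$; you should also drop the sentence about applying $T_V$ to $\pi^{\calG_\sigma(O_E)^+}$, since $T_V$ does not act on that space.
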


	As an application, we show in Section \ref{Section_DR} that for depth-zero regular supercuspidal representations, Fargues--Scholze $L$-parameters agree with DeBacker--Reeder $L$-parameters over $\overline{\bbQ}_{\ell}$.
	
	\begin{theorem}[Corollary \ref{Cor_FSDR}]
		Assuming Assumption \ref{Assum_sm} and \ref{Assum_CT}. Then the Fargues--Scholze and DeBacker--Reeder $L$-parameters agree:
		\begin{equation}
			\varphi_{\pi}^{\on{FS}} = \varphi_{\pi}^{\on{DR}}.
		\end{equation}
	\end{theorem}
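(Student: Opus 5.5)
The comparison will go through the inertial restriction and then use the known structure of depth-zero regular supercuspidals. Let $\pi$ be a depth-zero regular supercuspidal representation of $G(E)$ over $\overline{\bbQ}_{\ell}$. By DeBacker--Reeder, $\pi \cong \cInd_{G(E)_x}^{G(E)}\kappa$, where $x$ is a vertex and $\kappa$ is inflated from $\pm R_{\sfT^{wF}}^{\chi}$, a Deligne--Lusztig representation attached to a regular (in general position) character $\chi$ of an elliptic torus $\sfT^{wF}$ of $M_x$. The parameter $\varphi_{\pi}^{\on{DR}}$ is tame. Its restriction to $I_E/P_E$ is the map $\tau_E \mapsto s_\chi \in \widehat{T}$ corresponding to $\chi$, landing in $\widehat{T}$ via local class field theory for the unramified twist $T_w$. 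Frobenius acts by $n_w\widehat{F}$ times a scalar determined by the value of $\chi$-extension on the uniformizer and the central character.

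The proof has three steps.

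\textbf{Step 1 (inertia).} Reduce mod $\ell$ via an integral model and Brauer theory, or run the argument directly over $\overline{\bbQ}_{\ell}$, since the earlier theorems should hold with those coefficients as well. Then apply Theorem \ref{Thm_LLC}: $\varphi_{\pi}^{\on{FS}}$ is tame and $\on{ev}_{\tau_E}\varphi_{\pi}^{\on{FS}}=\on{LS}(\pi)$. For a regular supercuspidal, $\on{LS}(\pi)$ is the $W$-orbit of $s_\chi$. This orbit is a regular semisimple class because $\chi$ is in general position, so $\varphi_{\pi}^{\on{FS}}(\tau_E)$ is $\widehat{G}$-conjugate to $\varphi_{\pi}^{\on{DR}}(\tau_E)$.

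\textbf{Step 2 (centralizer).} Because $s_\chi$ is regular, its centralizer is the torus $\widehat{T}$. Tameness gives $\varphi(\Frob)\varphi(\tau_E)\varphi(\Frob)^{-1}=\varphi(\tau_E)^q$. Hence both parameters have Frobenius image in the normalizer $N_{\widehat{G}}(\widehat{T})\rtimes \widehat{F}$, and this image maps to the same Weyl element $w$ (up to the $W$-ambiguity of choosing $s_\chi$ in the orbit). Conjugating by $\widehat{T}$, the two Frobenius images then differ by an element of $\widehat{T}$, modulo $(1-w\widehat{F})\widehat{T}$.

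\textbf{Step 3 (fixing Frobenius).} The remaining ambiguity is the finite group $\widehat{T}^{w\widehat{F}}$-coinvariants. Dually, this is an unramified character of $T_w(E)$, i.e.\ the value of the character of $T_w(E)$ on $\pi_E$. To pin it down, I would use two compatibilities of Fargues--Scholze: with central characters, and with twisting by unramified characters. Together these control the value at the uniformizer. If that is insufficient, I would instead invoke compatibility with parabolic induction applied to the principal-series-like constituents, or the known agreement for tori (\cite{fargues2021geometrization}) through a transfer/endoscopic character identity. Note that $\pi$ is determined by $\chi$ on all of $T_w(E)$, not just on $\sfT^{wF}$.

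I expect Step 3 to be the main obstacle. Determining the full Frobenius value, not just its image modulo the torus, may require an additional input beyond inertia, such as character identities or the local Langlands for tori embedded via $\chi$. My first attempt would combine the central character with the unramified twist: vary $\chi|_{\langle\pi_E\rangle}$ along the unramified characters of $T_w(E)$, compare the two families, and check that they differ by a fixed element.
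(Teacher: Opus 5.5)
Your Steps 1 and 2 match the paper's outline. The paper's Lemma~\ref{Lemma_inertia} gets the inertial agreement as follows:
\begin{itemize}
\item it chooses an $O_K[1/p]$-lattice in an unramified twist of $\pi$;
\item it reduces along every $\lambda\colon O_K[1/p]\to\overline{\mathbb F}_{\ell'}$ with $\ell'\nmid pm$;
\item it uses that $\varphi^{\mathrm{FS}}(\tau_E)$ and $\mathrm{LS}$ are both compatible with specialization, and that $(\widehat T\sslash W)^{\widehat F}$ is finite over $\mathbb Z[1/p]$.
\end{itemize}
Tameness of $\varphi^{\mathrm{FS}}_\pi$ over $\overline{\mathbb Q}_\ell$ is quoted from Dat et al. Note that a single reduction mod $\ell$ would not suffice, since distinct $\overline{\mathbb Q}_\ell$-points can have the same reduction. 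Also, ``running the argument over $\overline{\mathbb Q}_\ell$'' is not something the paper's mod-$\ell$ machinery provides. The genuine gap is Step 3, which is where the proof actually happens, and there your description of the residual ambiguity is wrong.

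Set $s=\varphi(\tau_E)$ (regular), $\widehat T_s=Z_{\widehat G}(s)$, $n=\varphi^{\mathrm{DR}}_\pi(\mathrm{Fr})$ and $\theta=\mathrm{Ad}(n)\circ\widehat F$ on $\widehat T_s$. The extensions are then $\mathrm{Fr}\mapsto tn$ with $t\in\widehat T_s$, taken modulo $t\mapsto t\,L_\theta(u)$, where $L_\theta(u)=u^{-1}\theta(u)$.

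Since $L_\theta$ is a homomorphism of tori, its image is a subtorus. Hence over $\overline{\mathbb Q}_\ell$ the coinvariants $\widehat T_s/L_\theta(\widehat T_s)$ are the points of a torus of dimension $\dim\widehat T_s^{\theta}$. They are never a nontrivial finite group. There is also no ``value at $\pi_E$'' to pin down beyond the center: for elliptic $T_w$ and anisotropic center, $T_w(E)$ is compact and has no nontrivial unramified characters.

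The missing idea is ellipticity.
\begin{itemize}
\item $S_{\varphi^{\mathrm{DR}}_\pi}=\widehat T_s^{\theta}$ is finite modulo $Z(\widehat G)^{\Gamma}$, so $(\widehat T_s\cap\widehat G_{\mathrm{der}})^{\theta}$ is finite.
\item Hence $L_\theta$ restricted to $\widehat T_{s,\mathrm{der}}=\widehat T_s\cap\widehat G_{\mathrm{der}}$ is an isogeny, and therefore surjective on $\overline{\mathbb Q}_\ell$-points.
\item So the class of $t$ is determined by $h(t)$, where $h\colon\widehat G\to\widehat{Z^0}$.
\item Compatibility of both parameters with the central character forces $t\in\widehat T_{s,\mathrm{der}}$. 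This uses normalizing cocycles so that $h\circ\varphi=h\circ\varphi^{\mathrm{DR}}_\pi$, then aligning the inertial restrictions by an element of $\ker h$, which works because $h|_{\widehat T_s}$ is surjective.
\item Surjectivity of $L_\theta$ on $\widehat T_{s,\mathrm{der}}$ then conjugates $t$ away.
\end{itemize}
Thus central-character compatibility alone pins down Frobenius, giving uniqueness of the extension; since both $\varphi^{\mathrm{FS}}_\pi$ and $\varphi^{\mathrm{DR}}_\pi$ satisfy it, they agree.

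Your fallbacks would not close the gap:
\begin{itemize}
\item supercuspidals are not constituents of proper parabolic inductions;
\item no endoscopic character identities for $\varphi^{\mathrm{FS}}$ are available here;
\item twisting by unramified characters of $G(E)$ only changes the parameter by central cocycles, which the central character already controls.
\end{itemize}
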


	\subsection{Reduction from shtukas to the
		local Hecke stack}
	From now on, let us outline the proof of Theorem \ref{Thm_1.4}.
	There are two basic ideas: 
	\begin{enumerate}
		\item Reduce from the shtuka space $\Sht$ to the local Hecke stack $\Hk^{\loc}$.
		\item Reduce from a general unramified reductive group $G$ to the torus case.
	\end{enumerate}
	
	In this subsection, we sketch the reduction step from $\Sht$ to $\Hk^{\loc}$ following \cite[Chapter 4]{eteve2023monodromiques}. We refer the reader to Subsection \ref{Subsection_strategy} for details.
	
	Recall that we want to compute the inertia action on the cohomology
	$$e_s(\HV)^{V_{\sigma}(k_E)}.$$
	
	By Deligne-Lusztig theory, it suffices to compute
	$${^*}R_{w}(\cHV)^{V_{\sigma}(k_E)},$$
	for every $w \in W_{M_{\sigma}}$. Here ${^*}R_{w}$ is the Deligne-Lusztig restriction.
	
	It turns out that ${^*}R_{w}(\cHV)^{V_{\sigma}(k_E)}$ can be interpreted as certain cohomology of 
	$$\Sht_{\calG_{\sigma}, (\DY)^I, \overline{s}_0} \times^{\calG_{\sigma}(k_E)} M_{\sigma}/B_{M_{\sigma}},$$
	see Lemma \ref{Lemma_4512}.
	
	Now, the key observation (Lemma \ref{Lemma_Sht_Cart}) is that we have a closed embedding
	$$\emb: \Sht_{\calG_{\sigma}, (\DY)^I, \overline{s}_0} \times^{\calG_{\sigma}(k_E)} M_{\sigma}/B_{M_{\sigma}} \to (\Sht_{\calG_C, (\DY)^{I \sqcup 0}})|_{(\Spd \breve{E})^I \times \overline{s}_0},$$
	where $\calG_C$ is obtained from $\calG_{\sigma}$ by dilating the Borel on the special fiber. Let $\calG_C^0$ be the group scheme obtained from $\calG_{\sigma}$ by dilating the unipotent radical of the Borel on the special fiber.
	
	Moreover, we have a diagram
	\[\begin{tikzcd}
		{(\Spd \breve{E})^I} & {\Sht_{\calG_{\sigma}, (\DY)^I, \overline{s}_0} \times^{\calG_{\sigma}(k_E)}M_{\sigma}/B_{M_{\sigma}}} & {\Hk^{\loc}_{G, (\Spd\breve{E})^I} \times \frac{L^+\calG_{C}^0\backslash L^+\calG_{\sigma}/L^+\calG_{C}^0}{Ad_FT_{M_{\sigma}}}} \\
		{(\Spd \bOE)^I} & {\Sht_{\calG_C, (\DY)^{I \sqcup 0}}|_{(\Spd \breve{O}_E)^I \times \overline{s}_0}} & {\frac{\Hk^{\loc}_{\calG_C^0, (\DY)^{I\sqcup 0}}}{Ad_FT_{M_{\sigma}}}|_{{(\Spd \breve{O}_E)^I \times \overline{s}_0}}}
		\arrow[from=1-1, to=2-1]
		\arrow[from=1-2, to=1-1]
		\arrow[from=1-2, to=1-3]
		\arrow["\emb"', from=1-2, to=2-2]
		\arrow[from=1-3, to=2-3]
		\arrow["p"', from=2-2, to=2-1]
		\arrow["\epsilon", from=2-2, to=2-3]
	\end{tikzcd}\]
	
	In other words, we have extended our stacks from $(\Spd \bE)^I$ to $(\Spd \bOE)^I$. We can then use iterated nearby cycles to extract the monodromy as follows.
	
	We are interested in the $(I_E)^I$-action on $p_!\epsilon^*(\Sat(V) \boxtimes \nabla_w(\on{Reg}_{T^{wF}}))$, where the sheaf $\nabla_w(\on{Reg}_{T^{wF}})$ is defined in Equation \eqref{Eqdef_nabla}. This monodromy can be extracted from nearby cycles with respect to  the identity map $(\Spd \bOE)^I \to (\Spd \bOE)^I$ applied to the cohomology sheaves. We will show that $p_!$ commutes with nearby cycles (see Subsection \ref{Subsection_push_nearby} for a sketch), so it is enough to control the monodromy of shtuka spaces. Furthermore, by Assumption \ref{Assum}, $\epsilon^*$ commutes with nearby cycles, so it suffices to control the monodromy on the local Hecke stack. On the local Hecke stack, we can compute the monodromy of the corresponding sheaves by reducing to the torus case using twisted hyperbolic localization (see Subsection \ref{Subsection_computation_Hecke} for a sketch).

	\subsection{Commutation of nearby cycles with pushforward}\label{Subsection_push_nearby}
	
	As explained in the previous subsection, one key step is to show that nearby cycles commute with $p_!$.
	In this subsection, we sketch the proof.
	
	Recall the following general situation. Let $S$ be curve with a closed point $s$. Let $I=\{1, 2, ..., n\}$ be a finite set. Given a morphism $f: Z \to W$  of small $v$-stacks over $S^I$ and $A \in \scrD_{\et}(Z|_{(S-s)^I}; \Lambda)$, we ask whether the natural map
	$$f_!\Psi_1...\Psi_nA \to \Psi_1...\Psi_n f_!A$$
	is an isomorphism.
	
	In our case, $S=\DY=\Spd \bOE, s = \sbar=\Spd \overline{k}_E, S-s=\etab = \Spd \breve{E}$,
	and 
	$$Z=(\Sht_{\calG_C, (\DY)^{I \sqcup 0}})|_{(\Spd \bOE)^I \times \sbar}:=\Sht_{\calG_C, (\DY)^{I \sqcup 0}} \times_{(\Spd \bOE)^{I \sqcup 0}}{((\Spd \bOE)^I \times \sbar)}$$
	with the structure map $p$ to $S^I$ being the projection to the first $I$-legs. In other words, we have
	$$p: Z=(\Sht_{\calG_C, (\DY)^{I \sqcup 0}})|_{(\Spd \bOE)^I \times \sbar} \to (\Spd \bOE)^I \times \sbar \to (\Spd \bOE)^I.$$	
	Let $W=S^I$ with the structure map being the identity map. Let $f:=p: Z \to W$. Let 
	$$A:=\emb_!\epsilon^*(\Sat V \boxtimes \nabla_w(\on{Reg}_{T^{wF}})) \in \scrD_{\et}(Z|_{\etab^I}; \Lambda).$$
	We can ask whether the natural map
	\begin{equation}\label{Eq_commute?}
		p_!\Psi_1...\Psi_n A  \to \Psi_1...\Psi_n p_! A
	\end{equation}
	is an isomorphism.
	
	Following \cite{salmon2023unipotent} and \cite{eteve2023monodromiques}, we are going to construct an inverse of the map \eqref{Eq_commute?} using the idea of fusion. Write $\Psi_I=\Psi_1...\Psi_n$ and introduce three disjoint finite sets $I_1, I_2, I_3$ which are bijective to $I$. Formally, we will construct maps
	$$\Psi_{I_1}p_! \to \Psi_{I_1}p_!\Psi_{I_2} \Psi_{I_3} \to \Psi_{I_1}\Psi_{I_2}p_! \Psi_{I_3} \to p_!\Psi_{I_3}.$$
	(see Subsection \ref{Subsection_p_proof} for details.)
	Here, the first map can be constructed using fusion on the local Hecke stack, which translates to the monoidality of twisted central sheaves. The second map is induced from the natural map $p_!\Psi_{I_2} \to \Psi_{I_2}p_!$. The third map can be constructed once we know certain cohomology of shtuka space is locally constant, which ultimately reduce to \cite[Corollary IX.2.3]{fargues2021geometrization}.

	\subsection{Computation on the local Hecke stack}\label{Subsection_computation_Hecke}
	
	As explained, using $p_!\Psi=\Psi p_!$ and our assumption $\epsilon^*\Psi=\Psi \epsilon^*$, we reduce the computation to the monodromy (i.e., $(I_E)^I$-action) on the so-called twisted central sheaf
	$$\Psi(\Sat V \boxtimes \nabla_w(\chi)),$$
	where $\Psi=\Psi_h$ is the nearby cycle associated to the map
	$$h: \Hk^{\loc}_{\calG_C^0, \breve{S}_0^I \times \sbar} :=\Hk^{\loc}_{\calG_C^0, (\DY)^{I \sqcup 0}} \times_{(\DY)^{I \sqcup 0}} ((\Spd \breve{O}_E)^I \times \Spd \overline{k}_E) \to (\Spd \breve{O}_E)^I=\breve{S}_0^I .$$
	
	Recall that we have fixed a reductive group $\calG$ over $O_E$ and we denote by $G=\calG_E$ its generic fiber. We fix a maximally $O_E$-split maximal torus $\calT \subseteq \calG$ with generic fiber $T$. This choice determines an apartment $\mathcal{A}(G, T) \subseteq \calB(G, E)$ and the integral model $\calG$ determines a hyperspecial point $x_0 \in \mathcal{A}(G, T)$. Let $C$ be an alcove in $\calA(G, T)$ containing $x_0$ in its closure and let $\sigma$ be a polysimplex in the closure of $C$. The choices of $C$ and $\sigma$ determine three smooth group schemes over $O_E$.
	\begin{enumerate}
		\item $\calG_{\sigma}$ the parahoric group scheme corresponding to $\sigma$.
		\item $\calG_{C}$ the Iwahori group scheme corresponding to $C$. Its group of $O_E$-points is an Iwahori subgroup of $G(E)$.
		\item $\calG_{C}^0$ the pro-$p$ Iwahori group scheme corresponding to $C$. Its group of $O_E$-points is the pro-unipotent radical of the Iwahori $\calG_{C}(O_E)$.
	\end{enumerate}
	
	Moreover, let $\sfT:=\calT_{\overline{k}_E}$ denote the geometric speical fiber of $\calT$. Recall that we have
	$$\on{CH}_{\Lambda}(\sfT)=\Hom(\pi_1^t(\sfT)_{\ell'}, \Lambda^*)^{\on{tors}}.$$
	For each $\chi \in \on{CH}_{\Lambda}(\sfT)$, there is a local system $\calL_{\chi}$ on $\sfT$, called the Kummer sheaf.

	The idea is to use the constant term functor, a special case of hyperbolic localization, to reduce to the torus case. The paper \cite{anschutz2023gaitsgory} considers the constant term functor for $\Hk^{\loc}_{\calG_C}$. Using the theory of $(\sfT, \calL_{\chi})$-equivariant sheaves reviewed in the next paragraph, we can similarly define a constant-term functor for $\Hk^{\loc}_{\calG_C^0}$ with respect to a fixed 
	$\chi \in \on{CH}_{\Lambda}(\mathsf{T})$. 
	
	In this paragraph, we review the notion of $(\sfT, \calL_{\chi})$-equivariant sheaves following \cite[Section 2]{lusztig2020endoscopy}. Let $H$ be a connected algebraic group acting on a scheme $X$. We have the action map $\alpha: H \times X \to X$. Let $\calL$ be a character sheaf on $H$. We can define the category $$\scrD_{\et}(X/(H, \calL))$$
	of $(H, \calL)$-equivariant sheaves on $X$. An object in $\scrD_{\et}(X/(H, \calL))$ is an object $A \in \scrD_{\et}(X)$ together with an isomorphism
	$$\alpha^*A \cong \calL \boxtimes A$$
	and higher coherence data.
	
	We can now define a functor
	$$\CT_B^{\chi}: \scrD_{\et}((L^+\calG_C, \calL_{\chi})\backslash \Gr_{\calG_C^0, \overline{s}_0}/(\mathsf{T}, \calL_{\chi})) \to \scrD_{\et}((L^+\calT, \calL_{\chi})\backslash (\Gr_{\calG_C^0, \overline{s}_0})^0/(\mathsf{T}, \calL_{\chi})),$$
	see Subsection \ref{Subsection_Constant_terms}.
	The functor $\CT_B^{\chi}$ is the constant term functor over the special fiber $\Spd \overline{k}_E$. We can also define an integral version of $\on{CT}_B^{\chi}$ with several legs. We assume that $\CT_B^{\chi}$ has the formal properties specified in Assumption \ref{Assum_CT}.  In particular, we assume that the functor $\CT_B^{\chi}$ commutes with nearby cycles.

	We will apply the constant term  functor for the opposite Borel, $\CT^{\chi}_{B^-}$, to the twisted central sheaf
	$$\scrZ_{\chi}(V):=\Psi_{h}(\Sat V \boxtimes \calL_{\chi}[\dim \sfT]),$$
	where $B^-$ is the opposite Borel.
	It turns out that we can recover $\scrZ_{\chi}(V)$ from $\CT^{\chi}_{B^-}(\scrZ_{\chi}(V))$ in the graded sense (Theorem \ref{Theorem_central}). For this, we need to develop the theory of twisted Wakimoto sheaves and Wakimoto filtrations. For every $\nu \in X_*(T)$, the twisted Wakimoto sheaf $J_{\nu, \chi}$ is a  certain explicitly defined sheaf that is $(\sfT \times \sfT, \calL_{\chi} \boxtimes \calL_{\chi})$-equivariant (Subsection \ref{Subsection_Wakimoto}). The sheaves $\{J_{\nu, \chi}\}_{\nu \in X_*(T)}$ generate a full subcategory $\on{Wak}_{\chi} \subseteq \scrD_{\et}((L^+\calG_C, \calL_{\chi})\backslash \Gr_{\calG_C^0, \overline{s}_0}/(\mathsf{T}, \calL_{\chi}))$. Using the Bruhat order on $X_*(T)$, we can define the Wakimoto filtration for an object $\calF \in \on{Wak}_{\chi}$. Wakimoto sheaves are defined via pushforward from strata on $I^0\backslash LG/I^0$, and the $(\sfT \times \sfT, \calL_{\chi} \boxtimes \calL_{\chi})$-equivariant sheaves on each stratum are easy to describe. Therefore, an explicit computation shows that the graded piece of $\calF \in \on{Wak}_{\chi}$ is computed by the constant term functor for the opposite Borel (Lemma \ref{Lemma_3.24'}). Moreover, we can show that the twisted central sheaves $\scrZ_{\chi}(V)$ lie in $\on{Wak}_{\chi}$ using the centrality of $\scrZ_{\chi}(V)$, i.e., $\scrZ_{\chi}(V)$ commutes with $J_{\nu, \chi}$ under convolution. Combining everything, we deduce that $\scrZ_{\chi}(V)$ admits a filtration whose
	associated graded is given by $\CT_{B^-}^{\chi}(\scrZ_{\chi}(V))$.  Using our assumption that nearby cycles commute with $\CT_{B^-}^{\chi}$, the complex $\CT_{B^-}^{\chi}(\scrZ_{\chi}(V))$ is essentially the twisted central sheaf for $G=T$ (Theorem \ref{Theorem_central}). Thus, we are reduced to the torus case, where an explicit
	computation completes the proof (see Subsections \ref{Subsection_compute_equal}, \ref{Subsection_compute_multi}, and \ref{Subsection_compute_mixed}).

	\subsection{Organization of the paper} 
	In Section \ref{Section_basic} we review Deligne-Lusztig theory (Subsection \ref{Subsection_finite_red}), depth-zero representations of $p$-adic groups (Subsection \ref{Subsection_padic group}), the shtuka spaces and related spaces (Subsection \ref{Subsection_Sht}). We explain our first assumption in details in Subsection \ref{Subsection_ass_sm}. We state our main theorems in Subsection \ref{Subsection_statement}. We outline the strategy and review the work \cite{eteve2023monodromiques} in Subsection \ref{Subsection_strategy}. 
	
	In Section \ref{Section_pcomm} we prove $p_!\Psi = \Psi p_!$.
	
	In Section \ref{Section_monodromy} we compute the inertia actions on the twisted central sheaves. In Subsection \ref{Subsection_Constant_terms} we introduce hyperbolic localization and constant term for twisted sheaves on the local Hecke stack with pro-$p$ Iwahori level. In Subsection \ref{Subsection_CT_Ass} we explain our second assumption in details. In Subsection \ref{Subsection_Wakimoto} we introduce twisted Wakimoto sheaves. In Subsection \ref{Subsection_CT_Wak} we show that the Wakimoto graded piece can be computed by the constant term functor for the opposite Borel. In Subsection \ref{Subsection_tame} we state our main theorems and sketch a proof assuming the computation in the torus case. In Subsection \ref{Subsection_compute_equal}, \ref{Subsection_compute_multi}, \ref{Subsection_compute_mixed} we compute the torus case explicitly in the equal characteristic case, multi-legs case, and mixed characteristic case, respectively.
	
	In Section \ref{Section_DR}, we prove that for irreducible depth-zero regular supercuspidal representations, the Fargues--Scholze $L$-parameters and DeBacker--Reeder $L$-parameters agree.
	
	\subsection{Related work}
	Our paper is largely inspired by the paper \cite{eteve2023monodromiques}, where Arnaud Eteve proved similar results for Genestier-Lafforgue's construction of $L$-parameters for $E$ a local field of characteristic $p$. The paper \cite{eteve2023monodromiques} is closely related to the paper \cite{xue2020smooth} and \cite{salmon2023unipotent}.
	
	The use of hyperbolic localization to compute nearby cycles is inspired by the paper \cite{anschutz2023gaitsgory}. Similar methods were used earlier in the work of \cite{arkhipov2009perverse}. 
	
	The paper \cite{dat2025depthzero} also proved that the Fargues-Scholze $L$-parameter associated to a depth-zero irreducible representation is tame, using a different method, but does not identify the $L$-parameter.
	
	During the preparation of our work, we learned that Ian Gleason, Xiangqian Yang, and Xinwen Zhu were working on a project which has some overlaps with our work. They also use twisted Wakimoto sheaves  and hyperbolic localizations. Notably, they will prove a version of Assumption \ref{Assum_sm} and \ref{Assum_CT} in their work.
	
	During the preparation of our work, we learned that Sean Cotner and Tony Feng (\cite{cotner2026modular}, \cite{cotner2026local1}, and \cite{cotner2026local2}) were working on a project that obtains Theorem \ref{Thm_L_param_intro}, using a different method. Their method also applies to higher depth representations.
	
	Classically, the local Langlands correspondence for depth-zero regular supercuspidal representations is constructed by \cite{kazhdan2006endoscopic} and \cite{debacker2009depth} for coefficients $\Lambda=\overline{\bbQ}_{\ell} \cong \bbC$. In Section \ref{Section_DR}, we prove that for irreducible depth-zero regular supercuspidal representations, the Fargues--Scholze $L$-parameter and DeBacker--Reeder $L$-parameter agree. 
	
	In the paper \cite{yoshida2010nonabelian}, Yoshida proved that the depth-zero local Langlands for $GL_n$ can be realized using cohomology of Lubin-Tate spaces. Our paper can be viewed as a generalization of this to any unramified connected reductive group.
	
	In \cite{zhu2025tame}, Zhu proves a tame categorical local Langlands conjecture in his setting. Zhu gave a parametrization of depth-zero representations, having the same properties as in Theorem \ref{Thm_L_param_intro}, and proved that his parametrization agrees with DeBacker--Reeder's parametrization. It might be possible to show that Fargues--Scholze's parametrization and Zhu's parametrization agree (\cite[Remark 1.13]{zhu2025tame}). We expect that our article provides a step towards such a comparison.

	\subsection{Acknowledgements}
	
	It is a pleasure to thank my advisors Jessica Fintzen and Peter Scholze for their interest and suggestions regarding this work. I thank Arnaud Eteve for answering numerous questions about the paper \cite{eteve2023monodromiques}. I thank Ian Gleason for enlightening discussions and for explaining to me his unpublished results. Moreover, I thank Johannes Ansch\"utz,  Alexander Bertoloni Meli, Kei Yuen Chan, Sean Cotner, Jean-François Dat, Adam Vincent Daigoro Dauser, Tony Feng, Linus Hamann, David Hansen, Thibaud van den Hove, Alexander Ivanov, Tasho Kaletha,  Tim Kuppel, Jo\~ao Louren\c{c}o,  Lucas Mann, Michael Rapoport, Timo Richarz,  Yiming Tang, Eva Viehmann,  Haining Wang, Zhiyou Wu, Xiangqian Yang, Jize Yu, Mingjia Zhang, Zhiyu Zhang, Xiaoxiang Zhou, Xinwen Zhu, Konrad Zou for discussions and comments on this paper. Additionally, I would like to thank all members of the arithmetic geometry and representation theory group in Bonn for their constant support.

	I am funded by the Hausdorff Center of Mathematics and jointly hosted by the International Max-Planck Research School on Moduli Spaces. I thank both institutes for providing a nice working environment.

	\subsection{AI methodology} After a draft of the paper was mostly written, AI tools were used to proofread. GPT 5.6 found a sign mistake in Lemma \ref{Lemma_section}. All other statements and proofs in the paper originate from its human author.

	\subsection{Notation}
	\label{section-notation}
	
	First, let us introduce schematic notations.
	\begin{itemize}
		\item $p$ a prime.
		\item $E$ a non-archimedean local field, with ring of integers $O_E$ and residue field $k_E$. We fix a uniformizer $\pi_E \in O_E$.
		\item $\overline{E}$ a separable closure of $E$ and $\Gamma:=\Gamma_E:=\mathrm{Gal}(\overline{E}/E)$ the absolute Galois group. We have $P_E \subseteq I_E \subseteq \Gamma_E$ the wild inertia and the inertia subgroup.
		\item $E^{\on{un}}$ the maximal unramified extension of $E$.
		\item $\breve{E}$ the completion of $E^{\on{un}}$,
		$\bOE \subseteq \bE$  its ring of integers, and $\overline{k}_E$ the residue field of $\bOE$.
		\item $\bar{\breve{E}}$ an algebraic closure of $\breve{E}$.

		\item $\calG$ a connected reductive group over $O_E$ (in particular, the special fiber of $\calG$ is connected), with generic fiber $G:=\calG_{E}$ ($G$ is  unramified). Note that the base change $\bcG:=\calG_{\bOE}=\calG \times _{O_E} \bOE$ is split. Let $\bG:=G_{\bE}$. The integral model $\calG$ gives rise to a hyperspecial point $x_0$ in the Bruhat-Tits building  $\calB(G, E)$ of $G$ over $E$.

		\item Let $\calS \subseteq \calG$ be a maximal $O_E$-split torus. Let $S:=\calS_E$, which is a maximal $E$-split torus of $G$.
		
		\item Let $\calT \subseteq \calG$ be the centralizer of $\calS$. Then $\calT$ is a maximal torus of $\calG$. Let $T:=\calT_E$, which is a maximal torus of $G$. Note that the base change $\breve{\calT}:=\calT \times_{O_E}\breve{O}_E$ is a split maximal torus of $\calG_{\bOE}$. Let $\breve{T}:=T_{\bE}$.

		\item $\mathsf{T}:=\bcT_{\overline{k}_E}$  the special fiber of $\bcT$.
		
		\item $\calB \subseteq \calG$  a Borel containing $\calT$. Let $B:=\calB_E$.
		\item $U$ the unipotent radical of $B$.
		\item $B^-\subseteq G$,  the opposite Borel of $B$.
		\item $N:=N_{G}(T)$ denotes the normalizer of $T$ in $G$.
		
		\item $\bbX_\bullet:=\bbX_\bullet(T)$, respectively, $\ \bbX^\bullet:=\bbX^\bullet(T)$, the groups of (geometric) cocharacters, respectively, characters of $T$.
		
		\item $\bbX_\bullet^+,\ \bbX^{\bullet,+}$ denote the dominant cocharacters resp.\ dominant characters of $T$ with respect to $B$.
		
	\end{itemize}

	Next, let us introduce combinatorial notations.
	\begin{itemize}
		\item $\tilde{W}:=N(\bE)/\calT(\bOE)$ the extended affine Weyl group of $\calT$. 
		\item $\calB(G, E)$ the Bruhat--Tits building of $G$ over $E$, $\calB(\breve{G}, \breve{E})$ the Bruhat--Tits building of $\breve{G}$ over $\breve{E}$.
		\item $\mathcal{A}(G,S) \subseteq \calB(G, E)$ the apartment associated with $S$, identified with $\bbX_\bullet(S)_{\bbR}$ for pinned $G$. Note we also have $\calA(\bG, \bT) \subseteq \calB(\bG, \bE)$, and we have $\calA(G, S)=\calA(\bG, \breve{T})^{\Gal(\bE/E)}$.
		\item The integral model $\calG$ of $G$ yields a hyperspecial point $x_0 \in \mathcal{A}(G,S)$. Let $C \subseteq \mathcal{A}(G,S)$ be the alcove which contains $x_0$ and lies in the dominant Weyl chamber with respect to $B$.
		\item $\bbS\subseteq \tilde{W}$ the set of reflections at the walls of $C$, also called the set of simple reflections.
		\item $W_\aff\subseteq \tilde{W}$ the affine Weyl group, which is the Coxeter group generated by the simple reflections.
		\item $\Omega_{C}$ the stabilizer of $C$, which yields an isomorphism
		\begin{equation}
			\tilde{W} \cong W_\aff\rtimes \Omega_{C}.
		\end{equation}
		\item $\ell\colon \tilde{W} \to \bbN_{\geq 0}$ the length function on $\tilde{W}$, i.e., the unique function $\ell(-)\colon \tilde{W} \to \mathbb{N}_{\geq 0}$, which extends the length function $\ell(-)\colon W_\aff\to \mathbb{N}_{\geq 0}$ on the Coxeter group $W_\aff$, such that $\ell(\tau)=0$ for $\tau\in \Omega_{C}$.
		\item $\leq$ is the Bruhat order on $\tilde{W}$, i.e., $w\leq w^\prime$ for $w=(w_\aff,\tau), w^\prime=(w^\prime_\aff,\tau^\prime)\in \tilde{W} \cong W_\aff\rtimes \pi_1(G)_{\breve{\Gamma}}$ if and only if $\tau=\tau^\prime$ and $w_\aff\leq w^\prime_\aff$ for the Bruhat order $\leq$ on $W_\aff$ coming from its Coxeter structure.
		\item $W_{\mathrm{fin}}=N(\breve{E})/T(\breve{E})$  the finite Weyl group, which sits in an exact sequence
		\begin{equation}
			1\to {\bbX}_\bullet\to \tilde{W} \to W_{\mathrm{fin}}\to 1.
		\end{equation}
		\item $t_{\nu}\in \tilde{W}$ is the translation element associated with $\nu\in {\bbX}_\bullet$. 
	\end{itemize}

	Now let us define notations related to affine flag varieties and perfect geometry. 
	\begin{itemize}
		\item Let $(k, O)=(k_E, O_E)$ or $(\overline{k}_E, \bOE)$. We will mostly use the case $(k, O)=(\overline{k}_E, \bOE)$, but sometimes we need to notice that things are already defined over $(k_E, O_E)$.
		\item $\mathrm{Alg}^{\mathrm{perf}}_{k}$ the category of perfect $k$-algebras.
		\item For $R\in \mathrm{Alg}^{\mathrm{perf}}_{k}$, when the characteristic $\on{char} O =0$ we set $W_{O}(R):=O\otimes_{W(k)}W(R)$, the ring of $O$-Witt vectors. When $\on{char} O=p$, we set $W_O(R) = R \hat{\otimes} O \cong R[[\pi_E]]$, the formal power series ring over $R$.
		\item If $X/O$ is an affine scheme of finite type, then $L^+X\colon (\mathrm{Alg}^{\mathrm{perf}}_{k})\to (\mathrm{Sets}),\ R\mapsto X(W_{O}(R))$ is the positive loop functor for $X$.
		\item If $Z/O[1/\pi_E]$ is an affine scheme of finite type, then $LZ\colon (\mathrm{Alg}^{\mathrm{perf}}_{k})\to (\mathrm{Sets}),\ R\mapsto Z(W_{O}(R)[1/\pi_E])$ is the loop functor for $Z$.		
		\item $\calG_{C, O}$ the Iwahori group scheme over $O$ for $G$ associated with the alcove $C$. Note that $\calG_{C, O_E} \times_{O_E} \bOE  \cong \calG_{C, \bOE}$ (\cite[Section 4.1, Axiom UR4]{kaletha2023bruhat}), so we will denote both by $\calG_C$ when no confusion can arise. Keeping in mind the fixed alcove $C$, we will sometimes also denote $\calI:=\calG_C$. There is a natural map $r:\calI \to \calG$, such that the image of $r_k$ is the Borel subgroup $\calB_k$ of the special fiber $\calG_k$. The Iwahori group scheme $\calI$ can be obtained by dilating this Borel.
		\item $\calI^0:=\calG_C^0:=\calG_{C, O}^0$ the pro-unipotent radical of $\calI=\calG_C=\calG_{C, O}$, called the pro-$p$ Iwahori group scheme for $G$ associated with the alcove $C$. 
		\item $I:=L^+\calI$, $I^0:=L^+\calI^0$.
	\end{itemize}

	Next, let us introduce some cohomological notations.
	\begin{itemize}
		\item Let $\ell$ be a prime number which does not divide $p|\pi_0Z(G)|$.
		\item $\Lambda=\overline{\bbF}_{\ell}$.
		\item $\scrD_\et(-):=\scrD_\et(-,\Lambda)$ denotes the (left-completed) $\infty$-derived category of ``\'etale sheaves of $\Lambda$-modules'' on a small v-stack on either perfect schemes or perfectoid spaces of characteristic $p$, compare with \cite[Definition 1.7]{scholze2017ecd}. 
		\item Let $\Hk^{\loc}_{\calI, k}$ be the local Hecke stack over $k$ for the group scheme $\calI$. $\scrD_\et(\Hk^{\loc}_{\calI, k}):=\scrD_\et(\Hk^{\loc}_{\calI, k},\Lambda)^{\mathrm{bd}}$ denotes the $\infty$-category of ``\'etale sheaves of $\Lambda$-modules'' on $\Hk^{\loc}_{\calI, k}$, whose support is a finite subset of the underlying topological space of $\Hk^{\loc}_{\calI, k}$.
		\item Let $\scrD_{\on{cons}}(\Hk^{\loc}_{\calI, k}) \subseteq \scrD_\et(\Hk^{\loc}_{\calI, k})$ be the constructible complexes.
		\item Let $Y$ be a small v-stack with an action of a group object $H$. Let $\pi_{Y}: Y \to Y/H$ be the quotient map. For an object $A \in \scrD_\et(Y)$ which is equivariant with respect to the $H$-action, we denote $\on{Des}_{\pi_Y}(A) \in \scrD_\et(Y/H)$ the unique object (\cite[Section 6.8]{achar2021perverse}) such that $$(\pi_Y)^*\on{Des}_{\pi_Y}(A) \cong A.$$
	\end{itemize}
	
	We collect notation for the ``coherent'' side.

	\begin{itemize}
		\item $\widehat{G}$ denotes the dual group of $G$ over $\Lambda$,
		\item $\widehat{T}\subseteq \widehat{G}$ denotes the dual torus to $T$, and we identify $\bbX^\bullet(\widehat{T})\cong \bbX_\bullet(T)$,
		\item $\widehat{T}\subseteq \widehat{B}\subseteq \widehat{G}$ denotes the Borel subgroup for which the  dominant characters identifying with $\bbX_\bullet(T)^+$.
	\end{itemize}
	
	We fix a trivialization of the group of roots of unity $\overline{k}_E^* \cong (\bbQ/\bbZ)_{p'}$ where $p'$ denotes the prime-to-$p$ part. This induces a trivialization of the Tate twist $(1) \cong \id$.

	\section{Basic notions}\label{Section_basic}
	
	\subsection{Finite reductive groups and Deligne-Lusztig theory}\label{Subsection_finite_red}
	In this subsection, we review certain aspects of Deligne-Lusztig theory. The main references are  \cite[Section 4.2]{eteve2023monodromiques} and \cite{digne2020representations}. Let $M$ be a reductive group over $\overline{\bbF}_q$ equipped with a Frobenius endomorphism $F: M \to M$ coming from some $\bbF_q$-structure. Fix an $F$-stable Borel pair $B_M=TU_M$, and let $W_M=N_M(T)/T$ be the corresponding Weyl group.
	
	For every $w \in W_M$, we can define the Deligne-Lusztig induction and restriction functors
	\begin{equation}
		R_w: \D^b(\Rep_{\Lambda}(T^{wF})) \to \D^b(\Rep_{\Lambda}(M^F)), \quad A \mapsto A \otimes_{T^{wF}}R_w^0,
	\end{equation}
	\begin{equation}
		{^*R}_w: \D^b(\Rep_{\Lambda}(M^F)) \to \D^b(\Rep_{\Lambda}(T^{wF})), \quad B \mapsto \on{RHom}_{M^F}(R_w^0, B),
	\end{equation}
	where $R_w^0$ is the compactly supported cohomology of the Deligne-Lusztig variety associated to $w$.
	
	\begin{theorem}[\cite{deligne1976representations}, \cite{bonnafe03derived}]
		The collection of functors $\{{^*}R_w\}_{w \in W_M}$ is jointly conservative.
	\end{theorem}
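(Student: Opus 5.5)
Since $\Lambda=\overline{\bbF}_{\ell}$ may have characteristic dividing $|M^F|$, I will argue on derived categories and not with characters. Concretely, I will show that if $B\in \D^b(\Rep_{\Lambda}(M^F))$ satisfies ${^*}R_w B=0$ for all $w\in W_M$, then $B=0$. Each ${^*}R_w$ is exact, so this is the same as joint conservativity.

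The first step is to use adjunction: $R_w$ is left adjoint to ${^*}R_w$. Hence $\on{RHom}_{M^F}(R_w(A),B)\cong\on{RHom}_{T^{wF}}(A,{^*}R_wB)$, and this vanishes for every $A$ and every $w$. So $B$ is right-orthogonal to the thick subcategory $\calC$ generated by all $R_w(A)$, and it suffices to show that $\calC$ contains a generator of $\D^b(\Rep_{\Lambda}(M^F))$. The natural target is the regular representation $\Lambda[M^F]$, since $\on{RHom}(\Lambda[M^F],B)=B$.

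The second step is to get $\Lambda[M^F]$ into $\calC$ using the geometry of Deligne--Lusztig varieties, following Bonnafé--Rouquier. I would use the stratification of $(M/U_M)\times (M/U_M)$ by Bruhat cells, or equivalently the varieties $Y(w)$ with their $M^F\times T^{wF}$-actions. Take $R_w(\Lambda[T^{wF}])=R\Gamma_c(Y(w),\Lambda)$. The complexes $R\Gamma_c(Y(w))$ for $w\in W_M$ then assemble, through the excision triangles of a suitable $M^F$-equivariant stratified variety, into a complex built from $\Lambda[M^F/U_M^F]$. That variety is the fibre of the Lang map over the Borel, or the union of $Y(w)$ over a Bruhat stratification of $M/B\times M/B$. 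This shows that every $\Lambda[M^F/U^F_M]$-type permutation module lies in $\calC$. The same argument applied to the Levi subgroups of $F$-stable parabolics, combined with transitivity of Harish-Chandra/Deligne--Lusztig induction, gives all $\Lambda[M^F/U_P^F]$ for $F$-stable parabolics $P$.

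The main obstacle is passing from these permutation modules to $\Lambda[M^F]$ itself, because the unipotent radicals $U_P^F$ are $p$-groups and the $\ell$-Sylow subgroup is not controlled by them. My plan here is to invoke the Deligne--Lusztig decomposition of the identity: over $\overline{\bbQ}_{\ell}$, $\frac{1}{|W|}\sum_w R_w(\Reg_{T^{wF}})$ recovers a multiple of the regular representation in the Grothendieck group. The $\ell$-modular refinement I would try is Bonnafé--Rouquier's theorem that, for each $w$, $R\Gamma_c(Y(w),\Lambda)$ is a perfect complex and these complexes generate the perfect derived category $\on{Perf}(\Lambda[M^F])$. I would attempt to prove that via the block decomposition by semisimple $\ell'$-classes $s$ (Broué--Michel idempotents $e_s$), followed by Broué's and Bonnafé--Rouquier's Morita/derived reduction from $e_s\Lambda[M^F]$ to a Levi in which $s$ is central. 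For central $s$ one twists by a linear character and reduces to the unipotent block, and there the Deligne--Lusztig complexes generate by Deligne--Lusztig's original disjointness argument lifted via projective covers.

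Once $\on{Perf}(\Lambda[M^F])\subseteq\calC$, we get $B=\on{RHom}(\Lambda[M^F],B)=0$, which proves joint conservativity.
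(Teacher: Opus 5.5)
Your first step is the right formal reduction. Since $R\Gamma_c(Y(w),\Lambda)=R_w(\Lambda[T^{wF}])$ and ${}^*R_wB=\mathrm{RHom}_{M^F}(R\Gamma_c(Y(w),\Lambda),B)$, joint conservativity follows once $\Lambda[M^F]$ lies in the thick closure of the complexes $R\Gamma_c(Y(w),\Lambda)$. That generation statement is precisely the Bonnaf\'e--Rouquier theorem the paper cites; the paper gives no argument of its own. If you simply invoke it, you are done, and your second step is unnecessary. Indeed $R_1(\Lambda[T^F])=\Lambda[M^F/U_M^F]$ directly, and for proper $P$ the modules $\Lambda[M^F/U_P^F]$ are orthogonal to cuspidal modules, so they cannot help. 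The trouble is your proposed proof of the generation theorem, which does not work as written.

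\textbf{The reduction to central $s$ is not available in general.} A Morita (or derived) equivalence of Bonnaf\'e--Rouquier type moves $e_s\Lambda[M^F]$ to $e_s\Lambda[L^F]$ only for an $F$-stable Levi $L^*\supseteq C_{M^*}(s)$. The element $s$ is central in such an $L^*$ only when $C_{M^*}(s)$ is itself a Levi subgroup. For isolated non-central $s$ there is no proper $L^*$ at all. For example, take $M=Sp_4$, $\ell$ odd, and $s\in SO_5$ an involution with centralizer $S(O_4\times O_1)$. Its identity component $SO_4$ has full semisimple rank, so it lies in no proper Levi. Such series can neither be moved to a smaller Levi nor twisted to the unipotent case, so your induction has no base for them. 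Moreover, in Bonnaf\'e--Rouquier the progenerator property of the Jordan-decomposition bimodule is itself deduced from their generation theorem. Using that equivalence here is therefore circular unless you supply an independent argument.

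\textbf{The unipotent case is asserted, not proved.} This is where the real difficulty lies, and ``Deligne--Lusztig's disjointness argument lifted via projective covers'' does not handle it. The disjointness theorem concerns orthogonality between different series, not generation. The characteristic-zero fact that every ordinary character in $\mathcal{E}(M,t)$ occurs in some $R_T^\theta$ only shows that the characters of projective indecomposables are rational combinations of Deligne--Lusztig characters. Membership in a thick subcategory of $\mathrm{Perf}(\Lambda[M^F])$ is not detected on characters or on $K_0$. So this gives no way to place $e_1\Lambda[M^F]$ in the thick closure of the $R\Gamma_c(Y(w),\Lambda)$.

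As it stands, the key generation statement is assumed rather than established. Either cite it, as the paper does, or give a genuinely derived-level argument that does not pass through the Jordan decomposition.
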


	Let $\widehat{T}$ be the torus dual to $T$ defined over $\Lambda$. We recall the construction of Lusztig series. We denote by $\widehat{F}$ the isogeny dual to the Frobenius $F$. Note that we have an isomorphism (depending on the fixed trivialization of roots of unity of $\overline{\bbF}_q$)
	\begin{equation}
		\Lambda[T^{wF}]=\calO(\widehat{T}^{w\widehat{F}})
	\end{equation}
	as both sides are isomorphic to $\Lambda[X_*(T)/(wF-\on{id})X_*(T)]$. We therefore have a bijection
	\begin{equation}
		\Hom(T^{wF}, \Lambda^*)=\widehat{T}^{w\widehat{F}}(\Lambda).
	\end{equation}
	Let $\theta$ be a character of $T^{wF}$ and let $\pi$ be an irreducible representation of $M^F$. We say that $\pi$ lie in the Lusztig series of $(w, \theta)$ if $\pi$ is a subquotient of a cohomology group of $R_w(\theta)$. 
	
	Consider the morphism $\xi: \widehat{T} \to \widehat{T}\sslash W_M$. We still denote by $\widehat{F}$ the morphism induced by $\widehat{F}$ on $\widehat{T}\sslash W_M$.
	
	\begin{theorem}[\cite{deligne1976representations}, \cite{bonnafe03derived}]
		\begin{enumerate}
			\item Let $\pi$ be an irreducible representation of $M^F$. Suppose that $\pi$ belongs to the Lusztig series $(w, \theta)$ and $(w', \theta')$, then $\xi(\theta)=\xi(\theta')$. Hence we have a well-defined map
			\begin{equation}
				\on{LS}:	\Irr_{\Lambda} M^F \to (\widehat{T}\sslash W_M)^{\widehat{F}}(\Lambda).
			\end{equation}
			\item There is a complete collection of central orthogonal idempotents $e_s \in \Lambda[M^F]$ for $s \in (\widehat{T}\sslash W_M)^{\widehat{F}}(\Lambda)$ such that for all $\pi \in \Irr_{\Lambda}M^F$, we have $e_s\pi=\pi$ if and only if $\on{LS}(\pi)=s$.
		\end{enumerate}
	\end{theorem}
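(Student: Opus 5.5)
The plan is to deduce both statements from the characteristic-zero theory by reduction modulo $\ell$. The key input is the $\ell$-integrality of the Broué--Michel idempotents. Fix a finite extension $K/\bbQ_{\ell}$ with ring of integers $O$ and residue field inside $\Lambda$, large enough that all characters of all $T^{wF}$ and of $M^F$ are defined over $K$. Write $\widetilde{\Lambda}=\overline{\bbQ}_{\ell}$. The fixed trivialization of roots of unity gives $\Hom(T^{wF},\widetilde{\Lambda}^*)=\widehat{T}^{w\widehat{F}}(\widetilde{\Lambda})$ compatibly with the version over $\Lambda$. Under this identification, $(w,\tilde\theta)$ and $(w',\tilde\theta')$ are geometrically conjugate in the sense of Deligne--Lusztig if and only if $\xi(\tilde\theta)=\xi(\tilde\theta')$ in $(\widehat{T}\sslash W_M)^{\widehat F}(\widetilde{\Lambda})$. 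I would check this by the standard translation between geometric conjugacy of pairs $(T_w,\theta)$ and $F$-stable semisimple classes in the dual group.

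\emph{Step 1 (characteristic zero).} By the Deligne--Lusztig disjointness theorem, which follows from the Mackey/orthogonality formula for $\langle R_w(\tilde\theta),R_{w'}(\tilde\theta')\rangle$, virtual characters attached to non-geometrically-conjugate pairs have no common constituent. Every irreducible $\widetilde{\Lambda}$-representation occurs in some $R_w(\tilde\theta)$, since the $R_w$'s span the class functions up to the regular character. This gives a partition $\Irr_{\widetilde{\Lambda}}M^F=\bigsqcup_{t}\calE(M^F,t)$ and corresponding central idempotents $e_t^{\widetilde{\Lambda}}\in\widetilde{\Lambda}[M^F]$, where $t$ runs over $(\widehat{T}\sslash W_M)^{\widehat F}(\widetilde{\Lambda})$.

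\emph{Step 2 (integrality).} For $s\in(\widehat{T}\sslash W_M)^{\widehat F}(\Lambda)$, lift $s$ to its Teichmüller ($\ell'$-order) class $\tilde s$. Set
\[
e_s:=\sum_{t:\ t_{\ell'}=\tilde s} e_t^{\widetilde{\Lambda}},
\]
where $t_{\ell'}$ denotes the $\ell'$-part of the semisimple class $t$. By Broué--Michel, $e_s$ has coefficients in $O$. Their argument expresses $e_s$ through the characters of $R_w(\tilde\theta)$ summed over all $\tilde\theta$ with fixed $\ell'$-part. It then uses the character formula for Deligne--Lusztig characters together with the fact that such a sum of characters of $T^{wF}$ is $\ell$-integral, being a multiple of the characteristic function of the $\ell$-elements twisted by a character. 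Reducing modulo the maximal ideal gives central orthogonal idempotents $e_s\in\Lambda[M^F]$ with $\sum_s e_s=1$. This is the step I expect to be the main obstacle: the centrality and orthogonality are formal, but the $O$-integrality genuinely requires the Broué--Michel (or Bonnafé--Rouquier) analysis. I would simply import it rather than reprove it.

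\emph{Step 3 (compatibility with Lusztig series mod $\ell$).} Let $\theta$ be a character of $T^{wF}$ with values in $\Lambda$, and let $\tilde\theta$ be its Teichmüller lift. Then $\xi(\tilde\theta)=\tilde s$ for $s=\xi(\theta)$, and $R_w(\theta)$ is the reduction $R_w^0(O)\otimes_{O[T^{wF}]}\tilde\theta_O\otimes^{\mathbf L}_O\Lambda$ of a perfect complex. By the universal coefficient sequence, every subquotient of $H^i(R_w(\theta))$ is a composition factor of the reduction of an $O$-lattice in some $H^j(R_w(\tilde\theta))$. The rational constituents of $H^j(R_w(\tilde\theta))$ lie in $\calE(M^F,\tilde s)$, so $e_s$ acts as the identity on $R_w(\theta)$. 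Consequently, if an irreducible $\pi$ lies in the Lusztig series of $(w,\theta)$, then $e_s\pi=\pi$ with $s=\xi(\theta)$.

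Since the $e_s$ are orthogonal and $\pi$ is irreducible, exactly one $e_s$ acts nontrivially, and it acts as the identity. This proves (1): membership in the series of $(w,\theta)$ and $(w',\theta')$ forces $\xi(\theta)=\xi(\theta')$. It remains to see that every $\pi$ lies in some series. This follows from joint conservativity of $\{{^*}R_w\}$ (the preceding theorem): some ${^*}R_w(\pi)\neq 0$ gives a nonzero map from $R_w^0$ to $\pi$, hence a character $\theta$ of $T^{wF}$ with $\pi$ a quotient of some $H^i(R_w(\theta))$. Then (2) is immediate from Step 3 together with the orthogonality of the $e_s$.
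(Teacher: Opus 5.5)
The paper gives no argument for this statement; it imports it from Deligne--Lusztig and Bonnaf\'e--Rouquier. Your Steps 1 and 2 are legitimate citations. One caveat on Step 1: Deligne--Lusztig disjointness of constituents is proved geometrically, via $M^F\backslash(X\times X')$ and connected group actions, not from the orthogonality formula, which only controls virtual characters.

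The genuine gap is in Step 3. Put $C:=R_w^0(O)\otimes^{\mathbf L}_{O[T^{wF}]}\tilde\theta_O$, a perfect $O$-complex with $C\otimes^{\mathbf L}_O\Lambda\simeq R_w(\theta)$. The universal coefficient sequence
\[
0\to H^i(C)\otimes_O\Lambda\to H^i(R_w(\theta))\to \on{Tor}_1^O(H^{i+1}(C),\Lambda)\to 0
\]
shows that $H^i(R_w(\theta))$ also receives the $\ell$-torsion of $H^i(C)$ and $H^{i+1}(C)$. Only the torsion-free quotient of $H^i(C)$ is a lattice in $H^i(C\otimes_O K)$, and composition factors coming from torsion are invisible in characteristic zero. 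So your claim that every subquotient of $H^i(R_w(\theta))$ is a composition factor of a reduced lattice holds only if $H^*(C)$ is torsion-free. Nothing in your argument provides that.

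In terms of idempotents: Steps 1--2 show only that $(1-e_s)C$ is rationally acyclic, i.e.\ has torsion cohomology. This does not force $(1-e_s)C\otimes^{\mathbf L}_O\Lambda=0$. For example, $P\xrightarrow{\ell}P$, with $P$ a projective $O[M^F]$-module in a block orthogonal to $e_s$, is perfect and rationally acyclic but nonzero mod $\ell$. Integrality of $e_s$ plus characteristic-zero disjointness only shows that the class of $(1-e_s)R_w(\theta)$ in the Grothendieck group vanishes, not that the complex vanishes.

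So you need a genuinely modular input: the integral statement $e_s\,R\Gamma_c(Y(w),O)f_\theta=R\Gamma_c(Y(w),O)f_\theta$, where $f_\theta\in O[T^{wF}]$ is the block idempotent containing $\tilde\theta$. One way to get it is to run the Deligne--Lusztig disjointness argument with $\Lambda$-coefficients on $M^F\backslash(Y(w)\times Y(w'))$; this works because connected groups act trivially on cohomology with any coefficients. That gives orthogonality of the pieces of $R\Gamma_c(Y(w),\Lambda)$ cut out by characters with different images under $\xi$, which you then combine with the generation (joint conservativity) result. This modular orthogonality is the kind of input the cited Bonnaf\'e--Rouquier reference provides.

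Separately, your last step misuses conservativity. ${}^*R_w(\pi)\neq 0$ only gives a nonzero morphism $R_w(\theta)\to\pi[n]$ in the derived category. That is a nonzero $\on{Ext}$ from some cohomology group to $\pi$, not a proof that $\pi$ is a subquotient. Apply conservativity to the injective hull $I_\pi$ instead: $H^n\on{RHom}(R_w^0,I_\pi)=\Hom(H^{-n}(R_w^0),I_\pi)$ is nonzero exactly when $\pi$ is a composition factor of $H^{-n}(R_w^0)$. Filtering $\Lambda[T^{wF}]$ by characters then places $\pi$ in some series $(w,\theta)$.
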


	\subsection{Depth-zero representations of $p$-adic groups}\label{Subsection_padic group}
	In this subsection, we recall the definition of depth-zero representations of a $p$-adic group. We also record Lanard's result (Theorem \ref{Thm_lanard}) on the decomposition of the category of depth-zero representations.
	The main references are \cite{lanard18} and \cite[Section 4.3]{eteve2023monodromiques}.
	
	Let $\calH$ be a connected reductive group over $O_E$ with generic fiber $H$. The group $H$ splits over an unramified extension. We introduce the following notations.
	\begin{enumerate}
		\item Let $\calT \subseteq \calH$ be a maximally $O_E$-split maximal torus. We denote its generic fiber by $T \subseteq H$. Let $X^*(T)$ and $X_*(T)$ be the groups of characters and cocharacters defined over $E^{\on{un}}$. The group $\Gal(\overline{k}_E/k_E)$ naturally acts on them.
		\item Write $\calB(H)$ for the reduced Bruhat-Tits building of $H(E)$ and  $\calB(H_{E^{\on{un}}})$ for that of $H(E^{\on{un}})$. Let $A$ be the apartment corresponding to $T$. Let $\tilde{A}$ be the apartment corresponding to $T_{E^{\on{un}}}$.
		\item For $x \in \calB(H)$, let $\calH_x$ be the corresponding parahoric group scheme over $O_E$. Let $M_x$ be the reductive quotient of $\calH_{x, k_E}$, the special fiber of $\calH_x$.
	\end{enumerate}
	
	The integral model $\calH$ of $H$ yields a hyperspecial point $x_0 \in A$. We choose $C \subseteq A$ a chamber containing $x_0$ in its closure. We denote by $\tilde{C} \subseteq \tilde{A}$ the chamber containing $C$.
	
	The group $\calH_x(O_E)$ comes equipped with its   Moy--Prasad filtration denoted by $\calH_x(O_E)_r$ for $r \in \bbR_{\geq 0}$. We have a canonical identification $\calH_x(O_E)/\calH_x(O_E)_{> 0}=M_x(k_E)$.
	
	Let $x \in \calB(H)$. The choice of $\calT$ and $C$ determines a Borel pair $B_M=T_MU_M$ of $M_x$ over $k_E$. Let $W_M$ be the finite Weyl group of $T_{M, \overline{k}_E}$.
	
	Let $\tilde{W}$ be the extended affine Weyl group of $H$ and $W_{\on{aff}}$ the affine Weyl group, i.e., the group generated by reflections along all hyperplanes in $\tilde{A}$. The Weyl group $W_M$ of $M_x$ is canonically identified with the subgroup of $\tilde{W}$ generated by reflections along hyperplanes in $\tilde{A}$ containing the image of the point $x$. Consider the composition
	\begin{equation}
		W_M \subseteq \tilde{W} \to W.
	\end{equation}
	Since the kernel of $\tilde{W} \to W$ is torsion free, the morphism $W_M \to W$ is injective.
	
	There are isomorphisms 
	\begin{equation}
		X_*(T_M) \cong X_*(\calT_{O_{E^{\on{un}}}}) \cong X_*(T).
	\end{equation}
	The composite is equivariant for the actions of $W_M$ and $\Gal(\overline{k}_E/k_E)$. Subsequently, taking group algebras over $\Lambda$, taking quotient under $W_M$ and $W$, and taking Frobenius fixed points we get a well-defined transfer morphism
	\begin{equation}
		\xi_x: (\widehat{T}_M \sslash W_M)^{\hat{F}} \to (\widehat{T} \sslash W)^{\hat{F}},
	\end{equation}
	where $\widehat{(-)}$ denotes the dual torus over $\Lambda$.
	
	\begin{definition}
		Let $\pi \in \Irr_{\Lambda}H(E)$. We say that $\pi$ has depth $r$ if $r$ is the minimal real number such that  
		$$\pi=\cup_{x \in \calB(H)}\pi^{\calH_x(O_E)_{>r}}.$$
	\end{definition}
	
	\begin{theorem}[\cite{moy1994unrefined}]
		Every representation has a well-defined depth.
	\end{theorem}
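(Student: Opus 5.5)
This statement is the theorem of Moy--Prasad that every irreducible smooth representation $\pi$ of $H(E)$ has a well-defined depth. That is, the set
$$R(\pi):=\{r \in \bbR_{\geq 0} : \pi=\textstyle\bigcup_{x \in \calB(H)}\pi^{\calH_x(O_E)_{>r}}\}$$
is nonempty and has a minimum, which moreover lies in a discrete subset of $\bbQ_{\geq 0}$.

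\emph{Step 1: nonemptiness.} Since $\pi$ is smooth, any nonzero vector $v$ is fixed by some open compact subgroup $K$. Fix $x \in \calB(H)$. The subgroups $\calH_x(O_E)_{>r}$ form a neighborhood basis of the identity, so $v \in \pi^{\calH_x(O_E)_{>r}}$ for $r$ large. Let $\pi'$ be the span of $\bigcup_{y}\pi^{\calH_y(O_E)_{>r}}$. It is $H(E)$-stable, because $g\calH_y(O_E)_{>r}g^{-1}=\calH_{gy}(O_E)_{>r}$. It is nonzero, so by irreducibility $\pi' = \pi$. To conclude $r \in R(\pi)$ I need the union itself, not just its span, to be everything. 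For this I use that $\pi$ is admissible (Jacquet) and finitely generated. An alternative is to replace the union by its span in the definition. With this convention, $R(\pi)$ is nonempty and upward closed, since $\calH_x(O_E)_{>r'} \subseteq \calH_x(O_E)_{>r}$ for $r' \geq r$.

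\emph{Step 2: existence of a minimum.} The filtration jumps: the set of $r$ at which $\calH_x(O_E)_{>r}$ changes lies in a discrete set $\frac{1}{e}\bbZ$ (after unramified base change). Here $e$ depends only on $H$, uniformly in $x$ on a fixed rational structure, and the map $r \mapsto \calH_x(O_E)_{>r}$ is right-continuous. Hence $R(\pi)$ is a discrete upward-closed set, closed under decreasing limits, and $\inf R(\pi)$ is attained. The question of whether $\inf R(\pi)$ is attained is then combinatorial. Because the building is infinite and $x$ varies continuously, one cannot simply cite finitely many jumps. Instead, using the facet structure, I restrict to $x$ in the closure of a fixed chamber $C$; this is allowed by $H(E)$-conjugacy, as in Step 1. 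Then I use that on a fixed closed chamber the jump set is locally finite.

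\emph{Step 3: main obstacle.} The genuinely hard part of Moy--Prasad is showing that the minimal $r$ is witnessed by an \emph{unrefined minimal K-type} $(\calH_x(O_E)_r,\chi)$. It also requires that any two such types are associate, giving the depth intrinsic meaning. For mere well-definedness as stated, this refinement is unnecessary. Step 2 suffices once the discreteness of jumps is established. I would cite \cite{moy1994unrefined} for the optimal-point argument, which shows that jumps occur only at rational values with bounded denominator.
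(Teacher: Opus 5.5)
The paper gives no proof of its own here; it only cites Moy--Prasad. Your argument has a genuine gap in Step 2. For $x$ in the apartment, the jumps of $r\mapsto\calH_x(O_E)_r$ occur at the values $\psi(x)$ of the affine roots $\psi$, together with the jumps of the torus filtration, which do not depend on $x$. As $x$ moves in $\overline{C}$, these values move continuously. Already for $\mathrm{SL}_2$ the jumps $\pm\alpha(x)+n$ sweep out all of $\bbR_{\geq 0}$ as $x$ runs over the closed chamber. For irrational $x$ the jumps are irrational, so there is no uniform denominator $e$.

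Consequently:
\begin{itemize}
\item The claim that the jump set on a closed chamber is locally finite is false.
\item ``$R(\pi)$ is discrete'' has no content, since $R(\pi)$ is a half-line.
\item ``Closed under decreasing limits'' is precisely what has to be proved.
\end{itemize}
Concretely, put $r_x:=\min\{r : \pi^{\calH_x(O_E)_{>r}}\neq 0\}$. This minimum exists for each fixed $x$, because the jumps at a single point are discrete. With irreducibility and your span convention, $R(\pi)=\bigcup_x[r_x,\infty)$. So the theorem is the assertion that $\inf_x r_x$ is attained, and nothing in Step 2 shows this. Your last sentence also misstates what optimal points do. Moy--Prasad do not show that jumps are rational. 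They show that the infimum is already achieved on a finite set of optimal (rational) points of $\overline{C}$, which gives attainment and rationality at once.

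A self-contained way to close the gap, for attainment but not rationality, is compactness. For $x^*\in\overline{C}$ and fixed $r$, the group $\calH_{x^*}(O_E)_{>r}$ is generated by the torus piece $T(E)_{>r}$ and finitely many affine root subgroups $U_\psi$ with $\psi(x^*)>r$. These strict inequalities persist for $x$ near $x^*$, so $\calH_{x^*}(O_E)_{>r}\subseteq\calH_x(O_E)_{>r}$ there. Hence $\{x\in\overline{C}: r_x\leq r\}$ is closed, and $x\mapsto r_x$ is lower semicontinuous on the compact set $\overline{C}$, so it attains its minimum. As you noted, $H(E)$-conjugacy then reduces $\calB(H)$ to $\overline{C}$.

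Finally, in Step 1, admissibility and finite generation do not turn a union of subspaces into a subspace, so they do not give the literal union condition. The span reading you propose, equivalently $\pi^{\calH_x(O_E)_{>r}}\neq0$ for some $x$, is the standard one and is what makes Step 1 work.
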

	
	\begin{theorem}
		$\on{Rep}_{\Lambda}H(E) \cong \bigoplus_r \on{Rep}_{\Lambda}^rH(E).$
	\end{theorem}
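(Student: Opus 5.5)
The plan is to deduce the decomposition $\Rep_{\Lambda}H(E) \cong \bigoplus_r \Rep_{\Lambda}^rH(E)$ from the Moy--Prasad theorem. First I would show that the two natural ingredients behave well: every object of $\Rep_{\Lambda}H(E)$ should have a depth filtration, and there should be no maps or extensions between representations of different depths. Concretely, for a smooth representation $V$ and each $r$, I define $V_r$ as the subspace generated by all $V^{\calH_x(O_E)_{r+}}$ for $x \in \calB(H)$ and $r$ of the discrete set of possible depths. These $V_r$ are $H(E)$-stable, since $g\calH_x(O_E)_{r+}g^{-1}=\calH_{gx}(O_E)_{r+}$.

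The key step is to construct idempotent projectors. Since $\ell \neq p$ and the groups $\calH_x(O_E)_{r+}$ are pro-$p$, the averaging idempotents $e_{x,r+}$ exist in the Hecke algebra with $\Lambda$-coefficients. Following Bernstein and Moy--Prasad, I would take a facet $x$ and the relevant $r$, and use the combinatorics of the building to show the following: if $V^{\calH_x(O_E)_{r+}}\neq 0$ but $V$ is generated by vectors of smaller depth, then one gets an unrefined minimal $K$-type of depth $<r$ contained in the depth-$r$ part. This contradicts the uniqueness of the depth (the Moy--Prasad theorem above), applied to irreducible subquotients.

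This yields $V_r \cap \sum_{r'\neq r}V_{r'}=0$. To prove the decomposition, I would then show that $V=\bigoplus_r V_r^{\mathrm{exact}}$, where $V_r^{\mathrm{exact}}$ is the part of exact depth $r$, carved out by a system of commuting idempotents in the Bernstein center. The construction is:
$$e_r=\sum_{x}\pm e_{x,r+}$$
over facets of a chamber, with the sign given by the Euler–Poincaré alternation, in the style of Bezrukavnikov--Kazhdan--Varshavsky / Meyer--Solleveld. These are well-defined since the sums are locally finite on each vector.

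The main obstacle is showing that these alternating sums define central idempotents with coefficients in $\overline{\bbF}_{\ell}$, i.e., that the relevant resolutions are exact mod $\ell$. I would try to establish this via the Schneider--Stuhler/Meyer--Solleveld contractibility of the building, which works for $\ell\neq p$. Exactness then implies the categorical splitting, since there are then no Ext groups between different depths: a central idempotent acts by $1$ on one piece and by $0$ on the other.
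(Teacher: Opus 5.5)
The paper does not prove this statement itself; it only refers to \cite[Appendix A]{dat2009finitude}. Your eventual route, depth projectors built from the Moy--Prasad idempotents $e_{x,r+}$ together with the Schneider--Stuhler/Meyer--Solleveld machinery on the building, is the right kind of argument. As written, however, it has a genuine gap, and several intermediate steps are wrong.

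\textbf{Wrong intermediate steps.} With $V_r$ defined as the span of the spaces $V^{\calH_x(O_E)_{r+}}$, the $V_r$ form an increasing filtration, since $\calH_x(O_E)_{r+}\subseteq\calH_x(O_E)_{r'+}$ for $r'<r$. Hence $V_r\cap\sum_{r'\neq r}V_{r'}=V_r$, not $0$.

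Your second paragraph cannot produce a splitting. The Moy--Prasad theorem is about irreducible representations. A non-split extension of an irreducible of depth $r_1$ by one of depth $r_2\neq r_1$ has all its irreducible subquotients of well-defined depth, so ``uniqueness of depth applied to subquotients'' gives no contradiction. Excluding such extensions, and showing that a subrepresentation of a representation generated by $\calH_x(O_E)_{r+}$-invariants is again so generated, is exactly the content of the theorem.

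The sum $\sum_\sigma\pm e_{\sigma,r+}$ over the facets of one chamber is not conjugation-invariant, hence not central. The Bezrukavnikov--Kazhdan--Varshavsky projector sums over all facets of the building instead. That sum is \emph{not} locally finite on vectors: on the trivial representation every $e_{\sigma,r+}$ acts by $1$. It only makes sense via stabilization of partial sums over finite convex subcomplexes, which again needs the input below.

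\textbf{The missing key lemma.} It is not the contractibility of the building. That is characteristic-free and, for an arbitrary equivariant family of idempotents, gives nothing. For instance, over $\mathbb{C}$ and for the full parahorics $\calH_x(O_E)$, the subcategory generated by invariants is not even closed under subobjects (e.g.\ the Steinberg representation inside an unramified principal series of $\mathrm{SL}_2$).

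What you need is the consistency of the Moy--Prasad idempotents: for $y$ on the geodesic $[x,z]$,
$\calH_y(O_E)_{r+}=\bigl(\calH_y(O_E)_{r+}\cap\calH_x(O_E)_{r+}\bigr)\bigl(\calH_y(O_E)_{r+}\cap\calH_z(O_E)_{r+}\bigr)$, hence $e_{x,r+}e_{y,r+}e_{z,r+}=e_{x,r+}e_{z,r+}$. This comes from the Iwahori factorization of $\calH_y(O_E)_{r+}$ with respect to the parabolic attached to the direction $z-x$, in an apartment containing $x$ and $z$. Writing $y=x+t(z-x)$, an affine root $\psi$ satisfies $\psi(x)\ge\psi(y)$ or $\psi(z)\ge\psi(y)$ according to the sign of its gradient on $z-x$. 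The remaining axioms are easy: $H(E)$-equivariance, and commutation for points of a common closed chamber.

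Once consistency is established, apply the Meyer--Solleveld theorem, whose proof works over any coefficient ring in which $p$ is invertible. It shows that representations generated by their $e_{x,r+}$-invariants form a Serre subcategory which is a direct factor. The complementary factor consists of those $V$ with $e_{x,r+}V=0$ for all $x$. Take successive differences over the discrete set of depths, and use that every smooth vector is fixed by some $\calH_x(O_E)_{r+}$; this yields the decomposition.
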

	
	We refer to \cite[Appendix A]{dat2009finitude} for a proof of this theorem over any coefficient ring.
	
	\begin{definition}
		\begin{enumerate}
			\item An unrefined minimal depth-zero type is a pair $(x, \tau)$ where $x$ is a point in $\calB(H)$ and $\tau$ is an irreducible supercuspidal representation of $M_x(k_E)$.
			\item Let $\pi$ be an irreducible depth-zero representation of $H(E)$. A depth-zero type for $\pi$ is an unrefined minimal depth-zero type $(x, \tau)$ such that there is a nonzero map $\cInd_{{\calH}_x(O_E)}^{H(E)}\tau \to \pi$.
		\end{enumerate}
	\end{definition}
	
	\begin{theorem}[\cite{moy1994unrefined}]
		Every irreducible depth-zero representation admits a depth-zero type.
	\end{theorem}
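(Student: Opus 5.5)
The plan is to argue with invariants under pro-$p$ groups and Harish-Chandra restriction on the finite reductive quotients $M_x(k_E)$. Everything is exact because $p$ is invertible in $\Lambda$. The proof is an induction on the dimension of facets of $\calB(H)$.

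\emph{Step 1: a first point.} Since $\pi$ has depth zero, there is a point $x\in\calB(H)$ with $\pi^{\calH_x(O_E)_{>0}}\neq 0$. We may replace $x$ by any point of its facet, since the groups $\calH_x(O_E)_{>0}$ depend only on the facet. The space $V_x:=\pi^{\calH_x(O_E)_{>0}}$ is a smooth representation of $\calH_x(O_E)/\calH_x(O_E)_{>0}=M_x(k_E)$. Irreducible smooth representations over $\Lambda$ are admissible, so $V_x$ is finite-dimensional. Pick an irreducible subrepresentation $\tau\subseteq V_x$.

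\emph{Step 2: descend to a smaller parahoric.} Suppose $\tau$ is not (super)cuspidal. Then there is a proper $k_E$-parabolic $P=LU\subseteq M_x$ with $\tau^{U(k_E)}\neq 0$. By Bruhat--Tits theory, $P$ corresponds to a facet $y$ having $x$ in its closure. We have $\calH_y(O_E)\subseteq\calH_x(O_E)$ as the preimage of $P(k_E)$, with $M_y=L$. Moreover $\calH_y(O_E)_{>0}$ is the preimage of $U(k_E)$, so
\[
\pi^{\calH_y(O_E)_{>0}}=(V_x)^{U(k_E)}\supseteq \tau^{U(k_E)}\neq 0 .
\]
Since $\dim y>\dim x$ and facet dimensions are bounded, iterating this step terminates. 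A chamber has torus reductive quotient, where every irreducible representation is supercuspidal.

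To make the iteration produce an actual cuspidal type, I would choose $P$ minimal with $\tau^{U(k_E)}\neq 0$. Transitivity of Harish-Chandra restriction, together with exactness of $U(k_E)$-invariants, then shows that every irreducible subrepresentation $\sigma$ of $\tau^{U(k_E)}\subseteq\pi^{\calH_y(O_E)_{>0}}$ is cuspidal as an $L(k_E)$-representation.

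\emph{Step 3: extract the type.} Frobenius reciprocity for compact induction gives
\[
\Hom_{H(E)}\big(\cInd_{\calH_y(O_E)}^{H(E)}\sigma,\pi\big)=\Hom_{\calH_y(O_E)}(\sigma,\pi)\neq 0 ,
\]
where $\sigma$ is inflated to $\calH_y(O_E)$. So $(y,\sigma)$ is a depth-zero type for $\pi$, provided $\sigma$ is supercuspidal.

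\emph{Main obstacle.} Over $\overline{\bbQ}_\ell$ cuspidal and supercuspidal coincide, and the argument above is exactly Moy--Prasad's. Over $\Lambda=\overline{\bbF}_\ell$, however, a cuspidal irreducible representation of $L(k_E)$ can fail to be supercuspidal, being a subquotient but not a subrepresentation of a proper Harish-Chandra induction. The non-semisimplicity of $\Lambda[L(k_E)]$ then blocks Frobenius reciprocity.

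To get past this, I would first try to strengthen the choice in Step 2. Among all pairs $(y,\sigma)$ with $\sigma$ an irreducible subrepresentation of $\pi^{\calH_y(O_E)_{>0}}$, pick one with $\dim y$ maximal. If such a $\sigma$ were not supercuspidal, then by the Geck--Hiss theory of supercuspidal support it would embed in some $R_{L'}^{L}(\rho)$. I would try to use adjunction, together with the equality $\pi^{\calH_{y'}(O_E)_{>0}}=(\pi^{\calH_y(O_E)_{>0}})^{U'(k_E)}$, to produce a nonzero irreducible subrepresentation on a larger facet $y'$, contradicting maximality.

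If this adjunction argument does not close, I would fall back on Vignéras' modular version of the Moy--Prasad theorem, which uses injective hulls of supercuspidals, as the cited input.
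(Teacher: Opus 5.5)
Your Steps 1--3 are exactly the Moy--Prasad argument, which is all the paper relies on (it gives no proof beyond the citation). They are correct over $\Lambda=\overline{\bbF}_\ell$. Exactness of pro-$p$ invariants, the minimal choice of $P$, and Frobenius reciprocity for $\cInd$ produce a pair $(y,\sigma)$ with $\sigma\hookrightarrow\pi^{\calH_y(O_E)_{>0}}$, where $\sigma$ is irreducible and \emph{cuspidal}, i.e.\ all its proper Harish-Chandra restrictions vanish. (Admissibility is not needed in Step 1: any nonzero representation of the finite group $M_x(k_E)$ has an irreducible subrepresentation.)

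The genuine gap is your attempted upgrade to supercuspidal, and its key step is false. A cuspidal $\sigma$ never embeds into $R_{L'}^{L}(\rho)$ for a proper Levi $L'$, since $\Hom_{L}(\sigma,R_{L'}^{L}\rho)\cong\Hom_{L'}({}^*R_{L'}^{L}\sigma,\rho)=0$. Supercuspidal support (Geck--Hiss) only exhibits $\sigma$ as a \emph{subquotient} of such an induction. Subquotients are invisible to your invariants-plus-adjunction mechanism, so the maximality argument cannot produce a larger facet.

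Moreover, no argument can close this gap, because the supercuspidal version is false over $\overline{\bbF}_\ell$. Take $H=GL_2$, $K=GL_2(O_E)$, and $\ell$ odd with $\ell\mid q+1$. Write $K_{>0}$ and $I_{>0}$ for the pro-$p$ radicals of $K$ and of the standard Iwahori.
\begin{itemize}
\item In $\overline{\bbF}_\ell[\bbP^1(\bbF_q)]$ the constants lie in the augmentation kernel. The middle constituent $\sigma$, of dimension $q-1$, is irreducible and cuspidal (its $U(\bbF_q)$-invariants have dimension $2-1-1=0$), but it is not supercuspidal.
\item Let $\pi$ be an irreducible quotient of $\cInd_K^{GL_2(E)}\sigma$. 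Mackey's formula and cuspidality of $\sigma$ show that $(\cInd_K^{GL_2(E)}\sigma)^{K_{>0}}$ is $\sigma$-isotypic, hence so is $\pi^{K_{>0}}\neq 0$.
\item On the other hand, $\pi^{I_{>0}}=(\pi^{K_{>0}})^{U(\bbF_q)}=0$.
\item Since $GL_2(E)$ is transitive on the vertices and on the edges of the tree, no $\pi^{\calH_y(O_E)_{>0}}$ contains a supercuspidal subrepresentation.
\end{itemize}
So this depth-zero $\pi$ has no type with $\tau$ supercuspidal.

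Over $\overline{\bbF}_\ell$, then, the statement is true, and is what Moy--Prasad's argument gives, only with ``supercuspidal'' in the definition of a type read as ``cuspidal''. That reading is all the map $\on{LS}$ of Lanard's theorem needs, since Lusztig series are defined for every irreducible representation of $M_x(k_E)$. The right move is to stop after Step 3; there is no modular Moy--Prasad theorem producing supercuspidal types for you to fall back on. What survives with supercuspidals is weaker: $\pi$ is a subquotient of $\cInd_{\calH_{y'}(O_E)}^{H(E)}\rho$ for some supercuspidal $\rho$ of $M_{y'}(k_E)$. This follows from Step 3 together with finite-group supercuspidal support and exactness of $\cInd$.
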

	
	\begin{remark}
		Given a depth-zero irreducible representation $\pi$, a depth-zero type $(x, \tau)$ for it can be chosen such that $x$ lies in the closure of the chamber $C$.
	\end{remark}
	
	\begin{theorem}[\cite{lanard18, lanard21}]\label{Thm_lanard}
		\begin{enumerate}
			\item Let $\pi$ be an irreducible depth-zero representation of $H(E)$ and let $(x, \tau), (x', \tau')$ be two depth-zero types for $\pi$ such that $x, x' \in A$. We have
			$$\xi_x(\on{LS}(\tau))=\xi_{x'}(\on{LS}(\tau')).$$
			In particular, we have a well-defined map
			\begin{equation}\label{Eq_LS}
				\on{LS}_H: \Irr^0_{\Lambda}H(E) \to (\widehat{T} \sslash W)^{\hat{F}}(\Lambda).
			\end{equation}
			\item Let $\frak{Z}_H^0$ be the Bernstein center of $\Rep_{\Lambda}^0H(E)$. There is a complete collection $(e_s)_{s \in (\widehat{T} \sslash W)^{\hat{F}}(\Lambda)}$ of orthogonal idempotents in $\frak{Z}_H^0$ such that for all $\pi \in \Irr_{\Lambda}^0H(E)$, we have $e_s\pi=\pi$ if and only if $\on{LS}_{H}(\pi)=s$.
		\end{enumerate}
	\end{theorem}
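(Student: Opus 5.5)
This is Lanard's theorem, so the plan is to reproduce the strategy of \cite{lanard18, lanard21}: reduce everything to the combinatorics of the Deligne--Lusztig transfer maps $\xi_x$ and use the theory of depth-zero types of Moy--Prasad.

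For part (1), first reduce to the case where both $x$ and $x'$ lie in the closure of a common chamber. Since $x, x' \in A$, the affine Weyl group of $T$ acts transitively on chambers of $A$, and conjugating a type by $n \in N(E)$ changes neither $\pi$ nor the image under $\xi$. The latter holds because $\xi_{nx}\circ \on{Ad}(n)^* = \xi_x$, as the identifications $X_*(T_M)\cong X_*(T)$ differ by the image of $n$ in $W$, which is killed by $\sslash W$. So we may take $x, x' \in \overline{C}$.

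Next, pass through the facet $y$ spanned by $x$ and $x'$, or more precisely through a common face. Compare the two types via parahoric restriction. A nonzero map $\cInd_{\calH_x(O_E)}^{H(E)}\tau\to\pi$ gives $\pi^{\calH_x(O_E)_{>0}}\supseteq\tau$. Apply the Moy--Prasad/Morris theorem that two unrefined minimal types of the same $\pi$ are associate. Concretely, there is $g\in H(E)$ such that $\calH_x(O_E)\cap \calH_{gx'}(O_E)$ surjects onto both reductive quotients as parabolic subgroups. The associated Levi quotients are identified, and $\tau$ and $g\tau'$ correspond to the same cuspidal datum on that Levi. Deligne--Lusztig series are compatible with Harish-Chandra induction: if $\tau$ lies in the series $(w,\theta)$ of $M_x$, then the Levi representation lies in the series of the same pair viewed in the Levi, with $w$ in the Levi's Weyl group. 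This gives $\xi_x(\on{LS}(\tau))=\xi_{x'}(\on{LS}(\tau'))$ after transport through the common Levi, whose torus is again $T_M$ with the same identification to $T$. The main obstacle is this associativity step over $\Lambda=\overline{\bbF}_\ell$. Cuspidal need not mean supercuspidal there, and we assumed $\tau$ supercuspidal precisely so that Harish-Chandra theory, through uniqueness of supercuspidal support (Geck--Hiss--Hennecart in the modular case), applies. I would follow Lanard's argument, using the Broué--Michel theorem that $e_s$ is compatible with Harish-Chandra restriction.

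For part (2), for each $x\in\overline{C}$ consider the idempotents $e_{x,s} := \sum_{s' \mapsto s} e_{s'}\in\Lambda[M_x(k_E)]$, summed over $s'$ with $\xi_x(s')=s$, inflated to $\calH_x(O_E)$ and normalized by the Haar measure of $\calH_x(O_E)_{>0}$. Check the compatibility that for facets $x\le y$ the restriction of $e_{x,s}$ to $\calH_y(O_E)$ through the parabolic of $M_x$ cut out by $y$ agrees with $e_{y,s}$ times the pro-$p$ idempotent. This is again Broué--Michel compatibility with Harish-Chandra restriction and transfer. Such a consistent system defines, by Meyer--Solleveld / Schneider--Stuhler type resolutions (or Bezrukavnikov--Kazhdan--Varshavsky systems of idempotents), a central idempotent $e_s$ of $\frak{Z}_H^0$. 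Completeness and orthogonality follow from those at each vertex. Finally, $e_s\pi=\pi$ iff $\on{LS}_H(\pi)=s$ follows by testing on $\pi^{\calH_x(O_E)_{>0}}\supseteq\tau$ for a type $(x,\tau)$, using part (1).
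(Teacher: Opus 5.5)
The paper does not prove this statement; it quotes it from \cite{lanard18, lanard21}. Your part (2) matches that construction. It uses Deligne--Lusztig idempotents $e_{x,s}=\sum_{\xi_x(s')=s}e_{s'}$ on facets, multiplied by the idempotent of $\calH_x(O_E)_{>0}$. Compatibility under facet inclusion comes from the Broué--Michel theorem that Lusztig-series idempotents commute with Harish-Chandra restriction, and $H(E)$-equivariance from the $N(E)$-computation you give in (1). Meyer--Solleveld's theory of consistent systems then produces the central $e_s$. You still need to check that the facet-inclusion compatibility implies the full Meyer--Solleveld consistency conditions.

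The gap is in part (1). You invoke associativity of depth-zero types ``in the Moy--Prasad/Morris sense'', which is established for complex coefficients. You then defer the $\overline{\bbF}_{\ell}$ case to Lanard and Broué--Michel, but compatibility of Lusztig series with Harish-Chandra restriction says nothing about why two types of the same $\pi$ are related. So this step is unsupported.

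The mechanism you describe is also off. For supercuspidal $\tau,\tau'$, any relevant intertwining element forces the images of $\calH_x(O_E)\cap\calH_{gx'}(O_E)$ to be the \emph{whole} reductive quotients. So there is no proper Levi and no Harish-Chandra step; the entire content is the associativity itself.

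The cleanest fix is to reverse the logic, since your construction in (2) never uses (1). Let $(x,\tau)$ be a type for $\pi$. Frobenius reciprocity gives $\tau\hookrightarrow\pi^{\calH_x(O_E)_{>0}}$, on which $e_s$ acts through $e_{x,s}$, so $e_{\xi_x(\on{LS}(\tau))}\pi\neq 0$. The $e_s$ are orthogonal central idempotents summing to $1$ on $\Rep^0_{\Lambda}H(E)$ and $\pi$ is irreducible, so exactly one $e_s$ is nonzero on $\pi$. This proves (1) and the last assertion of (2) at once, without associativity.

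If you want (1) independently of (2), prove associativity mod $\ell$ directly:
\begin{itemize}
\item Taking $\calH_{x'}(O_E)_{>0}$-invariants is exact (pro-$p$ group, $\ell\neq p$). So $\tau'$ is a subquotient of $(\cInd_{\calH_x(O_E)}^{H(E)}\tau)^{\calH_{x'}(O_E)_{>0}}$.
\item By Mackey, this is a sum over double cosets of Harish-Chandra inductions to $M_{x'}$ of Harish-Chandra restrictions of ${}^g\tau$. The relevant unipotent radical is the image of $\calH_{gx}(O_E)\cap\calH_{x'}(O_E)_{>0}$ in $M_{gx}$.
\item Cuspidality of $\tau$ kills every term where the image of $\calH_{gx}(O_E)\cap\calH_{x'}(O_E)$ in $M_{gx}$ is a proper parabolic. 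Supercuspidality of $\tau'$ kills proper inductions.
\item Hence $M_{x'}\cong M_{gx}$ and $\tau'\cong{}^g\tau$. After moving $gx$ into $A$ by an element of $\calH_{x'}(O_E)$, you conclude with the $N(E)$-equivariance of $\xi$ you already checked.
\end{itemize}
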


	\subsection{Mixed-characteristic shtukas and related stacks}\label{Subsection_Sht}
	Following \cite[Section 4.4.1]{eteve2023monodromiques}, this subsection defines mixed-characteristic shtukas and related stacks. Most of the objects were already defined in \cite{fargues2021geometrization}.
	
	Let $\Perf$ be the category of perfectoid spaces over $\Fpbar$. For every $S \in \Perf$, we have the Fargues--Fontaine curve $X_S$, and $Y_S, \calY_S$.
	
	Let $G$ be a connected reductive group over $E$. Let
	$$\Bun_G: \Perf \to \on{Groupoids}, \quad S \mapsto \{\text{$G$-torsors over $X_S$}\}.$$
	The functor $\Bun_G$ is a small $v$-stack.
	
	Let $O=O_E$ or $\bOE$. Let $\calQ$ be a smooth affine group scheme over $O$. Let $$\Bun_{\calQ}: \Perf \to \on{Groupoids}, \quad S \mapsto \{\text{$\calQ$-torsors over $\calY_S$}\}.$$
	The functor $\Bun_{\calQ}$ is a small $v$-stack.

	Recall that working over $\Fpbar$, we have the space of degree-one Cartier divisors on the Fargues-Fontaine curves (\cite[Section VI.1]{fargues2021geometrization}), explicitly, 
	\begin{equation}
		\Div^1_{\calY}=\Spd \bOE, \; \Div^1_Y=\Spd \bE, \; \Div^1_X=\Spd \bE/\varphi^{\bbZ}.
	\end{equation}
	
	\begin{definition}[Local Hecke stack for $X_S$]
		Let $I$ be a finite set. We define $\Hk^{\loc}_{G, (\Div^1_X)^I}$ to be the $v$-stack whose $S$-points form the groupoid of tuples $(x, \calE, \calE', \alpha)$ where
		\begin{enumerate}
			\item $x=(x_i)_{i \in I} \in (\DX)^I(S),$
			\item $\calE, \calE'$ are $G$-torsors over $\hat{\Gamma}_x$, the completion of $X_S$ along the graph $\Gamma_x$ of $x$,
			\item $\alpha: \calE|_{\hat{\Gamma}_x-\Gamma_x} \to \calE'|_{\hat{\Gamma}_x-\Gamma_x}$ is an isomorphism over $\hat{\Gamma}_x-\Gamma_x$ which is meromorphic along $\Gamma_x$.
		\end{enumerate}
	\end{definition}
	
	After trivializing $\calE'$, we get the Beilinson Drinfeld Grassmannian
	$$\Gr_{G, (\DX)^I}$$
	defined as in \cite[Section VI.1]{fargues2021geometrization}.
	
	As in \cite[Section VI.2]{fargues2021geometrization}, for a collection of cocharacters $\mu$, we can define a bounded-by-$\mu$ version of the local Hecke stack
	$$\Hk^{\loc}_{G, (\DX)^I, \leq \mu}$$
	that fits into a Cartesian diagram
	\[\begin{tikzcd}
		{\Gr_{G, (\DX)^I, \leq \mu}} & {\Gr_{G, (\DX)^I}} \\
		{\Hk^{\loc}_{G, (\DX)^I, \leq \mu}} & {\Hk^{\loc}_{G, (\DX)^I}}
		\arrow[from=1-1, to=1-2]
		\arrow[from=1-1, to=2-1]
		\arrow["\lrcorner"{anchor=center, pos=0.125}, draw=none, from=1-1, to=2-2]
		\arrow[from=1-2, to=2-2]
		\arrow[from=2-1, to=2-2].
	\end{tikzcd}\]

	\begin{definition}[Local Hecke stack for $\calY_S$]\label{Def_HklocY}
		Let $I$ be a finite set. We define $\Hk^{\loc}_{\calQ, (\DY)^I}$ to be the $v$-stack whose $S$-points form the groupoid of tuples $(y, \calE, \calE', \alpha)$ where
		\begin{enumerate}
			\item $y=(y_i)_{i \in I} \in (\DY)^I(S)$,
			\item $\calE, \calE'$ are $\calQ$-torsors over $\hat{\Gamma}_y$, the completion of $\calY_S$ along the graph $\Gamma_y$ of $y$,
			\item $\alpha: \calE|_{\hat{\Gamma}_y-\Gamma_y} \to \calE'|_{\hat{\Gamma}_y-\Gamma_y}$ is an isomorphism over $\hat{\Gamma}_y-\Gamma_y$.
		\end{enumerate}
	\end{definition}
	
	After trivializing $\calE'$, we get the Beilinson Drinfeld Grassmannian
	$$\Gr_{\calQ, (\DY)^I}$$
	defined as in \cite[Section VI.1]{fargues2021geometrization}.
	
	Similar to \cite[Section VI.2]{fargues2021geometrization}, for a collection of cocharacters $\mu$, we can define a bounded-by-$\mu$ version of local Hecke stack
	$$\Hk^{\loc}_{\calQ, (\DY)^I, \leq \mu}.$$
	
	\begin{definition}[Global Hecke stack for $X_S$]
		Let $I$ be a finite set. We define $\Hk^{\glob}_{G, (\DX)^I}$ to be the $v$-stack whose $S$-points form the groupoid of tuples $(x, \calE, \calE', \alpha)$ where
		\begin{enumerate}
			\item $x=(x_i)_{i \in I} \in (\DX)^I(S),$
			\item $\calE, \calE'$ are $G$-torsors over $X_S$, 
			\item $\alpha: \calE|_{X_S-\Gamma_x} \to \calE'|_{X_S-\Gamma_x}$ is an isomorphism over $X_S-\Gamma_x$, meromorphic along $\Gamma_x$.
		\end{enumerate}
	\end{definition}

	After trivializing $\calE'$, we get $\Gr_{G, (\DX)^I}$ again, by the Beauville--Laszlo lemma.
	
	\begin{definition}[Global Hecke stack for $\calY_S$]
		Let $I$ be a finite set. We define $\Hk^{\glob}_{\calQ, (\DY)^I}$ to be the $v$-stack whose $S$-points form the groupoid of tuples $(y, \calE, \calE', \alpha)$ where
		\begin{enumerate}
			\item $y=(y_i)_{i \in I} \in (\DY)^I(S),$
			\item $\calE, \calE'$ are $\calQ$-torsors over $\calY_S$, 
			\item $\alpha: \calE|_{\calY_S-\Gamma_y} \to \calE'|_{\calY_S-\Gamma_y}$ is an isomorphism over $\calY_S-\Gamma_y$, meromorphic along $\Gamma_y$.
		\end{enumerate}
	\end{definition}
	
	After trivializing $\calE'$, we get $\Gr_{\calQ, (\DY)^I}$ again, by the Beauville--Laszlo lemma.
	
	\begin{definition}[Shtuka space]\label{Def_Sht}
		Let $\calQ$ be a smooth affine group scheme over $\bOE$ with generic fiber $G_{\bE}$, and let $I$ be a finite set. Define $\Sht_{\calQ, (\DY)^I}$ by the following pullback
		\[\begin{tikzcd}
			{\Sht_{\calQ, (\DY)^I}} && {\Hk^{\glob}_{\calQ, (\DY)^I}} \\
			{\Bun_{\calQ}} && {\Bun_{\calQ} \times \Bun_{\calQ}}
			\arrow[from=1-1, to=1-3]
			\arrow[from=1-1, to=2-1]
			\arrow[from=1-3, to=2-3]
			\arrow["{(\id, \Frob_S)}", from=2-1, to=2-3].
		\end{tikzcd}\]
		The functor $\Sht_{\calQ, (\DY)^I}$ is a small $v$-stack.
	\end{definition}
	
	By definition, for $S \in \Perf$, $\Sht_{\calQ, (\DY)^I}(S)$ classifies tuples $(y, \calP, \alpha)$, where
	\begin{enumerate}
		\item $y=(y_i)_{i \in I} \in (\DY)^I(S),$
		\item $\calP$ is a $\calQ$-torsor over $\calY_S$,
		\item $\alpha: (\Frob_S^*\calP)|_{\calY_S-\Gamma_y} \to \calP|_{\calY_S-\Gamma_y}$ is an isomorphism over ${\calY_S-\Gamma_y}$ and is meromorphic along $\Gamma_y$.
	\end{enumerate}

	Let $\calG$ be a connected reductive group scheme over $\bOE$ with generic fiber $G_{\bE}$. We compare $\Sht_{\calG, (\DY)^I}$ with $\Sht_{\calG, b, \{\mu_i\}}$ defined in \cite[Definition 23.1.1]{scholze2020berkeley} (see Lemma \ref{Lemma_Sht_compare}).

	Note that there is a map 
	\begin{equation}
		\res_{\infty}: \Sht_{\calG, (\DY)^I} \to \Bun_G,
	\end{equation}
	given by restricting the $\calG$-torsor near infinity (note that away from the locus $\{p=0\} \subseteq \calY_S$, $\calG=G$) and then descending it along $Y_S \to X_S$. For $b \in B(G)$, we define $\Sht_{\calG, (\DY)^I}^{\infty=b}$ by the following Cartesian diagram: 
	\[\begin{tikzcd}
		{\Sht_{\calG, (\DY)^I}^{\infty=b}} & {*=\Spd \Fpbar} \\
		{\Sht_{\calG, (\DY)^I}} & {\Bun_G}
		\arrow[from=1-1, to=1-2]
		\arrow[from=1-1, to=2-1]
		\arrow["\lrcorner"{anchor=center, pos=0.125}, draw=none, from=1-1, to=2-2]
		\arrow["{\calE^b}", from=1-2, to=2-2]
		\arrow["{\res_{\infty}}", from=2-1, to=2-2]
	\end{tikzcd}\]
	
	Write $I=\{1, 2, ..., m\}$ for some $m \in \bbZ_{\geq 0}$. Now let $\mu:=(\mu_i \;|\; i \in I)$ be a collection of conjugacy classes of cocharacters $\bbG_m \to G_{\Ebar}$, with fields of definition $E_i$ over $E$, $i \in I$. We define 
	$$\Sht_{\calG, \TE/(\DY)^I}^{\infty=b}:=\Sht_{\calG, (\DY)^I}^{\infty=b} \times_{(\DY)^I}\TE.$$
	
	Note that $\Sht_{\calG, \TE/(\DY)^I}^{\infty=b}$ inherits a map to $\Hk^{\loc}_{\calG, \TE/(\DY)^I}$ recording the local modification. We continue to denote this map by
	$$\epsilon: \Sht_{\calG, \TE/(\DY)^I}^{\infty=b} \to \Hk^{\loc}_{\calG, \TE/(\DY)^I}.$$
	We define ${ \Sht_{\calG, \TE/(\DY)^I, \leq\mu}^{\infty=b}}$ by the following pullback diagram
	\[\begin{tikzcd}
		{ \Sht_{\calG, \TE/(\DY)^I, \leq\mu}^{\infty=b}} & { \Sht_{\calG, \TE/(\DY)^I}^{\infty=b}} \\
		{\Hk^{\loc}_{\calG, \TE/(\DY)^I, \leq \mu}} & {\Hk^{\loc}_{\calG, \TE/(\DY)^I}}
		\arrow[from=1-1, to=1-2]
		\arrow[from=1-1, to=2-1]
		\arrow["\epsilon", from=1-2, to=2-2]
		\arrow[from=2-1, to=2-2].
	\end{tikzcd}\]

	Unraveling the definitions, we have the following lemma.
	
	\begin{lemma}\label{Lemma_Sht_compare}
		Let $\Sht_{\calG, b, \mu}$ be the moduli space defined in \cite[Definition 23.1.1]{scholze2020berkeley}. We have
		\begin{equation}
			\Sht_{\calG, b, \mu} \cong {\Sht_{\calG, \TE/(\DY)^I, \leq\mu}^{\infty=b}}.
		\end{equation}
	\end{lemma}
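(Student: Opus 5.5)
We prove Lemma \ref{Lemma_Sht_compare} by comparing the moduli problems on $S$-points and checking that each piece of data on one side corresponds to a piece of data on the other. Both sides are small $v$-stacks over $\TE$, so it suffices to build, functorially in $S \in \Perf$ over $\TE$, an equivalence of groupoids. Recall the data in \cite[Definition 23.1.1]{scholze2020berkeley}: for $S=\Spa(R,R^+)$ with untilts $S_i^\sharp$ over $\bE_i$, a point of $\Sht_{\calG,b,\mu}$ is a $\calG$-torsor $\calP$ on $\calY_{[0,\infty)}(S)=\calY_S$ (with the relevant conventions on the location of $\varphi$), a Frobenius isomorphism $\phi_{\calP}:\Frob_S^*\calP \to \calP$ away from $\bigcup_i \Gamma_{S_i^\sharp}$ that is meromorphic and bounded by $\mu_i$ at the $i$-th leg, and an isomorphism of $\calP$ near infinity with the $\varphi$-module attached to $b$, i.e.\ $\calP|_{\calY_{[r,\infty)}} \cong G\times \calY_{[r,\infty)}$ for $r \gg 0$, intertwining $\phi_{\calP}$ with $b\times\Frob$.

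The first step is to match the untilted legs. The points $y_i\in \DY(S)$ coming from $\TE \to (\DY)^I$ are exactly the untilts $S_i^\sharp$, and $\Gamma_y=\bigcup_i \Gamma_{y_i}$. Hence a point of $\Sht_{\calG,\TE/(\DY)^I}$ is a triple $(y,\calP,\alpha)$ as in the explicit description after Definition \ref{Def_Sht}, with $y$ given by the untilts. The bound $\leq \mu$ is imposed through $\epsilon$ and $\Hk^{\loc}_{\calG,\TE/(\DY)^I,\leq\mu}$. Via Beauville--Laszlo on $\hat\Gamma_y$ this is the same relative-position condition as in loc.\ cit.

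The second step is the condition at infinity. Away from $\{\pi_E=0\}$ the torsor $\calP$ is a $G$-torsor with Frobenius structure, and $\res_\infty$ descends its restriction near infinity along $Y_S\to X_S$ to a $G$-bundle on $X_S$. The fiber product with $\calE^b:*\to\Bun_G$ then parametrizes isomorphisms of this descended bundle with $\calE^b$. Descent along $Y\to X$ is an equivalence between $G$-bundles on $X$ and $\varphi$-equivariant $G$-bundles on a neighborhood of infinity in $Y$. Under this equivalence, such an isomorphism is the same as a $\varphi$-equivariant trivialization of $\calP$ near infinity matching $\phi_{\calP}$ with $b\Frob$. Two such data agree once they agree on some $\calY_{[r,\infty)}$, so the germ-at-infinity formulation of Scholze matches.

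The main obstacle is this step at infinity, specifically the bookkeeping of conventions: on which side Frobenius acts, whether $\phi$ goes $\Frob^*\calP\to\calP$ or conversely, and the sign conventions for $b$ versus $\calE^b$. Here I would first fix the convention in \cite{fargues2021geometrization} for $\calE^b$ and check it against \cite[Section 23]{scholze2020berkeley}, replacing $b$ by $b^{-1}$ or $\mu$ by its dual if needed. I would also confirm that the isomorphisms of $\calE^b$ (the group $\tilde G_b$) act compatibly on both sides, so that the equivalence of groupoids really is an isomorphism of $v$-sheaves. Once these identifications are made, the remaining claims are routine from the definitions, and the functoriality in $S$ is evident.
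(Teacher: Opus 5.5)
Your proposal is correct and follows the paper's route: the paper proves the lemma only by ``unraveling the definitions'', and your matching of the untilts $S_i^\sharp$ with the legs in $\DY$, of the bound with pullback from $\Hk^{\loc}_{\leq\mu}$, and of Scholze's $\iota_r$ with an isomorphism $\res_\infty(\calP)\cong\calE^b$ via descent along $Y_S\to X_S$ is exactly that unraveling made explicit. One small correction: the right-hand $v$-stack is a $v$-sheaf, and so matches Scholze's set-valued functor, because an automorphism of $\calP$ that commutes with $\alpha$ and is trivial near infinity is trivial by $\varphi$-translation, not because of any compatibility of $\tilde G_b$-actions; also no change of $b$ or $\mu$ should be needed, since on both sides $\Frob_S^*\calP\to\calP$ is identified near infinity with $b\times\Frob$, which is the same descent datum that defines $\calE^b$.
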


	\begin{definition}\label{Def_Sht_depth0}
		Define 
		$$\Sht_{\calQ, (\DY)^I, \sbar},$$ 
		the space of $\calQ$-shtukas with level-$\sbar$ structure  as follows. For every $S \in \on{Perf}$, the groupoid $\Sht_{\calQ, (\DY)^I, \sbar}(S)$ classifies tuples $(y, \calP, \alpha, \psi_{\sbar})$, where
		\begin{enumerate}
			\item $(y, \calP, \alpha) \in \Sht_{\calQ, (\DY)^I}(S)$ such that $y \in (\DY-\sbar)^I(S)$. In other words, $y \in (\DY-\sbar)^I(S)$, $\calP$ is a $\calQ$-torsor over $\calY_S$, and $\alpha: (\Frob_S^*\calP)|_{\calY_S-\Gamma_y} \to \calP|_{\calY_S-\Gamma_y}$ is an isomorphism over ${\calY_S-\Gamma_y}$ which is meromorphic along $\Gamma_y$.
			\item $\psi_{\sbar}: \calP|_{S} \xrightarrow{\cong} \calE^0|_{S}$ is a trivialization of $\calP$ over $S \cong \sbar \times S$ that is compatible with $\alpha$, where $|_S$ means pullback along the trivial untilt $S \to \calY_S$.
		\end{enumerate}
	\end{definition}

	Recall that for every compact open subgroup $K \subseteq G(E)$, \cite[Section 23]{scholze2020berkeley} defines
	$$\Sht_{G, b, \mu, K},$$
	a shtuka space with level-$K$ structure. Thanks to \cite[Proposition 23.3.1]{scholze2020berkeley}, we can encode the level structure by changing the group $\calG$. The main examples that we will use are as follows.
	
	\begin{example}
		Let $\calG'$ be a Bruhat--Tits group scheme over $O_E$ with generic fiber $G$. When $K=\calG'(O_E)$, we have 
		$$\Sht_{G, b, \mu, \calG'(O_E)} \cong \Sht_{\calG', b, \mu} \cong {\Sht_{\calG', \TE/(\DY)^I, \leq\mu}^{\infty=b}}.$$
		
		In particular, when $\calG'=\calI$ is an Iwahori group scheme, we have 
		$$\Sht_{G, b, \mu, \calI (O_E)} \cong {\Sht_{\calI, \TE/(\DY)^I, \leq\mu}^{\infty=b}}.$$
		Thanks to \cite[Corollary 2]{arnaud2023neron}, the space $\Sht_{G, b, \mu, \calI (O_E)}$	 classifies $G$-shtukas with $B$-reduction, where $B \subseteq G_{\bE}$ is the Borel subgroup corresponding to $\calI$.
		
		Similarly, when $\calG'=\calI^0$, the pro-unipotent radical of an Iwahori $\calI$, we have
		$$\Sht_{G, b, \mu, \calI^0 (O_E)} \cong {\Sht_{\calI^0, \TE/(\DY)^I, \leq\mu}^{\infty=b}}.$$
		The space $\Sht_{G, b, \mu, \calI^0 (O_E)}$ classifies $G$-Shtukas with $U$-reduction, where $U$ is the unipotent radical of $B$.
	\end{example}

	\begin{example}\label{Example_depthzero}
		Let $\sigma$ be a polysimplex in the Bruhat-Tits building of $G$ over $E$. Let $\calG_{\sigma}$ be the parahoric group scheme over $O_E$ associated to $\sigma$. When $K=\ker(\calG_{\sigma}(O_E) \to \calG_{\sigma}(k_E))$, by \cite[Corollary 2]{arnaud2023neron}, we have 
		\begin{equation}\label{Eq_Sht_1}
			\Sht_{G, b, \mu, K} \cong {\Sht_{\calG_{\sigma, 1}, \TE/(\DY)^I, \leq\mu}^{\infty=b}} 
		\end{equation}
		where $\calG_{\sigma, 1}$ is obtained from $\calG_{\sigma}$ by dilating the trivial subgroup on the special fiber. The space ${\Sht_{\calG_{\sigma, 1}, \TE/(\DY)^I, \leq\mu}^{\infty=b}}$ admits the following moduli interpretation. 
		For $S \in \Perf$, the groupoid ${\Sht_{\calG_{\sigma, 1}, \TE/(\DY)^I, \leq\mu}^{\infty=b}}(S)$ classifies tuples $(y, \calP, \alpha, \iota_r, \psi_{\sbar})$, where
		\begin{enumerate}
			\item $y=(y_i)_{i \in I} \in \TE(S),$
			\item $\calP$ is a $\calG_{\sigma}$-torsor over $\calY_S$,
			\item $\alpha: (\Frob_S^*\calP)|_{\calY_S-\Gamma_y} \to \calP|_{\calY_S-\Gamma_y}$ is an isomorphism over ${\calY_S-\Gamma_y}$ which is meromorphic along $\Gamma_y$ and bounded by $\mu$,
			\item $\iota_r: \calP|_{\calY_S[r, \infty)} \to \calE^b|_{\calY_S[r, \infty)}$ is an isomorphism for some $r \gg 0$,
			\item $\psi_{\sbar}: \calP|_S \to \calE^0|_S$ is a trivialization of $\calP$ over $\sbar \times S \cong S$ that is compatible with $\alpha$.
		\end{enumerate}
		In other words, the space ${\Sht_{\calG_{\sigma, 1}, \TE/(\DY)^I, \leq\mu}^{\infty=b}}$ is obtained from the space $\Sht_{\calG_{\sigma}, (\DY)^I, \sbar}$ (Definition \ref{Def_Sht_depth0}) by base change to $\TE$, fixing a trivialization $\calP \cong \calE^b$ near infinity, and imposing a bounded-by-$\mu$ condition on local modifications:
		\begin{equation}\label{Eq_Sht_2}
			{\Sht_{\calG_{\sigma, 1}, \TE/(\DY)^I, \leq\mu}^{\infty=b}} \cong (\Sht_{\calG_{\sigma}, (\DY)^I, \sbar} \times_{(\DY)^I} \TE)^{\infty=b}_{\leq \mu }.
		\end{equation}

		For example, when $\sigma$ is a hyperspecial vertex, the cohomology of ${\Sht_{\calG_{\sigma, 1}, \TE/(\DY)^I, \leq\mu}^{\infty=b}}$ encodes depth-zero representations. More specifically, when $\calG_{\sigma}$ is $GL_2$ over $O_E$, we have 
		$$\calG_{\sigma, 1}(O_E)=\begin{pmatrix}
			1+(\pi_E) & (\pi_E) \\ 
			(\pi_E) & 1+(\pi_E)
		\end{pmatrix}.$$
		
		When $\sigma$ is not a vertex, the cohomology of ${\Sht_{\calG_{\sigma, 1}, \TE/(\DY)^I, \leq\mu}^{\infty=b}}$ contains higher-depth representations in general. More specifically, when $G=GL_2$ and $\calG_{\sigma}$ is an Iwahori subgroup with
		$$\calI(O_E):=\calG_{\sigma}(O_E)=\begin{pmatrix}
			O_E^* & O_E \\
			(\pi_E) & O_E^*
		\end{pmatrix},$$
		we have 
		$$\calG_{\sigma, 1}(O_E)=\begin{pmatrix}
			1+(\pi_E) & (\pi_E)\\
			(\pi_E^2) & 1+(\pi_E)
		\end{pmatrix}.$$
		Let $V_{\sigma}$ be the unipotent radical of the special fiber of $\calG_{\sigma}$. Let 
		$$\calI^0(O_E)=\ker(\calG_{\sigma}(O_E) \to \calG_{\sigma}(k_E)/V_{\sigma}(k_E)).$$
		Note that we have the following diagram of groups
		\[\begin{tikzcd}
			1 & {\calG_{\sigma, 1}(O_E)} & {\calI^0(O_E)} & {V_{\sigma}(k_E)} & 1 \\
			1 & {\calG_{\sigma, 1}(O_E)} & {\calG_{\sigma}(O_E)} & {\calG_{\sigma}(k_E)} & 1
			\arrow[from=1-1, to=1-2]
			\arrow[from=1-2, to=1-3]
			\arrow[from=1-2, to=2-2]
			\arrow[from=1-3, to=1-4]
			\arrow[from=1-3, to=2-3]
			\arrow["\lrcorner"{anchor=center, pos=0.125}, draw=none, from=1-3, to=2-4]
			\arrow[from=1-4, to=1-5]
			\arrow[from=1-4, to=2-4]
			\arrow[from=2-1, to=2-2]
			\arrow[from=2-2, to=2-3]
			\arrow[from=2-3, to=2-4]
			\arrow[from=2-4, to=2-5].
		\end{tikzcd}\]
		
		Taking $V_{\sigma}(k_E)$-invariants of the cohomology of ${\Sht_{\calG_{\sigma, 1}, \TE/(\DY)^I, \leq\mu}^{\infty=b}}$ cuts out the depth-zero part. This corresponds to the fact that
		$$\calI^0(O_E)=\ker(\calG_{\sigma}(O_E) \to \calG_{\sigma}(k_E)/V_{\sigma}(k_E))=\begin{pmatrix}
			1+(\pi_E) & O_E\\
			(\pi_E) & 1+(\pi_E)
		\end{pmatrix},$$
		and
		$$\Sht_{G, b, \mu, \calI^0(O_E)}=\Sht_{G, b, \mu, \infty}/\calI^0(O_E)=(\Sht_{G, b, \mu, \infty}/\calG_{\sigma, 1}(O_E))/V_{\sigma}(k_E)=\Sht_{G, b, \mu, \calG_{\sigma, 1}}/V_{\sigma}(k_E).$$
		
	\end{example}

	\subsection{Assumptions on smoothness}\label{Subsection_ass_sm}
	
	We assume certain smoothness of the cohomology of the shtuka space over the local Hecke stack, in the sense that pullback commutes with nearby cycles, as follows.
	
	\begin{assumption}\label{Assum_sm}
		Let $\sigma$ be a polysimplex in the Bruhat-Tits building $\calB(G, E)$. Let $\calG_{\sigma}$ be the corresponding parahoric group scheme. Let $M_{\sigma}$ be the reductive quotient of the special fiber of $\calG_{\sigma}$. We fix a Borel pair $B_M=T_MU_M$. Let $C$  be the alcove corresponding to the Borel $B_M$ and containing $\sigma$ in its closure. Let $\calG_C$ be the corresponding Iwahori group scheme and $\calG_C^0$ be its pro-$p$ Iwahori group scheme. Let $b \in B(G)$ and $I$ a finite set. Let $\mu$ be a collection of dominant cocharacters of $T$. Consider the map 
		$$\epsilon:  \Sht^{\infty=b}_{\calG_{C}, (\Spd \bOE)^I, \leq \mu} \to \Hk^{\loc}_{\calG_{C}^0, (\Spd \bOE)^I, \leq \mu}/Ad_FT_M$$
		induced from the map
		$$\Sht^{\infty=b}_{\calG_{C}^0, (\Spd \bOE)^I} \to \Hk^{\loc}_{\calG_{C}^0, (\Spd \bOE)^I}.$$
		We assume that $\epsilon^*$ commutes with iterated nearby cycles over $(\Spd \bOE)^I$.

	\end{assumption}

	\subsection{Statements of the main theorems}
	\label{Subsection_statement}
	
	Recall that $E$ is a non-archimedean local field, $\calG$ is a connected reductive group scheme over $O_E$ with generic fiber $G$. Let $W_E$ denote the Weil group of $E$. Let $\Lambda=\overline{\bbF}_{\ell}$. We recall the following theorem.
	
	\begin{construction}[{\cite[Chapter VIII]{fargues2021geometrization}}]
		There is a map $\on{LLC}^{\on{FS}}: \pi \mapsto \varphi_{\pi}^{\on{FS}}$ from the set of smooth irreducible $\Lambda$-linear representations of $G(E)$ to the set of   semisimple  $L$-parameters.
	\end{construction}
	
	Let $(\varphi_{\pi}^{\on{FS}})|_{I_E}$ be the restriction of $\varphi_{\pi}^{\on{FS}}$ to the inertia subgroup $I_E$. The goal of this paper is to compute  $(\varphi_{\pi}^{\on{FS}})|_{I_E}$ for $\pi$ a depth-zero irreducible representation. 
	
	Let us now formulate our main results. Let $T \subseteq G$ be a maximal torus and let $W$ be the Weyl group of $(G, T)$. We recall the following result from \cite{lanard18} and \cite{lanard21} (see also Theorem \ref{Thm_lanard}).

	\begin{theorem}
		The category $\Rep_{\Lambda}^0G(E)$ of depth-zero representations of $G(E)$ decomposes as a direct sum
		\begin{equation}
			\Rep_{\Lambda}^0G(E) = \bigoplus_{s \in (\widehat{T}\sslash W)^{\widehat{F}}(\Lambda) }\Rep_{\Lambda}^sG(E),
		\end{equation}
		where $\widehat{T}$ denotes the dual torus over $\Lambda$, $\widehat{T}\sslash W$ the GIT quotient by the action of $W$ and $(-)^{\widehat{F}}$ the scheme of invariants under the morphism dual to the Frobenius of $T$.
	\end{theorem}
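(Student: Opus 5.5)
This statement is the recalled decomposition of Lanard, so the plan is to deduce it from Theorem \ref{Thm_lanard} (the version for the group $H=G$ with integral model $\calG$). First I would fix the identification: take $T$ to be the generic fiber of the maximally $O_E$-split torus $\calT$, so that $(\widehat{T}\sslash W)^{\widehat{F}}$ in the statement is the same as in Theorem \ref{Thm_lanard}. Then I define $\Rep_{\Lambda}^sG(E)$ to be the full subcategory of depth-zero representations $\pi$ with $e_s\pi=\pi$, where $(e_s)_s$ is the complete family of orthogonal idempotents in the depth-zero Bernstein center $\frakZ_G^0$ given by part (2) of Theorem \ref{Thm_lanard}.

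The key steps are the following. First, since the $e_s$ are central, orthogonal, and sum to $1$ in $\frakZ_G^0$, every $\pi\in\Rep_\Lambda^0G(E)$ splits functorially as $\pi=\bigoplus_s e_s\pi$, and morphisms respect this splitting. Here I use that the sum of the $e_s$ is $1$ as an element of the center, acting on each object. If there are infinitely many $s$, I read completeness as a decomposition of the identity of the category into an orthogonal family, which is what "complete collection" means in \cite{lanard18}. Second, each $e_s\pi$ is again depth zero, because $\Rep^0_\Lambda G(E)$ is a direct factor of $\Rep_\Lambda G(E)$ (the depth decomposition recalled before Theorem \ref{Thm_lanard}). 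Third, the factors are pairwise orthogonal: $\Hom(e_s\pi,e_{s'}\pi')=0$ for $s\neq s'$, since $e_s$ acts by $1$ on the source and by $0$ on the target. Together these steps give the equivalence $\Rep^0_\Lambda G(E)\simeq\bigoplus_s\Rep^s_\Lambda G(E)$. Finally, part (2) of Theorem \ref{Thm_lanard} identifies the irreducible objects of $\Rep^s_\Lambda G(E)$ as exactly those $\pi$ with $\on{LS}_G(\pi)=s$, which makes the decomposition compatible with the map $\on{LS}$ of \eqref{Eq_LS}.

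The main obstacle is hidden inside Theorem \ref{Thm_lanard}: one needs the compatibility $\xi_x(\on{LS}(\tau))=\xi_{x'}(\on{LS}(\tau'))$ for different depth-zero types, and the construction of the idempotents from the finite Deligne–Lusztig idempotents $e_s\in\Lambda[M_x(k_E)]$ of the parahorics, glued along the building via the transfer maps $\xi_x$. The hypothesis $\ell\nmid p|\pi_0Z(G)|$ enters there. Since that theorem is stated earlier, I would simply invoke it. The only remaining care is to check that the $\widehat{F}$-twist and the Weyl group $W$ in this statement match those in Theorem \ref{Thm_lanard}. This holds because $G$ is unramified and $T$ is the centralizer of a maximal split torus.
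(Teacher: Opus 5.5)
Your proposal is correct and matches the paper, which does not prove this statement itself but recalls it from Lanard \cite{lanard18, lanard21} (restated as Theorem \ref{Thm_lanard}); the direct-sum decomposition follows from the complete family of central orthogonal idempotents $e_s\in\frakZ^0_G$ by exactly the argument you give. One simplification: the index set $(\widehat{T}\sslash W)^{\widehat{F}}(\Lambda)$ is finite, since the paper notes in Section \ref{Section_DR} that $(\widehat{T}\sslash W)^{\widehat{F}}$ is finite over $\bbZ[1/p]$, so your caveat about infinitely many $s$ is unnecessary.
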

	
	The above decomposition of $\Rep_{\Lambda}^0G(E)$ yields a map
	\begin{equation}
		\on{LS}: \Irr_{\Lambda}^0G(E) \to (\widehat{T}\sslash W)^{\widehat{F}}(\Lambda),
	\end{equation}
	where $\Irr_{\Lambda}^0G(E)$ is the set of irreducible depth-zero representations of $G(E)$, characterized by $\on{LS}(\pi)=s$ if and only if $\pi$ lies in the direct summand indexed by $s$.
	
	Fix a topological generator $\tau_E$  of the tame inertia $I_E/P_E$ that corresponds to the isomorphism $\overline{k}_E^* \cong (\bbQ/\bbZ)_{p'}$. We denote by $\on{ev}_{\tau_E}(\varphi)$ the image in $\widehat{G}\sslash \widehat{G}=\widehat{T}\sslash W$ of $\varphi(\tau_E)$. We can now state our main theorem.
	
	\begin{theorem}\label{Thm_LLC}
		Assume Assumption \ref{Assum_sm} and \ref{Assum_CT}. Let $\pi \in \Irr_{\Lambda}^0G(E)$ be an irreducible depth-zero representation.
		\begin{enumerate}
			\item The corresponding $L$-parameter $\varphi_{\pi}^{\on{FS}}$ is tame. In particular, we can view $\varphi_{\pi}^{\on{FS}}$ as a map from $W_E/P_E$.
			\item The restriction of $\varphi_{\pi}^{\on{FS}}$ to $I_E/P_E$ is determined by its value on the topological generator $\tau_E \in I_E/P_E$, in the sense that the following diagram commutes
			\[\begin{tikzcd}
				{\Irr_{\Lambda}^0G(E)} & {(Z^1(W_E/P_E, \widehat{G})\sslash \widehat{G})(\Lambda)} \\
				{(\widehat{T}\sslash W)^{\widehat{F}}(\Lambda)} & {(\widehat{T}\sslash W)^{\widehat{F}}(\Lambda)}
				\arrow["{\on{LLC}^{\on{FS}}}", from=1-1, to=1-2]
				\arrow["{\on{LS}}"', from=1-1, to=2-1]
				\arrow["{\on{ev}_{\tau_E}}", from=1-2, to=2-2]
				\arrow["{=}", from=2-1, to=2-2].
			\end{tikzcd}\]
		\end{enumerate}
	\end{theorem}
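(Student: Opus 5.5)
The plan is to deduce Theorem \ref{Thm_LLC} from the cohomological statement Theorem \ref{Theorem_action}, via the standard characterization of Fargues--Scholze parameters through excursion operators on shtuka cohomology.

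\textbf{Step 1: realize $\pi$ in shtuka cohomology with the right idempotent.} Let $\pi\in\Irr^0_\Lambda G(E)$. By the Moy--Prasad theorem and the remark that types may be chosen in $\overline{C}$, pick a depth-zero type $(\sigma,\tau)$ with $\sigma\subseteq\overline C$ and $\tau$ an irreducible supercuspidal representation of $M_\sigma(k_E)$. By Theorem \ref{Thm_lanard}, $\on{LS}(\pi)=\xi_\sigma(\on{LS}(\tau))$. Let $s_\sigma=\on{LS}(\tau)\in(\widehat{T}_{M_\sigma}\sslash W_{M_\sigma})^{\widehat F}(\Lambda)$. Then $\pi^{\calG_{\sigma,1}(O_E)}$ is nonzero and contains $\tau$ as a $\calG_\sigma(k_E)$-representation. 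Moreover, $V_\sigma(k_E)$ acts trivially on it and $e_{s_\sigma}$ acts nontrivially on it.

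\textbf{Step 2: recall how excursion operators determine $\varphi^{\on{FS}}_\pi$.} Following \cite[Chapter IX]{fargues2021geometrization}, the excursion operator attached to $(I,V,\alpha,\beta,(\gamma_i))$ acts on $\pi$ by the scalar determined by $\varphi^{\on{FS}}_\pi$. The same operators act compatibly on the shtuka cohomology $\cHV$ for $b$ basic, which computes $\pi\mapsto$ Hecke eigensheaves on $\Bun_G$ restricted to level $\calG_{\sigma,1}(O_E)$ (Lemma \ref{Lemma_Sht_Exc}). Concretely, take $b=1$ and the $\pi$-isotypic part: $\Hom_{G(E)}(\cInd_{\calG_{\sigma,1}}^{G(E)}\Lambda,-)$ applied to the spectral action. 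The $\pi$-part of $(\HV)^{V_\sigma(k_E)}$ for $V=$ the trivial representation and $I=\emptyset$ is $\pi^{\calG_{\sigma,1}}$ itself. The relevant point is that the action of $\calO(Z^1(W_E,\widehat G)/\widehat G)$ on $\pi$ factors through evaluation at $\varphi^{\on{FS}}_\pi$, and that this action is compatible with the action on $e_{s_\sigma}(H_{\sigma,1,\emptyset,\sbar,V})^{V_\sigma(k_E)}$ after applying $\Hom(-,\pi)$. Hence the scalar by which any $f\in\calO(Z^1/\widehat G)$ acts on $\pi$ equals $f(\varphi^{\on{FS}}_\pi)$. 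It also agrees with its action on the nonzero summand of $e_{s_\sigma}(\HV)^{V_\sigma(k_E)}$ mapping onto $\pi$.

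\textbf{Step 3: apply Theorem \ref{Theorem_action}.} Part (1) of that theorem gives that the $I_E$-action through excursion is trivial on $P_E$. Therefore every excursion function with some $\gamma_i\in P_E$ acts as if $\gamma_i=1$, which forces $\varphi^{\on{FS}}_\pi|_{P_E}$ to be trivial up to semisimple conjugacy. Since $P_E$ is pro-$p$ and $\ell\ne p$, the image of $P_E$ under $\varphi^{\on{FS}}_\pi$ is finite of order prime to $\ell$, so semisimple triviality means actual triviality. This proves (1).

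Part (2) of Theorem \ref{Theorem_action} says the summand is supported on $\on{ev}_{\tau_E}^{-1}(\xi_\sigma(s_\sigma))$. Here one must check that the $s$ appearing there is transported to $(\widehat T\sslash W)^{\widehat F}$ via $\xi_\sigma$; this is the content of the statement over $\widehat T\sslash W$. So $\on{ev}_{\tau_E}(\varphi^{\on{FS}}_\pi)=\xi_\sigma(s_\sigma)=\on{LS}(\pi)$. Finally, $\varphi|_{I_E/P_E}$ is determined by $\varphi(\tau_E)$ because $\tau_E$ topologically generates $I_E/P_E$, giving (2).

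\textbf{Main obstacle.} I expect Step 2 to be the hardest: precisely matching the Fargues--Scholze spectral action on $\pi$ with the excursion action on the depth-zero-level shtuka cohomology, including the $e_s$ and $V_\sigma(k_E)$-invariant projections. The first approach I would try is to use the compatibility of excursion operators with the $G(E)$-action and the isomorphism $\Sht_{G,b,\mu,K}\cong\Sht_{\calG_{\sigma,1},\dots}$ (Example \ref{Example_depthzero}). Since $e_s$ and the $V_\sigma(k_E)$-invariants commute with excursion operators, the support statement then descends to the quotient $\pi$.
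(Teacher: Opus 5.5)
Your proposal is correct and follows the paper's route. The paper deduces Theorem~\ref{Thm_LLC} in one line from Theorem~\ref{Theorem_action} (using only $b=1$), implicitly through the same depth-zero-type and excursion-operator argument you spell out in Steps 1--3.

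The one point to state more carefully is Step 2. Central elements need not act by scalars on a ``summand mapping onto $\pi$''. Instead, $\pi$ is a quotient of $e_{s_\sigma}(\cInd_{\calG_{\sigma,1}(O_E)}^{G(E)}\Lambda)^{V_\sigma(k_E)}$, compatibly with the action of $\calO(Z^1(W_E,\widehat G)/\widehat G)$, whose excursion operators factor through $\calH_{\sigma,1,I,\sbar,V}$ by Lemma~\ref{Lemma_Sht_Exc}. So both the tameness and the support on $\on{ev}_{\tau_E}^{-1}(s)$ pass to the point $\varphi_\pi^{\on{FS}}$.
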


	\begin{remark}\label{Rmk_depend}
		Recall that we have fixed a trivialization of the group of roots of unity $\overline{k}_E^* \cong (\bbQ/\bbZ)_{p'}$ where $p'$ denotes the prime-to-$p$ part. This choice enters in two places, and the two dependencies cancel:
		\begin{enumerate}
			\item It occurs in the construction of the map $$\on{LS}: \Irr^0_{\Lambda}G(E) \to (\widehat{T}\sslash W)^{\widehat{F}}(\Lambda).$$
			\item It occurs in the choice of a topological generator $\tau_E \in I_E/P_E$.
		\end{enumerate}
	\end{remark}

	We now explain the proof strategy for Theorem \ref{Thm_LLC}. By construction of Fargues-Scholze $L$-parameters $\varphi_{\pi}^{\on{FS}}$, we need to compute certain Hecke operators, i.e., cohomology of certain shtuka space. Recall that $\calG$ is a connected reductive group scheme over $O_E$ with generic fiber $G$. Let $\sigma$ be a polysimplex in the Bruhat-Tits building $\calB(G, E)$. Let $\calG_{\sigma}$ be the parahoric group scheme over $O_E$ corresponding to $\sigma$. Let $\breve{S}_0:=\DY=\Spd \bOE, \etab:=\Spd \bE, \sbar:=\Spd \overline{k}_E$. Choose a geometric point $\overline{\eta}$ of $\etab$. Fix $b \in B(G)$.

	As in Definition \ref{Def_Sht}, we have the associated shtuka space 
	\begin{equation}
		p_{\sigma, b, I}: \Sht^{\infty=b}_{\calG_{\sigma}, (\DY)^I} \to (\DY)^I=(\Spd \bOE)^I.
	\end{equation}
	Restricting to $(\Spd \bE)^I$ and adding the datum of a trivialization at $\sbar \times S$, we get the shtuka space with level structure $\sbar$,
	\begin{equation}
		p_{\sigma, b, I, \sbar}: \Sht^{\infty=b}_{\calG_{\sigma}, (\DY)^I, \sbar} \to (\Spd \bOE-\sbar)^I=(\Spd \bE)^I.
	\end{equation} 
	We also have the map
	$$\epsilon_{\sigma, b, I}:  \Sht^{\infty=b}_{\calG_{\sigma}, (\DY)^I} \to \Hk^{\loc}_{\calG_{\sigma}, (\DY)^I}$$
	recording the local modification. Similarly, we have 
	$$\epsilon_{\sigma, b, I, \sbar}: \Sht^{\infty=b}_{\calG_{\sigma}, (\DY)^I, \sbar}  \to \Hk^{\loc}_{\calG_{\sigma}, (\Spd \bE)^I}.$$
	
	\begin{definition}
		\begin{enumerate}
			\item Choose $\sigma, b, I$ as before. For $V \in \Rep(\widehat{G}^I)$, the \textit{cohomology of the shtuka space with depth-zero level structure associated with $(\sigma, b, I, V)$} is 
			$$\calH_{\sigma, b, I, \overline{s}_0, V}:=p_{\sigma, b, I, \sbar, !} \epsilon_{\sigma, b, I, \sbar}^*\Sat V \in \scrD_{\et}((\Spd \bE)^I, \Lambda),$$
			where $\Sat V \in \scrD_{\et}(\Hk^{\loc}_{\calG_{\sigma}, \breve{\eta}^I})$ is the Satake sheaf.
			\item We write
			$$H_{\sigma, b, I, \overline{s}_0, V}:=(\calH_{\sigma, b, I, \overline{s}_0, V})|_{\Delta(\overline{\eta})} \in \scrD_{\et}(\overline{\eta})$$
			for the fiber of $\calH_{\sigma, b, I, \overline{s}_0, V}$ at the geometric generic point $\overline{\eta}$.
		\end{enumerate}
	\end{definition}

	Since $H_{\sigma, b, I, \overline{s}_0, V}$ is the pullback to the geometric fiber, it carries a $\Gal(\overline{\bE}/\bE)^I=(I_E)^I$-action by \cite[Proposition IX.3.2]{fargues2021geometrization}.
	
	We next give an expression for $\calH_{\sigma, b, I, \overline{s}_0, V}$ in terms of Hecke operators. This will allow us to express $\calH_{\sigma, b, I, \overline{s}_0, V}$ in terms of Hecke operators. In particular, it shows that $\calH_{\sigma, b, I, \overline{s}_0, V}$ is equipped with an action of $\calO(Z^1(W_E, \widehat{G})/\widehat{G})$, the ring of functions on the stack of $L$-parameters.
	For every compact open subgroup $K \subseteq G(E)$ and $W \in \Rep_{\Lambda}(\widehat{G}^I)$, we recall the maps
	\begin{equation}
		f_K: \Sht_{G, b, \mu, K} \to \prod_{i \in I} \Spd \breve{E}_i,  \qquad \pi_K: \Sht_{G, b, \mu, K} \to \Gr^{\on{tw}}_{G, \prod_{i \in I}\Spd \breve{E}_i, \leq \mu}
	\end{equation}
	and the sheaf $\calS_W$ from \cite[Section IX.3]{fargues2021geometrization}.
	
	\begin{lemma}\label{Lemma_Sht_Exc}
		Fix $b \in B(G)$ as before. Let $K=\calG_{\sigma, 1}(O_E)$. Let $V \in \Rep_{\Lambda}(\widehat{G}^I)$ be an irreducible representation. Let $\mu$ be a tuple of cocharacters that is sufficiently dominant. We have
		\begin{enumerate}
			\item $\calH_{\sigma, b, I, \overline{s}_0, V}=f_{K, !}(\calS_V|_{\Sht_{G, b, \mu, K}})$.
			\item $f_{K, !}(\calS_V|_{\Sht_{G, b, \mu, K}})=i_b^*T_Vi_{1, !}(\cInd_K^{G(E)}\Lambda)$.
			\item $\calH_{\sigma, b, I, \overline{s}_0, V}$ carries an action of $\calO(Z^1(W_E, \widehat{G})/\widehat{G})$.
		\end{enumerate}
	\end{lemma}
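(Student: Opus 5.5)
We prove the three parts in order, since (3) follows formally from (1) and (2).

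For (1), we compare the definition of $\calH_{\sigma, b, I, \overline{s}_0, V}$ with the Fargues--Scholze sheaf on the shtuka space with level $K=\calG_{\sigma,1}(O_E)$. By Example \ref{Example_depthzero}, specifically \eqref{Eq_Sht_1} and \eqref{Eq_Sht_2}, we have
$$\Sht_{G,b,\mu,K}\cong(\Sht_{\calG_{\sigma}, (\DY)^I, \sbar} \times_{(\DY)^I} \TE)^{\infty=b}_{\leq \mu}.$$
Under this identification, $\epsilon_{\sigma,b,I,\sbar}$ restricts to $\pi_K$. Here we use the identification of the local Hecke stack over $(\Spd\bE)^I$ with the twisted Grassmannian quotient by $L^+G$: the Satake sheaf on $\Hk^{\loc}$ pulls back to the $L^+G$-equivariant sheaf $\calS_V$ on $\Gr^{\tw}$. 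Take $\mu$ sufficiently dominant that $\Sat V$ is supported on the bounded-by-$\mu$ locus; this is possible because $V$ is irreducible, hence has bounded weights. Then $\epsilon^*\Sat V$ is extended by zero from the $\leq\mu$ part. Applying proper base change for $p_!$ along $\TE\to(\Spd \bE)^I$ gives
$$\calH_{\sigma, b, I, \overline{s}_0, V}=f_{K,!}(\calS_V|_{\Sht_{G,b,\mu,K}}).$$
Strictly this holds after pullback to $\TE$, but since the fields $E_i$ can be taken unramified or absorbed into the notation over $\Spd\bE$, this is harmless. The one point needing care is matching the conventions in the definitions, namely the direction of $\alpha$ and the Frobenius twist between \cite[Definition 23.1.1]{scholze2020berkeley} and Definition \ref{Def_Sht}. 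This is exactly Lemma \ref{Lemma_Sht_compare}, combined with the level-structure dictionary \cite[Proposition 23.3.1]{scholze2020berkeley} and \cite[Corollary 2]{arnaud2023neron}.

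For (2), we invoke the comparison in \cite[Section IX.3]{fargues2021geometrization} between the cohomology of local Shimura varieties/shtuka spaces and Hecke operators. Concretely, $\Sht_{G,b,\mu,K}$ is the fiber product of the Hecke correspondence $\Bun_G\xleftarrow{h_1}\Hk\xrightarrow{h_2}\Bun_G$ with the points $i_1:[*/\underline{G(E)}]\to\Bun_G$ and $i_b:[*/\underline{G_b(E)}]\to\Bun_G$, taken at level $K$. The sheaf $i_{1,!}(\cInd_K^{G(E)}\Lambda)$ is the pushforward of the constant sheaf along $[*/\underline{K}]\to[*/\underline{G(E)}]\to\Bun_G$. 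Base change along $i_b$ and the projection formula then identify $f_{K,!}(\calS_V|_{\Sht})$ with $i_b^*T_Vi_{1,!}(\cInd_K^{G(E)}\Lambda)$. The main obstacle is bookkeeping, not mathematics: one must check compatibility of the $!$-pushforward over $[*/\underline K]$ with the profinite level (this is fine because $\ell\neq p$ and $K$ is pro-$p$ up to a finite group of order prime to $\ell$, or at least the stacks $[*/\underline K]$ are cohomologically smooth), and identify the twist conventions of $T_V$. We would simply cite \cite[Theorem IX.3.1/Proposition IX.3.2]{fargues2021geometrization} for this.

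For (3), by (2), $\calH$ is obtained by applying $i_b^*T_V$ to an object of $\scrD(\Bun_G)$. The Bernstein center, more precisely the spectral action or the excursion algebra $\calO(Z^1(W_E,\widehat G)/\widehat G)\to\mathfrak Z(\scrD(\Bun_G))$ of \cite[Chapter IX]{fargues2021geometrization}, acts on every object of $\scrD(\Bun_G)$ and commutes with Hecke operators and with $i_b^*$. Transporting this action through the isomorphisms of (1) and (2) gives the claimed action. Finally, this action is independent of $\mu$, because enlarging $\mu$ does not change $\calH$, as noted in the proof of (1).
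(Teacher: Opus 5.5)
\textbf{Verdict: there is a genuine gap in (1).} Your parts (2) and (3) match the paper: (2) is \cite[Proof of Proposition IX.3.2]{fargues2021geometrization}, and (3) transports the action of $\calO(Z^1(W_E,\widehat{G})/\widehat{G})\to\on{End}(\on{id}_{\scrD_{\et}(\Bun_G)})$ on $T_Vi_{1,!}(\cInd_K^{G(E)}\Lambda)$ through (1) and (2).

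\textbf{What you assert.} You identify $\Hk^{\loc}_{G,(\Spd\bE)^I}$ with the twisted Grassmannian modulo $L^+G$. From this you conclude that $\Sat V$ pulls back tautologically to $\calS_V$, so that $\epsilon_K^*\Sat V\cong\pi_K^*\calS_V$ on $\Sht_{G,b,\mu,K}$ holds by definition. This identification is not available once there are several legs.

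\textbf{Why it fails along the twisted diagonals.} The map $\epsilon_K$ only factors as $a_K\circ\pi_K$, where $a_K\colon\Gr^{\tw}\to\Hk^{\loc}_{G,\prod_i\Spd\bE_i}$, and $\Gr^{\tw}$ is a Frobenius-twisted form of the Beilinson--Drinfeld Grassmannian. Consider the twisted diagonals $y_i=\varphi^k(y_j)$ with $k\neq 0$.
\begin{itemize}
\item There the graphs $\Gamma_{y_i},\Gamma_{y_j}\subseteq\calY_S$ are disjoint, so $\Sat V$ on $\Hk^{\loc}_{G,(\Spd\bE)^I}$ is an external product.
\item The period map and the Hecke operator $T_V$ on $\Bun_G$, whose legs live in $\DX$, instead see these legs as colliding.
\item This is the same phenomenon as the map $\Phi$ in the proof of Proposition \ref{Prop_lc_nabla}, which is an isomorphism only away from the twisted diagonal.
\end{itemize}
Consequently $\epsilon_K^*\Sat V\cong\pi_K^*\calS_V$ holds by definition only off the twisted diagonals, and an argument is needed to extend it across them.

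\textbf{How the paper closes this.} It shows both sheaves are ULA over $\prod_i\Spd\bE_i$ (Lemma \ref{Lemma_ULA}). For $\pi_K^*\calS_V$ this is because $\calS_V$ is ULA and $\pi_K$ is \'etale. For $\epsilon_K^*\Sat V$ it reduces to $a_K^*\Sat V$, which is ULA by the explicit description of $\Gr^{\tw}$ and \cite[Lemma VI.6.3]{fargues2021geometrization}. It then concludes by uniqueness of ULA extensions from the complement of the twisted diagonals.

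\textbf{Where your argument stands.} The point you single out as needing care, the conventions behind Lemma \ref{Lemma_Sht_compare}, identifies only the spaces, not the sheaves on them. The comparison of the two sheaves across the twisted diagonals is the actual content of (1), and it is missing from your argument. For a single leg there are no twisted diagonals, and your argument is fine in that case.
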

	
	\begin{proof}
		For (1), let us first recall  equations \eqref{Eq_Sht_1} and \eqref{Eq_Sht_2} comparing $\Sht^{\infty=b}_{\calG_{\sigma}, (\DY)^I, \sbar}$ and $\Sht_{G, b, \mu, K}$. It suffices to show that the pullbacks of $\Sat V$ and $\calS_V$ to $\Sht_{G, b, \mu, K}$ agree. Away from the twisted diagonals, this follows from the definition of $\calS_V$. By uniqueness of ULA extensions, it therefore suffices to show that $(\Sat V)|_{\Sht_{G, b, \mu, K}}$ and $(\calS_V)|_{\Sht_{G, b, \mu, K}}$ are ULA over $\prod_{i \in I}\Spd \bE_i$. This follows from Lemma \ref{Lemma_ULA}.
		
		For (2), see \cite[Proof of Proposition IX.3.2]{fargues2021geometrization}.
		
		For (3), recall that we have a map
		\begin{equation}
			\calO(Z^1(W_E, \widehat{G})/\widehat{G}) \to \on{End}(\on{id}_{\scrD_{\et}(\Bun_G)}).
		\end{equation}
		The categorical center $\on{End}(\on{id}_{\scrD_{\et}(\Bun_G)})$ acts on $T_Vi_{1, !}(\cInd_K^{G(E)}\Lambda) \in \scrD_{\et}(\Bun_G)$. Therefore, (3) follows from (2).
		
	\end{proof}
	
	Note that the map
	\begin{equation}
		\Sht_{G, b, \mu, K} \to \Hk^{\loc}_{\calG_\sigma, \prod_{i \in I}\Spd \bE_i}=\Hk^{\loc}_{G,\; \prod_{i \in I}\Spd \bE_i}
	\end{equation}
	factors through the map $\pi_K$: 
	\[\begin{tikzcd}
		{\Sht_{G, b, \mu, K}} & {\Gr^{\on{tw}}_{G, \prod_{i \in I}\Spd \breve{E}_i, \leq \mu}} \\
		& {\Hk^{\loc}_{G, \prod_{i \in I}\Spd \bE_i}}
		\arrow["{\pi_K}", from=1-1, to=1-2]
		\arrow["{\epsilon_K}"', from=1-1, to=2-2]
		\arrow["{a_K}", from=1-2, to=2-2].
	\end{tikzcd}\]
	
	\begin{lemma}\label{Lemma_ULA}
		\begin{enumerate}
			\item The sheaf $\pi_K^*(\calS_{V})=(\calS_{V})|_{\Sht_{G, b, \mu, K}}$ is ULA over $\prod_{i \in I}\Spd \bE_i$.
			\item The sheaf $\epsilon_K^*(\Sat V)=(\Sat V)|_{\Sht_{G, b, \mu, K}}$ is ULA over $\prod_{i \in I}\Spd \bE_i$.
		\end{enumerate}
	\end{lemma}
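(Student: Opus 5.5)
The plan is to reduce both statements to the ULA property of the Satake sheaf on the Beilinson--Drinfeld Grassmannian (respectively the local Hecke stack), together with the fact that the level-$K$ shtuka space is, locally on the source, cohomologically smooth over the twisted Grassmannian. ULA is preserved under pullback along such maps.

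For (1), recall from \cite[Section IX.3]{fargues2021geometrization} that $\calS_V$ is defined on $\Gr^{\on{tw}}_{G, \prod_{i \in I}\Spd \breve{E}_i, \leq \mu}$ as the twisted version of the Satake sheaf. By \cite[Theorem VI.11.1 / Proposition VI.6.x]{fargues2021geometrization}, the Satake sheaf is ULA over $\prod_i \Spd \breve{E}_i$. Hence $\calS_V$ is ULA over $\prod_i \Spd \breve{E}_i$ as well, since the twisting is étale-locally trivial over the base and ULA is étale local on the source.

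Next, I would use the key geometric input from the proof of \cite[Proposition IX.3.2]{fargues2021geometrization} (originally \cite[Section 23]{scholze2020berkeley}). The map $\pi_K: \Sht_{G, b, \mu, K} \to \Gr^{\on{tw}}_{G, \prod \Spd \breve{E}_i, \leq \mu}$ is, after passing to the infinite-level cover $\Sht_{G,b,\mu,\infty}\to\Sht_{G,b,\mu,K}$ (a pro-étale $K$-torsor), identified locally with a map that is cohomologically smooth; concretely, $\pi_K$ factors through an open subspace of a space that is a $\widetilde{G}_b$-torsor over $\Gr^{\on{tw}}$. I would invoke the fact that ULA sheaves pull back to ULA sheaves along cohomologically smooth maps \cite[Proposition IV.2.13]{fargues2021geometrization}, with the pro-étale level changes handled by étale descent of ULA. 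This gives (1).

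For (2), I would use the factorization $\epsilon_K = a_K \circ \pi_K$ displayed above and show that $a_K^*\Sat V \cong \calS_V$. This is essentially the definition of $\calS_V$ as the pullback of the Satake sheaf along the (twisted) map $\Gr^{\on{tw}} \to \Hk^{\loc}$, at least away from twisted diagonals. The isomorphism should extend over the whole twisted Grassmannian because $a_K$ is a torsor for $L^+G$-type groups, and hence cohomologically smooth in the appropriate sense, so that $a_K^*\Sat V$ is ULA and agrees with $\calS_V$ by uniqueness of ULA/perverse extensions. Then (2) follows from (1).

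I expect the main obstacle to be the cohomological smoothness of $\pi_K$ (or the analogous statement for $a_K$) near the loci where legs collide or meet Frobenius twists of each other. The clean statement in \cite{scholze2020berkeley} is for fixed $b$ with a trivialization near infinity, so I would carefully restrict to the $\infty=b$ locus and use the description of $\Sht_{G,b,\mu,\infty}$ as a $\underline{G(E)}$-torsor over an open of the twisted Grassmannian modulo $\widetilde G_b$.
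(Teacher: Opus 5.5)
Your skeleton agrees with the paper's: for (1) you pull $\calS_V$ back along $\pi_K$, and for (2) you use $\epsilon_K=a_K\circ\pi_K$. However, the one step in (2) that carries real content is justified by a claim that fails exactly where it matters.

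\textbf{The gap in (2).} You get ULA-ness of $a_K^*\Sat V$ from the assertion that $a_K$ is a torsor for $L^+G$-type groups. That is true only off the twisted diagonals.

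Suppose a leg $x_j$ is a Frobenius translate of another leg $x_i$. The target $\Hk^{\loc}_{G,\prod_{i\in I}\Spd\bE_i}$ is then still a product of one-leg Hecke stacks, since the divisors $\Gamma_{x_i},\Gamma_{x_j}\subseteq Y_S$ are disjoint. But in $\Gr^{\on{tw}}_{G,\prod_{i\in I}\Spd\bE_i,\leq\mu}$ the modification at $\Gamma_{x_j}$ is taken relative to the Frobenius translate of the modification at $\Gamma_{x_i}$. So locally there $\Gr^{\on{tw}}$ is a twisted (convolution-type) product, and $a_K$ is a map from such a twisted product to a product of local Hecke stacks. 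That map is not a torsor. It is still smooth in a pro-sense, but seeing this and deducing ULA-ness is exactly where the work lies.

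This is the reason for the superscript $\on{tw}$, and the reason $\Sat V$ and $\calS_V$ can only be compared directly away from the twisted diagonals (cf.\ the proof of Lemma \ref{Lemma_Sht_Exc}). To conclude there you need the explicit local structure of $\Gr^{\on{tw}}$ \cite[Proposition 23.4.2]{scholze2020berkeley} together with \cite[Lemma VI.6.3]{fargues2021geometrization}. This is what the paper uses, and it is missing from your sketch.

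The same objection applies to your justification of the ULA-ness of $\calS_V$ in (1) via an ``\'etale-locally trivial twisting''. That property should simply be quoted from \cite[Section IX.3]{fargues2021geometrization}.

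\textbf{Two further corrections.}

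First, the obstacle you single out is not one. $\pi_K$ is \'etale everywhere, including near colliding or Frobenius-related legs \cite[Corollary 23.5.3]{scholze2020berkeley}. The relevant structure is that the infinite-level space is a $\underline{G(E)}$-torsor over an open subspace of $\Gr^{\on{tw}}$; nothing is built from $\widetilde{G}_b$. So (1) is immediate, and no passage to infinite level or descent of ULA-ness is needed. The difficulty lives entirely in $\Gr^{\on{tw}}$ and $a_K$.

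Second, identifying $a_K^*\Sat V$ with $\calS_V$ is an unnecessary detour. As soon as $a_K^*\Sat V$ is ULA, (2) follows from $\epsilon_K^*\Sat V=\pi_K^*a_K^*\Sat V$ and the \'etaleness of $\pi_K$; this is exactly the reduction the paper makes. The comparison with $\calS_V$ is a consequence of this lemma (used in Lemma \ref{Lemma_Sht_Exc}(1)), not an ingredient.
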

	
	\begin{proof}
		(1) follows from the fact that $\calS_V$ is ULA over $\prod_{i \in I}\Spd \bE_i$ and the period map
		$$\pi_K: \Sht_{G, b, \mu, K} \to \Gr^{\on{tw}}_{G, \prod_{i \in I}\Spd \breve{E}_i}$$
		is \'etale (\cite[Corollary 23.5.3]{scholze2020berkeley}).
		
		For (2), It suffices to show that $a_K^*\Sat V$ is ULA over $\prod_{i \in I}\Spd \bE_i$. This can be shown using the explicit description of $\Gr^{\on{tw}}$ (\cite[Proposition 23.4.2]{scholze2020berkeley}) and \cite[Lemma VI.6.3]{fargues2021geometrization}.
		
	\end{proof}

	We would like to cut out the depth-zero part of $H_{\sigma, b, I, \overline{s}_0, V}$ (cf. Example \ref{Example_depthzero}). By changing the level structure, there is a $\calG_{\sigma}(k_E)$-action  on $\Sht^{\infty=b}_{\calG_{\sigma}, (\DY)^I, \sbar}$, hence on $\cHV$ and $\HV$. Let $V_{\sigma}$ be the unipotent radical of $\calG_{\sigma, k_E}$ and $M_{\sigma}$ the reductive quotient. We consider the invariants
	$$(\HV)^{V_{\sigma}(k_E)},$$ 
	which then carries an action of $M_{\sigma}(k_E)$.
	
	Let $T_{M_{\sigma}}$ be a maximal torus of $M_{\sigma}$, and $W_{M_{\sigma}}$ the Weyl group of $T_{M_{\sigma}, \overline{k}_E}$. Recall that from Deligne-Lusztig theory, for every $s \in (\widehat{T}_{M_{\sigma}}\sslash W_{M_{\sigma}})^{\widehat{F}}(\Lambda)$, there is an associated idempotent element $e_s$ in the group algebra $\Lambda[M_{\sigma}(k_E)]$. Using the $M_{\sigma}(k_E)$-action on $(\HV)^{V_{\sigma}(k_E)}$, we can define
	$$e_s(\HV)^{V_{\sigma}(k_E)},$$ 
	the summand of cohomology of shtuka with respect to $s$. Moreover, we can view $s$ as an element in $(\widehat{T}\sslash W)^{\widehat{F}}(\Lambda)$ thanks to \cite{lanard18} and \cite{lanard21} (see Theorem \ref{Thm_lanard}).

	\begin{theorem}\label{Theorem_action}
		Assume Assumption \ref{Assum_sm} and \ref{Assum_CT}.
		\begin{enumerate}
			\item For every $I, V, \sigma, b$ as above, the $(I_E)^{I}$-module $(\HV)^{V_{\sigma}(k_E)}$ is tame, i.e., the $(I_E)^I$-action is trivial on the wild inertia $(P_E)^I$.
			\item For every $s \in (\widehat{T}_{M_{\sigma}}\sslash W_{M_{\sigma}})^{\widehat{F}}(\Lambda)$, the $\calO(Z^1(W_E, \widehat{G})/\widehat{G})$-module 
			$$e_s(\HV)^{V_{\sigma}(k_E)}$$ 
			is supported on $\on{ev}_{\tau_E}^{-1}(s)$. 
		\end{enumerate}
	\end{theorem}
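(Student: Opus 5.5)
Both parts will be deduced from the finer filtration statement, Theorem \ref{Thm_1.4} (proved later as Theorem \ref{Thm_tame}), together with the compatibility between the excursion action and the inertia action.

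For (1), I decompose $(\HV)^{V_{\sigma}(k_E)}=\bigoplus_s e_s(\HV)^{V_{\sigma}(k_E)}$ using the complete collection of central orthogonal idempotents $e_s\in\Lambda[M_{\sigma}(k_E)]$ recalled in Subsection \ref{Subsection_finite_red}. This decomposition is $(I_E)^I$-stable, because the $\calG_{\sigma}(k_E)$-action (change of level) commutes with the Galois action on the fibre at $\Delta(\overline{\eta})$. For each $s$, Theorem \ref{Thm_1.4} gives a finite filtration (the support of the shtuka cohomology is bounded) whose graded pieces are sums of $A_\nu$. On each $A_\nu$ the group $P_E^I$ acts trivially. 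Since $P_E$ is pro-$p$ and $\ell\neq p$, a pro-$p$ group acting trivially on all graded pieces of a finite filtration acts trivially on the whole complex. Indeed, the action is an iterated extension of trivial actions, hence unipotent, and a continuous unipotent action of a pro-$p$ group on an $\ell$-torsion module is trivial, because the unipotent group has $\ell$-power exponent. Working at the level of cohomology groups avoids any $\infty$-categorical subtleties. This proves tameness.

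For (2), I use the defining property of excursion operators: for $V$ and a function $f\in\calO(\widehat{G}\sslash\widehat{G})$, the excursion operator $S_{\{1,2\},f,(1,\gamma)}$ acts on $\cHV$ through the $(I_E)^I$-action composed with the creation/annihilation maps. The fibre at $\Delta(\overline{\eta})$ is compatible with this by Lemma \ref{Lemma_Sht_Exc}. Consequently, the function $\on{ev}_{\tau_E}^*f$ on $Z^1(W_E/P_E,\widehat{G})\sslash\widehat{G}$ acts on $e_s(\HV)^{V_\sigma(k_E)}$ via the action of $\tau_E$ on $H_{\sigma,b,\{1,2\},\sbar,V\boxtimes \on{Reg}}$, after the usual fusion along the diagonal. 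By the filtration of Theorem \ref{Thm_1.4}, $\tau_E$ acts on the piece $A_\nu$ by the scalar $\prod_i \chi(\nu_i(\tau_E))$, where this is read through $a^*\nu_i^*\calL_\chi$. Assembled over the weights $\nu$ of the $\widehat{T}$-modules that appear, this means $f(\varphi(\tau_E))$ acts on the associated graded by $f(s)$, where $s$ is viewed in $\widehat{T}$ via $\chi$ and the identification $\Hom(T^{wF},\Lambda^*)=\widehat{T}^{w\widehat{F}}(\Lambda)$. Here one must check that the trivialization $\overline{k}_E^*\cong(\bbQ/\bbZ)_{p'}$ used to pick $\tau_E$ matches the one identifying $\chi$ with a point of $\widehat{T}$, as in Remark \ref{Rmk_depend}. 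Hence $f-f(s)$ acts nilpotently on the graded pieces, and so nilpotently on the whole module. Transporting $s$ to $(\widehat{T}\sslash W)^{\widehat{F}}$ via $\xi_\sigma$ (Theorem \ref{Thm_lanard}) then shows that the module is set-theoretically supported on $\on{ev}_{\tau_E}^{-1}(s)$.

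I expect the main obstacle to be the careful matching in (2). The $\widehat{T}$-weight computation on $A_\nu$ has to be identified with evaluating $f\in\calO(\widehat{G})^{\widehat{G}}$ at $s$ via the Chevalley restriction $\calO(\widehat{G})^{\widehat{G}}\cong\calO(\widehat{T})^W$. This requires checking that the multiset of $\nu$ appearing in $\on{gr}(A)$ for $V$ is the multiset of $\widehat{T}$-weights of $V$, which I would extract from the constant term description in Theorem \ref{Theorem_central}. One also has to check that the choice of $w$ (the twisted torus $\sfT^{wF}$) does not change the $W$-orbit, which is the independence built into $\on{LS}$.
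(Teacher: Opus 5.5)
\textbf{There is a genuine gap.} Your unipotence argument is correct: it deduces (1) from a finite filtration with tame graded pieces, and it is the same step used in the proof of Theorem \ref{Thm_tame}. The problem is the filtration you feed into it.

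\textbf{Theorem \ref{Thm_1.4} is not what Theorem \ref{Thm_tame} proves.} Theorem \ref{Thm_tame} is about the twisted central sheaves $\scrZ_\chi(V)=\Psi_f(\Sat V\boxtimes\calL_\chi[\dim\sfT])$ on the local Hecke stack $\Hk_{\calI^0}$, not about $\HV$. The introduction does present the statement as a consequence of Theorem \ref{Thm_1.4}, but the shtuka-side filtration is never proved on its own. The passage from $\Hk^{\loc}$ to $\Sht$ is exactly the paper's proof of (1), which rests on Assumption \ref{Assum_sm}, Theorem \ref{Thm_pPsi} and Theorem \ref{Thm_tame}. The first two never appear in your argument, so you are citing the main step rather than proving it. The missing reduction is:
\begin{enumerate}
\item By joint conservativity of the Deligne--Lusztig restrictions, it suffices to treat ${}^*R_w(\cHV)^{V_\sigma(k_E)}\cong p_!\epsilon^*(\Sat V\boxtimes\nabla_w(\Reg_{T^{wF}}))$ for each $w$ (Lemma \ref{Lemma_4512}).
\item The closed embedding of Lemma \ref{Lemma_Sht_Cart}, into $\calG_C$-shtukas with an extra leg at $\sbar$, extends this family over $(\Spd\bOE)^I$. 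The inertia action is then read off from iterated nearby cycles.
\item The commutations $p_!\Psi\cong\Psi p_!$ and $\epsilon^*\Psi\cong\Psi\epsilon^*$ move the computation to nearby cycles on the local Hecke stack, where Theorem \ref{Thm_tame} applies.
\end{enumerate}
This gives tameness of each ${}^*R_w(\cHV)^{V_\sigma(k_E)}$, not a filtration on $e_s(\HV)^{V_\sigma(k_E)}$ itself. One concludes by conservativity: ${}^*R_w$ commutes with the $(I_E)^I$-action, so the part on which $P_E^I$ acts with no invariants is killed by every ${}^*R_w$, hence vanishes.

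\textbf{Part (2) has a further gap, even granting a shtuka-side filtration.} The function $\on{ev}_{\tau_E}^*f$ acts through an excursion operator $\beta\circ(1,\tau_E)\circ\alpha$. This operator passes, via creation and annihilation maps, through the cohomology for a different coefficient (more legs, fused along a diagonal). Knowing that $\tau_E$ acts by scalars on the pieces $A_\nu$, and that the $\nu$ occurring are the $\widehat T$-weights of $V$, does not determine this composite. Two things could contribute: the nilpotent part of $\tau_E$ on the intermediate cohomology, and the failure of $\alpha,\beta$ to respect the filtrations.

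What you need is a filtration that is functorial in the representation and compatible with fusion. Then $\alpha$ and $\beta$ are filtered, with associated graded maps equal to the corresponding maps for $V|_{\widehat T}$. Only then does the composite act on $\on{gr}$ by $\langle\beta,(1,t_\chi)\alpha\rangle=f(t_\chi)$, where $t_\chi\in\widehat T(\Lambda)$ is the point attached to $\chi$, so that $f-f(s)$ is nilpotent. Theorem \ref{Thm_1.4} as stated records no such compatibility, and matching multisets of weights does not supply it.

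In the paper this step is delegated to the computation of \cite[Section 4.8.3]{eteve2023monodromiques}, carried out on the Hecke-stack side. There the needed compatibilities are available: monoidality of the twisted central functor (cf.\ Lemma \ref{Lemma_B2}) and the functorial description of the Wakimoto grading by constant terms (Theorem \ref{Theorem_central}).
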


	\subsection{The strategy and recollections on Eteve's thesis}\label{Subsection_strategy}
	In order to prove Theorem \ref{Thm_LLC} and \ref{Theorem_action}, we will follow the strategy of \cite{eteve2023monodromiques}, where Eteve proved the similar result when $E$ is a non-archimedean local field of equal characteristic. A large part of that argument extends to the mixed-characteristic case. In this subsection, we review \cite{eteve2023monodromiques}  and point out the part that we need to modify.
	
	There are two basic ideas to prove Theorem \ref{Thm_LLC} and \ref{Theorem_action}: Reducing from $\Sht$ to $\Hk^{\loc}$ and reducing from a general unramified reductive group $G$ to the torus case. The reduction from $\Sht$ to $\Hk^{\loc}$ has already been carried out in \cite{eteve2023monodromiques} and can be adapted to our case under Assumption \ref{Assum_sm}, as we are going to explain now.	
	
	For every smooth affine group scheme $\calQ$ over $\bOE$ with generic fiber $G_{\bE}$ , $b \in B(G)$, and a finite set $I$, we have a map recording the local modifications
	$$\epsilon: \Sht^{\infty=b}_{\calQ, (\Spd \bOE)^I} \to \Hk^{\loc}_{\calQ, (\Spd \bOE)^I},$$ 
	which is defined over $(\Spd \bOE)^I$. 
	We expect the map $\epsilon$ to be "smooth" in the sense that $\epsilon^*$ commutes with nearby cycles. We refer the reader to Assumption \ref{Assum_sm} for the precise assumption we make.

	In our case, we can encode the group $\calG_{\sigma}$ together with the level-$\overline{s}_0$  structure  into a single group $\calQ=\calG_{\sigma, 1}$, in the sense that a $\calG_{\sigma}$-torsor together with a level-$\overline{s}_0$  structure is the same as a $\calG_{\sigma, 1}$-torsor (\cite[Corollary 2]{arnaud2023neron}).

	The next idea is to chop off the cohomology in question,
	$$(\cHV)^{V_{\sigma}(k_E)}.$$
	Recall that $(\cHV)^{V_{\sigma}(k_E)}$ carries a $M_{\sigma}(k_E)$-action. The choice of the alcove $C$ and the integral torus $\calT$ determines a Borel pair $B_M=T_MU_M$ of $M_{\sigma}$. By Deligne-Lusztig theory, for every $w$ in the Weyl group $W_M$ of $M_{\sigma}(k_E)$, we have a Deligne-Lusztig restriction functor,
	$${^*}R_{w}: \Rep_{\Lambda}(M_{\sigma}(k_E)) \to \Rep_{\Lambda}(T^{wF}).$$
	Moreover, the functors $\{{^*}R_{w}\}_{w \in W_M}$ are jointly conservative. So we can look at one $w$ at  a time and try to control the cohomology
	$${^*}R_{w}(\cHV)^{V_{\sigma}(k_E)}.$$
	It turns out that ${^*}R_{w}(\cHV)^{V_{\sigma}(k_E)}$ can be interpreted as certain cohomology of 
	$$\Sht^{\infty=b}_{\calG_{\sigma}, (\DY)^I, \overline{s}_0} \times^{\calG_{\sigma}(k_E)} M_{\sigma}/B_{M_{\sigma}},$$
	by a twisted version of K\"{u}nneth formula. More precisely, we recall the following lemma.
	
	Let $w \in W_M$. Recall that we have the following diagram over $\sbar$
	\[\begin{tikzcd}
		{\frac{U_M\backslash B_MwB_M/U_M}{Ad_FT_M}} & {\frac{U_M\backslash M_\sigma/U_M}{Ad_FT_M}} \\
		{[*/T^{wF}] \ltimes (U_M \cap w^{-1}U_{M}w)} \\
		{[*/T^{wF}]}
		\arrow["{j_w}", from=1-1, to=1-2]
		\arrow["{=}", from=1-1, to=2-1]
		\arrow["{k_w}", from=2-1, to=3-1].
	\end{tikzcd}\]
	Let 
	\begin{equation}\label{Eqdef_nabla}
		\nabla_w:=j_{w, *}k_w^*: \scrD_{\et}([*/T^{wF}], \Lambda) \to \scrD_{\et}({\frac{U_M\backslash M_\sigma/U_M}{Ad_FT_M}}, \Lambda).
	\end{equation}
	
	\begin{lemma}[{\cite[Lemma 4.5.12]{eteve2023monodromiques}}]\label{Lemma_4512}
		We have 
		\begin{equation}
			^*R_{w}(\cHV)^{V_{\sigma}(k_E)} \cong p_!\epsilon^*(\Sat(V) \boxtimes \nabla_w(\on{Reg}_{T^{wF}})),
		\end{equation}
		where $\on{Reg}_{T^{wF}}$ is the regular representation of $T^{wF}$ and
		\[\begin{tikzcd}
			{(\Spd \breve{E})^I \times \overline{s}_0} & {\Sht^{\infty=b}_{\calG_{\sigma}, (\DY)^I, \overline{s}_0} \times^{\calG_{\sigma}(k_E)}M_{\sigma}/B_{M_{\sigma}}} & {\Hk^{\loc}_{\calG_{\sigma}, (\Spd \breve{E})^I} \times \frac{U_{M_{\sigma}}\backslash M_{\sigma} /U_{M_{\sigma}}}{Ad_FT_{M_{\sigma}}}}
			\arrow["p"', from=1-2, to=1-1]
			\arrow["\epsilon", from=1-2, to=1-3].
		\end{tikzcd}\]
	\end{lemma}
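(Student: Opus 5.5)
We compute the Deligne--Lusztig restriction directly from its definition, ${^*}R_w(B)=\on{RHom}_{M_\sigma(k_E)}(R_w^0,B)$. We then convert the result into geometry using the Lang isomorphism and proper base change. Write $\Sht_{\sbar}:=\Sht^{\infty=b}_{\calG_{\sigma}, (\DY)^I, \overline{s}_0}$ for brevity.

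\textbf{Step 1: passing to $V_\sigma(k_E)$-invariants.} The group $\calG_\sigma(k_E)$ acts on $\Sht_{\sbar}$ by changing the trivialization $\psi_{\sbar}$. The kernel $V_\sigma(k_E)$ is a finite $p$-group and $\ell\neq p$. Hence taking $V_\sigma(k_E)$-invariants is exact, and it commutes with $p_!$. It therefore amounts to replacing $\Sht_{\sbar}$ by the quotient $\Sht_{\sbar}/V_\sigma(k_E)$, which carries a residual $M_\sigma(k_E)$-action. Since $\epsilon$ is invariant under the level action, the sheaf $\epsilon^*\Sat V$ descends to this quotient.

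\textbf{Step 2: rewriting ${^*}R_w$ as a cohomology.} The module $R^0_w$ is the compactly supported cohomology of the Deligne--Lusztig variety
\[
Y_w=\{gU_M : g^{-1}F(g)\in U_M\dot w U_M\}\subseteq M_\sigma/U_M,
\]
with its $M_\sigma(k_E)\times T^{wF}$-action. Because $M_\sigma(k_E)$ is finite, $\on{RHom}_{M_\sigma(k_E)}(R^0_w,-)$ applied to $p_!(\ldots)$ is computed by a Künneth-type identity: it is the compactly supported cohomology of the contracted product $\Sht_{\sbar}\times^{\calG_\sigma(k_E)} Y_w$, suitably shifted and twisted by the dualizing complex, which is harmless because $Y_w$ is smooth. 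This contracted product maps to $\Sht_{\sbar}\times^{\calG_\sigma(k_E)} M_\sigma/B_{M_\sigma}$.

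\textbf{Step 3: the Lang isomorphism.} Consider the Lang-type map $M_\sigma\to[M_\sigma/Ad_F M_\sigma]$, $g\mapsto g^{-1}F(g)$. It identifies $[M_\sigma/Ad_F M_\sigma]\cong[*/M_\sigma(k_E)]$. Its $U_M$-double coset refinement is
\[
\frac{U_M\backslash M_\sigma/U_M}{Ad_FT_M}.
\]
This identifies the pushforward of the constant sheaf along the $Y_w$-fibration with the pullback of $\nabla_w(\on{Reg}_{T^{wF}})=j_{w,*}k_w^*\on{Reg}_{T^{wF}}$. Two facts enter here. First, $k_w^*\on{Reg}_{T^{wF}}$ is the pushforward of the constant sheaf along $*\to[*/T^{wF}]$, which records the $T^{wF}$-torsor $Y_w\to X_w$. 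Second, using $j_{w,*}$ rather than $j_{w,!}$ exactly accounts for the passage from $\on{RHom}$ to compactly supported cohomology, by Verdier duality on the smooth stratum.

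\textbf{Step 4: assembling the pieces.} Proper base change along $\epsilon$, applied to the external product $\Sat V\boxtimes\nabla_w(\on{Reg})$, then gives the formula. The map $\epsilon$ to the second factor is the composite of the relative position map with the Lang map just described.

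The main obstacle is the bookkeeping in Steps 2--3. One must check that the $*$ versus $!$ choice, the cohomological shifts, and the dualizing twists match exactly. One must also verify that the $T^{wF}$-module structures agree, and not merely the underlying complexes. I would follow \cite[Lemma 4.5.12]{eteve2023monodromiques} verbatim for this part. The argument there is purely about the finite group $M_\sigma(k_E)$ and the Lang map over $\sbar$, and is insensitive to whether the legs live in mixed or equal characteristic; only the formal properties of $p_!$, proper base change, and Künneth in $\scrD_{\et}$ of $v$-stacks are needed, and these hold by \cite{scholze2017ecd}.
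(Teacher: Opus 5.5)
The paper gives no argument here beyond citing \cite[Lemma 4.5.12]{eteve2023monodromiques}. Your overall architecture (Lang map, a fibrewise computation over $M_\sigma/B_{M_\sigma}$, duality on $Y_w$) is the natural one. However, Step 2 is wrong as stated, and the key input for modular coefficients is missing.

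\textbf{Step 2 identifies the wrong complex.} Since ${}^*R_w(C)=\on{RHom}_{M_\sigma(k_E)}(R^0_w,C)$, the complex $R^0_w=R\Gamma_c(Y_w,\Lambda)$ enters through its $\Lambda$-dual. Poincar\'e duality on the smooth variety $Y_w$ gives $(R^0_w)^\vee\cong R\Gamma(Y_w,\Lambda)[2\ell(w)]$, i.e.\ \emph{ordinary} cohomology. So the relevant object is cohomology of $\Sht_{\sbar}\times^{\calG_\sigma(k_E)}Y_w$ that is compactly supported along $\Sht_{\sbar}$ but \emph{not} along $Y_w$. Twisting by $\omega_{Y_w}=\Lambda[2\ell(w)]$ is only a shift and cannot turn $R\Gamma_c(Y_w,\Lambda)$ into $R\Gamma(Y_w,\Lambda)$. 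For the Drinfeld curve, the trivial summand sits in degree $2$ of the first and degree $0$ of the second, while $H^1$ sits in degree $1$ in both, so no shift matches them.

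Taken literally, your Step 2 leads to the formula with $j_{w,!}k_w^*$ (the $\Delta$-version) rather than $\nabla_w=j_{w,*}k_w^*$. Your Step 3 remark about $j_{w,*}$ is the correct mechanism, and it contradicts Step 2. The correct computation runs as follows.
\begin{itemize}
\item The map $[M_\sigma(k_E)\backslash M_\sigma/B_{M_\sigma}]\cong[M_\sigma/Ad_FB_{M_\sigma}]\to\frac{U_{M_\sigma}\backslash M_\sigma/U_{M_\sigma}}{Ad_FT_{M_\sigma}}$ is smooth with fibres $U_{M_\sigma}$; it is not an isomorphism.
\item By smooth base change along it, the pullback of $\nabla_w(\on{Reg}_{T^{wF}})$ to $M_\sigma/B_{M_\sigma}$ is $j_{X_w,*}$ of the pushforward of $\Lambda$ along the $T^{wF}$-torsor $Y_w\to X_w$.
\item Since $M_\sigma/B_{M_\sigma}$ is proper, the compactly supported cohomology of this sheaf is $R\Gamma(Y_w,\Lambda)$.
\item Hence the right-hand side of the lemma is $\cHV\otimes^L_{\Lambda[\calG_\sigma(k_E)]}R\Gamma(Y_w,\Lambda)$.
\end{itemize}

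\textbf{The conversion to $\on{RHom}$ needs an input you do not supply.} ``Because $M_\sigma(k_E)$ is finite'' is not a justification. Here $\Lambda=\overline{\bbF}_\ell$, and $\ell$ may divide $|M_\sigma(k_E)|$. The side computed by $p_!$ over a free finite quotient is of derived-coinvariants type, while $\on{RHom}_{M_\sigma(k_E)}(P,-)$ is of derived-invariants type. The cone of the comparison map $P^\vee\otimes^L_{\Lambda[H]}C\to\on{RHom}_{\Lambda[H]}(P,C)$ is a Tate construction, which need not vanish in general.

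What makes it vanish for $P=R^0_w$ is that $R^0_w$ is a perfect complex of $\Lambda[M_\sigma(k_E)]$-modules. This holds because the stabilizers of $M_\sigma(k_E)$ on $Y_w$ are unipotent, hence $p$-groups (standard; cf.\ \cite{bonnafe03derived}). One could instead try to exploit freeness of the level action on $\Sht_{\sbar}$ to kill the Tate term, but some such input is required. Perfectness survives inflation to $\calG_\sigma(k_E)$ because $|V_\sigma(k_E)|$ is invertible in $\Lambda$. The same fact gives $\on{RHom}_{\calG_\sigma(k_E)}(R^0_w,C)=\on{RHom}_{M_\sigma(k_E)}(R^0_w,C^{V_\sigma(k_E)})$.

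With perfectness and Poincar\'e duality, the chain closes and yields ${}^*R_w(\cHV)^{V_\sigma(k_E)}$ up to a shift by $2\ell(w)$. Without this input, your Steps 2--4 do not connect $p_!$ to ${}^*R_w$.
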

	
	The key observation is that we have a map 
	$$\Sht^{\infty=b}_{\calG_{\sigma}, (\DY)^I, \sbar} \times M_{\sigma}/B_{M_{\sigma }} \to \Sht^{\infty=b}_{\calG_C, (\DY)^{I \sqcup 0}}|_{(\Spd \breve{E})^I \times \sbar},$$
	where $\calG_C$ is obtained from $\calG_{\sigma}$ by dilating the Borel on the special fiber.
	This induces a closed embedding
	$$\emb: \Sht^{\infty=b}_{\calG_{\sigma}, (\DY)^I, \overline{s}_0} \times^{\calG_{\sigma}(k_E)} M_{\sigma}/B_{M_{\sigma}} \to (\Sht^{\infty=b}_{\calG_C, (\DY)^{I \sqcup 0}})|_{(\Spd \breve{E})^I \times \overline{s}_0}.$$
	Indeed, tracking the moduli interpretation gives the stronger statement below.
	
	\begin{lemma}[{\cite[Lemma 4.5.4]{eteve2023monodromiques}}]\label{Lemma_Sht_Cart}
		Let $\calG_C^0$ denote the group scheme over $O_E$ obtained from $\calG_{\sigma}$ by dilating the unipotent radical of the Borel on the special fiber. Set $\Iw:=L^+\calG_C$ and $\Iw^0:=L^+\calG_C^0$. We have a Cartesian diagram
		\[\begin{tikzcd}
			{\Sht^{\infty=b}_{\calG_{\sigma}, (\DY)^I, \overline{s}_0} \times^{\calG_{\sigma}(k_E)}M_{\sigma}/B_{M_{\sigma}}} & {\Sht^{\infty=b}_{\calG_C, (\DY)^I \sqcup 0}|_{(\Spd \breve{E})^I \times \overline{s}_0}} \\
			{\Hk^{\loc}_{G, (\Spd \breve{E})^I} \times \frac{\Iw^0\backslash L^+\calG_{\sigma}/\Iw^0}{Ad_FT_{M_{\sigma}}}} & {\Hk^{\loc}_{G, (\Spd \breve{E})^I} \times \frac{\Iw^0\backslash L\calG_{\sigma}/\Iw^0}{Ad_FT_{M_{\sigma}}} \cong \frac{\Hk^{\loc}_{\calG_C^0, I\sqcup 0}}{Ad_FT_{M_{\sigma}}}|_{(\Spd \breve{E})^I \times \overline{s}_0}}
			\arrow["\emb", from=1-1, to=1-2]
			\arrow[from=1-1, to=2-1]
			\arrow[from=1-2, to=2-2]
			\arrow[from=2-1, to=2-2].
		\end{tikzcd}\]
		In particular, the map $\emb$ is induced by the map $L^+\calG_{\sigma} \to L\calG_{\sigma}$, hence a closed embedding.
	\end{lemma}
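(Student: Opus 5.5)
We prove Lemma \ref{Lemma_Sht_Cart} by comparing moduli interpretations: construct the map $\emb$ explicitly, identify its image inside the Iwahori-level shtuka space, and then read off the Cartesian square from the local modification at the extra leg $0$ placed at $\sbar$.

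\emph{Step 1 (the point data).} Fix an $S$-point of $\Sht^{\infty=b}_{\calG_{\sigma}, (\DY)^I, \sbar}\times^{\calG_{\sigma}(k_E)} M_{\sigma}/B_{M_{\sigma}}$. It is a tuple $(y,\calP,\alpha,\psi_{\sbar})$ together with a point of $M_\sigma/B_{M_\sigma}$, modulo $\calG_\sigma(k_E)$. Via $\psi_{\sbar}$, the point of $M_\sigma/B_{M_\sigma}$ becomes a $B_{M_\sigma}$-reduction of the $M_\sigma$-torsor $\calP|_S\times^{\calG_\sigma}M_\sigma$, and this reduction is independent of the representative.

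\emph{Step 2 (the map $\emb$).} By the dilatation description of $\calG_C$ (\cite[Corollary 2]{arnaud2023neron}), a $\calG_\sigma$-torsor on $\calY_S$ together with a $B_{M_\sigma}$-reduction along the divisor $\sbar\times S$ is the same as a $\calG_C$-torsor $\calP_C$. The isomorphism $\alpha$ is defined away from $\Gamma_y$. Since $y$ lies over $(\Spd\bE)^I$, $\alpha$ is defined near $\sbar\times S$. The reduction is in general not preserved by $\Frob_S^*$, so $\alpha$ lifts to a $\calG_C$-isomorphism only away from the divisor $\sbar\times S$, which we declare to be the extra leg $0$. This produces the point $(y,0=\sbar,\calP_C,\alpha)$ of $\Sht^{\infty=b}_{\calG_C,(\DY)^{I\sqcup 0}}$. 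The datum at infinity is unchanged.

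\emph{Step 3 (image and Cartesian square).} Conversely, start with a point of the target and look at its local modification at leg $0$. This is a point of $\Iw\backslash L\calG_\sigma/\Iw$, and, carrying the $\calG^0_C$-refinement, of $\Iw^0\backslash L\calG_\sigma/\Iw^0$ modulo $Ad_F T_{M_\sigma}$. The $T_M$-Frobenius twist appears because the torus part of the trivialization is only defined up to Frobenius-twisted conjugation. The modification at leg $0$ lies in $L^+\calG_\sigma$ exactly when it extends to an isomorphism of the underlying $\calG_\sigma$-torsors. In that case the $\calG_\sigma$-shtuka has no modification at $\sbar$, i.e.\ $\alpha$ is an isomorphism there.

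For such a point, choose a $\Frob$-compatible trivialization of $\calP|_S$. It exists $v$-locally because fixed points of Frobenius on a $\calG_\sigma$-torsor over $\sbar\times S$ form a $\calG_\sigma(k_E)$-torsor (Lang). This recovers $\psi_{\sbar}$ up to $\calG_\sigma(k_E)$, which is exactly the datum of the contracted product. Doing this carefully in families identifies the fiber product with the source, which gives the Cartesian square. The closed-embedding claim then follows by base change, since $L^+\calG_\sigma\to L\calG_\sigma$ is a closed immersion.

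\emph{Main obstacle.} The hardest step is Step 3, namely that trivializations of $\calP|_S$ compatible with $\alpha$ form a $\calG_\sigma(k_E)$-torsor over the locus where leg $0$ has trivial $\calG_\sigma$-modification. Here we need the Lang torsor $\calG_\sigma\to\calG_\sigma$, $g\mapsto g^{-1}F(g)$, to be an étale $\calG_\sigma(k_E)$-torsor of perfect schemes, transported to the $v$-stack setting. We also need to track the $Ad_FT_M$ quotient compatibly with the $\Iw^0$-level.

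I would follow \cite[Lemma 4.5.4]{eteve2023monodromiques} verbatim. Its argument only uses the structure of torsors near a single divisor together with the Frobenius, and both hold for $\calY_S$ with $\sbar\times S$ the trivial untilt divisor.
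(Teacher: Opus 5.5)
Your proposal is correct and is essentially the paper's argument. The paper gives no independent proof: it cites \cite[Lemma 4.5.4]{eteve2023monodromiques} and remarks that the statement follows by tracking moduli interpretations, which is what you do. That is, you use the dilatation description of $\calG_C$-torsors, take the extra leg $0$ at $\sbar$ to record the failure of $\alpha$ to preserve the Borel reduction, require the leg-$0$ modification to lie in $L^+\calG_\sigma$, and recover $\psi_{\sbar}$ up to $\calG_\sigma(k_E)$ via Lang, with the closed-embedding claim obtained by base change from $L^+\calG_\sigma \subseteq L\calG_\sigma$.
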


	Therefore, we have a diagram
	\[\begin{tikzcd}
		{(\Spd \breve{E})^I} & {\Sht^{\infty=b}_{\calG_{\sigma}, (\DY)^I, \overline{s}_0} \times^{\calG_{\sigma}(k_E)}M_{\sigma}/B_{M_{\sigma}}} & {\Hk^{\loc}_{G, (\Spd\breve{E})^I} \times \frac{\Iw^0\backslash L^+\calG_{\sigma}/\Iw^0}{Ad_FT_{M_{\sigma}}}} \\
		{(\Spd \bOE)^I} & {\Sht^{\infty=b}_{\calG_C, (\DY)^{I \sqcup 0}}|_{(\Spd \breve{O}_E)^I \times \overline{s}_0}} & {\frac{\Hk^{\loc}_{\calG_C^0, (\DY)^{I\sqcup 0}}}{Ad_FT_{M_{\sigma}}}|_{{(\Spd \breve{O}_E)^I \times \overline{s}_0}}}
		\arrow[from=1-1, to=2-1]
		\arrow[from=1-2, to=1-1]
		\arrow[from=1-2, to=1-3]
		\arrow["\emb"', from=1-2, to=2-2]
		\arrow[from=1-3, to=2-3]
		\arrow["p"', from=2-2, to=2-1]
		\arrow["\epsilon", from=2-2, to=2-3].
	\end{tikzcd}\]
	In other words, we have extended our stacks from $(\Spd \bE)^I$ to $(\Spd \bOE)^I$. We can then use iterated nearby cycles to extract the monodromy, as follows.

	We are interested in the $(I_E)^I$-action on $p_!\epsilon^*(\Sat(V) \boxtimes \nabla_w(\Reg_{T^{wF}}))$. This monodromy can be extracted from iterated nearby cycles with respect to  the identity map $(\Spd \bOE)^I \to (\Spd \bOE)^I$ applied to the cohomology sheaves. We will show that $p_!$ commutes with nearby cycles (see Section \ref{Section_pcomm} for details), so it is enough to control the monodromy on shtuka cohomology. Furthermore, by Assumption \ref{Assum_sm}, $\epsilon^*$ commutes with nearby cycles, so it suffices to control the monodromy on the local Hecke stack. On the local Hecke stack, we can compute the monodromy of the corresponding sheaves by reducing to the torus case, using twisted hyperbolic localization (see Section \ref{Section_monodromy} for details).

	\section{$p_!\Psi=\Psi p_!$ }\label{Section_pcomm}
	In this section, we prove that the map \eqref{Eq_commute?} is an isomorphism (Theorem \ref{Thm_pPsi}).

	\subsection{The statement}
	Recall the following general situation. Let $S$ be a curve with a closed point $s$. Let $I=\{1, 2, ..., n\}$ be a finite set. Given a morphism $f: Z \to W$  of small $v$-stacks over $S^I$ and $A \in \scrD_{\et}(Z|_{(S-s)^I})$, we ask whether the natural map
	$$f_!\Psi_1...\Psi_nA \to \Psi_1...\Psi_nf_!A$$
	is an isomorphism.
	
	We fix an element $b \in B(G)$. In our case, $S=\Spd \breve{O}_E, s=\Spd \overline{\bbF}_p$, 
	$$Z=\Sht^{{\infty=b}}_{\calG_{C}, (\Spd \bOE)^{I \sqcup \{0\}}}|_{S^I \times s}$$ 
	with the structure map $p$ to $S^I$ being the projection to the first $I$-legs. In other words,
	$$p: \Sht^{{\infty=b}}_{\calG_{C}, S^{I \sqcup \{0\}}} |_{S^I \times s} \to S^{I} \times s \to S^I,$$
	where the first map is the leg map and the second map is the projection onto the first $I$ factors. Let $W=S^I$ with the structure map to $S^I$ being the identity map.
	Let $$f=p: Z \to W.$$  Let $\eta=S-s=\Spd \breve{E}$. Set $A:=\emb_!\epsilon^*(\Sat V \boxtimes \nabla) \in \scrD_{\et}(Z|_{\eta^I})$, where $\nabla=\nabla_w(\on{Reg}_{T^{wF}})$. We ask whether the natural map
	$$p_!\Psi_1...\Psi_nA \to \Psi_1...\Psi_np_!A$$
	is an isomorphism.
	
	\begin{theorem}\label{Thm_pPsi}
		For every $V \in \Rep(\widehat{G}^I)$, the canonical map
		$$p_!\Psi_1...\Psi_n\emb_!\epsilon^*(\Sat V \boxtimes \nabla_w(\on{Reg}_{T^{wF}})) \to \Psi_1...\Psi_np_!\emb_!\epsilon^*(\Sat V \boxtimes \nabla_w(\on{Reg}_{T^{wF}}))$$
		is an isomorphism.
	\end{theorem}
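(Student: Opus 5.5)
We follow the fusion strategy sketched in Subsection \ref{Subsection_push_nearby}, adapted from \cite{salmon2023unipotent} and \cite{eteve2023monodromiques}. Write $\Psi_I=\Psi_1\cdots\Psi_n$ and set $A:=\emb_!\epsilon^*(\Sat V\boxtimes\nabla_w(\on{Reg}_{T^{wF}}))$. Choose three disjoint copies $I_1,I_2,I_3$ of $I$ and consider the shtuka space with legs indexed by $I_1\sqcup I_2\sqcup I_3\sqcup\{0\}$. On it we place the sheaf $\Sat(\mathbf 1)\boxtimes\Sat(\mathbf 1)\boxtimes A$, where the first two factors are the trivial (unit) representation. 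Only the $I_3$ legs carry the nontrivial sheaf $\Sat V\boxtimes\nabla$, but we apply nearby cycles in stages along the various blocks of legs. The aim is to construct a map
$$\Psi_{I}p_!A\;\to\;\Psi_{I_1}p_!\Psi_{I_2}\Psi_{I_3}A\;\to\;\Psi_{I_1}\Psi_{I_2}p_!\Psi_{I_3}A\;\to\;p_!\Psi_{I}A,$$
and then to check that it is inverse to the canonical map $p_!\Psi_I A\to\Psi_I p_!A$.

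\textbf{Step 1 (first map, by fusion).} Use $V\cong V\otimes\mathbf 1\otimes\mathbf 1$ together with the fusion isomorphism for Satake sheaves restricted along the diagonal $I\to I_1\sqcup I_2\sqcup I_3$. Applying iterated nearby cycles on the local Hecke stack for $\calG_C^0$ turns the unit in the $I_1,I_2$ legs into a twisted central sheaf. The monoidality of twisted central sheaves, i.e.\ compatibility of $\Psi$ with convolution, then gives a natural map $\Psi_{I_1}(\cdot)\to\Psi_{I_1}\Psi_{I_2}\Psi_{I_3}(\cdot)$ after restricting back to the diagonal. This map is transported to $\Sht$ via $\epsilon^*$ using Assumption \ref{Assum_sm}. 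We also use the unit map $\Lambda\to\Psi(\Lambda)$ on the auxiliary legs.

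\textbf{Step 2 (second map).} This is simply the natural base-change map $p_!\Psi_{I_2}\to\Psi_{I_2}p_!$, which always exists.

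\textbf{Step 3 (third map).} Here we need the sheaf $p_!\Psi_{I_3}(A)$, viewed over the $I_1\sqcup I_2$ legs, to be locally constant, or ULA, on $(\Spd\bOE)^{I_1\sqcup I_2}$. Once this holds, $\Psi_{I_1}\Psi_{I_2}$ of it is identified with its restriction to the special point. Restricting from the special point back along the diagonal then produces $p_!\Psi_{I_3}A$. The required local constancy reduces to the fact that cohomology of shtuka spaces is locally constant in the legs. This in turn comes from the excursion/Hecke description (Lemma \ref{Lemma_Sht_Exc}) and \cite[Corollary IX.2.3]{fargues2021geometrization}, which identifies such sheaves as coming from $\scrD(\Bun_G)$ with $W_E^I$-action, hence local systems.

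\textbf{Step 4 (inverse check).} Verify that both composites with the canonical map are the identity. This is a diagram chase using the unit and counit compatibilities of fusion, together with the fact that restricting everything to the diagonal recovers the original functor.

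We expect the main obstacle to be Step 3. Proving local constancy over the integral base $(\Spd\bOE)^{I_1\sqcup I_2}$, including at the special fiber, while the $0$-leg is fixed at $\sbar$ and the level is Iwahori, is delicate because \cite[Corollary IX.2.3]{fargues2021geometrization} is stated over the generic legs. We would first try to show that the trivial-representation legs impose no actual modification, so that the relevant shtuka space is a product with $(\Spd\bOE)^{I_1\sqcup I_2}$ up to a Frobenius-twisted identification. Local constancy would then follow from smooth base change, with \cite[Corollary IX.2.3]{fargues2021geometrization} invoked for the remaining generic behaviour. A secondary difficulty is making the fusion map of Step 1 precise for twisted sheaves; this should follow from the centrality of $\scrZ_\chi(V)$ discussed in Subsection \ref{Subsection_computation_Hecke}.
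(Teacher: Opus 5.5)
There is a genuine gap: with the trivial representation on both auxiliary blocks $I_1$ and $I_2$, your construction is circular. Every object in your chain is then constant in the $I_1\sqcup I_2$ directions, so $\Psi_{I_1}$ and $\Psi_{I_2}$ do nothing. The chain therefore collapses to a single arrow $\Psi_I p_!A\to p_!\Psi_I A$, which is exactly the inverse you are trying to build. The unit map $\Lambda\to\Psi(\Lambda)$ and monoidality applied to $\mathbf{1}\boxtimes\mathbf{1}\boxtimes V$ cannot move a nearby-cycle functor from outside $p_!$ to inside it.

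The missing idea is the creation/annihilation (zig-zag) trick, which is what the paper uses:
\begin{enumerate}
\item Keep $V$ on the $I_1$ legs and insert $\mathbf{1}$ on the $I_2$ legs (Lemma \ref{Lemma_B1}).
\item Apply the coevaluation $\mathbf{1}\to V^*\otimes V$ on the $I_2$ legs.
\item Split $V^*\otimes V$ into $V^*$ on $I_2$ and $V$ on $I_3$. This uses monoidality of the $\nabla$-twisted central functor, after commuting $\Psi$ past $\epsilon^*$ via Assumption \ref{Assum_sm} (Lemma \ref{Lemma_B2}).
\item Apply the canonical map $p_!\Psi_{I_2}\to\Psi_{I_2}p_!$.
\item Fuse $I_1$ and $I_2$ back to the diagonal, so that $V\boxtimes V^*$ becomes $V\otimes V^*$ (Lemma \ref{Lemma_nearby_nabla}).
\item Apply the evaluation $V\otimes V^*\to\mathbf{1}$ and drop the now-trivial legs (Lemma \ref{Lemma_C_nabla}).
\end{enumerate}
Your Step 4 then rests on the snake identity $(\ev\otimes\id_V)\circ(\id_V\otimes\coev)=\id_V$. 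Without $V^*$ on the auxiliary legs, nothing can play this role.

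Your Step 3 also targets the wrong statement. In the correct configuration, the sheaf $p_!\Psi_{I_3}\emb_!\epsilon^*(\Sat(V\boxtimes V^*\boxtimes V)\boxtimes\nabla)$ in general carries a nontrivial inertia action in the $I_1\sqcup I_2$ directions. So you should not expect it to be locally constant over $(\Spd\bOE)^{I_1\sqcup I_2}$, nor identify $\Psi_{I_1}\Psi_{I_2}$ of it with a restriction to the special point. What is needed, and what Proposition \ref{Prop_lc_nabla} proves, is only local constancy over the generic $\eta^{I_1\sqcup I_2}$, i.e.\ that the sheaf comes from $\scrD_{\et}([*/I_E^{I_1\sqcup I_2}])$. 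For such objects, iterated nearby cycles commute with restriction to the diagonal.

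That local constancy is also not a direct consequence of Lemma \ref{Lemma_Sht_Exc}, because $\Psi_{I_3}$ sits inside $p_!$. The paper handles it as follows. It uses that $\epsilon$ extends integrally over $(\Spd\bOE)^{I_3}$, and invokes Assumption \ref{Assum_sm} to move $\Psi_{I_3}$ onto the local Hecke stack, where it only affects the factor $\Sat V_3\boxtimes\iota_!\nabla$. A projection-formula argument then isolates the generic $I_1\sqcup I_2$ modifications. These are compared, via the twisted Grassmannian and the global Hecke stack over $(\DX)^{I_1\sqcup I_2}$ (an isomorphism off the twisted diagonals, plus uniqueness of ULA extensions), with the setting of \cite[Corollary IX.2.3]{fargues2021geometrization}.
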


	\subsection{The proof}\label{Subsection_p_proof}
	We now prove Theorem \ref{Thm_pPsi}. Write $\nabla=\nabla_w(\on{Reg}_{T^{wF}})$.
	Note that we have a canonical map
	\begin{equation}
		p_!\Psi \to \Psi p_!.
	\end{equation}
	Following \cite[Section 5]{salmon2023unipotent} and \cite[Section 4.8]{eteve2023monodromiques}, let us construct the candidate inverse
	\begin{equation}\label{Eq_candidate}
		\Psi_I p_!\emb_!\epsilon^*(\Sat V \boxtimes \nabla) \to p_!\Psi_I \emb_!\epsilon^*(\Sat V \boxtimes \nabla),
	\end{equation}
	where $I=\{1, ..., n\}$ is a finite set, $\Psi_I=\Psi_1...\Psi_n$ denotes the iterated nearby cycle. In the proof, we will use auxiliary notations $I_1 \cong I_2 \cong I_3 \cong I$.
	\begin{align}
		&\Psi_I p_!\emb_!\epsilon^*(\Sat V \boxtimes \nabla)\\
		\cong \;&\Psi_{I_1} p_!\Psi_{I_2}\emb_!\epsilon^*(\Sat (V \boxtimes 1) \boxtimes \nabla) \quad \text{Lemma \ref{Lemma_B1}}\\
		\xrightarrow{\coev}\;&\Psi_{I_1} p_!\Psi_{I_2}\emb_!\epsilon^*(\Sat (V \boxtimes (V^* \otimes V)) \boxtimes \nabla) \\
		\cong \;&\Psi_{I_1} p_!\Psi_{I_2}\Psi_{I_3}\emb_!\epsilon^*(\Sat (V \boxtimes V^* \boxtimes V) \boxtimes \nabla) \quad \text{Lemma \ref{Lemma_B2}}\\
		\xrightarrow{\can} \;&\Psi_{I_1} \Psi_{I_2}p_!\Psi_{I_3}\emb_!\epsilon^*(\Sat (V \boxtimes V^* \boxtimes V) \boxtimes \nabla) \quad \text{canonical map}\\
		\cong  \;&\Psi_I p_!\Psi_{I_3}\emb_!\epsilon^*(\Sat ((V \otimes V^*) \boxtimes V) \boxtimes \nabla) \quad \text{Lemma \ref{Lemma_nearby_nabla}}\\
		\xrightarrow{\ev}  \;&\Psi_I p_!\Psi_{I_3}\emb_!\epsilon^*(\Sat (1 \boxtimes V) \boxtimes \nabla) \\
		\cong   \;&p_!\Psi_{I_3}\emb_!\epsilon^*(\Sat V \boxtimes \nabla) \quad \text{Lemma \ref{Lemma_C_nabla}}
	\end{align}
	
	Without loss of generality we may assume that $V \in \Rep(\widehat{G}^I)$ is the irreducible representation associated to the cocharacter $\mu \in X_*(T)^I$. Let $\{E_i\}_{i \in I}$ be the fields of definition of $\mu$. In the following, we will denote 
	$\Sht^{\infty=b}_{\calG, \prod_{i \in I}\Spd \breve{E}_i/(\DY)^I, \leq \mu}$ by $\Sht_{\calG, b, I, V}$. After adding level-$\sbar$ structure, we will denote $\Sht^{\infty=b}_{\calG, \prod_{i \in I}\Spd \breve{E}_i/ (\DY)^I, \sbar, \leq \mu}$ by $\Sht_{\calG, b, I, \sbar, V}$. In the following, we base change everything to the fields of definition $E_i$ whenever needed, but we will omit $\prod_{i \in I}\Spd \breve{E}_i$ from the notation.

	\begin{lemma}\label{Lemma_B1}
		$$\emb_!\epsilon^*(\Sat V \boxtimes \nabla)
		\cong \Psi_{I_2}\emb_!\epsilon^*(\Sat (V \boxtimes 1) \boxtimes \nabla).$$
	\end{lemma}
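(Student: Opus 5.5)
The plan is to treat the new set of legs $I_2$ as carrying the trivial representation, so that $\Sat(V\boxtimes 1)$ is, over the diagonal-free locus, the exterior product of $\Sat V$ on the $I_1$-legs with the constant (unit) Satake sheaf on the $I_2$-legs. Taking nearby cycles in the $I_2$-directions should then simply forget those legs. I would formalize this as follows.

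First I would identify the relevant shtuka space. For the trivial representation on the $I_2$-legs, the bound is $\mu_{I_2}=0$. The bounded shtuka space $\Sht_{\calG_C,b,I_1\sqcup I_2\sqcup 0,\, V\boxtimes 1}$ is then the base change of $\Sht_{\calG_C,b,I_1\sqcup 0,V}$ along the projection $(\DY)^{I_1\sqcup I_2}\to(\DY)^{I_1}$. This holds because a modification bounded by $0$ at a leg is an isomorphism there, so the $I_2$-legs impose no condition. The same is true on the local Hecke side: $\Hk^{\loc}$ bounded by $(\mu,0)$ is the pullback of the one bounded by $\mu$, and $\Sat(V\boxtimes 1)$ is the pullback of $\Sat V$. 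This is the fusion/unit property of the Satake sheaf, \cite[Chapter VI]{fargues2021geometrization}, which also holds integrally after extending to $\DY$. It is compatible with $\epsilon$, $\emb$ and $\nabla$, since these only involve the $I_1$-legs and the leg $0$. Hence, writing $q$ for the projection forgetting the $I_2$ coordinates, we get $\emb_!\epsilon^*(\Sat(V\boxtimes1)\boxtimes\nabla)\cong q^*\emb_!\epsilon^*(\Sat V\boxtimes\nabla)$ by proper base change along the Cartesian squares.

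Second, I would compute $\Psi_{I_2}q^*\mathcal{F}$ for a sheaf $\mathcal{F}$ pulled back along a projection that is a product with $S^{I_2}$. The sheaf $q^*\mathcal F$ is constant in the $I_2$-directions, so each one-variable nearby cycle $\Psi_j$ for $j\in I_2$ is computed on the factor $S$ alone. There it is the nearby cycle of the constant sheaf $\Lambda$ on $\eta=\Spd\bE$ with respect to the identity $\Spd\bOE\to\Spd\bOE$, which is $\Lambda$ on $s$ (with trivial monodromy, and $\Spd\bOE$ is "smooth"). Applying this iteratively gives $\Psi_{I_2}q^*\mathcal F\cong \mathcal F|_{I_2\text{-legs at }s}$, which is the left side. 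The isomorphism should be the canonical map $\mathcal F\to\Psi_{I_2}q^*\mathcal F$ coming from the unit of adjunction.

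The main obstacle is the Künneth-type statement for nearby cycles: that $\Psi_j(\mathcal F\boxtimes\Lambda_\eta)\cong\mathcal F\boxtimes\Psi(\Lambda_\eta)$ over products of $v$-stacks in the iterated, mixed-characteristic setting. Here $\mathcal F$ is not ULA in the $I_1$-directions, and the factors are diamonds rather than schemes. I would first try to handle this by noting that $\mathcal F$ lives on the fiber over the closed point in the $I_1$-directions only after applying $\Psi_{I_1}$. Before that, the relevant statement is that $\Lambda$ on $S^{I_2}$ is ULA over $S^{I_2}$ and that ULA sheaves are preserved under external products with arbitrary sheaves over $S^{I_1}$ (\cite[Chapter IV]{fargues2021geometrization}). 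Nearby cycles of a ULA factor commute with the external product, which reduces the claim to the one-variable computation $\Psi(\Lambda_\eta)=\Lambda_s$.
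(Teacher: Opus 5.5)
Your proposal is correct and is essentially the paper's argument: the paper only records the generic-fibre decomposition $\Sht_{\calG_C, I_1\sqcup I_2\sqcup 0, V\boxtimes 1}|_{(\Spd\bE)^{I_1\sqcup I_2}\times\sbar}\cong \Sht_{\calG_C, I_1\sqcup 0, V}|_{(\Spd\bE)^{I_1}\times\sbar}\times(\Spd\bE)^{I_2}$. Under it the sheaf becomes $\emb_!\epsilon^*(\Sat V\boxtimes\nabla)\boxtimes\underline{\Lambda}_{(\Spd\bE)^{I_2}}$, and the paper leaves implicit the nearby-cycle computation for the constant $I_2$-factor that you spell out.

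One harmless imprecision: the bounded local Hecke stack with trivial bound at the $I_2$-legs is not literally a base change of the one for $I_1$ (away from collisions it differs by the classifying stack of the positive loop group at the added legs). This does not affect the proof, because $\Sat(V\boxtimes 1)$ is still the pullback of $\Sat V$ along the restriction map, and that is all your argument uses.
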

	
	\begin{proof}
		This follows from
		$$\left(\Sht_{\calG_C, I_1 \sqcup I_2 \sqcup 0, \sbar, V \boxtimes 1}\right)|_{(\Spd \bE)^{I_1 \sqcup I_2} \times \sbar} \cong \left(\Sht_{\calG_C, I_1 \sqcup 0,  V}\right)|_{(\Spd \bE)^{I_1} \times \sbar} \times (\Spd \bE)^{I_2},$$
		under which $\emb_!\epsilon^*(\Sat(V \boxtimes 1) \boxtimes \nabla)$ corresponds to $\emb_!\epsilon^*(\Sat V  \boxtimes \nabla) \boxtimes \underline{\Lambda}_{(\Spd \breve{E})^{I_2}}$. 
	\end{proof}
	
	\begin{lemma}\label{Lemma_B2}
		$$\Psi_{I_2}\emb_!\epsilon^*(\Sat (V \boxtimes (V^* \otimes V)) \boxtimes \nabla)  \; \cong \; \Psi_{I_2}\Psi_{I_3}\emb_!\epsilon^*(\Sat (V \boxtimes V^* \boxtimes V) \boxtimes \nabla).$$
	\end{lemma}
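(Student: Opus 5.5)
The plan is to imitate the proof of Lemma \ref{Lemma_B1} and reduce the statement to a fusion property. The aim is to rewrite the right-hand side so that the inner nearby cycle $\Psi_{I_3}$ is taken along the collision of the $I_3$-legs with the $I_2$-legs.

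First I unwind the geometry. The $I_3$-legs are specialized to the $I_2$-legs, with the iterated nearby cycle taken over the diagonal embedding $S^{I_2} \hookrightarrow S^{I_2 \sqcup I_3}$. Over the locus where the legs in $I_2$ and $I_3$ are pairwise disjoint, the sheaf $\emb_!\epsilon^*(\Sat(V \boxtimes V^* \boxtimes V) \boxtimes \nabla)$ is the $!$-restriction of the corresponding sheaf on the shtuka space with legs $I_1 \sqcup I_2 \sqcup I_3 \sqcup 0$. There are two facts to check on the local Hecke stack $\Hk^{\loc}_{\calG_C^0}$ over $(\Spd \breve{E})^{I_1\sqcup I_2\sqcup I_3}\times \sbar$:
\begin{enumerate}
\item The Satake sheaf $\Sat(V\boxtimes V^*\boxtimes V)$ is ULA over $(\Spd\breve{E})^{I_1\sqcup I_2\sqcup I_3}$.
\item Its restriction to the partial diagonal identifying $I_2$ with $I_3$ is $\Sat(V\boxtimes (V^*\otimes V))$.
\end{enumerate}
Both are the fusion and ULA statements of \cite[Chapter VI]{fargues2021geometrization}. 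Away from $\sbar$ the Iwahori and pro-$p$ Iwahori level is invisible, and the generic fibre of $\Hk^{\loc}_{\calG_C^0}$ is $\Hk^{\loc}_G$. The factor $\nabla$ lives on the extra leg $0$, which sits over $\sbar$, so it does not interact with the $I_2, I_3$ legs.

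Second, I pull back along $\epsilon$ and push forward along $\emb$. I use Lemma \ref{Lemma_ULA} (which rests on the period map being \'etale, so ULA-ness is preserved) together with the fact that $\emb$ is a closed embedding. This shows that $\emb_!\epsilon^*(\Sat(V\boxtimes V^*\boxtimes V)\boxtimes\nabla)$ is ULA relative to the $I_3$-coordinates over the generic locus $(\Spd\breve{E})^{I_3}$, with everything else fixed. For a ULA sheaf, nearby cycles along a specialization of the $I_3$-legs inside the generic locus are computed by plain restriction to the diagonal. So $\Psi_{I_3}$ of our sheaf agrees with its restriction to the diagonal $I_3 = I_2$, and by the fusion isomorphism of the first step that restriction is $\emb_!\epsilon^*(\Sat(V\boxtimes(V^*\otimes V))\boxtimes\nabla)$.

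Finally I apply $\Psi_{I_2}$ to both sides. The left-hand side of the lemma is then $\Psi_{I_2}$ of this identification, which gives the claimed isomorphism.

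The main obstacle is justifying that the iterated nearby cycle $\Psi_{I_3}$ really reduces to restriction to the diagonal. Two issues arise here. One is that the $I_2$-legs are themselves only generic at this stage, since $\Psi_{I_2}$ has not yet been taken. The other is to control the compatibility of ULA-ness with the level structure at $\sbar$ on the leg $0$. For this I would first use the Künneth-type splitting of the leg $0$ contribution, as in Lemma \ref{Lemma_B1}. I would then appeal to the standard fact that $\Psi$ commutes with restriction for ULA objects \cite[Section IV.2]{fargues2021geometrization}, or to \cite[Section 4.8]{eteve2023monodromiques} in the equal-characteristic analogue.
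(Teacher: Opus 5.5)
\textbf{There is a genuine gap: you have misidentified the functor $\Psi_{I_3}$.} In Section~\ref{Section_pcomm}, each $\Psi_{I_j}$ is the iterated nearby cycle over $S^{I_j}$, $S=\Spd\bOE$, that moves the $I_j$-legs to the closed point $\sbar$. Compare the description of $\Psi_{I_3}$ in the proof of Proposition~\ref{Prop_lc_nabla}, which lands on $\Sht_{\calG_C, b, \eta^{I_1\sqcup I_2}\times s^{I_3\sqcup\{0\}}}$. It is not a specialization of the $I_3$-legs onto the $I_2$-legs inside the generic locus.

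On the right-hand side, the $I_3$-legs carrying $V$ are first sent to $\sbar$, where they collide with the leg $0$ carrying $\nabla$. Only then are the $I_2$-legs carrying $V^*$ sent there. On the left-hand side, the fused $I_2$-legs carrying $V^*\otimes V$ are sent in one step. So your remark that $\nabla$ ``does not interact with the $I_2, I_3$ legs'' is exactly backwards: that interaction is the content of the lemma.

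Your ULA-plus-fusion argument only gives the generic-fibre identity $\Delta^*\Sat(V\boxtimes V^*\boxtimes V)\cong\Sat(V\boxtimes(V^*\otimes V))$. Hence it only shows that the left-hand side is $\Psi_{I_2}\Delta^*F$, with $F=\emb_!\epsilon^*(\Sat(V\boxtimes V^*\boxtimes V)\boxtimes\nabla)$. Your asserted isomorphism $\Psi_{I_3}F\cong\Delta^*F$ compares objects living on different spaces. The step you actually need is $\Psi_{I_2}\Delta^*F\cong\Psi_{I_2}\Psi_{I_3}F$. This cannot come from ULA-ness over the generic locus, because the specialization is to $\sbar$, where these nearby cycles carry nontrivial monodromy (Section~\ref{Section_monodromy}).

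The missing ingredients are the ones the paper uses:
\begin{enumerate}
\item By proper base change in the Cartesian square of Lemma~\ref{Lemma_Sht_Cart}, rewrite $\emb_!\epsilon^*(\Sat(\cdot)\boxtimes\nabla)$ as $\epsilon_1^*(\Sat(\cdot)\boxtimes\iota_!\nabla)$, where $\epsilon_1$ extends integrally over the $I_2\sqcup I_3$ legs.
\item Use Assumption~\ref{Assum_sm} to commute $\Psi_{I_2}$ and $\Psi_{I_2}\Psi_{I_3}$ with $\epsilon_1^*$.
\item On the local Hecke stack, prove the monoidality of the $\nabla$-twisted central functor, $\Psi_{I_2}(\Sat(V\boxtimes(V^*\otimes V))\boxtimes\iota_!\nabla)\cong\Psi_{I_2}\Psi_{I_3}(\Sat(V\boxtimes V^*\boxtimes V)\boxtimes\iota_!\nabla)$. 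This is done by an argument in the style of \cite[Section 5.2, Theorem 4]{gaitsgory2001central}, comparing nearby cycles along the diagonal with iterated nearby cycles.
\end{enumerate}
Your proposal uses neither Assumption~\ref{Assum_sm} nor any monoidality or centrality input, and without them the lemma does not follow.
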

	
	\begin{proof}
		This follows from monoidality of the twisted central functor. More precisely, by proper base change 
		on the generic fiber, we may commute $\emb_!$ and $\epsilon^*$, so it suffices to show that 
		$$\Psi_{I_2}\epsilon^*\emb_!(\Sat (V \boxtimes (V^* \otimes V)) \boxtimes \nabla)  \; \cong \; \Psi_{I_2}\Psi_{I_3}\epsilon^*\emb_!(\Sat (V \boxtimes V^* \boxtimes V) \boxtimes \nabla).$$
		Let $\iota: \frac{\Iw^0\backslash L^+\calG_{\sigma}/\Iw^0}{Ad_FT_M} \to \frac{\Iw^0\backslash LG/\Iw^0}{Ad_FT_M}$ be the map induced by the inclusion $L^+\calG_{\sigma} \subseteq LG$. It suffices to show that 
		$$\Psi_{I_2}\epsilon^*(\Sat (V \boxtimes (V^* \otimes V)) \boxtimes \iota_!\nabla)  \; \cong \; \Psi_{I_2}\Psi_{I_3}\epsilon^*(\Sat (V \boxtimes V^* \boxtimes V) \boxtimes \iota_!\nabla).$$
		Now $\epsilon: \Sht_{\calG_C, b, J \sqcup 0, N^x} \to \Hk^{\loc}_{\calG_C^0, J \sqcup 0}/Ad_FT_M$ is defined integrally over $S^J$, so we can commute the nearby cycles with $\epsilon^*$ by Assumption \ref{Assum_sm} . Therefore, it suffices to show that 
		$$\Psi_{I_2}(\Sat (V \boxtimes (V^* \otimes V)) \boxtimes \iota_!\nabla)  \; \cong \; \Psi_{I_2}\Psi_{I_3}(\Sat (V \boxtimes V^* \boxtimes V) \boxtimes \iota_!\nabla).$$
		This is exactly the monoidality of the twisted (by $\nabla$) central functor, which can be proved by an argument similar to \cite[Section 5.2, Theorem 4]{gaitsgory2001central}. 
	\end{proof}
	
	\begin{lemma}\label{Lemma_nearby_nabla}
		$$\Psi_{I_1} \Psi_{I_2}p_!\Psi_{I_3}\emb_!\epsilon^*(\Sat (V \boxtimes V^* \boxtimes V) \boxtimes \nabla) \;
		\cong \; \Psi_I p_!\Psi_{I_3}\emb_!\epsilon^*(\Sat ((V \otimes V^*) \boxtimes V) \boxtimes \nabla)$$
	\end{lemma}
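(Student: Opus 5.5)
We reduce the statement to a fusion statement on the shtuka space followed by a local constancy statement, in the spirit of \cite[Section 4.8]{eteve2023monodromiques} and \cite[Section 5]{salmon2023unipotent}. The key input is that
$$B:=p_!\Psi_{I_3}\emb_!\epsilon^*(\Sat (V \boxtimes V^* \boxtimes V) \boxtimes \nabla),$$
viewed as a sheaf on $S^{I_1 \sqcup I_2}$ (the $I_3$-legs having been pushed to the special fiber), is locally constant on $\eta^{I_1\sqcup I_2}$ with an action of $(I_E)^{I_1 \sqcup I_2}$ that extends over $S^{I_1\sqcup I_2}$. More precisely, we expect it to be the pullback along $S^{I_1 \sqcup I_2}\to$ (a point, or a product of $\Spd \bOE$'s) of a sheaf whose restriction to each diagonal is computed by the fused object.

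Concretely, let $\Delta: S^I \to S^{I_1 \sqcup I_2}$ be the diagonal identifying the $i$-th leg of $I_1$ with the $i$-th leg of $I_2$. The plan has three steps.

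\textbf{Step 1 (fusion on the generic fiber).} On $\eta^{I}$, the restriction $\Delta^*$ of the shtuka space with legs $I_1 \sqcup I_2$ bounded by $V \boxtimes V^*$ is the shtuka space with legs $I$ bounded by $V\otimes V^*$. The Satake sheaves satisfy $\Delta^*\Sat(V\boxtimes V^*)\cong \Sat(V\otimes V^*)$ by fusion (\cite[Chapter VI]{fargues2021geometrization}). By proper base change for $p_!$ along $\Delta$, together with $\Psi_{I_3}$ commuting with base change in the unrelated legs, this gives $\Delta^* B \cong p_!\Psi_{I_3}\emb_!\epsilon^*(\Sat((V\otimes V^*)\boxtimes V)\boxtimes\nabla)$ over $\eta^I$.

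\textbf{Step 2 (local constancy / ULA).} We show that $B|_{\eta^{I_1\sqcup I_2}}$ is ULA over $\eta^{I_1\sqcup I_2}$, with monodromy that is tame-extendable in the sense that iterated nearby cycles commute with $\Delta^*$. For this we identify $B$, via Lemma \ref{Lemma_Sht_Exc}(2) applied with level $\calG_{C}$ and the extra leg $0$, with Hecke operators $i_b^*T_{V\boxtimes V^*}(\ldots)$ applied to an object of $\scrD_{\et}(\Bun_G)$ independent of the $I_1\sqcup I_2$-legs. Since $\Psi_{I_3}$ is taken in the $0$- and $I_3$-legs, it produces such an object after $p_!$. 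We then invoke \cite[Corollary IX.2.3]{fargues2021geometrization}: Hecke operators applied to compact objects give sheaves on $(\Div^1_X)^{I}$ that are locally constant, equivalently representations of $W_E^{I}$. Pulled back to $\eta^{I_1\sqcup I_2}$, such a sheaf is a $(I_E)^{I_1\sqcup I_2}$-module. For these modules, iterated nearby cycles $\Psi_{I_1}\Psi_{I_2}$ followed by restriction to the special point agree with first restricting along $\Delta$ and then applying $\Psi_I$. Both compute the underlying module with its diagonal inertia action, by the standard description of iterated nearby cycles of local systems, e.g.\ as in \cite[Section 4.8]{eteve2023monodromiques}.

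\textbf{Step 3 (conclusion).} Combining the two steps gives
$$\Psi_{I_1}\Psi_{I_2}B \cong \Psi_I\Delta^*B \cong \Psi_I p_!\Psi_{I_3}\emb_!\epsilon^*(\Sat((V\otimes V^*)\boxtimes V)\boxtimes\nabla),$$
which is the desired isomorphism.

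The main obstacle is Step 2. The object $B$ involves nearby cycles along $I_3$ and a special-fiber leg $0$ carrying the twisted sheaf $\nabla$. We must check that after $p_!$ it really comes from a compact (or at least finitely generated, ULA) object on $\Bun_G$, so that \cite[Corollary IX.2.3]{fargues2021geometrization} applies. The first thing to try is to write $p_!\Psi_{I_3}\emb_!\epsilon^*(\ldots)$ as $i_b^*T_{V\boxtimes V^*}$ applied to $\Psi_{I_3}$ of the $I_3$-cohomology. Here one uses that the Hecke action in the $I_1\sqcup I_2$-legs commutes with the operations in the other legs by base change. Finiteness would then follow from finiteness of shtuka cohomology with bounded modifications.
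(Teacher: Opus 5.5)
Your top-level reduction matches the paper's. By fusion, the right-hand side is $\Psi_I$ of the diagonal restriction of $B$, so everything rests on $B$ being locally constant along $\eta^{I_1\sqcup I_2}$ (Proposition \ref{Prop_lc_nabla}), with \cite[Corollary IX.2.3]{fargues2021geometrization} as the final input. The gap is in your Step 2, which you yourself flag, and the mechanism you sketch there does not work.

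First, Lemma \ref{Lemma_Sht_Exc}(2) concerns $\Sht_{G,b,\mu,K}$ with all legs in the generic fiber and Satake sheaves on every leg. In $B$ the legs $I_3\sqcup\{0\}$ lie over $s$ and carry $\Psi_{I_3}(\cdots\boxtimes\nabla)$, so the lemma gives no description of $B$ as $i_b^*T_{V\boxtimes V^*}(\cdots)$.

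More seriously, you cannot pass the $I_1\sqcup I_2$-Hecke action through $\Psi_{I_3}$ ``by base change''. Here $\Psi_{I_3}$ is applied on the shtuka space to $\emb_!\epsilon^*(\cdots)$. Nearby cycles commute with proper pushforward and smooth pullback, but not with $\epsilon^*$, and not with a closed base change such as restriction to a diagonal; the latter also undermines the justification of your Step 1. If instead you apply $\Psi_{I_3}$ after pushing forward, you must exchange it with that pushforward. That is the kind of statement Theorem \ref{Thm_pPsi} proves using this very lemma, so that route is circular.

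The missing idea is Assumption \ref{Assum_sm}. Write $V_{123}=V\boxtimes V^*\boxtimes V$ and set $K:=\iota_!\nabla$, where $\iota$ is induced by $L^+\calG_{\sigma}\subseteq LG$. Proper base change along the Cartesian square of Lemma \ref{Lemma_Sht_Cart} gives $\emb_!\epsilon^*(\Sat V_{123}\boxtimes\nabla)\cong\epsilon_1^*(\Sat V_{123}\boxtimes K)$. Since $\epsilon_1$ extends integrally over $S^{I_3}$, the assumption moves $\Psi_{I_3}$ onto the local Hecke stack. There it produces the external product $\Sat(V\boxtimes V^*)\boxtimes\Psi_\alpha(\Sat V\boxtimes K)$ on $\Hk^{\loc}_{\eta^{I_1\sqcup I_2}}\times\Hk^{\loc}_{s^{I_3}\times s}$. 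This is what decouples the two groups of legs, and it is also what makes restriction to the diagonal in $I_1\sqcup I_2$ harmless.

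The paper then proceeds in three reductions:
\begin{enumerate}
\item It factors $p$ through the map $\tilde p$ that restricts the torsor to a neighbourhood of $\{p=0\}$. The projection formula then reduces the claim to local constancy of $\tilde p_!\epsilon_s^*p_1^*\Sat(V\boxtimes V^*)$.
\item It uses the Beauville--Laszlo Cartesian square $\Gr^{\tw,\infty=b}_{G,\eta^{I_1\sqcup I_2}}\to\eta^{I_1\sqcup I_2}\times\Bun_G$, together with the ULA comparison $\Phi_!E^*\Sat=\pi^*\Sat$.
\item This reduces everything to $\on{res}_{0,!}\pi^*\Sat(V\boxtimes V^*)$, to which \cite[Corollary IX.2.3]{fargues2021geometrization} applies directly.
\end{enumerate}

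In particular, no compactness or finiteness of an object built from the $I_3\sqcup\{0\}$-legs is needed. That data enters only through the factor $\epsilon_{s,2}^*\Psi_\alpha(\Sat V\boxtimes K)$, which is constant along $\eta^{I_1\sqcup I_2}$.
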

	
	\begin{proof}
		Let $A:=p_!\Psi_{I_3}\emb_!\epsilon^*(\Sat (V \boxtimes V^* \boxtimes V) \boxtimes \nabla)$. The left hand side is taking iterated nearby cycle  on $A$, and the right hand side is first restricting $A$ to the (partial) diagonal and then taking nearby cycle. Therefore, it suffices to show that $A$ is locally constant over $\eta^{I_1 \sqcup I_2}$. This is Proposition \ref{Prop_lc_nabla}.
	\end{proof}
	
	\begin{proposition}\label{Prop_lc_nabla}
		Let $\eta=\Spd \breve{E}$. For every $V_i \in \Rep(\widehat{G}^{I_i}), i= 1, 2, 3$, the complex $$A:=p_!\Psi_{I_3}\emb_!\epsilon^*(\Sat (V_1 \boxtimes V_2 \boxtimes V_3) \boxtimes \nabla) \in \scrD_{\et}(\eta^{I_1 \sqcup I_2})$$ 
		is locally constant along $\eta^{I_1 \sqcup I_2}$, i.e., $A$ lies in the essential image of pullback $\scrD_{\et}([*/I_E^{I_1 \sqcup I_2}]) \to \scrD_{\et}(\eta^{I_1 \sqcup I_2})$.
	\end{proposition}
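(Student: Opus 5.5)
We prove Proposition \ref{Prop_lc_nabla} by reducing local constancy of $A$ to the local constancy of shtuka cohomology with Hecke operators, which is \cite[Corollary IX.2.3]{fargues2021geometrization}. The guiding principle is that the $I_3$-legs and the extra leg $0$ sit over the special fibre after $\Psi_{I_3}$, so they only contribute an integral Hecke-type kernel at $\sbar$. Over the legs $I_1 \sqcup I_2$, which live on the generic fibre, we have Satake sheaves and a shtuka space over $\eta^{I_1\sqcup I_2}$, whose cohomology is locally constant by the Fargues--Scholze argument.

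\textbf{Step 1.} Commute $\Psi_{I_3}$ past $\epsilon^*$. By proper base change on the generic fibre we rewrite $\emb_!\epsilon^*(\cdots)$ as $\epsilon^*(\Sat(V_1\boxtimes V_2\boxtimes V_3)\boxtimes \iota_!\nabla)$, exactly as in the proof of Lemma \ref{Lemma_B2}. By Assumption \ref{Assum_sm} applied to the $I_3$-legs (with the $I_1\sqcup I_2$ legs as parameters), we get
$$\Psi_{I_3}\epsilon^*(\Sat(V_1\boxtimes V_2\boxtimes V_3)\boxtimes\iota_!\nabla)\cong \epsilon^*\bigl(\Sat(V_1\boxtimes V_2)\boxtimes \Psi_{I_3}(\Sat V_3\boxtimes \iota_!\nabla)\bigr).$$
Here $\calF:=\Psi_{I_3}(\Sat V_3\boxtimes\iota_!\nabla)$ is a twisted central sheaf on $\frac{\Hk^{\loc}_{\calG_C^0,\sbar}}{Ad_FT_M}$. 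This sheaf lives entirely over the special point and is bounded and constructible, hence it is a finite extension of standard objects on finitely many Schubert cells.

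\textbf{Step 2.} Interpret $A$ as a Hecke operator. Then $A = p_!\epsilon^*(\Sat(V_1\boxtimes V_2)\boxtimes \calF)$, where $p$ now runs over shtukas with legs $I_1\sqcup I_2$ on $\eta$ and one leg at $\sbar$ with modification governed by $\calF$. Because $\calF$ is supported at the fixed point $\sbar$, the $\sbar$-leg amounts to an integral (pro-$p$ Iwahori level) Hecke correspondence. By the analogue of Lemma \ref{Lemma_Sht_Exc}(2) with $K=\calG_C^0(O_E)$, such a correspondence is given by a convolution with a compact object $P_\calF \in \scrD(G(E),\Lambda)$ built from $\calF$. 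Thus
$$A\cong i_b^*T_{V_1\boxtimes V_2}\, i_{1,!}(P_\calF) \in \scrD_{\et}(\Bun_G\times \eta^{I_1\sqcup I_2})|_b .$$
To make this rigorous, we dévissage $\calF$ along its finitely many strata. On each stratum, the corresponding piece of the shtuka space is the level-$\calG_C^0$ shtuka space twisted by a finite étale correspondence given by a double coset. Since local constancy is stable under cones and finite extensions, it suffices to treat one stratum at a time.

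\textbf{Step 3.} Apply \cite[Corollary IX.2.3]{fargues2021geometrization}. The Hecke operator $T_{V_1\boxtimes V_2}$ applied to any object of $\scrD_{\et}(\Bun_G)$ yields an object that is locally constant over $(\Div^1_X)^{I_1\sqcup I_2}$, i.e.\ it descends to $[*/W_E^{I_1\sqcup I_2}]$. Pulling back along $\eta^{I_1\sqcup I_2}\to(\Div^1_X)^{I_1\sqcup I_2}$ and restricting to $b$, we conclude that $A$ lies in the image of $\scrD_{\et}([*/I_E^{I_1\sqcup I_2}])$.

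The main obstacle is Step 2: identifying the cohomology with a leg fixed at $\sbar$ and modification $\calF$ with a Hecke operator applied to a compact representation, compatibly with the generic legs. I would first try the dévissage into strata, checking on each open Iwahori double coset that the shtuka space is a $\calG_C^0$-level shtuka space pushed through a finite étale correspondence. A subsidiary issue is that $\calF$ is bounded, so the dévissage terminates; I would deduce this from constructibility of nearby cycles on the bounded local Hecke stack $\Hk^{\loc}_{\leq\mu}$.
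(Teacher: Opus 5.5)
Your Step 1 and the final appeal to \cite[Corollary IX.2.3]{fargues2021geometrization} match the paper. In Step 1 you use proper base change and Assumption~\ref{Assum_sm} to reduce to $p_!\epsilon_s^*\bigl(\Sat(V_1\boxtimes V_2)\boxtimes\calF\bigr)$ with $\calF=\Psi_{I_3}(\Sat V_3\boxtimes\iota_!\nabla)$, which is exactly the paper's reduction.

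The gap is Step 2, which you yourself flag as the main obstacle, and the mechanism you propose there does not work. After nearby cycles, $\calF$ is no longer supported on $L^+\calG_\sigma$. For nontrivial $V_3$ it lives on cells $IwI$ with $w$ in the $\mu_3$-admissible set (times $W_{M_\sigma}$), e.g.\ with translation part $t_{\mu_3}$. These are genuinely meromorphic modifications at $\sbar$.

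A $\calG_C$-shtuka whose legs at $\sbar$ have such a type restricts on a punctured neighbourhood of $\{p=0\}$ to a $\varphi$-bundle. This bundle descends to some $\calE^{b'}$ with $\kappa(b')$ determined by the image of $w$ in $\pi_1(G)_\Gamma$, and the relevant fibres are of affine Deligne--Lusztig type. So a stratum of $\calF$ is not ``the level-$\calG_C^0$ shtuka space twisted by a finite \'etale correspondence''. Nor do the special legs produce $i_{1,!}$ of a compact object of $\scrD(G(E),\Lambda)$: there is no analogue of Lemma~\ref{Lemma_Sht_Exc}(2) of that shape. Already for $G=T$ a torus, your formula would make $A$ vanish unless $\kappa(b)$ is determined by the weights of $V_1\boxtimes V_2$ alone, whereas the special legs shift $\kappa(b)$ by the weight of $V_3$.

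What is missing is the Beauville--Laszlo factorization, which separates the generic legs from the special ones without interpreting the latter representation-theoretically. Restricting the $\calG_C$-torsor near $\{p=0\}$ gives
\[
\tilde p:\Sht_{\calG_C,b,\eta^{I_1\sqcup I_2}\times s^{I_3}\times s}\to\eta^{I_1\sqcup I_2}\times\Sht_{\calG_C,b,s^{I_3}\times s}.
\]
This map is the base change of $P:\Gr^{\tw,\infty=b}_{G,\eta^{I_1\sqcup I_2}}\to\eta^{I_1\sqcup I_2}\times\Bun_G$ along the map recording the descended bundle. The projection formula then gives
\[
A\cong\pi_!\bigl(\tilde p_!\epsilon_s^*p_1^*\Sat(V_1\boxtimes V_2)\otimes\epsilon_{s,2}^*\calF\bigr).
\]
The second factor is pulled back from the special-leg shtuka space, hence constant along $\eta^{I_1\sqcup I_2}$. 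The first factor is the base change of $P_!E^*\Sat(V_1\boxtimes V_2)$.

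In your language, $A$ is $T_{V_1\boxtimes V_2}$ applied to the pushforward of $\epsilon_{s,2}^*\calF$ to $\Bun_G$. That object is spread over several strata, not concentrated on the trivial one. Your Step 3, which you correctly state for arbitrary objects of $\scrD_{\et}(\Bun_G)$, then finishes the argument, and no d\'evissage of $\calF$ is needed.

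One further point for Step 3: over $\eta^{I_1\sqcup I_2}$ the twisted Grassmannian is not the pullback of the global Hecke stack over $(\DX)^{I_1\sqcup I_2}$, because $\Phi$ is an isomorphism only off the twisted diagonal. The paper therefore uses ULA-uniqueness to get $\Phi_!E^*\Sat_\eta(V_1\boxtimes V_2)\cong\pi^*\Sat_{\DX}(V_1\boxtimes V_2)$ before invoking Fargues--Scholze.
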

	
	\begin{proof}
		Let us first recall the maps. Denote $s=\Spd \overline{\bbF}_p$, $N=s$ the level structure. We have
		\[\begin{tikzcd}
			{\Hk^{\loc}_{\eta^{I_1 \sqcup I_2 \sqcup I_3}} \times \frac{Iw^0\backslash M_{\sigma}/U_M}{Ad_FT_M}} & {\Sht_{\calG_{\sigma}, b, \eta^{I_1 \sqcup I_2 \sqcup I_3}, N} \times^{\calG_{\sigma}(k_E)} M_{\sigma}/B_M} & {\Sht_{\calG_C, b, \eta^{I_1 \sqcup I_2 \sqcup I_3}\times s}}
			\arrow["\epsilon"', from=1-2, to=1-1]
			\arrow["\emb", from=1-2, to=1-3],
		\end{tikzcd}\]
		$$\Psi_{I_3}: \scrD_{\et}(\Sht_{\calG_C, b, \eta^{I_1 \sqcup I_2 \sqcup I_3} \times s}) \to  \scrD_{\et}(\Sht_{\calG_C, b, \eta^{I_1 \sqcup I_2} \times s^{I_3 \sqcup \{0\}}}),$$
		and 
		$$p: \Sht_{\calG_C, b, \eta^{I_1 \sqcup I_2} \times s^{I_3 \sqcup \{0\}}} \to \eta^{I_1 \sqcup I_2} \times s^{I_3 \sqcup \{0\}} \cong \eta^{I_1 \sqcup I_2}.$$
		
		Recall that we have the following Cartesian diagram (Lemma \ref{Lemma_Sht_Cart})
		\[\begin{tikzcd}
			{{\Sht_{\calG_{\sigma}, b, \eta^{I_1 \sqcup I_2 \sqcup I_3}, N} \times^{\calG_{\sigma}(k_E)} M_{\sigma}/B_M}} & {{\Sht_{\calG_C, b, \eta^{I_1 \sqcup I_2 \sqcup I_3}\times s}}} \\
			{{\Hk^{\loc}_{\calG^0_C, \eta^{I_1 \sqcup I_2 \sqcup I_3}} \times \frac{Iw^0\backslash M_{\sigma}/U_M}{Ad_FT_M}} } & {\Hk^{\loc}_{\calG_C^0, \eta^{I_1 \sqcup I_2 \sqcup I_3} \times s}/Ad_FT_M}
			\arrow["\emb", from=1-1, to=1-2]
			\arrow["\epsilon"', from=1-1, to=2-1]
			\arrow["{\epsilon_1}"', from=1-2, to=2-2]
			\arrow["{\emb_1}", from=2-1, to=2-2].
		\end{tikzcd}\]
		In the following, we denote $V_{123}=V_1 \boxtimes V_2 \boxtimes V_3$. By proper base change, 
		$$\emb_!\epsilon^*(\Sat V_{123} \boxtimes \nabla) \cong \epsilon_1^*\emb_{1, !}(\Sat V_{123} \boxtimes \nabla) \cong \epsilon_1^*(\Sat V_{123} \boxtimes  \iota_!\nabla),$$
		where $\iota: \frac{\Iw^0\backslash L^+\calG_{\sigma}/\Iw^0}{Ad_FT_M} \to \frac{\Iw^0\backslash LG/\Iw^0}{Ad_FT_M}$ is induced by the inclusion $L^+\calG_{\sigma} \subseteq LG$. So it suffices to show that
		$$p_!\Psi_{I_3}\epsilon_1^*(\Sat V_{123} \boxtimes K)$$
		is locally constant along $\eta^{I_1 \sqcup I_2}$, where $K:=\iota_!\nabla$. 
		
		Note that $\epsilon_1$ is actually defined integrally over $S^{I_3}$, i.e., there is a map
		$$\epsilon_2: \Sht_{\calG_C, b, \eta^{I_1 \sqcup I_2} \times S^{I_3} \times s} \to {\Hk^{\loc}_{\calG_C^0, \eta^{I_1 \sqcup I_2} \times S^{I_3} \times s}/Ad_FT_M}$$
		such that the pullback of $\epsilon_2$ along $\eta^{I_1 \sqcup I_2 \sqcup I_3} \times s \hookrightarrow \eta^{I_1 \sqcup I_2} \times S^{I_3} \times s$ is $\epsilon_1$. By Assumption \ref{Assum_sm}, nearby cycles commutes with $\epsilon_2^*$, so it suffices to show that 
		$$p_!\epsilon_s^*\Psi_{I_3}(\Sat V_{123} \boxtimes K)$$
		is locally constant along $\eta^{I_1 \sqcup I_2}$, where 
		$$\epsilon_s: \Sht_{\calG_C, b, \eta^{I_1 \sqcup I_2} \times s^{I_3 } \times s} \to \Hk^{\loc}_{\calG_C^0, \eta^{I_1 \sqcup I_2} \times s^{I_3} \times s}/Ad_F T_M$$ 
		is the pullback of $\epsilon_2$ along $$\eta^{I_1 \sqcup I_2} \times s^{I_3 } \times s \hookrightarrow \eta^{I_1 \sqcup I_2} \times S^{I_3} \times s.$$
		
		Note that in $\Psi_{I_3}(\Sat V_{123} \boxtimes K)$, $\Psi_{I_3}=\Psi_f$ is the iterated nearby cycle with respect to the map 
		$$f: \Hk^{\loc}_{\calG^0_C, \eta^{I_1 \sqcup I_2} \times S^{I_3} \times s} \to S^{I_3}.$$ 
		Let $\alpha$ denote the map $$\alpha: \Hk^{\loc}_{\calG_C^0, S^{I_3} \times s} \to S^{I_3}.$$
		Note that $\Psi_{I_3}(\Sat V_{123} \boxtimes K)$ corresponds to $\Sat(V_1 \boxtimes V_2) \boxtimes \Psi_{\alpha}(\Sat V_3 \boxtimes K)$ under the isomorphism 
		$$\Hk^{\loc}_{\calG_C^0, \eta^{I_1 \sqcup I_2} \times s^{I_3} \times s} \cong \Hk^{\loc}_{\calG_C^0, \eta^{I_1 \sqcup I_2}} \times \Hk^{\loc}_{\calG_C^0, s^{I_3} \times s}.$$
		
		We have a commutative diagram
		\[\begin{tikzcd}
			& {\Sht_{\calG_C, b, \eta^{I_1 \sqcup I_2} \times s^{I_3} \times s}} & {\eta^{I_1 \sqcup I_2} \times \Sht_{\calG_C, b, s^{I_3} \times s}} & {\eta^{I_1 \sqcup I_2}} \\
			{\Hk^{\loc}_{\calG_C^0, \eta^{I_1 \sqcup I_2}}} & {\Hk^{\loc}_{\calG_C^0, \eta^{I_1 \sqcup I_2} \times s^{I_3} \times s}/Ad_FT_M} & {\Hk^{\loc}_{\calG_C^0, s^{I_3} \times s}/Ad_FT_M}
			\arrow["{\tilde{p}}", from=1-2, to=1-3]
			\arrow["{\epsilon_{s, 1}}"', from=1-2, to=2-1]
			\arrow["{\epsilon_s}", from=1-2, to=2-2]
			\arrow["\pi", from=1-3, to=1-4]
			\arrow["{\epsilon_{s, 2}}", from=1-3, to=2-3]
			\arrow["{p_1}"', from=2-2, to=2-1]
			\arrow["{p_2}", from=2-2, to=2-3],
		\end{tikzcd}\]
		where $\tilde{p}$ is given by restricting the $\calG_C$-torsor near $\{p=0\}$.
		Therefore, we have
		\begin{align}
			&p_!\epsilon_s^*\Psi_{f}(\Sat V_{123} \boxtimes K)\\
			\cong \;&p_!\epsilon_s^*(\Sat(V_1 \boxtimes V_2) \boxtimes \Psi_{\alpha}(\Sat V_3 \boxtimes K))\\
			\cong \;&p_!\epsilon_s^*(p_1^*\Sat(V_1 \boxtimes V_2) \otimes p_2^*\Psi_{\alpha}(\Sat V_3 \boxtimes K))\\
			\cong \;&p_!(\epsilon_s^*p_1^*\Sat(V_1 \boxtimes V_2) \otimes \epsilon_s^*p_2^*\Psi_{\alpha}(\Sat V_3 \boxtimes K))\\
			\cong \;&\pi_!\tilde{p}_!(\epsilon_s^*p_1^*\Sat(V_1 \boxtimes V_2) \otimes \tilde{p}^*\epsilon_{s, 2}^*\Psi_{\alpha}(\Sat V_3 \boxtimes K))\\
			\cong \;&\pi_!(\tilde{p}_!\epsilon_s^*p_1^*\Sat(V_1 \boxtimes V_2) \otimes \epsilon_{s, 2}^*\Psi_{\alpha}(\Sat V_3 \boxtimes K)) \quad \text{Projection formula}
		\end{align}
		Note that $\epsilon_{s, 2}^*\Psi_{\alpha}(\Sat V_3 \boxtimes K)$ is locally constant along $\eta^{I_1 \sqcup I_2}$. Therefore, it suffices to show that $$\tilde{p}_!\epsilon_s^*p_1^*\Sat(V_1 \boxtimes V_2)$$ 
		is locally constant along $\eta^{I_1 \sqcup I_2}$.
		
		Let $\Gr_{G}^{\tw, \infty=b}$ be the twisted affine Grassmannian associated to $b \in B(G)$ (\cite[Section 23]{scholze2020berkeley}). Note that in \cite[Section 23]{scholze2020berkeley}, the element $b \in B(G)$ is fixed throughout, so they simply denote $\Gr_{G}^{\tw, \infty=b}$ by $\Gr_{G}^{\tw}$. The space $\Gr_{G}^{\tw, \infty=b}$ classifies data similar to ${\Sht_{\calG_C, b, \eta^{I_1 \sqcup I_2}}}$ but restricts everything to $\calY_S(0, \infty)$. Note that we have a commutative diagram
		\[\begin{tikzcd}
			{\Hk^{\loc}_{\calG_C^0, \eta^{I_1 \sqcup I_2}}} & {\Sht_{\calG_C, b, \eta^{I_1 \sqcup I_2} \times s^{I_3} \times s}} & {\eta^{I_1 \sqcup I_2} \times \Sht_{\calG_C, b, s^{I_3} \times s}} \\
			{\Hk^{\loc}_{G, \eta^{I_1 \sqcup I_2}}} & {\Gr^{\tw, \infty=b}_{G, \eta^{I_1 \sqcup I_2}}} & {\eta^{I_1 \sqcup I_2} \times \Bun_G}
			\arrow["{=}", from=1-1, to=2-1]
			\arrow["{\epsilon_{s, 1}}"', from=1-2, to=1-1]
			\arrow["{\tilde{p}}", from=1-2, to=1-3]
			\arrow[from=1-2, to=2-2]
			\arrow[from=1-3, to=2-3]
			\arrow["E"', from=2-2, to=2-1]
			\arrow["P", from=2-2, to=2-3]
		\end{tikzcd}\]
		where the two maps 
		$$\Sht_{\calG_C, b, s^{I_3} \times s} \to \Bun_G, \quad {\Gr^{\tw, \infty=b}_{G, \eta^{I_1 \sqcup I_2}}} \to \Bun_G$$ 
		are given by restriction near $\{p=0\} \subseteq \calY_S$ followed by descent along $Y_S \to X_S$. The right square is Cartesian by Beauville--Laszlo lemma.  It therefore suffices to show that 
		$$P_!E^*\Sat(V_1 \boxtimes V_2)$$
		is locally constant along $\eta^{I_1 \sqcup I_2}$. 
		
		Note that we have a map $\Phi: \Gr^{\tw}_{G, \eta^J} \to \Hk^{\glob, \infty=b}_{G, (\DX)^J}|_{\eta^{J}}$ given by restricting the $G$-torsor near $0$ and $\infty$. 
		Moreover, we have a commutative diagram
		\[\begin{tikzcd}
			{\Hk^{\loc, \tw}_{G, \eta^{I_1 \sqcup I_2}}} & {\Gr^{\tw, \infty=b}_{G, \eta^{I_1 \sqcup I_2}}} & {\eta^{I_1 \sqcup I_2} \times \Bun_G} \\
			{(\Hk^{\loc}_{(\DX)^{I_1 \sqcup I_2}})|_{\eta^{I_1 \sqcup I_2}}} & {(\Hk^{\glob, \infty=b}_{G, (\DX)^{I_1 \sqcup I_2}})|_{\eta^{I_1 \sqcup I_2}}} & {((\DX)^{I_1 \sqcup I_2} \times \Bun_G)|_{\eta^{I_1 \sqcup I_2}}}
			\arrow["{\Phi^{\on{pre}}}"', from=1-1, to=2-1]
			\arrow["{\pi'}"', from=1-2, to=1-1]
			\arrow["P", from=1-2, to=1-3]
			\arrow["\Phi"', from=1-2, to=2-2]
			\arrow["\cong"', from=1-3, to=2-3]
			\arrow["\pi"', from=2-2, to=2-1]
			\arrow["{\on{res}_0}", from=2-2, to=2-3]
		\end{tikzcd}\]
		where ${\Hk^{\loc, \tw}_{G, \eta^{I_1 \sqcup I_2}}}$ classifies similar data as $\Gr^{\tw}$ but we restrict everything to the completion at the legs.
		
		Recall we have Satake sheaves $\Sat_{\eta}(V) \in \scrD_{\et}(\Hk^{\loc}_{G, \eta^J})$ and $\Sat_{\DX}(V) \in \scrD_{\et}(\Hk^{\loc}_{G, (\DX)^J})$. Note that $\Phi$ is ind-proper and $\Phi$ is an isomorphism away from the twisted diagonal
		$$\tilde{\Delta}:=\{(x_1, ..., x_n, y_1, ..., y_n)\;|\; x_i=\varphi^k(y_j) \; \text{for some $i, j=1, ..., n$ and $k \in \bbZ$}\} \subseteq \eta^{I_1 \sqcup I_2}.$$  
		This implies that 
		$$\Phi_!E^*\Sat_{\eta}(V_1 \boxtimes V_2)=\pi^*\Sat_{\DX}(V_1 \boxtimes V_2)$$
		because both of them are ULA over $\eta^{I_1 \sqcup I_2}$ and they agree away from $\tilde{\Delta}$. Therefore, it suffices to show that $\on{res}_{0, !}\pi^*\Sat(V_1 \boxtimes V_2)$ is locally constant over $\eta^{I_1 \sqcup I_2}$. But this follows from \cite[Corollary IX.2.3]{fargues2021geometrization}.
	\end{proof}
	
	\begin{lemma}\label{Lemma_C_nabla}
		For the trivial representation $1 \in \Rep(\widehat{G}^{I})$ and $V \in \Rep(\widehat{G}^{I_3})$, we have 
		$$\Psi_I p_!\Psi_{I_3}\emb_!\epsilon^*(\Sat (1 \boxtimes V) \boxtimes \nabla) 
		\cong   p_!\Psi_{I_3}\emb_!\epsilon^*(\Sat V \boxtimes \nabla).$$
	\end{lemma}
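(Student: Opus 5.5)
The plan is to reduce Lemma \ref{Lemma_C_nabla} to the same product decomposition used in Lemma \ref{Lemma_B1}, followed by the observation that iterated nearby cycles of a sheaf pulled back from the point are trivial. On the generic fiber over the $I$-legs, the representation $1 \in \Rep(\widehat{G}^I)$ imposes no modification at those legs. Hence, as in the proof of Lemma \ref{Lemma_B1}, there is an isomorphism
$$\Sht_{\calG_C, b, I \sqcup I_3 \sqcup 0, 1 \boxtimes V}|_{S^{I} \times S^{I_3} \times s} \cong S^{I} \times \Sht_{\calG_C, b, I_3 \sqcup 0, V}|_{S^{I_3}\times s}$$
over $S^I \times S^{I_3}$. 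We first check that this holds integrally over $S^I$, and not only over $\eta^I$. At the $I$-legs the modification is trivial, so the datum of $y_i \in S$ is simply forgotten. The same holds on the local Hecke side: $\Sat(1 \boxtimes V) \boxtimes \nabla$ corresponds to $\underline{\Lambda}_{S^I} \boxtimes (\Sat V \boxtimes \nabla)$, the maps $\emb$ and $\epsilon$ being compatible with the product decomposition.

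Under this identification, the object $\Psi_{I_3}\emb_!\epsilon^*(\Sat(1\boxtimes V)\boxtimes\nabla)$ on the $\eta^I$-part becomes $\underline{\Lambda}_{\eta^I}\boxtimes \Psi_{I_3}\emb_!\epsilon^*(\Sat V \boxtimes\nabla)$. Here we use that the nearby cycles $\Psi_{I_3}$ along the $I_3$-legs commute with exterior product with a constant sheaf on the other factor, by base change for the product with $\eta^I$. The Künneth formula then gives
$$p_!\Psi_{I_3}\emb_!\epsilon^*(\Sat(1\boxtimes V)\boxtimes\nabla) \cong \underline{\Lambda}_{\eta^I}\boxtimes p_!\Psi_{I_3}\emb_!\epsilon^*(\Sat V\boxtimes\nabla),$$
where the second factor is a complex on $s^{I_3\sqcup 0}\cong s$, i.e.\ a plain complex of $\Lambda$-modules.

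Finally we apply $\Psi_I$ to this constant sheaf on $\eta^I$ extended over $S^I$. Iterated nearby cycles of the constant sheaf on $S^I=(\Spd\bOE)^I$ with respect to the identity map are the constant sheaf at the special point. Concretely, $\Psi$ along $\Spd\bOE\to\Spd\bOE$ of $\underline{\Lambda}$ is $\underline{\Lambda}$, since $\bOE$ is strictly henselian and $\underline{\Lambda}$ is pulled back from a point; we iterate this one leg at a time. Combined with the previous step, this yields the claimed isomorphism with $p_!\Psi_{I_3}\emb_!\epsilon^*(\Sat V\boxtimes\nabla)$. We also check that this isomorphism is compatible with the one in Lemma \ref{Lemma_B1}, so that the composite in \eqref{Eq_candidate} is well-defined. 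This amounts to a unit/counit compatibility for the $\ev$ and $\coev$ steps.

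The expected obstacle is the first step: making the product decomposition rigorous integrally over $S^I$, where legs may collide with the $I_3$-legs and with the level leg $0$ at $s$. Trivial modification at colliding legs should still impose no condition, since $\Hk^{\loc}$ bounded by $0$ is the unit section. We intend to deduce this from the moduli description of Definition \ref{Def_Sht}, with $\alpha$ an isomorphism away from $\Gamma_{y_{I_3}}\cup\Gamma_{y_0}$ only.
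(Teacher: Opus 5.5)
Your proposal is correct and is essentially the paper's proof. The paper's argument consists only of the product decomposition of $\Sht_{\calG_C, I\sqcup I_3\sqcup 0, 1\boxtimes V}$ over $(\Spd \bE)^{I}\times(\Spd\bE)^{I_3}\times\sbar$ as $\Sht_{\calG_C, I_3\sqcup 0, V}\times(\Spd\bE)^I$, under which the sheaf becomes $\emb_!\epsilon^*(\Sat V\boxtimes\nabla)\boxtimes\underline{\Lambda}$; the K\"unneth step and the triviality of $\Psi_I$ on a constant sheaf, which you spell out, are left implicit there.

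Two minor corrections. Integrality over $S^I$ is unnecessary and is not the real obstacle, because $\Psi_I$ is only applied after $p_!$, to a sheaf on $\eta^I$; the $\Psi_{I_3}$ step only needs the decomposition over $\eta^I\times S^{I_3}$. The $\ev$/$\coev$ compatibility belongs to the end of the proof of Theorem~\ref{Thm_pPsi}, not to this lemma.
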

	
	\begin{proof}
		The key is to note that we have an isomorphism
		$$(\Sht_{\calG_C, I \sqcup I_3 \sqcup 0, 1 \boxtimes V})|_{\Spd \breve{E}^{I} \times \Spd \breve{E}^{I_3} \times \sbar} \cong (\Sht_{\calG_C, I_3 \sqcup 0, V})|_{\Spd \breve{E}^{I_3} \times \sbar} \times \Spd \breve{E}^{I},$$
		under which the complex $\emb_!\epsilon^*(\Sat(1 \boxtimes V) \boxtimes \nabla)$ corresponds to the complex $\emb_!\epsilon^*(\Sat V \boxtimes \nabla) \boxtimes \underline{\Lambda}_{\Spd \breve{E}^I}$.
	\end{proof}
	
	Arguing as in \cite{salmon2023unipotent}, we see that the candidate inverse \eqref{Eq_candidate} is indeed the inverse of the canonical map. The proof of Theorem \ref{Thm_pPsi} is now complete.

	\section{Monodromy of twisted central sheaves}\label{Section_monodromy}
	In this section, we 
	\begin{enumerate}
		\item introduce the constant term functor for twisted sheaves on the local Hecke stack with pro-$p$ Iwahori level,
		\item introduce the twisted Wakimoto sheaves and twisted central sheaves,
		\item compute the $(I_E)^I$-action on the twisted central sheaves.
	\end{enumerate}
	Throughout the section, we work over $k=\overline{k}_E$. So that the Iwahori group scheme $\calI$ should be understood as over $\bOE$, etc.

	\subsection{Setting}
	In this section, we will work over $\bOE$ instead of $O_E$. Therefore, we use the following group-theoretic setting in place of the setup introduced in Subsection \ref{section-notation}.
	
	Let $\calG$ be a connected reductive group over $\bOE$ with generic fiber $G:=\calG_{\bE}$. Note that $\calG$ is split since $\bOE$ is strictly Henselian. Let $\calT \subseteq \calG$ be a split maximal torus over $\bOE$. Let $T:=\calT_{\bE}$.  Let $\eta: \bbG_{m, \bOE} \to \calT \subseteq \calG$ be a regular cocharacter, i.e., $\calT$ is the centralizer of $\eta$ in $\calG$.
	
	Let $\calP$ be the attractor locus of $\eta$. The generic fiber $P:=\calP_{\bE}$ is a Borel subgroup of $G$. We can and will assume that the Borel $P$ defines the same notion of dominance for characters as in Subsection \ref{section-notation}. By \cite[Lemma 4.5]{haines2021test}, we have a semidirect product 
	$$\calP=\calT \rtimes \calU,$$
	where $\calU$ is the kernel of $\calP \to \calT$.
	Let $U:=\calU_{\bE}$.

	The integral model $\calG$ of $G$ determines a hyperspecial vertex $x_0$ in the Bruhat-Tits building $\calB(G, \bE)$ of $G$. The Borel subgroup $P \subseteq G$ determines an alcove $C$ containing $x_0$ in its closure. Let $\calI:=\calG_C$ be the Iwahori group scheme corresponding to the alcove $C$. Let $\calI^0 \subseteq \calI$ be the pro-$p$ Iwahori group scheme. Let $I=L^+\calI$ and $I^0=L^+\calI^0$.
	Let $\calU'$ be the schematic closure of $U$ in $\calI$. Note that $\calU'$ is canonically isomorphic to $\calU$, so we will use $\calU$ to denote both.
	
	\subsection{Constant terms}\label{Subsection_Constant_terms}

	Following \cite[Section 2]{anschutz2023gaitsgory} and \cite[Section 5, 6]{aglr2022}, we introduce the constant term functor for twisted sheaves on  $$\Gr_{\calI^0}=LG/L^+\calI^0=LG/I^0.$$

	There is a map
	\begin{equation}
		\pi: LG/I^0 \to LG/I, 
	\end{equation}
	which is a torsor under $I/I^0 \cong \mathsf{T}=\calT_{k}$, the special fiber of $\calT$. We refer to \cite[Section 2]{anschutz2023gaitsgory} for the basics on the affine flag variety $$\Gr_{\calI}:=\Fl_{\calI}=LG/L^+\calI=LG/I.$$
	Unlike \cite{anschutz2023gaitsgory}, we will use the notation $\Gr_{\calI}$ instead of $\Fl_{\calI}$.

	We have an action of $\mathbb{G}_m$ on $LG/I$ by conjugation via the regular cocharacter $\eta$.
	This places us in the situation of hyperbolic localization as in \cite[Section IV.6]{fargues2021geometrization}.
	
	Recall that the $\mathbb{G}_m$-fixed points
	\begin{equation}
		\Gr_{\calI}^0:=\Gr_{\calI}^{\mathbb{G}_m}=\coprod\limits_{w\in \tilde{W}}wI/I=\coprod\limits_{w\in \tilde{W}}w
	\end{equation} 
	is a disjoint union of points indexed by the extended affine Weyl group $\tilde{W}$. We also have the attractor locus
	\begin{equation}
		\Gr_{\calI}^+=\coprod\limits_{w\in \tilde{W}} \calS_w,\text{ where } \calS_w:=LU\cdot w\subseteq \Gr_{\calI}.
	\end{equation}
	
	\begin{definition}
		We define the spaces $\Gr_{\calI^0}^0$ and $\Gr_{\calI^0}^+$ via pullback along $\pi$:
		\begin{equation}
			\begin{tikzcd}
				{\Gr_{\calI^0}} & {\Gr_{\calI^0}^+} & {\Gr_{\calI^0}^0} \\
				{\Gr_{\calI}} & {\Gr_{\calI}^+} & {\Gr_{\calI}^0}
				\arrow["\pi"', from=1-1, to=2-1]
				\arrow["{\tilde{q}^+}"', from=1-2, to=1-1]
				\arrow["\lrcorner"{anchor=center, pos=0.125, rotate=-90}, draw=none, from=1-2, to=2-1]
				\arrow["{\pi^+}"', from=1-2, to=2-2]
				\arrow["{\tilde{i}^+}"', from=1-3, to=1-2]
				\arrow["\lrcorner"{anchor=center, pos=0.125, rotate=-90}, draw=none, from=1-3, to=2-2]
				\arrow["{\pi^0}", from=1-3, to=2-3]
				\arrow["{q^+}", from=2-2, to=2-1]
				\arrow["{i^+}", from=2-3, to=2-2].
			\end{tikzcd}
		\end{equation}
	\end{definition}
	
	As a warning, we note that $\Gr_{\calI^0}^0$ is no longer the fixed-point locus of $\Gr_{\calI^0}$ under the $\bbG_m$-action.
	
	We have 
	\begin{equation}\label{Equation_Fl^Gm}
		\Gr_{\calI^0}^{0}=\coprod\limits_{w\in \tilde{W}}wI/I^0=\coprod\limits_{w\in \tilde{W}}w\mathsf{T}.
	\end{equation}
	Likewise,
	\begin{equation}\label{Eq_Gr^+}
		\Gr_{\calI^0}^+=\coprod\limits_{w\in \tilde{W}} \tilde{\calS_w},\text{ where } \tilde{\calS_w}:=LU\cdot w\mathsf{T}\subseteq \Gr_{\calI^0}.
	\end{equation}

	Note that we also have the map
	$$p^+: \Gr_{\calI}^+ \to \Gr_{\calI}^0.$$
	By Equations \eqref{Equation_Fl^Gm} and \eqref{Eq_Gr^+}, we have the following Cartesian diagram 
	\begin{equation}
		\begin{tikzcd}
			{\Gr_{\calI^0}^+} & {\Gr_{\calI^0}^0} \\
			{\Gr_{\calI}^+} & {\Gr_{\calI}^0}
			\arrow["{\tilde{p}^+}", from=1-1, to=1-2]
			\arrow["{\pi^+}"', from=1-1, to=2-1]
			\arrow["\lrcorner"{anchor=center, pos=0.125}, draw=none, from=1-1, to=2-2]
			\arrow["{\pi^0}", from=1-2, to=2-2]
			\arrow["{p^+}"', from=2-1, to=2-2].
		\end{tikzcd}
	\end{equation}

	\begin{definition}
		Consider the maps
		\begin{equation}
			\Hk_{\calI} = L^+\calI \backslash \Gr_{\calI} \xleftarrow{q^+} L^+\calT \backslash \Gr_{\calI}^+ \xrightarrow{p^+} L^+\calT \backslash \Gr_{\calI}^0.
		\end{equation}

		Define
		\begin{equation}
			\CT_B := p^+_! \circ q^{+, *}: \scrD_{\et}(\Hk_\calI) \to \scrD_{\et}(L^+\calT \backslash \Gr_{\calI}^0).
		\end{equation}
		
	\end{definition}

	The essential datum in the shtuka-cohomology computation is a tame local system over $\sfT$. We introduce this datum and the associated terminology.
	
	\begin{definition}[{\cite[Section 4.7]{eteve2023monodromiques}}]\label{Def_CH}
		Let $\sfT'$ be a torus over $\overline{\bbF}_p$.
		\begin{enumerate}
			\item Let $\pi_1^t(\sfT')$ be the geometric tame fundamental group of $\sfT'$ at the point $1$. It is canonically isomorphic to 
			\begin{equation}
				\pi_1^t(\sfT')=\bbX_{\bullet}(\sfT') \otimes \pi_1^t(\bbG_m)=\bbX_{\bullet}(\sfT') \otimes \hat{\bbZ}^{(p)}(1).
			\end{equation}
			
			\item Let $\on{CH}_{\Lambda}(\sfT')=\Hom(\pi_1^t(\sfT'), \Lambda^*)^{\on{tors}}$, where $\on{tors}$ denotes the group of characters of finite order. When there is no confusion about the coefficient $\Lambda$, we denote $\on{CH}(\sfT')=\on{CH}_{\Lambda}(\sfT')$.
			
			\item For each $\chi \in \on{CH}_{\Lambda}(\mathsf{T}')$, there is a corresponding tame local system $\calL_{\chi} \in \scrD_{\on{cons}}(\mathsf{T}')$, called the Kummer sheaf associated to $\chi$.
		\end{enumerate}
	\end{definition}
	
	To prepare for the next definition, we review the notion of $(\sfT, \calL_{\chi})$-equivariant sheaves following \cite[Section 2]{lusztig2020endoscopy}. Let $H$ be a connected algebraic group acting on a scheme $X$. We have the action map $\alpha: H \times X \to X$. Let $\calL$ be a character sheaf on $H$. We can define the category 
	$$\scrD_{\et}(X/(H, \calL))$$ 
	of $(H, \calL)$-equivariant sheaves on $X$. An object in $\scrD_{\et}(X/(H, \calL))$ is an object $A \in \scrD_{\et}(X)$ together with an isomorphism
	$$\alpha^*A \cong \calL \boxtimes A$$
	and higher coherence data. 
	
	Note that we can view $\calL_{\chi}$ as a character sheaf on $I=L^+\calI$ and $L^+\calT$ via pullback, which we still denote by $\calL_{\chi}$.
	
	\begin{definition}\label{Def_CT}
		Let $\sfT:=I/I^0$. For every character ${\chi} \in \on{CH}_{\Lambda}(\mathsf{T})$, we define the constant term functor $\on{CT}_B^{\chi}:=\tilde{p}^+_! \circ \tilde{q}^{+, *}$ 
		by the following composition
		$$\scrD_\et((I, \calL_{\chi})\backslash\Gr_{\calI^0}/(\mathsf{T}, \calL_{\chi})) \xrightarrow{\tilde{q}^{+, *}} \scrD_\et((L^+\calT, \calL_{\chi})\backslash\Gr_{\calI^0}^+/(\mathsf{T}, \calL_{\chi})) \xrightarrow{\tilde{p}^+_!} \scrD_\et((L^+\calT, \calL_{\chi})\backslash\Gr_{\calI^0}^0/(\mathsf{T}, \calL_{\chi}))$$
		between categories of equivariant sheaves twisted by the character $\chi$. 
	\end{definition}
	
	We can define $\Gr_{\calI^0}^-$, $\tilde{q}^-, \tilde{i}^-$, and $\tilde{p}^-$ similarly by considering the repeller locus.
	
	From now on, we mostly consider $\calI^0$ and write  $p^+, q^+, i^+$ for $\tilde{p}^+, \tilde{q}^+, \tilde{i}^+$.
	
 As in \cite[Definition IV.6.4]{fargues2021geometrization}, there is a natural map
	\begin{equation}\label{Eq_nat_map}
		p_*^{-} \circ q^{-, !} \to p^+_! \circ q^{+, *}.
	\end{equation}

	We record the following lemma which will be used repeatedly.

	\begin{lemma}[{\cite[Lemma 2.12]{anschutz2023gaitsgory}}]\label{Lemma_2.12}
		Let $\calU$ be the schematic closure of $U$ in $\calI$ and $w \in \bbX_\bullet^+$. We have
		$$IwI=L^+\calU wI=L^+\calU wL^+\calU \times ^{L^+\calU}I.$$
	\end{lemma}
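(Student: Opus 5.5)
The plan is to reduce both equalities to the Iwahori factorization of $I$ and a conjugation estimate for the dominant translation $w=t_\lambda$, $\lambda\in\bbX_\bullet^+$. Write $\calU^-$ for the schematic closure of $U^-$ in $\calI$, and set $I^-:=L^+\calU^-$, which is $I\cap LU^-$.

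\emph{Step 1 (Iwahori factorization).} I will show that multiplication gives an isomorphism
\begin{equation*}
L^+\calU\times L^+\calT\times I^-\xrightarrow{\ \cong\ } I .
\end{equation*}
The big cell $\calU\times\calT\times\calU^-\to\calI$ is an open immersion, and its image contains the special fiber of $\calI$: the special fiber of $\calI$ maps onto the Borel, which lies in the big cell of $\calG_k$, and the remaining factors are pro-unipotent in the $\pi_E$-adic direction. So for a perfect ring $R$, every point of $\calI(W_O(R))$ lands in the big cell, since this can be checked modulo $\pi_E$. This gives the factorization on $R$-points, hence as $v$-sheaves.

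\emph{Step 2 (conjugation estimate).} I will use the relation $t_\lambda x_\alpha(a)t_\lambda^{-1}=x_\alpha(\pi_E^{\langle\alpha,\lambda\rangle}a)$.
\begin{itemize}
\item For a negative root $\alpha$, dominance of $\lambda$ gives $\langle\alpha,\lambda\rangle\le 0$. Hence $w^{-1}I^-w\subseteq I^-$, because the valuation of the entry only increases.
\item $w$ normalizes $L^+\calT$.
\end{itemize}
Here the sign convention must be matched with the one fixed above, namely that $P$ is the attractor of $\eta$ and dominance is taken with respect to $B$. If the convention is opposite, one replaces $t_\lambda$ by $t_{-\lambda}$ in the estimate. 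Combining with Step 1,
\begin{equation*}
IwI=L^+\calU\,L^+\calT\,I^-\,w\,I=L^+\calU\,w\,(w^{-1}L^+\calT\,I^-\,w)\,I=L^+\calU\,wI .
\end{equation*}
This proves the first equality.

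\emph{Step 3 (twisted product).} Consider the map
\begin{equation*}
m\colon L^+\calU\,w\,L^+\calU\times^{L^+\calU}I\to IwI,\qquad (y,g)\mapsto yg .
\end{equation*}
Surjectivity follows from Step 2. For injectivity, suppose $yg=y'g'$ with $y=uwv$ and $y'=u'wv'$, where $u,u',v,v'\in L^+\calU$. Then
\begin{equation*}
y^{-1}y'=v^{-1}\,\bigl(w^{-1}u^{-1}u'w\bigr)\,v'\in I,
\end{equation*}
so $w^{-1}u^{-1}u'w\in I\cap LU$. Since $\calU$ is the schematic closure of $U$ in $\calI$, we have $I\cap LU=L^+\calU$. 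Hence $y^{-1}y'\in L^+\calU$, and $(y,g)$ and $(y',g')$ are identified in the twisted product. This argument is functorial in $R$, so $m$ is a monomorphism of $v$-sheaves, and together with surjectivity it is an isomorphism.

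The main obstacle is Step 1, specifically the claim that the big-cell factorization holds for all $W_O(R)$-points rather than only over fields. I would handle it via the open-immersion argument above, checking points modulo $\pi_E$. If that fails over non-local $R$, I would pass to a $v$-cover by products of points, where the factorization is elementary.
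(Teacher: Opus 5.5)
Your proof is correct. The paper gives no argument of its own and simply cites \cite[Lemma 2.12]{anschutz2023gaitsgory}. What you wrote is the standard self-contained proof, with three ingredients: the Iwahori factorization $I=L^+\calU\cdot L^+\calT\cdot L^+\calU^-$; the contraction $w^{-1}L^+\calU^-w\subseteq L^+\calU^-$ for dominant $w=\nu(\pi_E)$; and the identity $I\cap LU=L^+\calU$. The last identity holds because $W_O(R)$ is $\pi_E$-torsion free, so a $W_O(R)$-point of $\calI$ whose generic fiber lies in $U$ factors through the schematic closure. It gives exactly the injectivity needed for the twisted-product description in Step 3.

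\emph{Step 1.} The only soft spot is your reason why the big cell of $\calI$ contains the special fiber: ``the remaining factors are pro-unipotent'' is not an argument. A clean version goes as follows. Take $g\in I(R)$. The reduction of $r(g)$ lies in $\calB_k$, which is contained in the open big cell of $\calG_k$. Since $W_O(R)$ is $\pi_E$-adically complete, $r(g)$ then lies in the big cell of $\calG$, so $r(g)=utn$ with $u\in U(W_O(R))$, $t\in\calT(W_O(R))$ and $n\in U^-(W_O(R))$. In the paper's conventions the closure of $U$ in $\calI$ is $\calU$ itself, so $u,t\in I$. Hence $n\in I\cap LU^-=L^+\calU^-$, by the same schematic-closure argument you use in Step 3.

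\emph{Sign conventions.} Your hedge is not needed. With $U$ the unipotent radical of the Borel that defines dominance, which the paper fixes, your estimate applies verbatim. In the opposite convention the statement for dominant $w$ would simply be false, so replacing $t_\lambda$ by $t_{-\lambda}$ would prove a different lemma rather than rescue this one.
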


	\subsection{Assumption on the constant term functor}\label{Subsection_CT_Ass}
	We now state our assumption on hyperbolic localization for twisted sheaves. 
	
	Work over $\Spd \overline{\bbF}_p$. Let $f: X \to S$ be a map of small v-stacks satisfying \cite[Hypothesis IV.6.1]{fargues2021geometrization}. In particular, we have a $\bbG_m$-action on $X$ over $S$. We have maps
	\begin{equation}
		X \xleftarrow{q^+} X^+ \xrightarrow{p^+} X^0 \quad \text{and} \quad X^+ \xleftarrow{i^+} X^0.
	\end{equation}

	Let $\sfT$ be a split torus over $\overline{\bbF}_p$. Let $\pi': \tilde{X} \to X$ be a $\sfT$-torsor. Assume that $\tilde{X}$ is equipped with a $\bbG_m$-action that commutes with the $\sfT$-action, and such that $\pi'$ is $\bbG_m$-equivariant.
	
	Define $\tilde{X}^0, \tilde{X}^+$ via the following Cartesian diagram:
	\begin{equation}\label{Diagram_twistCT1}
		\begin{tikzcd}
			{\tilde{X}} & {\tilde{X}^+} & {\tilde{X}^0} \\
			X & {X^+} & {X^0}
			\arrow["{\pi'}"', from=1-1, to=2-1]
			\arrow["{\tilde{q}^+}"', from=1-2, to=1-1]
			\arrow["\lrcorner"{anchor=center, pos=0.125, rotate=-90}, draw=none, from=1-2, to=2-1]
			\arrow[from=1-2, to=2-2]
			\arrow["{\tilde{i}^+}"', from=1-3, to=1-2]
			\arrow["\lrcorner"{anchor=center, pos=0.125, rotate=-90}, draw=none, from=1-3, to=2-2]
			\arrow[from=1-3, to=2-3]
			\arrow["{q^+}", from=2-2, to=2-1]
			\arrow["{i^+}", from=2-3, to=2-2].
		\end{tikzcd}
	\end{equation}

	Now assume that we have a $(\sfT \times \bbG_m)$-equivariant map $\tilde{p}^+: \tilde{X}^+ \to \tilde{X}^0$ making the following diagram commutative
	\begin{equation}\label{Diagram_twistedCT2}
		\begin{tikzcd}
			{\tilde{X}^+} & {\tilde{X}^0} \\
			{X^+} & {X^0}
			\arrow["{\tilde{p}^+}", from=1-1, to=1-2]
			\arrow[from=1-1, to=2-1]
			\arrow[from=1-2, to=2-2]
			\arrow["{p^+}"', from=2-1, to=2-2].
		\end{tikzcd}
	\end{equation}
	Note that Diagram \eqref{Diagram_twistedCT2} is automatically Cartesian because it induces a map from $\tilde{X}^+$ to the actual fiber product, which must be an isomorphism because every map between $\sfT$-torsors is an isomorphism.
	
	We can define 
	$$\tilde{X} \xleftarrow{\tilde{q}^-} \tilde{X}^- \xleftarrow{\tilde{i}^-} \tilde{X}^0$$
	similarly. Assume we have a $(\sfT \times \bbG_m)$-equivariant map $\tilde{p}^-: \tilde{X}^- \to \tilde{X}^0$.
	
	We can now state our assumption on hyperbolic localization for twisted sheaves. It is analogous to \cite[Theorem IV.6.5]{fargues2021geometrization}.
	As in \cite[Theorem IV.6.5]{fargues2021geometrization}, there are natural maps 
	$$(\tilde{p}^-)_*(\tilde{q}^-)^!A \to (\tilde{i}^-)^*(\tilde{q}^-)^!A \to (\tilde{i}^+)^!(\tilde{q}^+)^*A \to (\tilde{p}^+)_! (\tilde{q}^+)^*A$$
	for $A\in \scrD_{\et}(\tilde{X}/(\sfT, \calL_{\chi}), \Lambda)$.

	\begin{assumption}\label{Assum_CT}
		Let $\chi \in \on{CH}_{\Lambda}(\sfT)$. For any $\bbG_m$-equivariant object $A \in \scrD_{\et}(\tilde{X}/(\sfT, \calL_{\chi}), \Lambda)$, the above maps are isomorphisms
		$$(\tilde{p}^-)_*(\tilde{q}^-)^!A \cong (\tilde{i}^-)^*(\tilde{q}^-)^!A \cong (\tilde{i}^+)^!(\tilde{q}^+)^*A \cong (\tilde{p}^+)_! (\tilde{q}^+)^*A.$$
	\end{assumption}

	\subsection{Integral constant terms with multiple legs}
	Let $S_0=\Spd \bOE, x=\Spd k, \eta=\Spd \bE$. Let $J$ be a finite set.
	Let $S=S_0^J \times x$. Let $X=\Gr_{\calI, S}$. Let $f: X \to S$ be the natural map. After restricting to the bounded-by-$\mu$ locus, the map $f$ satisfies \cite[Hypothesis IV.6.1]{fargues2021geometrization} (see \cite[Lemma 5.1]{aglr2022}).
	
	Let $\tilde{X}=\Gr_{\calI^0, S}$, and let $\pi: \tilde{X} \to X$ be the natural map. We can define $\tilde{X}^0, \tilde{X}^+$ via pullback along $\pi$.
	To use hyperbolic localization for twisted sheaves on $\tilde{X}$ (Assumption \ref{Assum_CT}), we need to define a $\sfT$-equivariant map $\tilde{p}^+: \tilde{X}^+ \to \tilde{X}^0$. The proof of the following lemma were communicated to the author by Xiangqian Yang.
	
	\begin{lemma}\label{Lemma_vsurj}
		Let $LU$ be the loop group of $U$ over $S=S_0^J \times x$.
		\begin{enumerate}
			\item The action map $LU \times X^0 \to X^+$ is surjective as a map of v-sheaves.
			\item The action map $LU \times \tilde{X}^0 \to \tilde{X}^+$ is surjective as a map of v-sheaves.
		\end{enumerate}
	\end{lemma}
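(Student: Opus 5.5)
We reduce (2) to (1) and prove (1) by checking surjectivity on geometric points. A map of small v-sheaves is surjective once it is surjective on $\Spa(C,C^+)$-points for all algebraically closed perfectoid fields $C$: any point of the target is v-locally such a point, and lifts can be glued v-locally.

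For (1), fix a geometric point $s=\Spa(C,C^+)\to S=S_0^J\times x$. By \cite[Lemma 5.1]{aglr2022} and the description of fixed points and attractors under $\bbG_m$-actions (\cite[Section IV.6]{fargues2021geometrization}), the fibres over $s$ are as follows.
\begin{itemize}
\item The fibre of $X^0$ is the Beilinson--Drinfeld affine flag variety of $\calT$ (twisted by the Iwahori structure at the leg $x$).
\item The fibre of $X^+$ is the corresponding one for $\calP=\calT\rtimes\calU$.
\end{itemize}
The map $X^+\to X^0$ is induced by $\calP\to\calT$, and its fibres are $L\calU/L^+\calU$-type quotients. Concretely, a point of $X^+$ over $s$ is a $\calP$-torsor $\calE_{\calP}$ on the relevant completed curve $\hat{\Gamma}$, trivialized away from the legs, with associated $\calT$-torsor $\calE_{\calT}$. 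Since $\calT\subseteq\calP$ splits the projection, $\calE_{\calT}\times^{\calT}\calP$ is a point of $X^+$ in the image of $X^0$.

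It remains to compare $\calE_{\calP}$ with $\calE_{\calT}\times^{\calT}\calP$. Both have the same $\calT$-reduction, so they differ by a $\calU_{\calE_{\calT}}$-torsor (the twisted form of $\calU$), and this torsor is trivialized away from the legs. The claim is that this torsor is trivial over $\hat{\Gamma}$ after v-localization on $C$. Granting this, the difference is given by an element of $LU(C)$, and the point of $X^+$ is $u\cdot y$ with $y\in X^0$.

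The triviality step is the main obstacle. The plan is as follows.
\begin{itemize}
\item $\calU_{\calE_{\calT}}$ is smooth affine with connected unipotent fibres. It admits a filtration by root subgroups; this is the standard description of $\calU$, which is also the closure in $\calI$, via $\calP=\calT\rtimes\calU$ from \cite[Lemma 4.5]{haines2021test}.
\item Each graded piece of the filtration is a line bundle twist of $\bbG_a$.
\item A $\bbG_a$-torsor over the affine ring $\hat{\Gamma}$ (a product of complete local rings such as $W_O(C^+)$-type completions along divisors, or their mixed products at the leg $x$) has class in $H^1$ of a quasi-coherent sheaf on an affine scheme, hence is trivial.
\item Dévissage along the filtration then trivializes the whole torsor.
\end{itemize}
The delicate point is the collision of legs with $x$ at the Iwahori level. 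There the affine ring is not local, and one must use that $\calU$ inside $\calI$ is still split unipotent with $\bbG_a$-graded pieces, namely the affine root subgroups attached to $C$. Alternatively, one could compare with the untwisted Iwahori fixed points via Lemma \ref{Lemma_2.12}, which gives $IwI=L^+\calU wI$ on the special fibre.

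For (2), use the Cartesian squares defining $\tilde{X}^0$ and $\tilde{X}^+$ by pullback along the $\sfT$-torsor $\pi$. Write a point of $\tilde{X}^+$ as $\tilde z$ over $z=u\cdot y\in X^+$, using (1) after v-localization. Then $u^{-1}\tilde z$ lies in $\pi^{-1}(y)$. This uses that the $LU$-action on $\tilde X$ is compatible with $\pi$, which holds because $LU$ acts on the left while $\sfT=I/I^0$ acts on the right. The fibre $\pi^{-1}(y)$ is contained in $\tilde{X}^0=X^0\times_X\tilde X$, so $\tilde z=u\cdot(u^{-1}\tilde z)$ lies in the image of $LU\times\tilde{X}^0$, proving (2).
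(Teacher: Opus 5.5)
Your argument only treats the connected components of $X^+$ indexed by translations, and this is a genuine gap. Describing the fibres of $X^0$ and $X^+$ as Beilinson--Drinfeld Grassmannians of $\calT$ and of $\calP=\calT\rtimes\calU$ is wrong at Iwahori level. Over $x^J\times x$ the fixed points $\Gr_{\calI,x}^{\bbG_m}$ form the discrete set $\tilde W$, while $\Gr_{\calT,x}=\underline{\bbX_\bullet}$. The comparison with $\Gr_{\calT}$ and $\Gr_{\calP}$ (as in \cite[Theorem 5.2]{aglr2022}) only identifies them with the union of the components $X^0_{\tilde w}$, $X^+_{\tilde w}$ with $\tilde w\in\bbX_\bullet\subseteq\tilde W$. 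Now take a point of $X^+_{w\nu}$ with $1\neq w\in W_{\mathrm{fin}}$, for instance a point of $LU\cdot\dot w\nu I/I$ in the special fibre. It is not a $\calP$-torsor with a $\calT$-reduction in your sense, so comparing $\calE_{\calP}$ with $\calE_{\calT}\times^{\calT}\calP$ says nothing about it. The missing step, which is how the paper proceeds, is to move these components to translation ones. Left multiplication by a lift $\dot w$ is an automorphism of $\Gr_{\calI,S}$ with three properties:
\begin{enumerate}
\item it intertwines the $\bbG_m$-action through $\eta$ with the action through a $W_{\mathrm{fin}}$-conjugate of $\eta$;
\item it turns the $LU$-action into the action of the loop group of the correspondingly conjugated unipotent radical;
\item it matches the component $w\nu$ with the translation component $\nu$ for the new action.
\end{enumerate}
The attractor in $\calI$ of any regular cocharacter of $\calT$ is again of the form $\calU'\rtimes\calT$ by \cite[Lemma 4.5]{haines2021test}, so your translation-case argument then applies.

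Two smaller remarks. First, your opening reduction is not valid as stated. Surjectivity on $\Spa(C,C^+)$-points implies v-surjectivity only for quasicompact maps; that is the hypothesis in the corresponding criterion of \cite{scholze2017ecd}. The map $LU\times X^0\to X^+$ is not quasicompact, and pointwise lifts need not spread out to a v-cover. This is harmless, because on translation components your comparison works for an arbitrary affinoid perfectoid $S\to X^+$ after v-localizing on $S$. There both $\calE_{\calP}$ and $\calE_{\calT}$ become trivial on $\hat\Gamma$, and the claim reduces to $LP=LU\rtimes LT$, which is what the paper means by ``visibly v-surjective''. Your d\'evissage along affine root subgroups is a valid alternative route to the same triviality. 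Second, your proof of (2) is correct. It amounts to the paper's observation that $LU\times\tilde X^0\to\tilde X^+$ is the base change of $LU\times X^0\to X^+$ along $\tilde X^+\to X^+$.
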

	
	\begin{proof}
		Note that (2) follows from (1) because a base change of a v-surjection is still a v-surjection.  
		
		It remains to prove (1). Note that the set of connected components
		$$\pi_0(\Gr^+_{\calI, S_0^J \times x}) = \pi_0(\Gr^+_{\calI, x^J \times x}) = \pi_0(\Gr^+_{\calI, x})$$
		is in bijection with the extended affine Weyl group $\tilde{W}$. For every $\tilde{w} \in \tilde{W}$, let $X^+_{\tilde{w}}$ be the connected component of $X^+$ corresponding to $\tilde{w}$. 
		
		If $\tilde{w} \in X_*(T) \subseteq \tilde{W}$, a similar argument as \cite[Theorem 5.2]{aglr2022} shows that $X^+_{\tilde{w}}$ is isomorphic to the corresponding connected component of $\Gr_{\calP, S_0^J \times x}$, where $\calP$ is the attractor locus of the regular cocharacter $\eta$ in $\calI$. We have 
		\begin{equation}\label{Eq_semidirect}
			\calP \cong \calU \rtimes \calT.
		\end{equation}
		The map 
		$$LU \times X_{\tilde{w}}^0 \to X_{\tilde{w}}^+$$
		corresponds to the map
		$$LU \times \Gr_{\calT, S_0^J \times x} \to \Gr_{\calP, S_0^J \times x},$$
		which is visibly v-surjective by Equation \eqref{Eq_semidirect}.
		
		Now for a general element $\tilde{w} \in \tilde{W}$. We can write $\tilde{w}=w\nu$, where $w$ is an element of the finite Weyl group and $\nu \in X_*(T) \subseteq \tilde{W}$. Left multiplication by a lift of $w$ defines an isomorphism
		$$\Gr_{\calI, S_0^J \times x} \to \Gr_{\calI, S_0^J \times x},$$
		which is $\bbG_m$-equivariant with $\bbG_m$ acting on the left hand side by conjugation through $\eta$, and with $\bbG_m$ acting on the right hand side by conjugation via $\eta^w=w\eta w^{-1}$. This induces an isomorphism
		$$X_{w\nu}^{+, \eta} \cong X_{\nu}^{+, \eta^w},$$
		where the superscript $\eta$ and $\eta^w$ record the $\bbG_m$-action. This reduces the claim to the case that $\tilde{w}=\nu \in X_*(T) \subseteq \tilde{W}$, proved in the preceding paragraph.
	\end{proof}

	By Lemma \ref{Lemma_vsurj}, the second projection map 
	$$p_2: LU \times \tilde{X}^0 \to \tilde{X}^0$$
	induces a $(\sfT \times \bbG_m)$-equivariant map
	\begin{equation}
		\tilde{p}^+: \tilde{X}^+ \to \tilde{X}^0
	\end{equation}
	making Diagram \eqref{Diagram_twistedCT2} commute.
	
	Therefore, for every $\chi \in \on{CH}(\sfT)$, under Assumption \ref{Assum_CT}, we have the twisted constant functor 
	\begin{equation}
		\on{CT}_B^{\chi}:=\tilde{p}^+_!\tilde{q}^{+, *} \cong \tilde{p}^-_*\tilde{q}^{-, !}.
	\end{equation}

	\subsection{Wakimoto sheaves}
	\label{Subsection_Wakimoto}
	
	Throughout this subsection, the Hecke stack we will use is the local Hecke stack. Following \cite[Section 3]{anschutz2023gaitsgory}, we want to study the category of sheaves on the Hecke stack $$\Hk_{\calI^0}:=\Hk^{\loc}_{\calI^0, \sbar}=I^0\backslash LG/I^0.$$ 
	In particular, we will
	\begin{enumerate}
		\item introduce twisted Wakimoto-filtered complexes in mixed characteristic (Definition \ref{Definition_Wakimoto}), 
		\item calculate the constant terms of twisted Wakimoto sheaves (Lemma \ref{Lemma_3.24'}),
		\item show that twisted central objects for convolution are twisted Wakimoto-filtered (Lemma \ref{Lemma_3.15'}).
	\end{enumerate}

	We fix a collection of lifts $\dot{w}$ of $w \in \tilde{W}$ as in \cite[Appendix 4.A]{eteve2023monodromiques}. For every element $w$ in the extended affine Weyl group $\tilde{W}$ and $\chi \in \operatorname{CH}(\mathsf{T})$, we define $\Delta_{w, \chi}$ and $\nabla_{w, \chi}$ as in \cite[Section 4.7.4]{eteve2023monodromiques}.
	
	\begin{lemma}[{\cite[Lemma 4.A.5]{eteve2023monodromiques}}]\label{Lemma_convolution}
		With the choice of $\overset{.}{w}$ for any $w \in \tilde{W}$, we have canonical isomorphisms:
		\begin{enumerate}
			\item $\Delta_{w, w'\chi'} \;*\; \Delta_{w', \chi'} \;\cong \; \Delta_{ww', \chi'}$ if $l(w)+l(w')=l(ww'),$
			\item $\nabla_{w, w'\chi'} \;*\; \nabla_{w', \chi'} \;\cong \; \nabla_{ww', \chi'}$ if $l(w)+l(w')=l(ww'),$
			\item $\Delta_{w^{-1}, w\chi}\; *\nabla_{w, \chi} \;\cong \; \Delta_{e, \chi} \;\cong \; \nabla_{w^{-1}, w\chi}\; *\Delta_{w, \chi}.$
		\end{enumerate}
	\end{lemma}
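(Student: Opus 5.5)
The plan is to follow the standard Iwahori--Hecke argument, as in \cite[Appendix 4.A]{eteve2023monodromiques}, transported to the mixed-characteristic Witt vector affine flag variety $\Gr_{\calI^0}$. First I would recall the definitions. The stratum $I^0\backslash I^0\dot{w}I^0/I^0$ of $\Hk_{\calI^0}$ is identified, after descending along the $\sfT$-torsor, with $\sfT$ modulo a unipotent group. The sheaf $\Delta_{w,\chi}$ (resp.\ $\nabla_{w,\chi}$) is the $!$-extension (resp.\ $*$-extension) of the Kummer local system $\calL_\chi$ pulled back along $I\dot wI\to \sfT$, with the appropriate shift. The left and right $(\sfT,\calL)$-equivariance is then $(w\chi,\chi)$.

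For (1) and (2), I would use the convolution diagram
\[
I\dot wI\times^{I}I\dot w'I\to I\dot w\dot w'I.
\]
When $l(w)+l(w')=l(ww')$, this map is an isomorphism. This is the usual Bruhat--Tits fact; it holds for the perfect Witt flag variety because it reduces to the same statement about Schubert cells in the BN-pair $(LG(k),I(k))$ together with the perfection of the Demazure isomorphism, as in \cite{anschutz2023gaitsgory}. On the pro-$p$ level this lifts to an isomorphism of $\sfT$-torsors, and the chosen lifts satisfy $\dot w\dot w'=\dot{(ww')}$ whenever lengths add, as in \cite[Appendix 4.A]{eteve2023monodromiques}. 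The convolution of the two $!$-extensions (resp.\ $*$-extensions) is then the $!$- (resp.\ $*$-) pushforward along an isomorphism onto the stratum $I\dot{(ww')}I$. The two Kummer sheaves glue to $\calL_{\chi'}$ on the product stratum by the multiplicativity $m^*\calL_\chi\cong\calL_\chi\boxtimes\calL_\chi$ and by $\dot{w}'^{*}$-twisting $\calL_{w'\chi'}$ to $\calL_{\chi'}$. Properness of convolution gives $!=*$, which handles the $\nabla$ case.

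For (3), I would first reduce with (1) and (2) to length-zero and simple-reflection cases. Write $w=s_1\cdots s_r\tau$ reduced with $\tau\in\Omega_C$. For $\tau$ the statement is trivial, since the strata are points times $\sfT$. For a simple affine reflection $s$, the computation takes place on the minimal parahoric $\calG_s$. There $L^+\calG_s/I$ is a perfected $\bbP^1$, so everything reduces to the classical rank-one computation on $\bbP^1$ with a $\sfT$-torsor. When $s\chi\ne\chi$ on the coroot direction, i.e.\ $\check\alpha_s^*\calL_\chi$ is nontrivial, $\Delta_{s}$ and $\nabla_{s}$ are clean. Otherwise one uses the standard triangle $\Delta_e\to\Delta_s*\nabla_s$ with cone zero, which comes from the cohomology of $\bbG_m$ with coefficients in a (possibly trivial) Kummer sheaf. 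In both cases one gets $\Delta_{s,s\chi}*\nabla_{s,\chi}\cong\Delta_{e,\chi}$. Induction on $r$ then uses $(1)$, $(2)$ and associativity:
\[
\Delta_{w^{-1}}*\nabla_w=\Delta_{w'^{-1}}*(\Delta_{s}*\nabla_s)*\nabla_{w'}.
\]

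The main obstacle I expect is the rank-one computation in the non-clean case, together with keeping track of the twisting characters and of the choice of lifts. I would handle it exactly as in \cite[4.A]{eteve2023monodromiques}, by pulling back to $L^+\calG_s$, which is perfectly of finite type modulo pro-unipotent groups. That reduces it to a statement about the reductive quotient of rank one over $k$, which is independent of the characteristic of $E$.
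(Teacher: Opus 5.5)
Your proposal is correct and takes the same route as the paper, which gives no argument of its own and simply imports Eteve's Lemma 4.A.5. Your sketch reproduces that argument on the perfect Witt-vector flag variety: the length-additive isomorphism $I\dot wI\times^{I}I\dot w'I\cong I\dot w\dot w' I$ plus properness for (1)--(2), then reduction of (3) via (1), (2) and associativity to length-zero elements and a rank-one stalk computation on $L^+\calG_s$.

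Two minor points of precision. First, cleanness is governed by whether $\check\alpha_s^*\calL_\chi$ is nontrivial, which is not the same as $s\chi\neq\chi$ (e.g.\ a quadratic $\chi$ for $SL_2$). Second, in the non-clean case the vanishing on the $s$-stratum is $R\Gamma_c(\mathbb{A}^1, j_*\Lambda)=0$, which genuinely uses the $*$-extension in $\nabla_{s,\chi}$ rather than the cohomology of $\mathbb{G}_m$ alone.
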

	
	As in \cite[Section 4.7.3]{eteve2023monodromiques}, let $$\scrD_{\et, \chi, \chi}:=\scrD_{\et, (\mathsf{T} \times \mathsf{T}, \mathcal{L}_{\chi}\boxtimes \mathcal{L}_{\chi})}(I^0\backslash LG/I^0, \Lambda)$$ (which is denoted $D_{Ad, \chi}$ in \textit{loc. cit}) be the category of \'etale sheaves on $I^0\backslash LG/I^0$ with support contained in finitely many $I$-double cosets and equivariant under $(\mathsf{T} \times \mathsf{T}, \mathcal{L}_{\chi}\boxtimes \mathcal{L}_{\chi})$, where the action is given by $(t, t').x=txt'$. Let 
	$$\scrD_{\chi, \chi}:=\scrD_{\on{cons}, \chi, \chi} \subseteq \scrD_{\et, \chi, \chi}$$
	be the full subcategory of constructible sheaves. Let $\scrP_{\chi, \chi}$ be the corresponding category of perverse sheaves. 
	
	Note that $\scrD_{\et, \chi, \chi}$ is the same as the category $\scrD_\et((I, \calL_{\chi})\backslash\Gr_{\calI^0}/(\mathsf{T}, \calL_{\chi}))$, which we used to define the twisted constant term functor in Definition \ref{Def_CT}.
	
	The action of the finite Weyl group on $\sfT$ induces an action of the extended affine Weyl group $\tilde{W}$ on $\mathsf{T}$. This induces an action of $\tilde{W}$ on $\on{CH}_{\Lambda}(\mathsf{T})$. Let $W_{\chi}:=\Stab_{\tilde{W}}(\chi)$ be the stabilizer of $\chi$ in $\tilde{W}$. 
	
	We can endow $\scrD_{\et, \chi, \chi}$ with a monoidal structure by convolution, as follows. Consider the following convolution diagram
	\[\begin{tikzcd}
		& {I^0\backslash LG \times^{I^0}LG/I^0} & {I^0\backslash LG \times^{I}LG/I^0} & {I^0\backslash LG/I^0} \\
		{I^0\backslash LG/I^0} && {I^0\backslash LG/I^0}
		\arrow["f", from=1-2, to=1-3]
		\arrow["{\on{pr}_1}", from=1-2, to=2-1]
		\arrow["{\on{pr}_2}"', from=1-2, to=2-3]
		\arrow["m", from=1-3, to=1-4]
	\end{tikzcd}\]
	For $A, B \in \scrD_{\et, \chi, \chi}$, the sheaf $\on{pr}_1^*A \otimes \on{pr}_2^*$B descends to a sheaf
	\begin{equation}
		A \tilde{\boxtimes} B:=  \on{Des}_f(\on{pr}_1^*A \otimes \on{pr}_2^*B)\in \scrD_{\et}(I^0\backslash LG \times ^{I}LG/I^0).
	\end{equation}
	The convolution product is defined as 
	\begin{equation}
		A * B:=m_!(A \tilde{\boxtimes}B) \in \scrD_{\et, \chi, \chi}.
	\end{equation}
	
	For $w \in \tilde{W}$, let $j_w: IwI/I^0 \subseteq \Gr_{\calI^0}$ be the locally closed embedding.
	
	\begin{lemma}
		The map
		\begin{equation}\label{Theta}
			\Theta: K_0(\scrD_{\chi, \chi}) \to \mathbb{Z}[W_{\chi}], \qquad [\mathcal{F}] \mapsto (-1)^{\dim \sfT}\sum_{w \in W_{\chi}}\sum_{n \in \mathbb{Z}}(-1)^n\Rank(\mathcal{H}^n(j^*_{w}\mathcal{F}))w
		\end{equation}
		is a ring isomorphism, with inverse ring isomorphism
		$$\psi: \mathbb{Z}[W_{\chi}] \to K_0(\scrD_{\chi, \chi}), \qquad w \mapsto (-1)^{l(w)}{[\Dwchi]}.$$
	\end{lemma}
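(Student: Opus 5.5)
We reduce the statement to a combinatorial computation on the Bruhat stratification of $I^0\backslash LG/I^0$ and then use the convolution identities of Lemma \ref{Lemma_convolution}.

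\emph{Step 1: $\Theta$ is well defined.} The first task is to understand the $(\sfT\times\sfT,\calL_\chi\boxtimes\calL_\chi)$-equivariant constructible sheaves on a single stratum $IwI/I^0$. Using Lemma \ref{Lemma_2.12} and the description $IwI/I^0\cong I^0\dot w\sfT$ modulo unipotent directions, we will show the following. If $w\notin W_\chi$, the stratum supports no nonzero twisted-equivariant objects, because the two equivariance conditions force $\calL_\chi\cong\calL_{w\chi}$ on the stabilizer torus. If $w\in W_\chi$, the equivariant category on the stratum is equivalent to $\scrD(\Lambda)$, generated by $j_w^*\Delta_{w,\chi}$, which up to shift is a rank-one local system in degree $-\ell(w)-\dim\sfT$ (after normalization). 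Consequently the ranks of $\calH^n(j_w^*\calF)$ are finite, and the alternating sum is additive in distinguished triangles. Hence $\Theta$ is a well-defined group homomorphism, and it is zero on strata outside $W_\chi$. By dévissage along the finitely many strata in the support, $K_0(\scrD_{\chi,\chi})$ is generated by the classes $[j_{w,!}(\text{local system})]=\pm[\Delta_{w,\chi}]$ for $w\in W_\chi$.

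\emph{Step 2: the additive bijection.} With the normalization $(-1)^{\dim\sfT}$, we get $\Theta([\Delta_{w,\chi}])=(-1)^{\ell(w)}w$, since $\Delta_{w,\chi}$ is supported on one stratum with stalk concentrated in one degree of the stated parity. Therefore $\Theta\circ\psi=\id$. Since the $[\Delta_{w,\chi}]$ generate $K_0$, the map $\psi$ is surjective, and hence both maps are mutually inverse group isomorphisms. We also need the classes to be independent, which follows from $\Theta\circ\psi=\id$.

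\emph{Step 3: multiplicativity.} It suffices to check that $\psi$ is multiplicative on the basis, that is, $(-1)^{\ell(w)+\ell(w')}[\Delta_{w,\chi}*\Delta_{w',\chi}]=(-1)^{\ell(ww')}[\Delta_{ww',\chi}]$ for $w,w'\in W_\chi$. Here is the main obstacle: $W_\chi$ is generally not a parabolic subgroup of the Coxeter group, so length-additivity inside $W_\chi$ does not match $\ell$ on $\tilde W$.

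We will handle this via Lemma \ref{Lemma_convolution}. First, factor $w\in\tilde W$ as a reduced product of simple reflections and length-zero elements, passing through intermediate characters $w''\chi$ that may lie outside the stabilizer. Parts (1) and (2) then express $\Delta_{w,\chi}$ as a convolution of elementary $\Delta_{s,\chi''}$. Second, part (3) gives the key identity in $K_0$: $[\Delta_{s,\chi''}]\cdot[\Delta_{s,s\chi''}]=[\Delta_{e,\chi''}]$ whenever $s\chi''\neq\chi''$. This holds because the standard triangle $\Delta_{s}\to\nabla_{s}\to(\text{point})$ degenerates when $\chi''$ is nontrivial on the coroot, so that $\Delta_s=\nabla_s$. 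When $s\chi''=\chi''$, the usual Hecke quadratic relation holds instead.

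These relations show that the $K_0$ classes multiply according to the groupoid of pairs $(w,\chi)$, with signs $(-1)^{\ell}$ absorbing cancellations. Restricting to the endomorphisms of $\chi$, the product becomes the group law of $W_\chi$ with sign $(-1)^{\ell(w)}$, which is exactly the statement that $\psi$ is a ring map. Unit compatibility, $\psi(e)=[\Delta_{e,\chi}]$, is immediate. Combined with Step 2, both $\Theta$ and $\psi$ are ring isomorphisms.
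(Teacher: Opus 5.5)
Your proposal is correct and follows essentially the same route as the paper. Both arguments have the same three steps: $K_0(\scrD_{\chi,\chi})$ is generated by the classes $[\Delta_{w,\chi}]$ for $w\in W_\chi$; these classes are independent via the stalk Euler characteristics $\phi_{w'}([\Delta_{w,\chi}])=(-1)^{l(w)+\dim\sfT}\delta_w^{w'}$; and multiplicativity follows from the groupoid identity $[\Delta_{w_1,w_2\chi}*\Delta_{w_2,\chi}]=[\Delta_{w_1w_2,\chi}]$ for all $w_1,w_2\in\tilde{W}$, deduced from Lemma \ref{Lemma_convolution} by induction on length.

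One refinement: the case split in your Step 3 is unnecessary, since in $K_0$ one always has $[\Delta_{s,\chi''}]=[\nabla_{s,\chi''}]$ (the cone of $\Delta_{s,\chi''}\to\nabla_{s,\chi''}$ lives on the closed stratum and has Euler characteristic zero). Lemma \ref{Lemma_convolution}(3) then gives $[\Delta_{s,s\chi''}*\Delta_{s,\chi''}]=[\Delta_{e,\chi''}]$ uniformly. This also covers the case where $s$ fixes $\chi''$ but $\chi''$ is nontrivial on the coroot of $s$: there $\Delta_{s,\chi''}$ is clean, and the usual quadratic relation you invoke does not hold.
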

	
	\begin{proof}
		An argument similar to \cite[Lemma 5.2.1]{achar2025central}  works. We sketch the proof here. 
		
		We first show that $\Theta$ is a well-defined isomorphism of $\mathbb{Z}$-modules.
		The point is that $K_0(\scrD_{\chi, \chi})$ is generated by $\{[\Dwchi]\; | \;
		w \in W_{\chi}\}$ and that the set $\{[\Dwchi]\; | \;
		w \in W_{\chi}\}$ is $\mathbb{Z}$-linearly independent in $K_0(\scrD_{\chi, \chi})$.
		Indeed, the latter can be seen using the maps
		$$\phi_w: K_0(\scrD_{\chi, \chi}) \to \mathbb{Z} \qquad
		[\mathcal{F}] \mapsto \sum_{n \in \mathbb{Z}}(-1)^n\Rank(\mathcal{H}^n(j^*_{w}\mathcal{F})),$$
		because
		$$\phi_{w'}([\Dwchi])=(-1)^{l(w)+\dim \sfT} \delta_{w}^{w'}$$
		for $w, w' \in W_{\chi}$.
		
		Next, we show that $\Theta$ is a ring map, i.e., for any $w_1, w_2 \in W_{\chi}$,
		$$[\Delta_{w_1, \chi} * \Delta_{w_2, \chi}]=[\Delta_{w_1w_2, \chi}].$$ Indeed, one can show more generally that for any $w_1, w_2 \in \tilde{W}$ and any $\chi$,
		$$[\Delta_{w_1, w_2\chi}*\Delta_{w_2, \chi}]=[\Delta_{w_1w_2, \chi}]$$
		in $K_0(\scrD_{\on{cons}}(\Hk_{\calI^0}))$. This can be done using Lemma \ref{Lemma_convolution} and induction on the length of $w_2$.

	\end{proof}
	
	\begin{lemma}\label{Lemma_perverse}
		Let $w_1, w_2$ be in $W_{\chi}$. Then $\Delta_{w_1, \chi}*\nabla_{w_2, \chi}, \nabla_{w_1, \chi}*\Delta_{w_2, \chi} \in \Perv_{\chi, \chi}$.
	\end{lemma}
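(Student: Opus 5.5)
We follow the standard argument for (untwisted or twisted) Iwahori--Hecke categories, cf. \cite[Lemma 4.7.x]{eteve2023monodromiques} and \cite{achar2025central}. The plan is to reduce to the case where one of $w_1, w_2$ has length zero, or both are simple reflections in the appropriate twisted sense, using the braid relations of Lemma \ref{Lemma_convolution}.

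\textbf{Reduction.} Write $w_1, w_2 \in W_\chi$ and use that $\Delta_{w,\chi}$ and $\nabla_{w,\chi}$ are perverse. These are the $!$- and $*$-extensions of a shifted rank-one $(\mathsf{T}\times\mathsf{T},\calL_\chi\boxtimes\calL_\chi)$-equivariant local system from the stratum $IwI/I^0$, and $j_w$ is affine, so perversity follows from Artin vanishing. The key intermediate claim is that for any $w\in\tilde W$ and $\chi$, and any perverse $\calF$ in the relevant twisted category, the convolution $\Delta_{w,\chi'}*\calF$ lies in ${}^pD^{\ge 0}$ and $\nabla_{w,\chi'}*\calF$ lies in ${}^pD^{\le 0}$.

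\textbf{Proving the claim.} To show $\nabla * \calF \in {}^pD^{\le 0}$, I would use the following steps.
\begin{enumerate}
\item Factor $w$ into simple reflections and a length-zero element. Lemma \ref{Lemma_convolution}(1),(2) then reduces the claim to $\ell(w)\le 1$.
\item For $\ell(w)=0$, convolution is an isomorphism of stacks, hence $t$-exact.
\item For $w=s$ simple, convolution with $\nabla_{s,\chi'}$ is $m_!$ along the map $I^0\backslash LG\times^I LG/I^0\to I^0\backslash LG/I^0$ restricted over $IsI$. This map is a $\mathbb{P}^1$-type fibration with affine open part $\mathbb{A}^1$: $*$-extension along $\mathbb{A}^1\subset \mathbb{P}^1$ followed by proper pushforward has perverse amplitude $[-1,0]$ at most. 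A finer analysis, splitting into the cases where $s$ fixes $\chi'$ or not, gives right $t$-exactness.
\end{enumerate}
The bound $\Delta * \calF \in {}^pD^{\ge 0}$ follows by Verdier duality.

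\textbf{Conclusion.} Apply the claim twice. Taking $\calF=\nabla_{w_2,\chi}$, we get $\Delta_{w_1,\chi}*\nabla_{w_2,\chi}\in {}^pD^{\ge0}$. For the other bound, write
\[\Delta_{w_1,\chi}*\nabla_{w_2,\chi}=\Delta_{w_1,\chi}*\nabla_{w_2,\chi}*\Delta_{e,\chi},\]
and use associativity with the right-convolution version of the claim applied to $\Delta_{w_1,\chi}$, which gives ${}^pD^{\le0}$. This right-convolution version is the symmetric statement, proved the same way via inversion $g\mapsto g^{-1}$. The case $\nabla_{w_1,\chi}*\Delta_{w_2,\chi}$ is dual.

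\textbf{Main obstacle.} The hardest step is the simple-reflection case, specifically controlling exactness for a twisted local system when $s\chi'\ne\chi'$. There the rank-one $\mathbb{P}^1$ computation involves a nontrivial Kummer sheaf on $\mathbb{G}_m$. I would check first that its cohomology vanishes, which makes $\Delta_{s}=\nabla_{s}$ clean, and so only the case $s\in W_{\chi'}$ needs the classical $\mathbb{P}^1$ argument. The mixed-characteristic setting (Witt vector loop groups) should not matter, since all strata are perfections of finite-type schemes over $\overline{k}_E$.
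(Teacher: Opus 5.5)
Your proposal is correct. Its overall structure is the standard one behind the argument the paper cites from \cite{achar2025central}:
\begin{itemize}
\item convolution on either side with $\Delta_{w,\chi'}$ is left $t$-exact, and with $\nabla_{w,\chi'}$ is right $t$-exact;
\item hence $\Delta_{w_1,\chi}*\nabla_{w_2,\chi}$ lies in ${}^pD^{\ge 0}$, by left convolution applied to the perverse $\nabla_{w_2,\chi}$;
\item and it lies in ${}^pD^{\le 0}$, by right convolution applied to the perverse $\Delta_{w_1,\chi}$.
\end{itemize}
The extra factor $*\Delta_{e,\chi}$ you insert plays no role.

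Where you differ is in how you prove the one-sided exactness. You factor $w$ into simple reflections using Lemma \ref{Lemma_convolution} and then treat $\ell(w)\le 1$, with a case split on whether $s\chi'=\chi'$. The usual argument handles every $w$ at once:
\begin{itemize}
\item Via $(g,x)\mapsto(gI,gx)$, the restriction of $m$ to $IwI\times^{I}LG/I^0$ becomes the projection $IwI/I\times LG/I^0\to LG/I^0$. This is an affine morphism with fibres $\mathbb{A}^{\ell(w)}$.
\item Since $m$ is (ind-)proper on the closure $\overline{IwI}\times^{I}LG/I^0$, the objects $\Delta_{w,\chi'}*\calF$ and $\nabla_{w,\chi'}*\calF$ are the $!$- and $*$-pushforwards along this affine map of the perverse restriction of the twisted product.
\item Artin vanishing then gives ${}^pD^{\ge 0}$ and ${}^pD^{\le 0}$ respectively.
\end{itemize}

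This also shows the twist is irrelevant for exactness: $\calL_{\chi'}$ changes only the local system, not the geometry. So the ``main obstacle'' you single out is not one. The cleanness $\Delta_{s,\chi'}\cong\nabla_{s,\chi'}$ for $s\chi'\neq\chi'$ is true but not needed. In your simple-reflection step, the amplitude $[-1,0]$ you obtain is already right $t$-exactness, so no finer analysis is required.

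Your route reaches the same conclusion, but it depends on the length-additivity isomorphisms of Lemma \ref{Lemma_convolution} and on associativity across mixed twists $\scrD_{\chi',\chi''}$, and gains nothing in return.
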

	
	\begin{proof}
		A similar argument to \cite[Lemma 4.1.7]{achar2025central} works. 
	\end{proof}
	
	Recall we have the natural map $\pi: \Gr_{\calI^0} \to \Gr_{\calI}$. Also recall that $\Gr_{\calI}$ is denoted by $\Fl_{\calI}$ in \cite{anschutz2023gaitsgory}.
	
	\begin{lemma}\label{Lemma_support}
		Let $w_1, w_2 \in W_{\chi}$. 
		\begin{enumerate}
			\item The perverse sheaves $\Delta_{w_1, \chi}*\nabla_{w_2, \chi}, \nabla_{w_1, \chi}*\Delta_{w_2, \chi}$ are both supported on $\pi^{-1}(\Gr_{\calI, \leq w_1w_2})$.
			\item The perverse sheaves $\Delta_{w_1, \chi}*\nabla_{w_2, \chi}, \nabla_{w_1, \chi}*\Delta_{w_2, \chi}$ restrict to $\underline{\Lambda} \boxtimes \mathcal{L}_{\chi}[l(w_1w_2)+\dim \sfT]$  on $\pi^{-1}(\Gr_{\calI, w_1w_2}) \cong \mathbb{A}^{l(w_1w_2)} \times \mathsf{T}$. 
		\end{enumerate}

	\end{lemma}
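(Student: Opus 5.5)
The plan follows the standard argument for untwisted Iwahori sheaves (as in \cite[Lemma 4.1.7, Lemma 5.2.1]{achar2025central}), adapted to $(\sfT\times\sfT,\calL_\chi\boxtimes\calL_\chi)$-equivariance. The first step is to reduce to reduced expressions by induction on lengths. For $w\in W_\chi$ one writes $w=w'\tau$ with $w'\in W_{\aff}$ and $\tau\in\Omega_C$. The length-zero elements contribute only translations of strata, and Lemma \ref{Lemma_convolution}(1),(2) split $\Delta_{w,\chi}$ and $\nabla_{w,\chi}$ into convolutions of standard and costandard objects attached to simple reflections $s$. These intermediate factors carry twisted characters $w''\chi$ that need not lie in $W_\chi$, so the induction has to be carried out in the larger category $\scrD_{\on{cons}}(\Hk_{\calI^0})$, with characters varying. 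There are two cases for a simple reflection $s$ and character $\chi'$: either $s\chi'=\chi'$ (the ``$s$-regular'' case), or $s\chi'\neq\chi'$. In the second case $\Delta_{s,\chi'}\cong\nabla_{s,\chi'}$ is clean, because the Kummer sheaf restricted to the coroot $\bbG_m$ is nontrivial.

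Part (1), on supports, is geometric. The convolution $\Delta_{w_1,\chi}*\nabla_{w_2,\chi}$ is $m_!$ of a sheaf supported on $\pi^{-1}(Iw_1I)\times^{I}\pi^{-1}(Iw_2I)/I^0$. Its image under $m$ lies in $\pi^{-1}\big(\overline{Iw_1I\cdot Iw_2I}/I\big)$, which is contained in $\pi^{-1}(\Gr_{\calI,\leq w_1w_2})$ only after a refinement. The bare closure is only contained in the union of $\Gr_{\calI,\leq v}$ over products $v$ of subwords of $w_1$ and $w_2$, which a priori gives more. For this reason I would instead argue on the level of objects, by induction on $l(w_2)$. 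For a simple reflection $s$ with $l(w_2s)>l(w_2)$, Lemma \ref{Lemma_convolution} gives $\nabla_{w_2s,\chi'}=\nabla_{w_2,s\chi'}*\nabla_{s,\chi'}$. I would then use the rank-one computation: $\Delta_{w,\chi}*\nabla_{s,\chi'}$ is either $\Delta_{ws}$ (when $l(ws)<l(w)$ or $s\chi'\neq\chi'$) or an extension supported on $\overline{IwsI}$ (when $l(ws)>l(w)$), computed on the $\bbP^1$-fibration $\Gr_{\calI^0}\to LG/P_s^0$. Tracking supports through the induction keeps everything inside $\pi^{-1}(\Gr_{\calI,\leq w_1w_2})$. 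The key input here is that for $w_1,w_2\in W_\chi$ the factors with $s\chi'\neq\chi'$ cancel in pairs, by Lemma \ref{Lemma_convolution}(3).

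For part (2), the plan is to compute the restriction to the open stratum $\pi^{-1}(Iw_1w_2I/I)\cong\bbA^{l(w_1w_2)}\times\sfT$. Over this stratum, the relevant fiber of $m$ is a point whenever the relevant lengths add, or else an affine-space or $\bbG_m$-type fiber. The twisted equivariance forces the restriction to be of the form $\underline{\Lambda}\boxtimes\calL'[d]$ for some rank-one local system. The multiplicativity of Kummer sheaves under the isomorphism $\sfT\times\sfT\to\sfT$ then identifies $\calL'$ with $\calL_\chi$, and the shift is fixed by perversity (Lemma \ref{Lemma_perverse}). The $K_0$-computation via $\Theta$ confirms that the rank on this stratum is $1$, since $[\Delta_{w_1,\chi}*\nabla_{w_2,\chi}]=\pm[\Delta_{w_1w_2,\chi}]+(\text{lower terms})$. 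The case $\nabla*\Delta$ follows by Verdier duality, which exchanges $\Delta$ with $\nabla$ and $\chi$ with $\chi^{-1}$.

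The main obstacle is the rank-one computation in the $s$-regular case, $s\chi'=\chi'$ with $l(ws)>l(w)$. There one must show that $\Delta_{w,\chi}*\nabla_{s,\chi}$ has no contribution on the open stratum other than the expected one, and no unexpected extension supported beyond $\leq ws$. I would handle this by pulling back to the minimal parahoric $P_s$, where the computation reduces to the twisted flag variety of $SL_2$ or $PGL_2$ with a character trivial on the coroot. At that point it is an explicit Kummer-sheaf calculation on $\bbP^1\times\sfT$.
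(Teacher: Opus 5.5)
Your part (1) has a genuine gap. You are right to discard the bound coming from the image of $m$: for $w_1=w_2=s$ that image is $\pi^{-1}(\overline{IsI}/I)$, while $w_1w_2=e$. But your inductive replacement only ever produces filtrations. The support bound a filtration gives is the union of $\pi^{-1}(\Gr_{\calI,\leq v})$ over the indices $v$ of \emph{all} its graded pieces, including pieces that cancel in $K_0$.

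At every $\chi'$-regular ascent, $\nabla_{s,\chi'}$ contributes a piece indexed by $vs$ and two pieces indexed by $v$, with shifts that cancel in $K_0$. The pieces indexed by $v$ are then convolved with the remaining factors of $\nabla_{w_2,\chi}$. Along such a side branch the length can go up at a step where the main branch goes down; this already happens for $w_1=t$, $w_2=sts$ in type $A_2$. Showing that every index produced this way lies below $w_1w_2$ is exactly the content of (1), and your proposal only asserts it (``tracking supports through the induction keeps everything inside'').

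The remark that factors with $s\chi'\neq\chi'$ cancel in pairs by Lemma \ref{Lemma_convolution}(3) does not fill this gap. Those factors are clean and never branch; the whole difficulty comes from the regular ones. In the twisted setting the branching is also governed by the $\chi'$-regular reflections rather than the Coxeter structure of $\tilde{W}$, so a direct combinatorial proof would need real care.

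The missing idea is that the tools you already use in part (2) prove (1) directly; this is the paper's argument. Set $\calF=\Delta_{w_1,\chi}*\nabla_{w_2,\chi}$. Since $\Theta$ is a ring isomorphism and $[\nabla_{w,\chi}]=[\Delta_{w,\chi}]$, you get $\Theta([\calF])=(-1)^{l(w_1w_2)}w_1w_2$ exactly, with no ``lower terms''.

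Now suppose $\pi^{-1}(\Gr_{\calI,w})$ is open in $\supp(\calF)$. Such a $w$ lies in $W_\chi$, because $(\sfT\times\sfT,\calL_\chi\boxtimes\calL_\chi)$-equivariant sheaves vanish on the other strata. By perversity (Lemma \ref{Lemma_perverse}), $j_w^*\calF$ is nonzero and concentrated in one degree. Its Euler characteristic is therefore nonzero, so $w$ has a nonzero coefficient in $\Theta([\calF])$, which forces $w=w_1w_2$. Every maximal stratum of the support is thus $w_1w_2$, which is (1). The same argument applies verbatim to $\nabla_{w_1,\chi}*\Delta_{w_2,\chi}$.

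With (1) in hand, your part (2) coincides with the paper's: equivariance gives $\Lambda^{\oplus m}\boxtimes\calL_\chi$ in a single degree, perversity fixes the shift, and $\Theta$ gives $m=1$. Your Verdier-duality reduction for $\nabla*\Delta$ is fine but unnecessary. The $\bbP^1$-computation you single out as the main obstacle is not needed at all.
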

	
	\begin{proof}
		The proof is similar to the equal characteristic case, cf. \cite[Lemma 4.1.10]{achar2025central}. Recall that we have the map $\Theta$ as in (\ref{Theta}), sending $[\Delta_{w, \chi}]=[\nabla_{w, \chi}]$ to $(-1)^{l(w)} w \in \mathbb{Z}[W_{\chi}].$
		
		Let $\mathcal{F}=\Delta_{w_1, \chi}*\nabla_{w_2, \chi}$. Let $w \in W_{\chi}$ such that $\pi^{-1}(\Gr_{\calI, w})$ is open in $\supp(\mathcal{F})$. Since $\mathcal{F}$ is perverse (Lemma \ref{Lemma_perverse}), $j_w^*\mathcal{F}$ is concentrated in one degree (\cite[Lemma 3.1.6]{achar2021perverse}). Hence the coefficient of $w$ in $\Theta([\mathcal{F}])$ is nonzero. Now since $\Theta([\mathcal{F}])=(-1)^{l(w_1w_2)} w_1w_2$, we must have $w=w_1w_2$. This finishes the proof of (1).
		
		For (2), note that by (1), the sheaf $j_{w_1w_2}^*(\mathcal{F})$ on $\pi^{-1}(\Gr_{\calI, w_1w_2}) \cong \mathbb{A}^{l(w_1w_2)} \times \mathsf{T}$ is concentrated in one degree, which must be $l(w_1w_2)+\dim \sfT$ by perversity. By $(\mathsf{T} \times \mathsf{T}, \mathcal{L}_{\chi}\boxtimes \mathcal{L}_{\chi})$-equivariance, we must have
		$$j_{w_1w_2}^*(\mathcal{F}) \cong \Lambda^{\oplus m} \boxtimes \mathcal{L}_{\chi}[l
		(w_1w_2)+\dim \sfT]$$ for some $m \in \mathbb{Z}_{\geq 0}$. 
		Now $\Theta([\mathcal{F}])=(-1)^{l(w_1w_2)} w_1w_2$ implies that $m=1$. Hence
		$$j_{w_1w_2}^*(\mathcal{F}) \cong \Lambda \boxtimes \mathcal{L}_{\chi}[l
		(w_1w_2)+\dim \sfT].$$
	\end{proof}
	
	Analogously to \cite[Prop 3.7]{anschutz2023gaitsgory}, we have the following lemma.
	
	\begin{lemma}\label{Lemma_finite_support}
		Let $\mathcal{F} \in \scrD_{\chi, \chi}$. There exists a finite subset $A_{\mathcal{F}} \subseteq \tilde{W}$ such that for any $w \in W_\chi$,
		\begin{enumerate}
			\item $$\supp(\Delta_{w, \chi}*\mathcal{F}) \subseteq \pi^{-1}(\cup_{w' \in w.A_{\mathcal{F}}}\Gr_{\calI, w'}).$$
			\item $$\cosupp(\nabla_{w, \chi}*\mathcal{F}) \subseteq \pi^{-1}(\cup_{w' \in w.A_{\mathcal{F}}}\Gr_{\calI, w'}).$$
			\item  $$\supp(\mathcal{F}*\Delta_{w, \chi}) \subseteq \pi^{-1}(\cup_{w' \in A_{\mathcal{F}}.w}\Gr_{\calI, w'}).$$
			\item  $$\cosupp(\mathcal{F}*\nabla_{w, \chi}) \subseteq \pi^{-1}(\cup_{w' \in A_{\mathcal{F}}.w}\Gr_{\calI, w'}).$$
		\end{enumerate}
		
	\end{lemma}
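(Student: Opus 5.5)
The plan is to adapt the argument of \cite[Prop 3.7]{anschutz2023gaitsgory} to the twisted pro-$p$ Iwahori setting. Since $\mathcal{F}\in\scrD_{\chi,\chi}$ has support in finitely many $I$-double cosets, its support lies in $\pi^{-1}(\Gr_{\calI,\leq S})$ for a finite set $S\subseteq\tilde W$. We reduce to the case where $\mathcal{F}$ is a single standard or costandard object. This is possible because $\mathcal{F}$ is a finite iterated extension of the objects $j_{v,!}j_v^*\mathcal{F}$, $v\in S$ (and dually of $j_{v,*}j_v^!\mathcal{F}$). Supports of convolutions behave well under extensions: the support of a middle term of a triangle lies in the union of the supports of the outer terms. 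Cosupports behave the same way.

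Each $j_v^*\mathcal{F}$ lives on $\pi^{-1}(\Gr_{\calI,v})\cong \mathbb{A}^{l(v)}\times\sfT$. By the $(\sfT\times\sfT,\calL_\chi\boxtimes\calL_\chi)$-equivariance it is of the form $M\boxtimes\calL_\chi$ for a complex of vector spaces $M$. If $v\notin W_\chi$, the twisted equivariance for the left and right actions, which differ by $v$, forces it to vanish, by the usual argument comparing $\calL_\chi$ with $v^*\calL_\chi$ along the stabilizer torus. Hence it suffices to treat $\mathcal{F}=\Delta_{v,\chi}$ and $\mathcal{F}=\nabla_{v,\chi}$ tensored with finite-dimensional multiplicity spaces, for $v$ in the finite set $S\cap W_\chi$.

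For (1) and (3), write $w=w_1\cdots w_r$ as a reduced product of simple reflections and a length-zero element. By Lemma \ref{Lemma_convolution}(1), $\Delta_{w,\chi}$ factors as the corresponding product of twisted $\Delta$'s. For a simple reflection $s$, the support of $\Delta_{s,\chi'}*\mathcal{G}$ is contained in the preimage of $\overline{IsI}\cdot\supp(\mathcal{G})$, by properness of $m$ and the support formula for $\tilde\boxtimes$. Iterating this would only give a bound by $\overline{IwI}\cdot S$, which is not of the form $w.A_{\mathcal{F}}$ uniformly in $w$. So instead we use the fact that $\Delta_{w,\chi}*\Delta_{v,\chi}$ and $\Delta_{w,\chi}*\nabla_{v,\chi}$ are controlled by Lemma \ref{Lemma_convolution}(3), together with the $K_0$/support argument of Lemma \ref{Lemma_support}.

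Concretely, write $v=v'v''$ with $v''\in W_\chi$ and lengths adding. Then $\Delta_{w,\chi}*\nabla_{v,\chi}$ is perverse by Lemma \ref{Lemma_perverse}, and it is supported on $\pi^{-1}(\Gr_{\calI,\leq wv})$ by Lemma \ref{Lemma_support}. The Bruhat interval $\{\leq wv\}$ is not of the form $w.A$, so we refine this as in \cite{anschutz2023gaitsgory}. We use the semi-infinite orbits $\tilde\calS_{u}$ from \eqref{Eq_Gr^+} in place of Bruhat cells. Lemma \ref{Lemma_2.12} gives $IwI=L^+\calU wI$ for $w$ dominant translations, and convolution with $\Delta_{t_\lambda,\chi}$ for dominant $\lambda$ maps the intersection with $\tilde\calS_u$ into $\tilde\calS_{t_\lambda u}$. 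This translates supports by $t_\lambda$ up to a finite error depending only on $S$. For general $w$, write $w=t_\lambda t_{\mu}^{-1}x$ with $\lambda,\mu$ dominant and $x$ in a fixed finite set. The inverse factor $t_\mu^{-1}$ is handled via Lemma \ref{Lemma_convolution}(3), by swapping $\Delta$ for $\nabla$, which turns support into cosupport.

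Statements (2) and (4) then follow from (1) and (3) by Verdier duality, which exchanges $\Delta$ and $\nabla$ and support and cosupport, and replaces $\chi$ by $\chi^{-1}$. Statements (3) and (4) follow from (1) and (2) by the inversion anti-involution of $LG$, which exchanges the left and right actions.

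The main obstacle is the uniformity in $w$. Naive Bruhat bounds grow with $w$, so the semi-infinite orbit decomposition and the translation-compatibility of $L^+\calU$-orbits, from Lemma \ref{Lemma_2.12} and its mixed-characteristic justification as in \cite{aglr2022}, must be used to obtain a finite $A_{\mathcal{F}}$ independent of $w$.
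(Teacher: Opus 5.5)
Your opening reduction (to single strata, with strata $v\notin W_\chi$ contributing nothing) and your closing reductions via Verdier duality and inversion are fine. The central step, the bound uniform in $w$, is not established, and the replacement argument you sketch does not work.

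Lemma \ref{Lemma_support} only places $\Delta_{w,\chi}*\nabla_{v,\chi}$ in $\pi^{-1}(\Gr_{\calI,\leq wv})$. Passing to semi-infinite orbits cannot sharpen this to finitely many $I$-orbits. Each $\tilde\calS_u$ meets infinitely many of the cells $\pi^{-1}(\Gr_{\calI,w'})$. So even a statement that convolution with $\Delta_{t_\lambda,\chi}$ moves $\tilde\calS_u$ to $\tilde\calS_{t_\lambda u}$ ``up to finite error'' (which you do not prove) gives no bound of the required form. Two further problems:
\begin{enumerate}
\item For $w=t_\lambda t_\mu^{-1}x$ the lengths do not add, so $\Delta_{w,\chi}$ does not factor accordingly.
\item Trading a $\Delta$ for the inverse of a $\nabla$ via Lemma \ref{Lemma_convolution}(3) turns support information into cosupport information, which cannot be recombined into the support bound (1).
\end{enumerate}

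The missing point is that the naive argument you dismissed works once you stop taking closures. $\Delta_{w,\chi}$ is a $!$-extension, so its stalks vanish off the single cell $\pi^{-1}(\Gr_{\calI,w})$, not merely off $\pi^{-1}(\overline{\Gr_{\calI,w}})$. Choose a closed finite union $X$ of $I$-orbits with $\supp\mathcal F\subseteq\pi^{-1}(X)$. Proper base change for $m$ then gives $\supp(\Delta_{w,\chi}*\mathcal F)\subseteq\pi^{-1}(IwI\cdot X)$.

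Now decompose the \emph{fixed} elements $v$ with $\Gr_{\calI,v}\subseteq X$, not $w$. Take a reduced expression $v=s_1\cdots s_r\tau$ with $l(\tau)=0$, so $\tau$ normalizes $I$. Iterating $IyI\cdot IsI\subseteq IysI\cup IyI$ along it gives
$IwI\cdot IvI\subseteq\bigcup Iws_{i_1}\cdots s_{i_k}\tau I$,
the union taken over subsequences $i_1<\dots<i_k$. This is a finite set of translates $w.A$ with $A$ independent of $w$. It is \cite[Lemma 2.7]{anschutz2023gaitsgory}, and it is exactly the paper's proof, written there for (3).

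Statements (2) and (4) follow the same way. The costalks of $\nabla_{w,\chi}=j_{w,*}(\dots)$ vanish off the cell, and $m$ is proper, so $m_!=m_*$ commutes with $!$-restriction to points. In particular, no reduction of $\mathcal F$ to standard or costandard objects is needed.
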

	
	\begin{proof}
		We only prove (3), as the remaining cases are similar. Let $m_I$ be the multiplication for $\Gr_{\calI}$ and $m_{I^0}$ the multiplication for $\Gr_{\calI^0}$. Let $X \subseteq \Gr_{\calI}$ be a closed finite union of $I$-orbits such that $\supp( \mathcal{F}) \subseteq \pi^{-1}(X)$. By \cite[Lemma 2.7]{anschutz2023gaitsgory}, there is a finite subset $S_X \subseteq \tilde{W}$ such that for any $w \in \tilde{W}$,
		$$m_{I}(X \tilde{\times} \Gr_{\calI, w}) \subseteq \cup_{y \in S_X.w}\Gr_{\calI, y}.$$
		Therefore,
		$$m_{I^0}(\pi^{-1}(X) \tilde{\times} \pi^{-1}(\Gr_{\calI, w})) \subseteq \pi^{-1}(\cup_{y \in S_X.w}\Gr_{\calI, y}).$$
		By proper base change, 
		$$\supp(\mathcal{F} * \Delta_{w, \chi}) \subseteq m_{I^0}(\pi^{-1}(X) \tilde{\times} \pi^{-1}(\Gr_{\calI, w})) \subseteq \pi^{-1}(\cup_{y \in S_X.w}\Gr_{\calI, y}).$$
	\end{proof}

	Next, we introduce the twisted Wakimoto sheaves following \cite[Section 4.7.4]{eteve2023monodromiques}. We recall the features needed below.
	
	Let us fix a uniformizer $\pi_E$ of $E$. For any $\nu \in \bbX_\bullet$,  we have an element 
	$$\nu(\pi_E) \in T(\breve{E}) \subseteq G(\breve{E})=LG(k).$$ This induces a canonical isomorphism
	\begin{equation}
		I\nu I/I^0 \xrightarrow{\beta(\nu)} I\nu I/I \times I/I^0 \quad [x^0\nu(\pi_E)y] \mapsto ([x^0\nu(\pi_E)y],[y]),
	\end{equation}
	where $x^0 \in I^0$ and $y \in I$ (here we used $I\nu I=I^0\nu I$).
	
	\begin{definition}\label{Definition_Wakimoto}
		For every $\chi \in \operatorname{CH}(\mathsf{T})$ and $\nu \in \bbX_\bullet \cap W_{\chi} =\bbX_{\bullet}$, we define the \textbf{twisted Wakimoto sheaves}
		$J_{\nu, \chi}$ as follows:
		\begin{enumerate}
			\item For $\nu \in \bbX_\bullet^+$, define
			$$J_{\nu, \chi}:=\nabla_{\nu, \chi}=_{\operatorname{def}}  j_{\nu, *}\left(\beta(\nu)^*(\Lambda \boxtimes \mathcal{L}_{\chi})\right)[l(\nu)+\dim \sfT] \in \scrD_{\chi, \chi}.$$
			\item We extend the above definition linearly to define $J_{\nu, \chi}$ for any $\nu \in \bbX_\bullet $ as in \cite[Section 3.2]{anschutz2023gaitsgory}. In other words, $J_{\nu, \chi}$ is characterized by (1) and $$J_{\nu_1+\nu_2, \chi} \cong J_{\nu_1, \chi}*J_{\nu_2, \chi}$$
			for every $\nu_1, \nu_2 \in {\bbX_{\bullet}}(T).$
		\end{enumerate}
		
		\begin{remark}
			In particular, we have for $\nu \in -\bbX_\bullet^+$ anti-dominant, $J_{\nu, \chi} \cong \Delta_{\nu, \chi}$. And for $\nu=\nu_1-\nu_2$ with $\nu_1, \nu_2 \in \bbX_\bullet^+$, we have $J_{\nu, \chi} \cong \nabla_{\nu_1, \chi} * \Delta_{-\nu_2, \chi}$.
		\end{remark}
		
	\end{definition}
	
	Moreover, it upgrades to a functor which we still denote by $J_{\nu, \chi}$:
	\begin{equation}\label{Eq_Wak}
		J_{\nu, \chi}: \scrD
		_{\on{cons}}(*/{\mathsf{T}}) \cong \scrD_{\chi, \chi}(I^0 \backslash I/I^0) \to \scrD_{\chi, \chi}, \quad M \mapsto J_{\nu, \chi}*M.
	\end{equation}
	We stress that in the definition of the convolution on $\scrD_{\chi, \chi}$, the construction includes a descent step from $I^0\backslash LG \times^{I^0}LG/I^0$ to $I^0\backslash LG \times^{I}LG/I^0$.
	We define 
	$$J_{\Omega, \chi}:=\oplus_{\nu \in \Omega}J_{\nu, \chi}: \scrD_{\on{cons}}([\Omega/{\mathsf{T}}]) \to \scrD_{\chi, \chi}$$ 
	for any subset $\Omega \subseteq \bbX_\bullet $. We write $J_{\chi}$ for $J_{{\bbX_{\bullet}} , \chi}$.
	
	\begin{lemma}
		\begin{enumerate}
			\item For any $\nu \in {\bbX_{\bullet}} $, $J_{\nu, \chi}$ is $t$-exact for the perverse $t$-structure.
			\item For any $\nu \in {\bbX_{\bullet}}$, $J_{\nu, \chi}$ is supported on $\pi^{-1}(\Gr_{\calI, \leq \nu})$.
			\item For any $\mu, \nu \in {\bbX_{\bullet}} $, there exists a canonical isomorphism $J_{\mu, \chi}*J_{\nu, \chi} \cong J_{\mu+\nu, \chi}$.
			\item For any $\mu, \nu \in {\bbX_{\bullet}} $ with $\nu \; \nleq \;\mu$, we have $\on{RHom}(J_{\mu, \chi}, J_{\nu, \chi})=0.$
		\end{enumerate}
	\end{lemma}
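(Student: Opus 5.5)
We follow the proof of \cite[Lemma 3.9, Prop.~3.7]{anschutz2023gaitsgory} and \cite[Section 4.7.4]{eteve2023monodromiques}, using the twisted convolution identities of Lemma \ref{Lemma_convolution} and the support and perversity statements of Lemmas \ref{Lemma_perverse} and \ref{Lemma_support}. We first prove (3), since the other parts depend on it.

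\textbf{Part (3).} For $\mu,\nu\in\bbX_\bullet^+$ we have $l(t_\mu)+l(t_\nu)=l(t_{\mu+\nu})$. Since translations act trivially on $\sfT$, we have $t_\nu\chi=\chi$. Lemma \ref{Lemma_convolution}(2) then gives $\nabla_{\mu,\chi}*\nabla_{\nu,\chi}\cong\nabla_{\mu+\nu,\chi}$, with the lifts $\dot w$ chosen as in \cite[Appendix 4.A]{eteve2023monodromiques} so that the isomorphism is canonical. Lemma \ref{Lemma_convolution}(3) shows that $\Delta_{-\nu,\chi}$ is inverse to $\nabla_{\nu,\chi}$. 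So the dominant monoid $\bbX_\bullet^+\to\scrD_{\chi,\chi}$ extends to the group $\bbX_\bullet$ via $J_{\nu_1-\nu_2,\chi}:=\nabla_{\nu_1,\chi}*\Delta_{-\nu_2,\chi}$. This is independent of the decomposition, because dominant translations commute: $\nabla_{\nu_1,\chi}*\nabla_{\nu_2,\chi}\cong\nabla_{\nu_1+\nu_2,\chi}\cong\nabla_{\nu_2,\chi}*\nabla_{\nu_1,\chi}$. Statement (3) follows formally, as in \cite[Section 3.2]{anschutz2023gaitsgory}.

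\textbf{Parts (1) and (2).} Write $\nu=\nu_1-\nu_2$ with $\nu_i$ dominant, so that $J_{\nu,\chi}=\nabla_{t_{\nu_1},\chi}*\Delta_{t_{-\nu_2},\chi}$ and $t_{\nu_1},t_{-\nu_2}\in W_\chi$. Lemmas \ref{Lemma_perverse} and \ref{Lemma_support}(1) show that $J_{\nu,\chi}$ is perverse and supported on $\pi^{-1}(\Gr_{\calI,\leq\nu})$, which is (2). For $t$-exactness of $M\mapsto J_{\nu,\chi}*M$ on $\scrD_{\on{cons}}(*/\sfT)$, note that convolving with an object on $I/I^0\cong\sfT$ is given by the $\sfT$-action on the right. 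Using the twisted equivariance, this amounts to tensoring the fibre with the underlying complex of $M$, up to the normalising shift by $\dim\sfT$ already built into the definition. Exactness in $M$ then reduces to perversity of $J_{\nu,\chi}$ itself, and the general case follows by dévissage on perverse $M$.

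\textbf{Part (4).} This is the main step. By (3), convolution with $J_{-\mu,\chi}$ is an autoequivalence, so $\on{RHom}(J_{\mu,\chi},J_{\nu,\chi})\cong\on{RHom}(J_{0,\chi},J_{\nu-\mu,\chi})$. Here $J_{0,\chi}=\Delta_{e,\chi}$ is $j_{e,!}$ of a local system on the closed stratum $I/I^0$. By adjunction, the group becomes $\on{RHom}(\calL_\chi,j_e^!J_{\nu-\mu,\chi})$.

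We then write $\nu-\mu=\lambda_1-\lambda_2$ with $\lambda_i$ dominant, where the hypothesis $\nu\nleq\mu$ means $\nu-\mu$ is not in the negative coroot cone. We take $J_{\nu-\mu,\chi}=\nabla_{\lambda_1,\chi}*\Delta_{-\lambda_2,\chi}$ and move $\Delta_{-\lambda_2,\chi}$ across using its inverse $\nabla_{\lambda_2,\chi}$. This turns the group into $\on{RHom}(\Delta_{\lambda_2,\chi}',\nabla_{\lambda_1,\chi})$, with $\Delta_{\lambda_2,\chi}'$ a $\Delta$-type object on the stratum $t_{\lambda_2}$. By the $!$/$*$ adjunctions on strata, this vanishes unless $t_{\lambda_1}=t_{\lambda_2}$, which would give $\nu=\mu$.

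The delicate point is matching the strata of the twisted objects. I would make the previous paragraph precise by computing $j_e^!(\nabla_{\lambda_1,\chi}*\Delta_{-\lambda_2,\chi})$ via the support bound of Lemma \ref{Lemma_support}. Under $\nu\nleq\mu$, the identity is not in $\leq(\nu-\mu)$ in the relevant sense, and the $!$-restriction of a $*$-extension supported off $e$ is zero. To check the order-theoretic compatibility between the Bruhat order on translations and the coroot order, I would transport verbatim the untwisted argument of \cite[Lemma 3.9]{anschutz2023gaitsgory}. The twisting by $\calL_\chi$ only changes the local systems on strata, not the support combinatorics.
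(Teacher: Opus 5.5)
Your treatment of (1)--(3) matches the paper's. Parts (1) and (2) apply Lemmas \ref{Lemma_perverse} and \ref{Lemma_support} to $J_{\nu,\chi}\cong\nabla_{\nu_1,\chi}*\Delta_{-\nu_2,\chi}$, and part (3) uses Lemma \ref{Lemma_convolution}(2),(3) for the group law. The key step of (4), however, is invalid.

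After you move $\Delta_{-\lambda_2,\chi}$ across, the source is $J_{0,\chi}*\nabla_{\lambda_2,\chi}\cong\nabla_{\lambda_2,\chi}$. This is a costandard ($*$-extended) object, not a $\Delta$-type one. So the group is $\on{RHom}(\nabla_{\lambda_2,\chi},\nabla_{\lambda_1,\chi})$, and $\Delta$/$\nabla$ orthogonality does not apply. Your conclusion ``zero unless $\nu=\mu$'' is also too strong to be true. It would make every Wakimoto filtration split into the direct sum of its graded pieces. In fact $\on{RHom}(\nabla_{\lambda_2,\chi},\nabla_{\lambda_1,\chi})$ sees the restriction of $\nabla_{\lambda_2,\chi}$ to the boundary stratum of $t_{\lambda_1}<t_{\lambda_2}$, which is generally nonzero.

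Your fallback via $j_e^!$ fails as well. Since $e$ is the minimum of the Bruhat order on $W_\aff$, the stratum $I/I^0$ lies in the closure of $\pi^{-1}(\Gr_{\calI,t_{\nu-\mu}})$ whenever $t_{\nu-\mu}\in W_\aff$. So Lemma \ref{Lemma_support} never removes $e$ in exactly the cases that matter. Moreover, $J_{\nu-\mu,\chi}$ is a $*$-extension from a single stratum only when $\nu-\mu$ is dominant.

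The missing ingredient is the closure relation that the paper invokes. Use the $(j^*,j_*)$-adjunction on the target:
\[
\on{RHom}(\nabla_{\lambda_2,\chi},\nabla_{\lambda_1,\chi})\cong\on{RHom}\bigl(j_{\lambda_1}^*\nabla_{\lambda_2,\chi},\,j_{\lambda_1}^*\nabla_{\lambda_1,\chi}\bigr).
\]
This vanishes unless the stratum of $t_{\lambda_1}$ meets $\supp\nabla_{\lambda_2,\chi}$, i.e.\ unless $t_{\lambda_1}\le t_{\lambda_2}$ in the Bruhat order.

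For dominant $\lambda_1,\lambda_2$, project along $LG/I\to LG/L^+\calG$. This map sends $It_{\lambda}I/I$ into the spherical cell of $\lambda$, and spherical cells are ordered by dominance. Hence $t_{\lambda_1}\le t_{\lambda_2}$ forces $\lambda_2-\lambda_1$ to be a sum of positive coroots. Since $\lambda_1-\lambda_2=\nu-\mu$, this says $\nu\le\mu$.

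So the correct vanishing criterion is ``unless $t_{\lambda_1}\le t_{\lambda_2}$''. Translating this Bruhat relation on dominant translations into the coroot order is exactly the point you deferred, and it is the actual content of (4).
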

	
	\begin{proof}
		The proof is similar to the non-twisted case, cf. \cite[Lemma 3.12]{anschutz2023gaitsgory}.
		\begin{enumerate}
			
			\item This follows from Lemma \ref{Lemma_perverse}.
			
			\item This follows from Lemma \ref{Lemma_support}.
			
			\item When $\mu, \nu$ dominant and lengths add, this follows from Lemma \ref{Lemma_convolution} (2). For general $\mu, \nu$, it follows from the definition of Wakimoto sheaves and Lemma \ref{Lemma_convolution} (3).
			
			\item This follows from the closure relation of the Schubert stratification on $\Gr_{\calI}$.

		\end{enumerate}
	\end{proof}
	
	\begin{definition}
		For $\chi \in \on{CH}(\sfT)$, define the \textbf{$\chi$-Wakimoto category} as the full subcategory $\on{Wak}_{\chi}:=\langle J_{\chi} \rangle$ of $\scrD_{\et, \chi, \chi}$  generated by the essential image of $J_{\chi}$ under cones and shifts. An object $\mathcal{F} \in {\on{Wak}}_{\chi}$ is called \textbf{$\chi$-Wakimoto filtered}.
	\end{definition}
	
	We have the following simple criterion for determining whether an object of $\scrD_{\chi, \chi}$ lies in ${\on{Wak}}_{\chi}$ analog to \cite[Prop 3.15]{anschutz2023gaitsgory}.
	
	\begin{lemma}\label{Lemma_3.15'}
		Let $\mathcal{F} \in \scrD_{\chi, \chi}$ ($\mathcal{F}$ is constructible). 
		\begin{enumerate}
			\item $\mathcal{F}$ is $\chi$-Wakimoto filtered if and only if 
			$$\supp(J_{-\nu, \chi} * \mathcal{F}) \subseteq \pi^{-1}(\{t_{-\mu}\;|\; \mu \in {\bbX_\bullet^+}\})$$ for all $\nu \gg 0$, the cocharacters that are sufficiently dominant.
			\item If $\mathcal{F}$ satisfies $J_{\nu, \chi} * \mathcal{F} \cong \mathcal{F} * J_{\nu, \chi}$ for all $\nu \in \bbX_{\bullet}$, then $\calF$ is $\chi$-Wakimoto filtered.
		\end{enumerate}
	\end{lemma}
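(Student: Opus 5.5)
We follow the proof of \cite[Prop 3.15]{anschutz2023gaitsgory}, replacing $I$-equivariant sheaves on $\Gr_{\calI}$ by $(\sfT\times\sfT,\calL_{\chi}\boxtimes\calL_{\chi})$-equivariant sheaves on $\Gr_{\calI^0}$, and using Lemmas \ref{Lemma_convolution}, \ref{Lemma_support} and \ref{Lemma_finite_support} in place of their untwisted analogues. Part (2) will be deduced from part (1).

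For the ``only if'' direction of (1), it suffices to treat a generator $J_{\lambda,\chi}$, since the support condition is stable under cones and shifts. We have $J_{-\nu,\chi}*J_{\lambda,\chi}\cong J_{\lambda-\nu,\chi}$. For $\nu\gg0$ the weight $\lambda-\nu$ is antidominant, so $J_{\lambda-\nu,\chi}\cong\Delta_{\lambda-\nu,\chi}$, which is supported on $\pi^{-1}(\Gr_{\calI,t_{\lambda-\nu}})$. For a general generated object the same holds uniformly, because only finitely many $\lambda$ occur.

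For the ``if'' direction, we proceed in three steps.
\begin{enumerate}
\item Choose $\nu\gg0$ so that $\calG:=J_{-\nu,\chi}*\calF$ is supported on the strata $\pi^{-1}(\Gr_{\calI,t_{-\mu}})$ with $\mu$ dominant. Using Lemma \ref{Lemma_finite_support}, this support lies in finitely many such strata.
\item Filter $\calG$ by the closure order on these strata via open–closed triangles. Each graded piece is a $(\sfT\times\sfT,\calL_{\chi}\boxtimes\calL_{\chi})$-equivariant sheaf on $\pi^{-1}(\Gr_{\calI,t_{-\mu}})\cong\bbA^{l(t_{-\mu})}\times\sfT$ extended by zero.
\item Identify these pieces: such an equivariant sheaf is of the form $j_{t_{-\mu},!}\beta(-\mu)^*(M\boxtimes\calL_{\chi})$, i.e.\ $\Delta_{-\mu,\chi}*M=J_{-\mu,\chi}*M$ for some $M\in\scrD_{\on{cons}}(*/\sfT)$. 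Hence $\calG\in\on{Wak}_{\chi}$, and then $\calF\cong J_{\nu,\chi}*\calG\in\on{Wak}_{\chi}$, since convolution with $J_{\nu,\chi}$ preserves $\on{Wak}_{\chi}$ by $J_{\nu}*J_{\lambda}\cong J_{\nu+\lambda}$.
\end{enumerate}

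The main obstacle is step (3). We must check that on the antidominant translation strata the twisted equivariance forces the sheaf to be a twisted Wakimoto sheaf. Two points need care. First, the left $\sfT$ action via $t_{-\mu}$ transports $\calL_{\chi}$ to $t_{-\mu}\chi=\chi$, since translations act trivially on $\sfT$; this makes the left and right twists compatible, so that $\calL_{\chi}$ appears only once. Second, we need the $!$-extension to be computed by $\Delta_{-\mu,\chi}$. Here we use that $l(t_{-\mu})$ is additive for antidominant weights, so Lemma \ref{Lemma_convolution} identifies $\Delta_{-\mu,\chi}*M$ with the extension by zero. The contractible $\bbA^{l}$-factor carries no nontrivial equivariant local systems, since $L^+\calU$ acts transitively as in Lemma \ref{Lemma_2.12}.

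For (2), assume $J_{\nu,\chi}*\calF\cong\calF*J_{\nu,\chi}$ for all $\nu$, and apply (1). For $\nu\gg0$ we have $J_{-\nu,\chi}*\calF\cong\calF*J_{-\nu,\chi}=\calF*\Delta_{-\nu,\chi}$. By Lemma \ref{Lemma_finite_support}(3) this is supported on $\pi^{-1}(\cup_{w\in A_{\calF}t_{-\nu}}\Gr_{\calI,w})$. Fix an element $a\in A_{\calF}$ and write it as $a=w\lambda$. For $\nu$ sufficiently dominant relative to the finite set $A_{\calF}$, the strata in $\pi^{-1}(\Gr_{\calI,at_{-\nu}})$ that can meet the support of $J_{-\nu,\chi}*\calF$ must be antidominant translations. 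This is by the same argument as in \cite[Prop 3.15]{anschutz2023gaitsgory}: one compares supports computed on the left and on the right, using Lemma \ref{Lemma_finite_support}(1) and (3) simultaneously, and their intersection consists only of translation strata $t_{-\mu}$ with $\mu$ dominant. This verifies the criterion of (1), so $\calF$ is $\chi$-Wakimoto filtered.
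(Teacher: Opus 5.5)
Your proposal is correct and follows essentially the same route as the paper. In (1) you check generators in one direction and, conversely, reduce to the fact that a sheaf supported on antidominant translation strata is an iterated extension of objects $\Delta_{-\mu,\chi}*M=J_{-\mu,\chi}*M$ (the paper only asserts this; you flesh it out), and in (2) you deduce (1)'s criterion by intersecting the two support bounds $t_{-\nu}A_{\calF}$ and $A_{\calF}t_{-\nu}$ from Lemma~\ref{Lemma_finite_support}, which for $\nu\gg0$ leaves only antidominant translations. Two harmless slips in your step (3): Lemma~\ref{Lemma_2.12} is about dominant $w$, so on $\pi^{-1}(\Gr_{\calI,t_{-\mu}})$ it is not $L^+\calU$ but the full $I\times\sfT$-action that is transitive (with stabilizer $\sfT$ up to a pro-unipotent group, on which the twist is trivial since $t_{-\mu}\chi=\chi$); and identifying $\Delta_{-\mu,\chi}*M$ with an extension by zero needs no length additivity, only that $M$ lives on the unit stratum.
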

	
	\begin{proof}
		A similar argument as \cite[Prop 3.15]{anschutz2023gaitsgory} works.
		\begin{enumerate}
			\item Assume $\mathcal{F}$ is $\chi-$Wakimoto filtered, then without loss of generality $\mathcal{F}=J_{\nu', \chi}$ for some $\nu' \in \bbX_{\bullet}$.
			If now $\nu \gg 0$, then
			$$J_{-\nu, \chi} * \mathcal{F} \cong \Delta_{-\nu+\nu', \chi}$$
			and the support claim follows.

			Conversely, assume that 
			$$\supp(J_{-\nu, \chi} * \mathcal{F}) \subseteq \pi^{-1}(\{t_{-\mu}\;|\; \mu \in {\bbX_\bullet^+}\})$$ for all $\nu \gg 0$. Then $J_{-\nu, \chi} * \mathcal{F}$ lies in the subcategory $\langle \Delta_{t_{-\mu}, \chi}\;|\;\mu \in {\bbX_\bullet^+} \rangle$. But $\langle \Delta_{t_{-\mu}, \chi}\;|\;\mu \in {\bbX_\bullet^+}\rangle \subseteq {\on{Wak}}_{\chi}$. So $J_{-\nu, \chi}*\mathcal{F} \in {\on{Wak}}_{\chi}$, hence $\mathcal{F} \in {\on{Wak}}_{\chi}$.
			
			\item By Lemma \ref{Lemma_finite_support}, for all $\nu \in \bbX_{\bullet}$,
			$$\supp(\Delta_{\nu, \chi}*\mathcal{F})=\supp(\mathcal{F}*\Delta_{\nu, \chi}) \subseteq \pi^{-1}(\nu A_{\mathcal{F}} \cap A_{\mathcal{F}} \nu).$$
			Now for $\nu \gg 0$, $J_{-\nu, \chi}=\Delta_{-\nu, \chi}$. Therefore,
			$$\supp(J_{-\nu, \chi}*\mathcal{F}) \subseteq \pi^{-1}((-\nu) A_{\mathcal{F}} \cap A_{\mathcal{F}} (-\nu)) \subseteq \pi^{-1}(-{\bbX_\bullet^+})$$
			for $\nu \gg 0$. Hence we conclude that $\mathcal{F} \in {\on{Wak}}_\chi$ by (1).
		\end{enumerate}
	\end{proof}
	
	\begin{lemma}If $\Omega \subseteq {\bbX_{\bullet}} $ is a lower set, the inclusion ${\on{Wak}}_{\Omega, \chi} \to {\on{Wak}}_{\chi}$ has a right adjoint ${\on{Wak}}_{\chi} \to {\on{Wak}}_{\Omega, \chi}, \mathcal{F} \mapsto \mathcal{F}_{\Omega}$ such that the cone $\mathcal{F}'$ of the adjunction unit $\mathcal{F}_{\Omega} \to \mathcal{F}$ lies in ${\on{Wak}}_{\chi}$ and satisfies $\mathcal{F}'_{\Omega}=0$.
		
	\end{lemma}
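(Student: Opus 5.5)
We follow the argument for the untwisted case, \cite[Lemma 3.17]{anschutz2023gaitsgory}. The plan is to build the truncation $\mathcal{F}\mapsto\mathcal{F}_{\Omega}$ from Wakimoto objects supported on $\Omega$, using the semi-orthogonality in part (4) of the preceding lemma: $\on{RHom}(J_{\mu,\chi},J_{\nu,\chi})=0$ whenever $\nu\nleq\mu$.

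First we reduce to finitely many generators. Every $\mathcal{F}\in\on{Wak}_{\chi}$ is built by finitely many cones and shifts from objects $J_{\nu,\chi}*M_\nu$, where $\nu$ ranges over a finite set $S\subseteq\bbX_\bullet$. So it suffices to work inside $\langle J_{S,\chi}\rangle$ for a finite set $S$. We enlarge $S$ so that it is closed under taking intervals in the Bruhat order. Replacing $\Omega$ by $\Omega\cap S$, which is still a lower set inside $S$, we must show that $\langle J_{\Omega\cap S,\chi}\rangle\subseteq\langle J_{S,\chi}\rangle$ admits a right adjoint whose unit has cone in $\langle J_{S\setminus\Omega,\chi}\rangle$.

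The key step is a semi-orthogonal decomposition. We claim that
$$\on{RHom}(J_{\mu,\chi}*M,\;J_{\nu,\chi}*N)=0 \quad\text{for }\mu\in\Omega,\ \nu\notin\Omega,$$
and for all $M,N\in\scrD_{\on{cons}}(*/\sfT)$. The combinatorial input is immediate: since $\Omega$ is a lower set, $\nu\le\mu\in\Omega$ would force $\nu\in\Omega$, so $\nu\nleq\mu$. To pass from the sheaves $J_{\mu,\chi}$ to the functors $J_{\mu,\chi}*(-)$, we argue as follows. Because $J_{\mu,\chi}$ is the $!$- or $*$-extension (up to convolution) from the stratum $\pi^{-1}(\Gr_{\calI,\mu})\cong\mathbb{A}^{l(\mu)}\times\sfT$, adjunction reduces the vanishing to the closure relations on $\Gr_{\calI}$, exactly as in part (4) of the preceding lemma. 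Alternatively, we convolve with $J_{-\nu',\chi}$ for $\nu'\gg0$. This is an autoequivalence by part (3), and it moves all indices to the antidominant range, where $J_{\lambda,\chi}=\Delta_{\lambda,\chi}$ and the vanishing follows from the $(j_!,j^*)$-adjunction together with support considerations.

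With the vanishing established, the decomposition follows formally. We order $S$ by a linear refinement of the Bruhat order, listing $\Omega$ first. By the standard theory of recollement and semi-orthogonal decompositions in stable $\infty$-categories, this yields $\langle J_{S,\chi}\rangle=\langle\langle J_{S\setminus\Omega,\chi}\rangle,\langle J_{\Omega,\chi}\rangle\rangle$: every object $\mathcal{F}$ sits in a cofiber sequence $\mathcal{F}_\Omega\to\mathcal{F}\to\mathcal{F}'$, functorially in $\mathcal{F}$, with $\mathcal{F}_\Omega\in\on{Wak}_{\Omega,\chi}$ and $\mathcal{F}'\in\langle J_{S\setminus\Omega,\chi}\rangle$. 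The vanishing shows that $\mathcal{F}'$ is right orthogonal to $\on{Wak}_{\Omega,\chi}$. Hence $\mathcal{F}\mapsto\mathcal{F}_\Omega$ is right adjoint to the inclusion, and applying it to $\mathcal{F}'$ gives $\mathcal{F}'_\Omega=0$. The result is independent of the choice of $S$, since the right adjoint is characterized by its universal property, and for $S\subseteq S'$ it is compatible with the inclusion $\langle J_{S,\chi}\rangle\subseteq\langle J_{S',\chi}\rangle$. Therefore the functors glue to a right adjoint defined on all of $\on{Wak}_{\chi}$.

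The main obstacle is the vanishing step in the twisted setting. There we must check that the $(\sfT\times\sfT,\calL_\chi\boxtimes\calL_\chi)$-equivariance does not introduce unexpected nonzero Homs between strata. Concretely, we need $\on{RHom}$ between twisted sheaves on distinct $I$-double cosets to be computed by restriction to strata, where the twisted equivariant categories are simply $\scrD_{\on{cons}}(*/\sfT)$ via $\beta(\nu)$. This is exactly the input used in Lemma~\ref{Lemma_support}.
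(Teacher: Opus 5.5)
Your proposal is correct and takes essentially the same route as the paper: the paper's proof adapts \cite[Lemma 3.17]{anschutz2023gaitsgory}, and its only input is the semi-orthogonality $\on{RHom}(J_{\mu,\chi},J_{\nu,\chi})=0$ for $\nu\nleq\mu$ from part (4) of the preceding lemma. Your semi-orthogonal decomposition, with the reduction to a finite index set $S$, is an explicit packaging of the paper's ``induction on the length of a Wakimoto filtration''; enlarging $S$ to be closed under intervals is harmless but not needed.
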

	
	\begin{proof}
		A similar argument as \cite[Lemma 3.17]{anschutz2023gaitsgory} works. It follows by induction on the length of a Wakimoto filtration. 
	\end{proof}
	
	We can now define twisted Wakimoto grading as in \cite[Lemma 3.18]{anschutz2023gaitsgory}.
	
	\begin{definition}
		For any $\mu \in {\bbX_{\bullet}} $, we define the endofunctor
		$$\on{gr}_{\mu}: {\on{Wak}}_{\chi} \to {\on{Wak}}_{\chi}, \quad \mathcal{F} \mapsto \text{cone}(\mathcal{F}_{< \mu} \to \mathcal{F}_{\leq \mu}).$$
		We also define $\on{gr}:=\oplus_{\mu \in {\bbX_{\bullet}} }\on{gr}_{\mu}: {\on{Wak}}_{\chi} \to {\on{Wak}}_{\chi}$.
	\end{definition}
	
	Note that $\on{gr}_{\nu}(\mathcal{F})$ lies in the essential image of the functor $J_{\nu, \chi}$. Since convolution with $J_{\nu, \chi}$ is invertible (with inverse isomorphic to convolution with $J_{-\nu, \chi}$), we see that
	$$J_{\nu, \chi}: \scrD_{\on{cons}}([*/{\mathsf{T}}]) \to {\on{Wak}_{\chi}}$$
	is fully faithful. In particular, we can lift $\on{gr}_{\nu}(\mathcal{F})$ to an element of $\scrD_{\on{cons}}([*/{\mathsf{T}}])$, uniquely up to a unique isomorphism.  We make the following definition as in \cite[Definition 3.19]{anschutz2023gaitsgory}.
	
	\begin{definition}
		Let $\mathcal{F} \in {\on{Wak}}_{\chi}$, define
		$$\on{Grad}^{\chi}_{\nu}(\calF) \in \scrD_{\on{cons}}([*/{\mathsf{T}}])$$
		to be the canonical object such that $J_{\nu, \chi}(\on{Grad}^{\chi}_{\nu}(\calF))$ identifies with $\on{gr}_{\nu}\mathcal{F}$.
	\end{definition}

	\subsection{Constant terms vs Wakimoto gradings}\label{Subsection_CT_Wak}
	
	The goal of this section is to prove Lemma \ref{Lemma_3.24'} -- Wakimoto gradings are computed by stalks of constant terms. The main lemma is Lemma \ref{Lemma_3.23'} -- an inductive formula for constant terms applied to $J_{\nu, \chi}*\mathcal{F}$ for some $\mathcal{F} \in \scrD_{\et, \chi, \chi}$.
	
	\subsubsection{Statement of Lemma \ref{Lemma_3.23'}}
	
	Recall from Equation \eqref{Equation_Fl^Gm} that 
	$$\Gr_{\calI^0}^0=\coprod\limits_{w\in \tilde{W}}w\mathsf{T}.$$
	For any $w \in \tilde{W}$, let $\iota_{w}: w\mathsf{T} \to \Gr_{\calI^0}^0$ denote the inclusion. We have the functor $\on{CT}_{B^-}^{\chi}(-)_{w}:=\iota_{w}^* \circ \on{CT}^{\chi}_{B^-}$:
	$$\scrD_{\et, \chi, \chi} \xrightarrow{\on{CT}_{B^-}^{\chi}} \scrD_{\et}((L^+\calT, \chi)\backslash \Gr_{\calI^0}^0/(\mathsf{T}, \chi)) \xrightarrow{\iota_{w}^*} \scrD_{\et}((L^+\calT, \chi)\backslash  w\mathsf{T}/(\mathsf{T}, \chi)).$$
	
	Note that 
	$$\scrD_{\et}((L^+\calT, \chi)\backslash w \mathsf{T}/(\mathsf{T}, \chi)) \cong \scrD_{\et}((\mathsf{T}, \chi)\backslash w\mathsf{T}/(\mathsf{T}, \chi))$$ 
	because $\ker({L^+\calT} \to T)$ is pro-unipotent by \cite[Proposition VI.4.1]{fargues2021geometrization}. Moreover, for any $\nu \in \bbX_{\bullet} $ and $w \in \tilde{W}$ we have an identification 
	$$\nu(\pi_E)_*: \scrD_{\et}((\mathsf{T}, \chi)\backslash w\mathsf{T}/(\mathsf{T}, \chi)) \cong \scrD_{\et}((\mathsf{T}, \chi)\backslash \nu w\mathsf{T}/(\mathsf{T}, \chi)).$$
	
	\begin{lemma}[{\cite[Proposition 3.23]{anschutz2023gaitsgory}}]\label{Lemma_3.23'}
		For every $\nu \in {\bbX_{\bullet}} $, $w \in \tilde{W}$, and $\mathcal{F} \in \scrD_{\et, \chi, \chi}$, we have
		$$\on{CT}_{B^-}^{\chi}(J_{\nu, \chi} * \mathcal{F})_{\nu w} \cong \nu({\pi_E})_*\on{CT}_{B^-}^{\chi}(\mathcal{F})_w[d_{\nu}],$$
		where $d_{\nu}=\langle 2\rho, \nu \rangle +\dim \sfT$.
	\end{lemma}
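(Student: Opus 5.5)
Following \cite[Proposition 3.23]{anschutz2023gaitsgory}, I plan to reduce first to the case of a single Wakimoto sheaf of a particular sign, and then to compute directly on one stratum. The convolution is defined in two steps: first form $\on{pr}_1^*A \otimes \on{pr}_2^*B$ on $I^0\backslash LG\times^{I^0}LG/I^0$, then descend along $f$ to $\times^{I}$. Because of this, the twisted statement should follow the untwisted argument once I track the $(\sfT,\calL_\chi)$-equivariance. Both sides are compatible with composition in $\nu$: by the lemma $J_{\mu,\chi}*J_{\nu,\chi}\cong J_{\mu+\nu,\chi}$, and the shifts add, since $d_\mu+d_\nu$ should be read with the $\dim\sfT$ coming from the renormalized convolution unit $\Delta_{e,\chi}$, which carries the shift $[\dim \sfT]$. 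Hence it suffices to treat $\nu$ dominant and $\nu$ antidominant. In each case I would use the description of $\on{CT}^\chi_{B^-}$ that makes the computation local. For antidominant $\nu$, where $J_{\nu,\chi}=\Delta_{\nu,\chi}=j_{\nu,!}(\ldots)$, I would use the $!$-description $p^-_!q^{-,*}$. For dominant $\nu$, where $J=\nabla_{\nu,\chi}$, I would use the dual description $p^-_*q^{-,!}$, which is legitimate by Assumption \ref{Assum_CT}, so that the $*$-extension is absorbed.

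The key geometric input is Lemma \ref{Lemma_2.12}: $I\nu I = L^+\calU\,\nu\, I$, together with its analogue for $\calU^-$ in the antidominant case. Applying it to the convolution diagram, the part of $I\nu I\times^{I}LG$ mapping to the repeller $LU^-\cdot \nu w\sfT$ should identify, via left translation by $\nu(\pi_E)$, with the repeller stratum $LU^-\cdot w\sfT$ of $\Gr_{\calI^0}$. The fibre along the way is an affine space of dimension $\langle 2\rho,\nu\rangle$ (up to sign conventions); this is the fibre $L^+\calU/(L^+\calU\cap \nu L^+\calU\nu^{-1})$ or its $U^-$-version. Concretely, I would write $(\tilde q^-)^{-1}$ of the convolution space over the component $\nu w$ as a product of this affine space with the repeller over $w$. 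I would then use base change for $m_!$ (or $m_*$, $m$ being ind-proper) along $\tilde q^-$ and $\tilde p^-$. The $!$- or $*$-extension from $I\nu I$ then contributes only on this product locus, because the other strata do not meet the repeller over $\nu w$; the same support argument appears in \emph{loc.\ cit.} Integration along the affine fibre gives the shift $[2\langle\rho,\nu\rangle]$. The remaining $[\dim\sfT]$ together with the sheaf $\beta(\nu)^*(\Lambda\boxtimes\calL_\chi)$ should cancel against the descent from $\times^{I^0}$ to $\times^{I}$ using $\chi$-equivariance. What is left is exactly $\nu(\pi_E)_*\on{CT}^\chi_{B^-}(\calF)_w$.

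I expect the main obstacle to be the twisted bookkeeping in this last step. First, the isomorphism $\beta(\nu)$ and the descent along $I/I^0=\sfT$ must produce $\calL_\chi$ in a way compatible with the $(L^+\calT,\calL_\chi)\times(\sfT,\calL_\chi)$-equivariance on the target. Second, the $\sfT$-torsor structure on $\Gr^{\pm}_{\calI^0}$ must be respected by $\tilde p^-$, which was defined through Lemma \ref{Lemma_vsurj}. Third, no stray $\calL_\chi$-cohomology of $\sfT$ should appear; such cohomology would vanish for nontrivial $\chi$ and spoil the shift. To handle this I would first verify the formula for $\calF=\Delta_{e,\chi}$, where both sides can be computed explicitly by Lemma \ref{Lemma_support}. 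I would then use equivariance to see that the identification is functorial in $\calF$, since every operation in the chain is a base change or projection formula isomorphism.
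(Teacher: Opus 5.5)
Your overall mechanism is the paper's: present $I\nu I$ via Lemma~\ref{Lemma_2.12}, identify the preimage of a semi-infinite stratum under $m_\nu$ as a twisted product, absorb $\calL_\chi$ by descent along $\phi_\nu$ using the $\chi$-equivariance of $\calF$, and contract the unipotent fibre. However, your dominant case is set up on the wrong strata and fails as written.

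By Assumption~\ref{Assum_CT}, $p^-_*q^{-,!}\cong p^+_!q^{+,*}$, and that functor is $\on{CT}^{\chi}_{B}$, not $\on{CT}^{\chi}_{B^-}$. The $(*,!)$-description of $\on{CT}^{\chi}_{B^-}=p^-_!q^{-,*}$ is $p^+_*q^{+,!}$, which lives on the attracting strata $LU\cdot w\sfT$. This is not just notation. For $\nu$ dominant, $I\nu I=L^+\calU\,\nu(\pi_E)\,I$, and $L^+\calU$ does not preserve $LU^-$-orbits. Hence $m_\nu^{-1}(LU^-\nu wI/I^0)$ is not $L^+\calU\nu(\pi_E)L^+\calU\times^{L^+\calU}LU^-wI/I^0$. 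The product description holds for $LU$-orbits, because $L^+\calU\subseteq LU$ and $\nu(\pi_E)$ normalizes $LU$ (Lemma~\ref{Lemma_Cartesian3}). That is why the paper computes $p_{\nu w,*}q^!_{\nu w}$ on $LU\nu wI/I^0$.

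Here is a concrete check. The $LU^-$-stratum through $\nu$ meets $I\nu I/I^0$ only in $\nu\sfT$, so $p^-_*q^{-,!}\nabla_{\nu,\chi}$ there is $\calL_\chi[-\langle 2\rho,\nu\rangle+\dim\sfT]$. Along the $LU$-stratum the same computation gives $\calL_\chi[d_\nu]$.

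Your antidominant case, by contrast, is consistent. There $\Delta_{\nu,\chi}$ is a $!$-extension, and $I\nu I=(I\cap LU^-)\,\nu(\pi_E)\,I$ with $I\cap LU^-\subseteq LU^-$ normalized by $\nu(\pi_E)$. So $p^-_!q^{-,*}$ base-changes through $m_!$. Since $J_{\nu,\chi}$ is invertible under convolution, one sign suffices; this is the paper's ``without loss of generality'', up to the unit normalization you already flagged. You can either redo the dominant case on $LU$-strata or simply drop it.

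Your shift bookkeeping also needs to be redone.
\begin{itemize}
\item In the correct dominant computation the fibre contributes nothing. The map $p_U$ is a torsor under the pro-unipotent $L^+\calU$, so $p_{U,*}p_U^*=\on{id}$ (Lemma~\ref{Lemma_prounipotent}). The entire $[d_\nu]=[l(\nu)+\dim\sfT]$ is the shift built into $\nabla_{\nu,\chi}$.
\item Descent along $\phi_\nu$ (Lemma~\ref{Lemma_Descent_revised}) removes only the factor $\calL_\chi$; the $[\dim\sfT]$ is not cancelled. Since the $\sfT=I/I^0$ direction is quotiented rather than integrated, no $\calL_\chi$-cohomology of $\sfT$ can appear, so your third worry does not arise.
\item In the antidominant case, $!$-integration along $\bbA^{l(\nu)}$ gives $[-2l(\nu)]$. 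Together with the $[l(\nu)+\dim\sfT]$ in $\Delta_{\nu,\chi}$, this yields $[d_\nu]$.
\end{itemize}
Your account, in which the fibre gives $[2\langle\rho,\nu\rangle]$ and $[\dim\sfT]$ cancels against descent, omits the $[l(\nu)]$ in the definition of $J_{\nu,\chi}$. It also discards a $[\dim\sfT]$ that must survive.
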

	
	We will prove Lemma \ref{Lemma_3.23'} in the next subsubsection. Before that, we need to introduce some notation and preliminary lemmas.
	
	We have the natural projections $f, f_{\nu}$ and the multiplication maps $m, m_{\nu}$:
	\begin{equation}\label{Diagram_f}
		\begin{tikzcd}
			{LG \times^{I^0}LG/I^0} & {LG \times^{I}LG/I^0} & {LG/I^0} \\
			{I\nu I \times^{I^0}LG/I^0} & {I\nu I \times^{I}LG/I^0} & {LG/I^0}
			\arrow["f", from=1-1, to=1-2]
			\arrow["m", from=1-2, to=1-3]
			\arrow[from=2-1, to=1-1]
			\arrow["{f_{\nu}}", from=2-1, to=2-2]
			\arrow[from=2-2, to=1-2]
			\arrow["{m_{\nu}}", from=2-2, to=2-3]
			\arrow[from=2-3, to=1-3]
		\end{tikzcd}
	\end{equation}

	By Lemma \ref{Lemma_2.12}, for $\nu \in {\bbX_\bullet^+}$, we have isomorphisms $\alpha$ and $\alpha_0$ that fit into a commutative diagram 
	\begin{equation}\label{Diagram_phi}
		\begin{tikzcd}
			{I\nu I \times^{I^0}LG/I^0} & {I\nu I \times^{I}LG/I^0} \\
			{{L^+\calU}\nu({\pi_E}){L^+\calU} \times^{{L^+\calU}}I \times^{I^0}LG/I^0} & {{L^+\calU}\nu({\pi_E}) {L^+\calU} \times^{{L^+\calU}}LG/I^0}
			\arrow["{f_{\nu}}", from=1-1, to=1-2]
			\arrow["{\alpha_0}"', from=2-1, to=1-1]
			\arrow["{\phi_{\nu}}"', from=2-1, to=2-2]
			\arrow["\alpha"', from=2-2, to=1-2],
		\end{tikzcd}
	\end{equation}
	where $\phi_{\nu}$ is induced from the multiplication map
	$$I \times^{I^0} LG/I^0 \to LG/I^0, \quad (a, b) \mapsto ab.$$

	The following lemma is well-known.
	
	\begin{lemma}
		Given the lift $\nu({\pi_E}) \in G(\bE)$ of $\nu \in \bbX_{\bullet}$, we have an isomorphism
		\begin{equation}
			\beta_{\nu}: I\nu I/I^0 \xrightarrow{\cong} I\nu I/I \times I/I^0 \quad [x^0\nu({\pi_E})y] \mapsto ([x^0\nu({\pi_E})y], [y]),
		\end{equation}
		where $x^0 \in I^0$ and $y \in I$. 
	\end{lemma}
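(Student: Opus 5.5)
The plan is to show that the map $\beta_{\nu}$ is well defined, and then to write down an explicit inverse. The key input is the identity $I\nu I = I^0\nu I$, which I will derive from the Iwahori decomposition. Write $I = I^0 \cdot L^+\calT$, where $L^+\calT \subseteq I$ surjects onto $I/I^0 \cong \sfT$. Since $\nu(\pi_E)$ lies in $T(\bE)$, it commutes with $L^+\calT$. Hence
\begin{equation*}
I\nu(\pi_E) I = I^0 L^+\calT\, \nu(\pi_E)\, I = I^0 \nu(\pi_E) L^+\calT\, I = I^0\nu(\pi_E) I.
\end{equation*}
So every point of $I\nu I/I^0$ can be written as $[x^0\nu(\pi_E)y]$ with $x^0 \in I^0$ and $y \in I$. (This is checked on $R$-points up to étale or v-local lifting, since $I \to \sfT$ has a section given by $\calT$.)

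\emph{Well-definedness.} Suppose $x^0\nu(\pi_E)y I^0 = x'^0\nu(\pi_E)y' I^0$. The first components of the two images agree automatically, so the task is to show $yI^0 = y'I^0$. From the equality we get
\begin{equation*}
\nu(\pi_E)^{-1}(x'^0)^{-1}x^0\nu(\pi_E) = y' h y^{-1}
\end{equation*}
for some $h \in I^0$, and the right-hand side lies in $I$. So it suffices to show that $\nu(\pi_E)^{-1}I^0\nu(\pi_E) \cap I \subseteq I^0$. I will check this root subgroup by root subgroup, using the Iwahori factorization of $I^0$. Conjugation by $\nu(\pi_E)$ scales each affine root group $U_{\alpha+k}$ to $U_{\alpha+k-\langle\alpha,\nu\rangle}$ (up to sign convention), and it fixes $\ker(L^+\calT\to\sfT)$. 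An element of $I$ with trivial image in $\sfT$ lies in $I^0$. Since $I^0 = \ker(I \to \sfT)$ and conjugation by $\nu(\pi_E)$ is the identity on the torus part, the image in $\sfT$ of the intersection is trivial. This is the main point of the proof, and I expect it to be the only real obstacle: making the torus-component argument rigorous for the intersection, via a product decomposition $I = \prod U_{\alpha}\cdot L^+\calT$ valid on perfect-ring points.

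\emph{Inverse.} Define the inverse map by $([g],[y]) \mapsto [x^0\nu(\pi_E)y]$, where $g = x^0\nu(\pi_E)y_1$ for some choice. More directly, I will argue that both sides are $I\times \sfT$-equivariant (left $I^0$, right $\sfT$) torsors over $I\nu I/I$. The source is a $\sfT$-torsor via $\pi$, and the target is trivially a $\sfT$-torsor. The map $\beta_\nu$ is $\sfT$-equivariant for right multiplication, since $[x^0\nu y t]\mapsto([\ldots],[yt])$, and it lies over $I\nu I/I$. A $\sfT$-equivariant map of $\sfT$-torsors over the same base is an isomorphism, exactly as in the remark after Diagram \eqref{Diagram_twistedCT2}. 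This finishes the argument, with algebraicity of $\beta_\nu$ following because it is the map induced by the projection $I^0\nu(\pi_E)\times I \to I/I^0$.
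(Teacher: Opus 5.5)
The paper gives no proof of this lemma; it simply calls it well-known. So there is nothing to match against, and your overall structure is sound.

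Reducing well-definedness to $I\cap\nu(\pi_E)^{-1}I^0\nu(\pi_E)\subseteq I^0$ is correct: from $y'hy^{-1}\in I^0$ and normality of $I^0$ in $I$ you get $y'I^0=yI^0$. Once $\beta_\nu$ exists, you observe that it is a right-$\sfT$-equivariant map over $I\nu I/I$ from the $\sfT$-torsor $I\nu I/I^0$ to the trivial torsor. That is the efficient way to conclude, and only the right $\sfT$-action matters there. You can drop your garbled explicit inverse. Any explicit inverse, or equivalently the section $[x^0\nu(\pi_E)]I\mapsto[x^0\nu(\pi_E)]I^0$, would need the same inclusion anyway.

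The step that still needs an actual argument is that inclusion. Saying that ``conjugation by $\nu(\pi_E)$ is the identity on the torus part'' does not by itself control the image in $I/I^0$ of an element $g\in I\cap\nu(\pi_E)^{-1}I^0\nu(\pi_E)$. A factorization of $g$ adapted to $\nu(\pi_E)^{-1}I^0\nu(\pi_E)$ has root factors that need not lie in $I$ individually, so you cannot read off the image in $\sfT$ from them. The missing input is uniqueness of the big-cell factorization: $LU^-\times LT\times LU\to LG$ is a monomorphism.

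Here is how it closes:
\begin{itemize}
\item Write $g=a^-a^0a^+$ using the Iwahori factorization of $I$. Then $\nu(\pi_E)g\nu(\pi_E)^{-1}=(\nu(\pi_E)a^-\nu(\pi_E)^{-1})\,a^0\,(\nu(\pi_E)a^+\nu(\pi_E)^{-1})$, since $T$ is commutative.
\item This element lies in $I^0$. Now $I^0$ also factors as $(I^0\cap LU^-)(I^0\cap LT)(I^0\cap LU)$; this follows from the factorization of $I$, because $I\cap LU^{\pm}\subseteq I^0$.
\item Uniqueness gives $a^0\in I^0\cap LT=\ker(L^+\calT\to\sfT)$. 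Together with $a^{\pm}\in I\cap LU^{\pm}\subseteq I^0$, this yields $g\in I^0$.
\end{itemize}

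You already planned to use the factorization of $I$ on $W_O(R)$-points. It holds because the open cell of $\calI$ contains its whole special fibre, and $\pi_E$ lies in the Jacobson radical of $W_O(R)$.
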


	Recall that $j_{\nu}: I\nu I/I^0 \to \Gr_{\calI^0}$ is the locally closed embedding. 
	
	In the next lemma, we use the following notation: Let $G'$ be a group object. Let $X'$, $Y'$ be two stacks with $G'$-actions. Let $A' \in \scrD_{\et}(X')$ and $B' \in \scrD_{\et}(Y')$ such that $A' \boxtimes B'$ is equivariant with respect to the diagonal action of $G'$ on $X' \times Y'$. Then we denote $A' \tilde{\boxtimes}_{G'} B'$ to be the unique object in $\scrD_{\et}(X' \times^{G'} Y')$ whose pulback is $A' \boxtimes B' \in \scrD_{\et}(X' \times Y')$.
	
	\begin{lemma}\label{Lemma_beta_revised}
		Let $\nu$ in ${\bbX_\bullet^+}$. Then Lemma \ref{Lemma_2.12} induces an isomorphism
		$$I\nu I/I^0 \xrightarrow[]{\cong}{L^+\calU}\nu({\pi_E}){L^+\calU} \times^{{L^+\calU}} I/I^0,\; [u\nu({\pi_E})x] \mapsto ([u\nu({\pi_E})], [x])$$ where $u \in {L^+\calU}$ and $x \in I$, such that $j_{\nu}^*J_{\nu, \chi}$ corresponds to $\underline{\Lambda} \tilde{\boxtimes}_{{L^+\calU}}\mathcal{L}_{\chi}[d_{\nu}].$ 
	\end{lemma}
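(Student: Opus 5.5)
The plan is to build the isomorphism in two stages and then compare sheaves. The first stage is purely geometric. By Lemma \ref{Lemma_2.12}, for $\nu \in \bbX_\bullet^+$ we have $I\nu I = L^+\calU\,\nu(\pi_E)\,L^+\calU \times^{L^+\calU} I$. Quotienting on the right by $I^0$ gives a surjection
$$L^+\calU \times \nu(\pi_E) \times I/I^0 \to I\nu I/I^0, \qquad (u, \nu(\pi_E), x) \mapsto [u\nu(\pi_E)x].$$
This map is invariant for the $L^+\calU$-action $(u, x) \mapsto (u\,\nu(\pi_E)v\nu(\pi_E)^{-1}, v^{-1}x)$, where $v$ runs over those $v \in L^+\calU$ with $\nu(\pi_E)v\nu(\pi_E)^{-1} \in L^+\calU$. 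Equivalently, it factors through
$$L^+\calU\,\nu(\pi_E)\,L^+\calU \times^{L^+\calU} I/I^0.$$
The inverse map is read off from the same lemma: write an element of $I\nu I$ uniquely, up to the $L^+\calU$-balancing, as $u\nu(\pi_E)x$. I will check that this factorization is an isomorphism by comparing it with $\beta_\nu$. Both spaces are $\sfT$-torsors over $I\nu I/I$ via $x \mapsto [x] \in I/I^0 = \sfT$, modulo the $\calU$-part, and the map is $\sfT$-equivariant for the right action. A map of $\sfT$-torsors over the same base is an isomorphism, by the same argument used after Diagram \eqref{Diagram_twistedCT2}.

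The second stage identifies the sheaf. By Definition \ref{Definition_Wakimoto}, for dominant $\nu$ we have
$$j_\nu^*J_{\nu,\chi} = \beta(\nu)^*(\Lambda \boxtimes \calL_\chi)[l(\nu)+\dim \sfT].$$
For dominant $\nu$ we have $l(\nu) = \langle 2\rho, \nu\rangle$, so the shift equals $d_\nu$. It then suffices to show that the composite
$$L^+\calU\,\nu(\pi_E)\,L^+\calU \times^{L^+\calU} I/I^0 \xrightarrow{\cong} I\nu I/I^0 \xrightarrow{\beta_\nu} I\nu I/I \times I/I^0$$
sends $[u\nu(\pi_E), x]$ to $([u\nu(\pi_E)x], [x])$. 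The pullback of $\Lambda \boxtimes \calL_\chi$ under this composite should be the object whose pullback to $L^+\calU\,\nu(\pi_E)\,L^+\calU \times I/I^0$ is $\underline{\Lambda} \boxtimes \calL_\chi$, and that is by definition $\underline{\Lambda}\tilde{\boxtimes}_{L^+\calU}\calL_\chi$.

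The main obstacle is this compatibility: the second coordinate of $\beta_\nu$ must agree with the class $[x]$. In $\beta_\nu$ one writes the element as $x^0\nu(\pi_E)y$ with $x^0 \in I^0$, whereas here it is written as $u\nu(\pi_E)x$ with $u \in L^+\calU \subseteq I^0$. So the two decompositions differ only by rebalancing $y$ against $x$ through elements of $I \cap \nu(\pi_E)^{-1}I^0\nu(\pi_E)$, and I must show that such elements have trivial image in $\sfT = I/I^0$. I would argue this via the Iwahori decomposition of $I$ into $L^+\calU$, $L^+\calT$ and the opposite part. Conjugation by $\nu(\pi_E)$ fixes $L^+\calT$, so an element of $I$ lying in $\nu(\pi_E)^{-1}I^0\nu(\pi_E)$ has $L^+\calT$-component in $\ker(L^+\calT \to \sfT)$, and hence trivial image in $\sfT$.

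Finally, the $L^+\calU$-equivariance of $\calL_\chi$ needed for $\underline{\Lambda}\tilde{\boxtimes}_{L^+\calU}\calL_\chi$ to be defined holds because $\calL_\chi$ is pulled back from $\sfT$ and $L^+\calU$ maps trivially to $\sfT$.
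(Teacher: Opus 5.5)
Your proof is correct and follows the paper's route. The isomorphism is Lemma~\ref{Lemma_2.12} taken modulo $I^0$, and the sheaf claim reduces to checking that the second coordinate of $\beta_\nu$ is compatible with the projection $[u\nu(\pi_E), x]\mapsto [x]$; this is exactly the commutative diagram in the paper's proof. Your Iwahori-decomposition argument that $I\cap \nu(\pi_E)^{-1}I^0\nu(\pi_E)\subseteq I^0$ just verifies that $\beta_\nu$ is well defined, which the paper treats as well known. The $\sfT$-torsor comparison is redundant, since Lemma~\ref{Lemma_2.12} already gives the isomorphism.
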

	
	\begin{proof}
		This follows from the definition of $J_{\nu, \chi}$ and the following commutative diagram
		\[\begin{tikzcd}
			{I\nu I/I^0} & {I\nu I/I \times I/I^0} \\
			{{L^+\calU}\nu({\pi_E}){L^+\calU} \times^{{L^+\calU}} I/I^0 } \\
			{{L^+\calU}\backslash I/I^0} & {I/I^0}
			\arrow["\beta_{\nu}", from=1-1, to=1-2]
			\arrow["\cong", from=1-1, to=2-1]
			\arrow[from=1-2, to=3-2]
			\arrow[from=2-1, to=3-1]
			\arrow[from=3-2, to=3-1]
		\end{tikzcd}\]
	\end{proof}
	
	Let us recall the notation $\on{Des}$ from Section \ref{section-notation}.
	
	\begin{lemma}\label{Lemma_Descent_revised}
		Let $\nu \in \bbX_{\bullet}^+$ dominant. Let $\calF \in \scrD_{\et, \chi, \chi}$. We have
		\begin{equation}
			\on{Des}_{\phi_{\nu}}(\underline{\Lambda} \tilde{\boxtimes}_{{L^+\calU}}\mathcal{L}_{\chi} \tilde{\boxtimes}_{I^0} \mathcal{F}) \cong \underline{\Lambda}\tilde{\boxtimes}_{{L^+\calU}}\mathcal{F}
		\end{equation}
		as a sheaf on $ I\nu I\times^ILG/I^0 \xrightarrow[\cong]{\alpha^{-1}} {L^+\calU}\nu({\pi_E}){L^+\calU} \times^{{L^+\calU}}LG/I^0$ .
	\end{lemma}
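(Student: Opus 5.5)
The approach is to compute the descent along $\phi_\nu$ by first reducing to a statement about the factor $I \times^{I^0} LG/I^0$, and then using the twisted equivariance of $\calF$.

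Recall from Diagram \eqref{Diagram_phi} that $\phi_\nu$ is induced by the multiplication $I \times^{I^0} LG/I^0 \to LG/I^0$, $(a,b)\mapsto ab$, after applying ${L^+\calU}\nu(\pi_E){L^+\calU}\times^{L^+\calU}(-)$. Since the first factor and $\underline{\Lambda}$ are untouched, the claim reduces to an identity of $L^+\calU$-equivariant sheaves:
\begin{equation*}
\on{Des}_{m'}(\calL_\chi \tilde{\boxtimes}_{I^0}\calF)\cong \calF \quad\text{on } LG/I^0,
\end{equation*}
where $m': I\times^{I^0}LG/I^0\to LG/I^0$ is the multiplication. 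Both sides must then be checked to be compatible with the residual left $L^+\calU$-action. This compatibility is automatic, because $\calL_\chi$ is pulled back from $\sfT=I/I^0$ and is therefore trivial along $L^+\calU\subseteq I^0\cdot L^+\calU$, so $L^+\calU$ acts compatibly on both.

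To prove the reduced identity, identify $I\times^{I^0}LG/I^0$ with $\sfT\times LG/I^0$ via the isomorphism $(a,b)\mapsto([a],ab)$. Here $\sfT=I/I^0$, and a lift of $[a]$ is used to make the map well defined. Under this identification $m'$ becomes the second projection, and the map $(a,b)\mapsto b$ becomes the action map $\alpha$ of $\sfT$, or more precisely of $I$, acting on the left of $LG/I^0$, twisted by the inverse. The pulled-back sheaf $\on{pr}^*\calL_\chi\otimes\on{pr}^*\calF$ thus becomes $\calL_\chi\boxtimes\alpha^*\calF$ up to inversion in the first factor.

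The left $(I,\calL_\chi)$-equivariance of $\calF$, namely $\alpha^*\calF\cong\calL_\chi\boxtimes\calF$, then gives
\begin{equation*}
\calL_\chi\otimes\alpha^*\calF\cong(\calL_\chi\otimes\calL_\chi^{\pm1})\boxtimes\calF .
\end{equation*}
With the inverse convention built into the definition of $\tilde\boxtimes_{I^0}$, one gets $\calL_\chi\otimes\calL_\chi^{-1}=\underline\Lambda$, so the sheaf is $\underline\Lambda\boxtimes\calF=\on{pr}_2^*\calF$. Uniqueness of descent (\cite[Section 6.8]{achar2021perverse}) then identifies the descended object with $\calF$. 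The higher coherence data of the equivariance supply the descent data, so the identification holds in the $\infty$-categorical sense and not merely objectwise.

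The main obstacle is sign and convention bookkeeping. One has to check that the character appearing from $j_\nu^*J_{\nu,\chi}$ (Lemma \ref{Lemma_beta_revised}) really cancels against the left equivariance character of $\calF$, giving $\calL_\chi\otimes\calL_\chi^{-1}$ rather than $\calL_\chi^{\otimes2}$. This requires tracking whether $I$ acts through $a$ or $a^{-1}$ in the balanced product. I would fix this first on a single fiber $\sfT\times\{b\}$, where it reduces to the multiplicativity $m^*\calL_\chi\cong\calL_\chi\boxtimes\calL_\chi$ of the Kummer sheaf together with $\on{inv}^*\calL_\chi\cong\calL_\chi^{-1}$. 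After that I would verify that the descent data match, which is formal once the isomorphism of pullbacks is $I^0$-equivariant.
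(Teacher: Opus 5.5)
Your proposal is correct and is essentially the paper's argument: both reduce to the multiplication map $\mu\colon I\times^{I^0}LG/I^0\to LG/I^0$ (since $\phi_\nu$ is $\mu$ with the factor ${L^+\calU}\nu(\pi_E){L^+\calU}$ carried along unchanged), and both conclude from the twisted left $(I,\calL_{\chi})$-equivariance of $\calF$, i.e.\ $\mu^*\calF\cong\calL_{\chi}\tilde{\boxtimes}_{I^0}\calF$. Your detour through $I\times^{I^0}LG/I^0\cong\sfT\times LG/I^0$ is harmless but unnecessary: the inverse there comes from the coordinate change $b=\tilde{t}^{-1}g$, not from any convention in $\tilde{\boxtimes}_{I^0}$, whereas pulling back along $\mu$ directly gives $\calF_{ab}\cong(\calL_{\chi})_{[a]}\otimes\calF_{b}$ with no cancellation to track.
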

	
	\begin{proof}
		Recall that the map 
		$$\phi_{\nu}: {{L^+\calU}\nu({\pi_E}){L^+\calU} \times^{{L^+\calU}}I \times^{I^0}LG/I^0} \to  {{L^+\calU}\nu({\pi_E}) {L^+\calU} \times^{{L^+\calU}}LG/I^0}$$ 
		is induced from the multiplication map
		$$\mu: I \times^{I^0} LG/I^0 \to LG/I^0, \quad (a, b) \mapsto ab.$$
		
		This follows from
		\begin{equation}
			\mu^*\mathcal{E} \cong \mathcal{L}_{\chi} \tilde{\boxtimes}_{I^0} \mathcal{E},
		\end{equation} 
		for every $\mathcal{E} \in \scrD_{\et, \chi, \chi}$.
	\end{proof}
	
	Recall that we have the maps $q^+: \Gr_{\calI^0}^+ \to \Gr_{\calI^0}$ and $p^+: \Gr_{\calI^0}^+ \to \Gr_{\calI^0}^0$ (Section \ref{Subsection_Constant_terms}). For any $\nu' \in \tilde{W}$, we denote $q_{\nu'}: LU\nu' I/I^0 \to LG/I^0$ the restriction of $q^+$ to $LU\nu' I/I^0$. Define $p_{\nu'}: LU\nu'I/I^0 \to \nu'I/I^0$ the base change of $p^+$ along $\nu'I/I^0 \to \Gr_{\calI^0}^0$.

	\begin{lemma}\label{Lemma_Cartesian3}
		For every $w \in \tilde{W}$, we have the following Cartesian diagram
		\[\begin{tikzcd}
			{{L^+\calU}\nu({\pi_E}){L^+\calU} \times^{{L^+\calU}}LU.wI/I^0} && {LU.\nu wI/I^0} \\
			{{L^+\calU}\nu({\pi_E}){L^+\calU} \times^{{L^+\calU}}LG/I^0} & {I\nu I \times^ILG/I^0} & {LG/I^0}
			\arrow["{m_\nu'}", from=1-1, to=1-3]
			\arrow["{q_{\nu w}'}", from=1-1, to=2-1]
			\arrow["{q_{\nu w}}"', from=1-3, to=2-3]
			\arrow["\alpha", from=2-1, to=2-2]
			\arrow["{m_{\nu}}", from=2-2, to=2-3]
		\end{tikzcd}\]
	\end{lemma}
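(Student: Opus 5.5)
We have to show that the square in Lemma \ref{Lemma_Cartesian3} is Cartesian. The approach is to identify the fiber product of $m_\nu\circ\alpha$ and $q_{\nu w}$ directly inside ${L^+\calU}\nu({\pi_E}){L^+\calU} \times^{{L^+\calU}}LG/I^0$. We then check that it equals the sub-v-sheaf ${L^+\calU}\nu({\pi_E}){L^+\calU} \times^{{L^+\calU}}LU.wI/I^0$, with $m_\nu'$ the restriction of the multiplication map.

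The first step is to make the composite explicit. The map $m_\nu\circ\alpha$ sends a class $[u\nu(\pi_E)u', g]$, with $u,u'\in L^+\calU$ and $g\in LG/I^0$, to $u\nu(\pi_E)u'g$. This is well defined because the contracted product is taken over $L^+\calU$ acting on the right of the first factor and on the left of $LG/I^0$.

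The fiber product therefore consists of pairs $([u\nu(\pi_E)u'],[g])$ such that $u\nu(\pi_E)u'g \in LU.\nu wI/I^0$. Since $u\in L^+\calU\subseteq LU$, this condition is equivalent to $\nu(\pi_E)u'g\in LU.\nu w I/I^0$. The torus element $\nu(\pi_E)$ normalizes $LU$, so conjugating gives
\[
\nu(\pi_E)^{-1}LU\,\nu(\pi_E)\,\nu(\pi_E)^{-1}\nu w I/I^0 = LU.wI/I^0 ,
\]
using that $\nu(\pi_E)^{-1}\nu w = w$ in $\tilde W$ with the fixed lifts. Hence the condition is equivalent to $u'g\in LU.wI/I^0$. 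Because $u'\in L^+\calU\subseteq LU$, this in turn is equivalent to $g\in LU.wI/I^0$, a condition that is visibly independent of the representative modulo $L^+\calU$. This shows that the fiber product is exactly ${L^+\calU}\nu({\pi_E}){L^+\calU} \times^{{L^+\calU}}LU.wI/I^0$, with $q'_{\nu w}$ the inclusion and $m'_\nu$ the multiplication.

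The remaining point is to make this pointwise argument valid for v-sheaves. I would check the statement on $R$-points for perfect $R$, or after v-localization. Membership in the locally closed sub-ind-scheme $LU.\nu wI/I^0$ is a condition that can be tested v-locally, and the strata $LU.w'I/I^0$ are disjoint for distinct $w'$ by \eqref{Eq_Gr^+}. So the identification is one of subfunctors. I expect the main subtlety to be the lift compatibility $\nu(\pi_E)\dot w=\dot{(\nu w)}$ up to an element of $\sfT$. This causes no difficulty, since $\sfT\subseteq I$ is absorbed into $I/I^0$ on the right, and the claim concerns the stratum $LU.\nu w I/I^0$, which does not depend on the lift.
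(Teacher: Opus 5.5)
Your proof is correct and uses the same key input as the paper's: $\nu(\pi_E)$ normalizes $LU$, so that $m_\nu\circ\alpha$ is multiplication and carries ${L^+\calU}\nu(\pi_E){L^+\calU}\times^{L^+\calU}LU.wI/I^0$ into $LU.\nu wI/I^0$. The only difference is in the reverse inclusion. The paper deduces it from the disjointness of the strata $LU.\nu w'I/I^0$ for distinct $w'\in\tilde{W}$, implicitly using that the strata $LU.w'I/I^0$ exhaust $LG/I^0$. You instead get both inclusions at once, because left translation by $\nu(\pi_E)^{-1}$ maps $LU.\nu wI/I^0$ onto $LU.wI/I^0$; this is slightly more direct and works cleanly on functors of points.
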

	
	\begin{proof}
		Recall that $\alpha$ is an isomorphism. We notice that for every $w \in \tilde{W}$, the multiplication map $m_{\nu}$ sends ${{L^+\calU}\nu({\pi_E}){L^+\calU}  \times^{{L^+\calU}}LU.wI/I^0}$ to ${LU.\nu wI/I^0}$ because $\nu(\pi_E)$ normalizes $LU$. For distinct $w \in \tilde{W}$, the subspaces ${LU.\nu wI/I^0}$ are disjoint. Therefore, ${{L^+\calU}\nu({\pi_E}){L^+\calU}  \times^{{L^+\calU}}LU.wI/I^0}$ is exactly the preimage of ${LU.\nu wI/I^0}$.
	\end{proof}
	
	\begin{lemma}\label{Lemma_Cartesian4}
		We have the following Cartesian diagram
		\[\begin{tikzcd}
			{{L^+\calU}\nu({\pi_E}){L^+\calU} \times^{{L^+\calU}}LU.wI/I^0} & {{L^+\calU}\backslash LU.wI/I^0} \\
			{{L^+\calU}\nu({\pi_E}){L^+\calU} \times^{{L^+\calU}}LG/I^0} & {{L^+\calU}\backslash LG/I^0}
			\arrow["{p'_2}", from=1-1, to=1-2]
			\arrow["{q_{\nu w}'}", from=1-1, to=2-1]
			\arrow["{q_w}"', from=1-2, to=2-2]
			\arrow["{p_2}", from=2-1, to=2-2]
		\end{tikzcd}\]
	\end{lemma}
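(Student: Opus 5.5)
The plan is to check directly that the square is a base change of the $L^+\calU$-quotient of the map $q_w$. The top-left corner is a twisted product, so it should be a pullback of the right column along the projection $p_2$.

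First, make $p_2$ precise. A point of ${L^+\calU}\nu({\pi_E}){L^+\calU} \times^{{L^+\calU}}LG/I^0$ is a class $[a, x]$ with $a \in {L^+\calU}\nu({\pi_E}){L^+\calU}$ and $x \in LG/I^0$, modulo $(au, x) \sim (a, ux)$ for $u \in L^+\calU$. The map $p_2$ sends $[a,x]$ to the class of $x$ in ${L^+\calU}\backslash LG/I^0$, and this is well defined by construction of the contracted product. In other words, $p_2$ is the map of quotient stacks
\[
{L^+\calU}\nu({\pi_E}){L^+\calU} \times^{{L^+\calU}}LG/I^0 \to [*/{L^+\calU}] \times^{{L^+\calU}} LG/I^0 = {L^+\calU}\backslash LG/I^0
\]
induced by the $L^+\calU$-equivariant map ${L^+\calU}\nu({\pi_E}){L^+\calU} \to *$. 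The map $p_2'$ is defined the same way, and $q_w$ is induced by the $L^+\calU$-stable inclusion $LU.wI/I^0 \subseteq LG/I^0$.

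Second, use the general fact that for a group $H$, a right $H$-space $P$, and an $H$-equivariant map $Y \to X$ of left $H$-spaces, the square
\[
\begin{tikzcd}
	P \times^H Y & H\backslash Y \\
	P \times^H X & H\backslash X
	\arrow[from=1-1, to=1-2]
	\arrow[from=1-1, to=2-1]
	\arrow[from=1-2, to=2-2]
	\arrow[from=2-1, to=2-2]
\end{tikzcd}
\]
is Cartesian. Both $P \times^H X$ and the fiber product are quotients by $H$ of $P \times X$ and of $P \times Y$, and Cartesianness can be checked after pulling back along the $H$-torsor $X \to H\backslash X$, where it reduces to the obvious Cartesian square with $P \times Y \to P \times X$. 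Applying this with $H = L^+\calU$, $P = {L^+\calU}\nu({\pi_E}){L^+\calU}$, $X = LG/I^0$ and $Y = LU.wI/I^0$ gives the lemma.

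The only point needing care is that $LU.wI/I^0$ is stable under left multiplication by $L^+\calU$. This holds because $L^+\calU \subseteq LU$: $\calU$ is the schematic closure of $U$ in $\calI$, so it has generic fiber $U$. We also need that the closed or locally closed immersion $q_w$ is compatible with descent along $v$-torsors, which is automatic in the $v$-stack setting. Thus the main obstacle is only a bookkeeping one: making sure the left $L^+\calU$-action used in the contracted product matches the action defining the quotient in the right column.
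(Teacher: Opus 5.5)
Your argument is correct and is the same as the paper's: the paper simply identifies $p_2$ and $p_2'$ as the projections onto the second factor and declares the square clearly Cartesian. Your reduction to the general base-change fact for $P\times^H Y \to P\times^H X$ over $H\backslash Y \to H\backslash X$, checked after pulling back along $X \to H\backslash X$, just supplies the omitted details. One minor notational point: the middle term in your description of $p_2$ should read $*\times^{L^+\calU} LG/I^0$ rather than $[*/L^+\calU]\times^{L^+\calU} LG/I^0$.
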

	
	\begin{proof}
		Here $p_2$ and $p_2'$ are the second projection maps. The diagram is clearly Cartesian.
	\end{proof}
	
	\begin{lemma}\label{Lemma_isom2}
		Let ${\calU_{\nu}}:={L^+\calU} \cap \nu({\pi_E}){L^+\calU}\nu({\pi_E})^{-1}$. We have an isomorphism
		\begin{equation}
			{	{L^+\calU}\nu({\pi_E}){L^+\calU} \times^{{L^+\calU}}LU.wI/I^0} \;\; \xrightarrow{\gamma} \;\; {{L^+\calU} \times^{{\calU_{\nu}}} 		\nu({\pi_E})LU.wI/I^0     } 
		\end{equation}
		\begin{equation}
			{(u_1\nu({\pi_E})u_2, u_3w)}  \;\; \mapsto \;\; {(u_1, \nu({\pi_E})u_2u_3w)}
		\end{equation}
		under which $\underline{\Lambda} \tilde{\boxtimes}_{{L^+\calU}} q_w^!\mathcal{F} \in \scrD_{\et}({L^+\calU}\nu({\pi_E}){L^+\calU} \times^{{L^+\calU}}LU.wI/I^0)$ corresponds to $$\underline{\Lambda} \tilde{\boxtimes}_{{\calU_{\nu}}} \nu({\pi_E})_*q_w^!\mathcal{F} \in \scrD_{\et}({L^+\calU} \times^{{\calU_{\nu}}}
		\nu({\pi_E})LU.wI/I^0),$$
		where $\nu({\pi_E}): LU.wI/I^0 \to \nu({\pi_E})LU.wI/I^0=LU.\nu wI/I^0$ is left multiplication by $\nu({\pi_E})$.
	\end{lemma}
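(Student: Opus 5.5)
We first check that $\gamma$ is a well-defined isomorphism, and then track the sheaf through the two descent steps. On $L^+\calU\nu(\pi_E)L^+\calU\times^{L^+\calU}LU.wI/I^0$, a point is represented by $(u_1\nu(\pi_E)u_2,\ u_3 w)$ with $u_i\in L^+\calU$ (and $u_3$ ranging over $LU$). The ambiguity comes from two sources: replacing $(u_2,u_3)$ by $(u_2u,\ u^{-1}u_3)$ for $u\in L^+\calU$, and the non-uniqueness of writing an element of $L^+\calU\nu(\pi_E)L^+\calU$ as $u_1\nu(\pi_E)u_2$. The latter is exactly the relation $(u_1,\nu(\pi_E)u_2)\sim(u_1v,\ \nu(\pi_E)\cdot\nu(\pi_E)^{-1}v^{-1}\nu(\pi_E)u_2)$ for $v\in\calU_\nu=L^+\calU\cap\nu(\pi_E)L^+\calU\nu(\pi_E)^{-1}$. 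So the formula $(u_1,\nu(\pi_E)u_2u_3w)$ is invariant under the first relation and equivariant for the second. Moreover, $\nu(\pi_E)u_2u_3w$ lies in $\nu(\pi_E)LU.wI/I^0$ because $u_2u_3\in LU$.

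An inverse is given as follows. Send $(u_1,\nu(\pi_E)xw)$ with $x\in LU$ to $(u_1\nu(\pi_E),\ xw)$, taking $u_2=1$ and $u_3=x$. Both composites are the identity: the first equivalence relation lets us absorb $u_2$ into $u_3$. Hence $\gamma$ is an isomorphism of v-stacks. Equivalently, $\gamma$ is the composite of the identification $L^+\calU\nu(\pi_E)L^+\calU\times^{L^+\calU}(-)\cong L^+\calU\times^{\calU_\nu}\nu(\pi_E)(-)$, with $L^+\calU\nu(\pi_E)L^+\calU\cong L^+\calU\times^{\calU_\nu}\nu(\pi_E)L^+\calU$, and of absorbing the right $L^+\calU$ into $LU.w$.

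For the sheaves, we compare pullbacks to the atlas $L^+\calU\times LU.wI/I^0$, mapping via $(u_1,y)\mapsto(u_1\nu(\pi_E),y)$ on the source and $(u_1,y)\mapsto(u_1,\nu(\pi_E)y)$ on the target. These two maps are intertwined by $\gamma$.

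On the source, the pullback of $\underline{\Lambda}\tilde{\boxtimes}_{L^+\calU}q_w^!\mathcal{F}$ is $\underline{\Lambda}\boxtimes q_w^!\mathcal{F}$. This uses the $L^+\calU$-equivariance of $q_w^!\mathcal{F}$, which comes from $\mathcal{F}$ being $I$-equivariant on the left; the twist $\calL_\chi$ is trivial on the pro-unipotent $L^+\calU$. On the target, the pullback of $\underline{\Lambda}\tilde{\boxtimes}_{\calU_\nu}\nu(\pi_E)_*q_w^!\mathcal{F}$ along $(u_1,y)\mapsto(u_1,\nu(\pi_E)y)$ is $\underline{\Lambda}\boxtimes\nu(\pi_E)^*\nu(\pi_E)_*q_w^!\mathcal{F}=\underline{\Lambda}\boxtimes q_w^!\mathcal{F}$, since left multiplication by $\nu(\pi_E)$ is an isomorphism $LU.wI/I^0\to LU.\nu wI/I^0$.

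It remains to match the descent data. Because the relevant groups $L^+\calU$ and $\calU_\nu$ are pro-unipotent, pullback along the quotient maps is fully faithful on the relevant categories; this is the $\on{Des}$ formalism of Section \ref{section-notation}. The equivariance structure on each side is the one induced by the $L^+\calU$-equivariance of $q_w^!\mathcal{F}$, conjugated by $\nu(\pi_E)$ on the target. It therefore suffices to check that the two equivariant structures on $\underline{\Lambda}\boxtimes q_w^!\mathcal{F}$ agree on the common atlas.

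The main subtlety is this compatibility of the $\calU_\nu$-equivariance on the target with the restriction of the $L^+\calU$-equivariance on the source. It reduces to the identity $\nu(\pi_E)^{-1}\calU_\nu\nu(\pi_E)\subseteq L^+\calU$ and to the functoriality of $\nu(\pi_E)_*$ with respect to the conjugated actions. Once this is checked, uniqueness of descended objects gives the claimed correspondence.
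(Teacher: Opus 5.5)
Your proposal is correct and follows the same route as the paper. The paper says only that $\gamma$ is induced by $L^+\calU\nu(\pi_E)L^+\calU/L^+\calU\cong L^+\calU/\calU_\nu$ and that the sheaf statement follows by tracking the isomorphisms; your well-definedness check, explicit inverse, and comparison of pullbacks on the common atlas $L^+\calU\times LU.wI/I^0$ spell exactly this out. (Your final matching of descent data is harmless but not needed: the atlas is a $\calU_\nu$-torsor over both sides, and pullback along a torsor for a pro-unipotent group is fully faithful, so the isomorphism of pullbacks you exhibit already descends.)
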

	
	\begin{proof}
		The isomorphism $\gamma$ is induced from the isomorphism
		$$L^+\calU \nu(\pi_E) L^+\calU/L^+\calU \cong L^+\calU/\calU_{\nu}.$$
		The rest follows by tracking the isomorphisms.
	\end{proof}
	
	\begin{lemma}\label{Lemma_commute_I0}
		We have the following commutative diagram
		\[\begin{tikzcd}
			{{L^+\calU}\nu({\pi_E}){L^+\calU} \times^{{L^+\calU}}LU.wI/I^0} & {LU.\nu wI/I^0} \\
			{{L^+\calU} \times^{{\calU_{\nu}}}\nu({\pi_E})LU.wI/I^0} \\
			{{\calU_{\nu}}\backslash\nu({\pi_E})LU.wI/I^0} & {\nu wI/I^0}
			\arrow["{m_\nu'}", from=1-1, to=1-2]
			\arrow["\cong", from=1-1, to=2-1]
			\arrow["{p_{\nu w}}"', from=1-2, to=3-2]
			\arrow["{p_U}", from=2-1, to=3-1]
			\arrow["{p_{{\calU_{\nu}}}}", from=3-1, to=3-2]
		\end{tikzcd}\]
	\end{lemma}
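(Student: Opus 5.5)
We have to show that the square in Lemma \ref{Lemma_commute_I0} commutes: first go down by the isomorphism $\gamma$ of Lemma \ref{Lemma_isom2}, then apply $p_U$ and $p_{\calU_{\nu}}$; compare this with going across by $m_\nu'$ and then down by $p_{\nu w}$. Everything here is a v-sheaf over $\Spd k$. So it suffices to check equality of the two composites on a v-cover of the source. We compute on the points $(u_1\nu(\pi_E)u_2, u_3 w x)$, where $u_1,u_2 \in L^+\calU$, $u_3 \in LU$ and $x \in I$. By Lemma \ref{Lemma_vsurj}(2) and Lemma \ref{Lemma_2.12}, such points cover the source v-locally. On these points both composites are given by explicit formulas in $u_1,u_2,u_3,x$, and we compare them.

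First we track the top-right path. By definition, $m_\nu'$ sends the point to $u_1\nu(\pi_E)u_2u_3wxI^0 \in LU.\nu wI/I^0$. We rewrite this as
\[
\bigl(u_1\cdot \nu(\pi_E)u_2u_3\nu(\pi_E)^{-1}\bigr)\,\nu(\pi_E)wxI^0 .
\]
This uses that $\nu(\pi_E)$ normalizes $LU$, as in Lemma \ref{Lemma_Cartesian3}. So the element is $u'\cdot \nu w x I^0$ with $u' \in LU$. The map $p_{\nu w}$ is the base change of $p^+$. By the construction of $\tilde p^+$ after Lemma \ref{Lemma_vsurj}, it is induced by the second projection $LU \times \tilde X^0 \to \tilde X^0$. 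Hence $p_{\nu w}$ forgets the $LU$-factor, and the point goes to $\nu wxI^0 \in \nu wI/I^0$.

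Next we track the left-bottom path. The map $\gamma$ sends the point to $(u_1, \nu(\pi_E)u_2u_3wxI^0)$. The map $p_U$ forgets $u_1$ and gives the class of $\nu(\pi_E)u_2u_3wxI^0$ in $\calU_{\nu}\backslash\nu(\pi_E)LU.wI/I^0$. We then define $p_{\calU_\nu}$ as the map induced by $p_{\nu w}$ restricted to $\nu(\pi_E)LU.wI/I^0 = LU.\nu wI/I^0$. This is well defined because $p_{\nu w}$ is invariant under left translation by $LU \supseteq \calU_\nu$. Applying it, we again get $\nu wxI^0$. So the two composites agree on the cover, and hence the diagram commutes.

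The main obstacle is a well-definedness check: the formula for $p_{\nu w}$ must not depend on how a point of $LU.\nu wI/I^0$ is written as $u'\cdot\nu w t$ with $t\in \sfT$. Here we use that $\tilde p^+$ was constructed as a $\sfT$-equivariant descent of the projection $p_2$. This requires that the stabilizer in $LU$ of a point of $\nu w\sfT$ acts compatibly, which is exactly what the v-surjectivity in Lemma \ref{Lemma_vsurj} and the Cartesian square \eqref{Diagram_twistedCT2} guarantee. Once this is granted, the commutativity is the formal computation above.
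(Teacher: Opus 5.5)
This is correct and is essentially the paper's argument: $\nu(\pi_E)$ normalizes $LU$, so $\nu(\pi_E)LU.wI/I^0=LU.\nu wI/I^0$. Moreover $p_{\nu w}$, being induced by the projection $LU\times\tilde X^0\to\tilde X^0$, is left $LU$-invariant and therefore factors through $\calU_\nu\backslash LU.\nu wI/I^0$, so both composites send your test point to $\nu(\pi_E)wxI^0$.

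The only quibble is your last paragraph. Well-definedness of $p_{\nu w}$ is part of the already-given construction of $\tilde p^+$, so this lemma need not check it. Also, v-surjectivity alone would not supply it: one additionally needs $p_2$ to be constant on fibres of the action map, which amounts to $(\nu w)^{-1}LU(\nu w)\cap I$ mapping trivially to $I/I^0\cong\sfT$.
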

	
	\begin{proof}
		Here $p_{\nu w}$ is the base change of 
		$$p^+: \Gr_{\calI^0}^+ = \sqcup_{w \in \tilde{W}}LU.wI/I^0 \to \sqcup_{w \in \tilde{W}} wI/I^0 = \Gr_{\calI^0}^0$$
		along the map
		$$\nu wI/I^0 \to \sqcup_{w \in \tilde{W}} wI/I^0.$$
		
		Note that $\nu(\pi_E)$ normalizes $LU$. Tracking the maps, the lemma follows by noticing that $\nu(\pi_E)LU.wI/I^0=LU.\nu wI/I^0$ and that the map
		$$p_{\nu w}: LU.\nu wI/I^0 \to \nu wI/I^0$$
		factors through $\calU_{\nu} \backslash LU.\nu wI/I^0$.
	\end{proof}
	
	\begin{lemma}\label{Lemma_prounipotent}
		$p_{U, *}p_U^*=\on{id}$.
	\end{lemma}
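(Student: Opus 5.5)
The plan is to identify $p_U$ as a torsor under a pro-unipotent group and then apply the standard acyclicity result for such torsors. By Lemma \ref{Lemma_commute_I0}, the map
$$p_U: L^+\calU \times^{\calU_{\nu}} \nu(\pi_E)LU.wI/I^0 \to \calU_{\nu}\backslash \nu(\pi_E)LU.wI/I^0$$
is the projection that forgets the first factor. Equivalently, it is the quotient by the free left $L^+\calU$-action on the first factor, read through the identification $L^+\calU\times^{\calU_\nu}Y \to \calU_\nu\backslash Y$. Concretely, $p_U$ is a fibration whose fibers are the homogeneous space $L^+\calU/\calU_{\nu}$. Via the isomorphism $\gamma$ of Lemma \ref{Lemma_isom2}, this homogeneous space is $L^+\calU\nu(\pi_E)L^+\calU/L^+\calU$.

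First, I would check that $L^+\calU/\calU_\nu$ is representable by a perfect affine space. The group $\calU_\nu = L^+\calU\cap \nu(\pi_E)L^+\calU\nu(\pi_E)^{-1}$ is the product, over positive roots $\alpha$, of the root subgroups $L^+U_\alpha$ with entries in $\pi_E^{\langle\alpha,\nu\rangle}\bOE$. Here I use that $\nu$ is dominant, so $\langle\alpha,\nu\rangle\ge 0$. Choosing an ordering of the positive roots, one gets
$$L^+\calU/\calU_\nu \cong \prod_{\alpha>0} L^+U_\alpha/\pi_E^{\langle\alpha,\nu\rangle}L^+U_\alpha,$$
which is the perfection of $\bbA^{\langle 2\rho,\nu\rangle}$, built from finitely many truncated Witt vector groups. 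This is the same computation that gives $\dim I\nu I/I = \langle 2\rho,\nu\rangle$.

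Second, I would show that $p_U$ is, v-locally on the target, isomorphic to the projection $(L^+\calU/\calU_\nu)\times Y' \to Y'$. Indeed, the pullback of $p_U$ along the $\calU_\nu$-torsor $Y=\nu(\pi_E)LU.wI/I^0 \to \calU_\nu\backslash Y$ is the trivial bundle $L^+\calU/\calU_\nu\times Y\to Y$. Because $p_{U,*}p_U^*$ is computed after v-local base change, I would reduce, using base change for $*$-pushforward along this cohomologically smooth map, to the case of a product projection $\pi: \bbA^{n,\mathrm{perf}}\times Y\to Y$. (Alternatively, one could filter $\calU_\nu\subseteq L^+\calU$ by normal subgroups with vector-group graded pieces and induct, with each step being a $\bbG_a^{\mathrm{perf}}$-torsor.)

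Finally, for $\pi:\bbA^{n,\mathrm{perf}}\times Y\to Y$, the unit $\id\to\pi_*\pi^*$ is an isomorphism by $\bbA^1$-invariance of étale cohomology with $\ell$-torsion coefficients, $\ell\neq p$. Perfection does not change the étale topos, and the infinite-level parts are handled as in \cite[Proposition VI.4.1]{fargues2021geometrization}. This gives $p_{U,*}p_U^*=\id$. I expect the main obstacle to be the second step: making the base change rigorous for $p_{U,*}$ on these infinite-type v-stacks. I would address it by writing $p_U$ as a quotient of a finite-dimensional unipotent group acting freely and invoking the standard fact that pullback along torsors under connected unipotent groups is fully faithful, \cite[Proposition VI.4.1]{fargues2021geometrization}.
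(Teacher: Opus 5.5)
\textbf{The gap is in the identification of the fibers of $p_U$.} Write $Y=\nu(\pi_E)LU.wI/I^0$. You say, correctly, that $p_U$ is the quotient of $L^+\calU\times^{\calU_{\nu}}Y$ by the free left $L^+\calU$-action on the first factor. But then its fibers are $L^+\calU$, not $L^+\calU/\calU_{\nu}$. Concretely, $p_U$ is induced by the $\calU_{\nu}$-equivariant projection $L^+\calU\times Y\to Y$. So its base change along $Y\to\calU_{\nu}\backslash Y$ is the trivial bundle $L^+\calU\times Y\to Y$, not $(L^+\calU/\calU_{\nu})\times Y\to Y$.

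You compute $L^+\calU/\calU_{\nu}\cong L^+\calU\nu(\pi_E)L^+\calU/L^+\calU$ correctly, as a perfected affine space of dimension $\langle 2\rho,\nu\rangle$. However, this is the fiber of a different morphism: the action map $[u,y]\mapsto uy$ to $Y$, i.e.\ $m'_{\nu}\circ\gamma^{-1}$, whose fibers are convolution fibers. Your first two steps, and the $\bbA^1$-invariance argument built on them, therefore concern the wrong map. Likewise, $p_U$ is not a quotient by a finite-dimensional unipotent group, contrary to what your last sentence asserts.

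What is needed instead is the paper's argument. The map $p_U$ is an $L^+\calU$-torsor, and $L^+\calU$ is pro-unipotent. More precisely, it is the inverse limit of its truncations modulo $\pi_E^n$; each truncation is a perfected iterated extension of copies of $\bbG_a$, with perfected vector groups as transition kernels. Hence $L^+\calU$ has trivial cohomology, and pullback along its torsors is fully faithful, by the argument in the proof of \cite[Proposition VI.4.1]{fargues2021geometrization}. The essential input is this passage to the limit over infinitely many truncation levels, not $\bbA^1$-invariance for a single finite-dimensional affine space.

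For the same reason, the smooth base change you wanted along $Y\to\calU_{\nu}\backslash Y$ is not available. The fibers of that map are $\calU_{\nu}\cong L^+\calU$, which is infinite-dimensional. Your final citation is the right one, but it must be applied to $L^+\calU$ itself rather than to $L^+\calU/\calU_{\nu}$.
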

	
	\begin{proof}
		This holds because $p_U$ is a ${L^+\calU}$-torsor and ${L^+\calU}$ is pro-unipotent, hence has trivial cohomology. See \cite[Proof of Proposition VI.4.1]{fargues2021geometrization} for details.
	\end{proof}

	\subsubsection{Proof of Lemma \ref{Lemma_3.23'} and its corollary}
	
	\begin{proof}[Proof of Lemma \ref{Lemma_3.23'}]
		The proof is similar to \cite[Lemma 3.23]{anschutz2023gaitsgory}. Without loss of generality $\nu$ is dominant. Note that the Wakimoto sheaf $J_{\nu, \chi}$ implicitly depends on the choice of the lift $\nu(\pi_E) \in G(\bE)$ of $\nu$. We will denote $J_{\nu, \chi}$ by $J_{\chi}(\nu(\pi_E))$ in the following. We refer the reader to the preceding discussions for the maps used in the following, notably, Diagram \eqref{Diagram_f} and \eqref{Diagram_phi}.
		\begin{align}
			&\on{CT}_{B^-}^{\chi}(J_{\chi}(\nu({\pi_E})) * \mathcal{F})_{\nu w}\\
			=\;& p_{\nu w, *}q_{\nu w}^!(J_{\chi}(\nu({\pi_E})) * \mathcal{F})\\
			=\;& p_{\nu w, *}q_{\nu w}^!m_*\on{Des}_f(J_{\chi}(\nu({\pi_E})) \tilde{\boxtimes}_{I^0} \mathcal{F}) \\
			=\;& p_{\nu w, *}q_{\nu w}^!m_{\nu, *}\on{Des}_{f_{\nu}}(j_{\nu}^*J_{\chi}(\nu({\pi_E})) \tilde{\boxtimes}_{I^0} \mathcal{F}) \\
			=\;& p_{\nu w, *}q_{\nu w}^!m_{\nu, *}\on{Des}_{\phi_{\nu}}((\underline{\Lambda} \tilde{\boxtimes}_{{L^+\calU}} \mathcal{L}_{\chi}) \tilde{\boxtimes}_{I^0} \mathcal{F})[d_{\nu}] && \text{See Lemma \ref{Lemma_beta_revised}} \\
			=\;& p_{\nu w, *}q_{\nu w}^!m_{\nu, *}(\underline{\Lambda}\tilde{\boxtimes}_{{L^+\calU}}\mathcal{F})[d_{\nu}] && \text{Lemma \ref{Lemma_Descent_revised}}\\
			=\;& p_{\nu w, *}m'_{\nu, *}(q'_{\nu w})^{!}(\underline{\Lambda}\tilde{\boxtimes}_{{L^+\calU}}\mathcal{F})[d_{\nu}] && \text{Lemma \ref{Lemma_Cartesian3}}\\
			=\;& p_{\nu w, *}m'_{\nu, *}(\underline{\Lambda}\tilde{\boxtimes}_{{L^+\calU}}q_{ w}^{!}\mathcal{F})[d_{\nu}] && \text{Lemma \ref{Lemma_Cartesian4}}\\
			=\;& p_{\nu w, *}m'_{\nu, *}(\underline{\Lambda}\tilde{\boxtimes}_{{\calU_{\nu}}}\nu({\pi_E})_*q_{ w}^{!}\mathcal{F})[d_{\nu}] && \text{Lemma \ref{Lemma_isom2}}\\
			=\;& p_{{\calU_{\nu}}, *}p_{U, *}(\underline{\Lambda}\tilde{\boxtimes}_{{\calU_{\nu}}}\nu({\pi_E})_*q_{ w}^{!}\mathcal{F})[d_{\nu}] && \text{Lemma \ref{Lemma_commute_I0}}\\
			=\;&p_{{\calU_{\nu}}, *}p_{U, *}p_U^* \nu({\pi_E})_*q_w^!\mathcal{F}[d_{\nu}] \\
			=\;&p_{{\calU_{\nu}}, *} \nu({\pi_E})_*q_w^!\mathcal{F}[d_{\nu}] && \text{see Lemma \ref{Lemma_prounipotent}}\\
			=\;&p_{\nu w, *} \nu({\pi_E})_*q_w^!\mathcal{F}[d_{\nu}] && \text{$\mathcal{F}$ is left-$I^0$-equivariant}\\
			=\;& \nu({\pi_E})_*p_{w, *}q_w^!\mathcal{F}[d_{\nu}]\\
			=\;& \nu({\pi_E})_*\on{CT}^{\chi}_{B^-}(\mathcal{F})_w[d_{\nu}].
		\end{align}
	\end{proof}
	
	As a corollary of Lemma \ref{Lemma_3.23'}, we deduce the following lemma.
	
	\begin{lemma}[{\cite[Corollary 3.24]{anschutz2023gaitsgory}}]\label{Lemma_3.24'}
		Assume Assumption \ref{Assum_CT}. For any $\mu \in {\bbX_{\bullet}} $ and $\mathcal{F} \in {\on{Wak}}_{\chi}$, we have
		$$\mu({\pi_E})_*\on{Grad}_{\mu}^{\chi}(\mathcal{F})[d(\mu)] \cong \on{CT}_{B^-}^{\chi}(\mathcal{F})_{\mu} \in \scrD_{\on{cons}}((\mathsf{T}, \chi)\backslash\mu({\pi_E}) T/(\mathsf{T}, \chi)),$$
		where $d(\mu)=\langle 2\rho, \mu \rangle+\dim \sfT$.
	\end{lemma}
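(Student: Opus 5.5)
We argue exactly as in \cite[Corollary 3.24]{anschutz2023gaitsgory}: first compute both sides on the generators $J_{\nu,\chi}$ of $\on{Wak}_{\chi}$, then pass to a general $\mathcal{F}$ by dévissage along the Wakimoto filtration.

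\textbf{Generators.} Fix $\nu \in \bbX_\bullet$ and $M \in \scrD_{\on{cons}}([*/\mathsf{T}])$, and write $J_{\nu,\chi}(M) = J_{\nu,\chi} * M$, where $M$ is viewed as a sheaf on $I/I^0 = \mathsf{T}$ placed at $w = e$. Applying Lemma \ref{Lemma_3.23'} to $\mathcal{F} = M$ gives
$$\on{CT}_{B^-}^{\chi}(J_{\nu,\chi}(M))_{\nu w} \cong \nu(\pi_E)_*\on{CT}_{B^-}^{\chi}(M)_w[d_\nu].$$
Since $M$ is supported on $I/I^0$, which is the single $\bbG_m$-fixed component $e\mathsf{T}$, the repelling locus meets its support only there. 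Hence $\on{CT}_{B^-}^{\chi}(M)_w = 0$ for $w \neq e$, and $\on{CT}_{B^-}^{\chi}(M)_e \cong M$; for the latter, $p^-, q^-$ restrict to the identity on $e\mathsf{T}$, up to the pro-unipotent $L^+\calT$ correction handled as in Lemma \ref{Lemma_prounipotent}. Consequently $\on{CT}_{B^-}^{\chi}(J_{\nu,\chi}(M))_\mu$ equals $\nu(\pi_E)_*M[d_\nu]$ if $\mu = \nu$ and vanishes otherwise. This matches the left-hand side, because $\on{Grad}_\mu^{\chi}(J_{\nu,\chi}(M))$ is $M$ for $\mu = \nu$ and $0$ otherwise.

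\textbf{Exactness of the left-hand side.} The functor $\mathcal{F} \mapsto \on{Grad}^{\chi}_\mu(\mathcal{F})$ is exact on $\on{Wak}_{\chi}$, since the truncations $\mathcal{F}_{\le\mu}$ and $\mathcal{F}_{<\mu}$ are right adjoints to inclusions and hence exact. The functor $\on{CT}^{\chi}_{B^-}(-)_\mu$ is exact as well. Here Assumption \ref{Assum_CT} lets us use either description $\tilde p^-_*\tilde q^{-,!} \cong \tilde p^+_!\tilde q^{+,*}$, so it is a composite of exact functors.

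\textbf{Natural comparison map.} To pass from generators to all of $\on{Wak}_{\chi}$ we need a natural transformation between the two functors, not just objectwise isomorphisms. Take a lower set $\Omega$ with $\mu$ maximal in $\Omega$ and put $\Omega' = \Omega \setminus \{\mu\}$. Applying $\on{CT}^{\chi}_{B^-}(-)_\mu$ to the triangle $\mathcal{F}_{\Omega'} \to \mathcal{F}_{\Omega} \to \on{gr}_\mu\mathcal{F}$ gives
$$\on{CT}^{\chi}_{B^-}(\mathcal{F}_{\le\mu})_\mu \xrightarrow{\ \cong\ } \on{CT}^{\chi}_{B^-}(\on{gr}_\mu\mathcal{F})_\mu,$$
because $\on{CT}^{\chi}_{B^-}(\mathcal{F}_{<\mu})_\mu = 0$ by the generator computation and exactness. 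We also need that $\on{CT}^{\chi}_{B^-}(\mathcal{F}')_\mu = 0$ for the cone $\mathcal{F}'$ of $\mathcal{F}_{\le\mu} \to \mathcal{F}$. Since $\mathcal{F}'_{\le\mu} = 0$, the object $\mathcal{F}'$ is an iterated extension of $J_{\nu,\chi}(M)$ with $\nu \not\le \mu$, and each such generator has vanishing $\mu$-component by the first step. Combining these with the identification on generators yields the natural isomorphism
$$\on{CT}^{\chi}_{B^-}(\mathcal{F})_\mu \cong \on{CT}^{\chi}_{B^-}(\on{gr}_\mu\mathcal{F})_\mu \cong \mu(\pi_E)_*\on{Grad}_\mu^{\chi}(\mathcal{F})[d(\mu)].$$

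\textbf{Main obstacle.} The main work is the vanishing $\on{CT}^{\chi}_{B^-}(M)_w = 0$ for $w \ne e$ together with $\on{CT}^{\chi}_{B^-}(M)_e \cong M$ in the twisted $\sfT$-torsor setting. This requires checking that the repelling locus $LU^-\cdot w\mathsf{T}$ meets $I/I^0$ only for $w = e$, and that it does so in $e\mathsf{T}$ itself. For this we use the $!$/$*$ description granted by Assumption \ref{Assum_CT}, which makes the computation a base change to a single $\sfT$-orbit.
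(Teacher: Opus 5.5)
Your proposal is correct and follows essentially the same route as the paper: exactness of both functors reduces to a single generator $J_{\nu,\chi}*M$, Lemma \ref{Lemma_3.23'} moves the computation to the identity stratum $e\mathsf{T}$, and a direct computation there finishes. Your explicit natural comparison through $\mathcal{F}_{<\mu}\to\mathcal{F}_{\le\mu}\to\mathcal{F}$ only spells out the dévissage that the paper's one-line reduction leaves implicit. The step claiming that the cone involves only $J_{\nu,\chi}$ with $\nu\not\le\mu$ is justified by the vanishing $\on{RHom}(J_{\nu',\chi},J_{\nu,\chi})=0$ for $\nu\not\le\nu'$.
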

	
	\begin{proof}
		Since both functors are exact ($\on{CT}_{B^-}^{\chi}$ is both a left and right adjoint by Assumption \ref{Assum_CT}), we are reduced to the case that the length of a Wakimoto filtration is one, i.e., $\mathcal{F}=J_{\nu, \chi}*M$ for some $\nu \in \bbX_{\bullet}$ and $M \in \scrD_{\on{cons}}(*/{\mathsf{T}})$. By Lemma \ref{Lemma_3.23'}, we reduce to the case $\mathcal{F}=J_{0, \chi}*M$ for some $M \in \scrD_{\on{cons}}(*/{\mathsf{T}})$. Then we conclude by a direct computation.
	\end{proof}

	\subsection{Tameness of twisted central sheaves}\label{Subsection_tame}
	In this subsection, we introduce the twisted central sheaves and describe the $\Gal(\bar{\breve{E}}/\breve{E}) \cong I_E$-action on the twisted central sheaves.
	
	\subsubsection{Twisted central sheaves}
	
	Following \cite[Section 4.1]{anschutz2023gaitsgory}, we introduce the spaces that underlie the construction of twisted central sheaves in mixed characteristic.
	
	In Subsection \ref{section-notation}, for $\calH \in \{\calI, \calI^0\}$, we defined the schematic local Hecke stack $$\Hk^{\on{sch}}_{\mathcal{H}}:=\Hk_{\mathcal{H}}=[L^+\mathcal{H}\backslash LG/L^+\calH].$$
	Here $\Hk^{\on{sch}}_{\mathcal{H}}$ is a temporary notation for $\Hk_{\mathcal{H}}$. There is also an analytic local Hecke stack $\Hk_{\mathcal{H}, \Spd \bOE}^{\operatorname{an}}$, which is the $v$-stack $\Hk_{\mathcal{H}, \bOE}$ defined in \cite[Definition 4.1]{anschutz2023gaitsgory}. The space $\Hk_{\mathcal{H}, \Spd \bOE}^{\operatorname{an}}$ is also the space $\Hk^{\loc}_{\calH, \Spd \bOE}$ we introduced in Definition \ref{Def_HklocY}.
	By \cite[Proposition 4.3]{anschutz2023gaitsgory}, there is an equivalence of derived category of sheaves
	\begin{equation}
		c \colon \scrD_{\mathrm{\acute{e}t}}(\Hk_{\calI,k}^{\mathrm{sch}} ) \to \scrD_{\mathrm{\acute{e}t}}(\Hk_{\calI, k}^{\mathrm{an}})
	\end{equation}
	carrying $\scrD_{\mathrm{cons}}(\Hk_{\calI,k}^{\mathrm{sch}} ) $ to $\scrD_{\mathrm{ula}}(\Hk_{\calI, k}^{\mathrm{an}})$. The same is true for $(\sfT \times \sfT, \chi \boxtimes \chi)$-equivariant sheaves on $\Hk_{\calI^0}$. 
	Henceforth we do not distinguish notationally between the  schematic and analytic versions of $\Hk_{\calH}$. Unless stated otherwise, $\Hk_{\calH}$ denotes the analytic version.  Similarly, all the notations $LZ$, $L^+X$, $\Fl_{\calH}$, and $\Gr_{\calH}$ introduced before should be understood as the schematic version and should have been decorated with the superscript $^{\on{sch}}$. There are also analytic version of these spaces, which should be decorated with the superscript $^{\on{an}}$. From now on, unless otherwise stated, we will only consider the analytic version, and we will simply omit the superscript.
	
	We also need the multi-leg analytic local Hecke stack $\Hk_{\mathcal{H}, S}$ for $S$ over a product of $\Spd \bOE$, as defined in \cite[Definition 4.10]{anschutz2023gaitsgory} and Definition \ref{Def_HklocY}.
	
	We choose a complete algebraically closed extension $C$ of $\bE$ with residue field $k=\overline{k}$, and let $\overline{\eta}=\Spd(C), \overline{s}=\Spd(\overline{k}), S_0=\Spd(O_C)$.
	
	To define twisted central sheaves, we consider $${\Hk_{\mathcal{I}^0, S_0 \times \overline{s}}},$$ the two-leg Hecke stack for $\calI^0$ with one leg fixed at $\overline{s}$. We have the following nearby-cycles diagram:
	\begin{equation}\label{Diagram_nearby}
		\begin{tikzcd}
			{\Hk_{\mathcal{I}^0, \overline{s} \times \overline{s}}} & {\Hk_{\mathcal{I}^0, S_0 \times \overline{s}}} & {\Hk_{\mathcal{I}^0, \overline{\eta} \times \overline{s}}} \\
			{\overline{s}} & {S_0} & {\overline{\eta}}
			\arrow["i", from=1-1, to=1-2]
			\arrow[from=1-1, to=2-1]
			\arrow["f", from=1-2, to=2-2]
			\arrow["j"', from=1-3, to=1-2]
			\arrow[from=1-3, to=2-3]
			\arrow[from=2-1, to=2-2]
			\arrow[from=2-3, to=2-2]
		\end{tikzcd}
	\end{equation}
	Note that 
	$$\Hk_{\calI^0, \overline{\eta} \times \overline{s}} \cong \Hk_{G, \overline{\eta}} \times \Hk_{\calI^0, \overline{s}}$$
	because the legs are disjoint.
	Recall that for any $\chi \in \on{CH}(\mathsf{T})$, we have the Kummer sheaf $\calL_{\chi} \in \scrD_{\on{cons}}(\mathsf{T})=\scrD_{\on{cons}}(I/I^0)$. We can view $\calL_{\chi}$ as a sheaf on $LG/I^0$ by pushforward along the closed embedding $I/I^0 \subseteq LG/I^0$.  We can further view $\calL_{\chi}$ as a sheaf on $\Hk_{\calI^0, \overline{s}}$ by descent. Following \cite[Definition 4.4]{anschutz2023gaitsgory} and \cite[Section 4.7.3]{eteve2023monodromiques}, we make the following definition. 
	
	\begin{definition}\label{Def_twisted_central}
		\begin{enumerate}
			\item We define the \textbf{$\chi$-twisted central functor} as the composition
			\begin{equation}
				\scrZ_{\chi}: \Rep(\hat{G}) {\xrightarrow{\Sat}}\scrP_{\mathrm{ula}}(\Hk_{G,C}) {\xrightarrow{\Psi_f(-\boxtimes\calL_{\chi}[\dim \sfT])}}
				\scrD_{\chi, \chi}
			\end{equation}
			\begin{equation*}
				V \mapsto \Sat(V) \mapsto {\Psi_f(\Sat(V) \boxtimes\calL_{\chi}[\dim \sfT])}.
			\end{equation*}
			Here, the first arrow comes from the geometric Satake equivalence of \cite[Chapter VI]{fargues2021geometrization}. The second arrow is nearby cycle for $f$ as in Diagram (\ref{Diagram_nearby}). We have implicitly used the fact that nearby cycles respect the ULA property by \cite[Corollary 6.14]{aglr2022}. 
			
			\item More generally, for every finite set $J$, we can define the \textbf{$\chi$-twisted central functor} as the composition
			$$\scrZ_{\chi}: \Rep(\widehat{G}^J) \xrightarrow{\on{Sat}} \scrP_{\on{ula}}(\Hk_{G, (\Spd C)^J}) \xrightarrow{\Psi(- \boxtimes \calL_{\chi}[\dim \sfT])} \scrD_{\chi, \chi},$$
			where $\Psi$ is the iterated nearby cycle with respect to the map 
			\begin{equation}
				f: \Hk_{\calI^0, S_0^J \times \overline{s}} \to S_0^J.
			\end{equation}

			\item We call $$\scrZ_{\chi}(V)={\Psi_f(\Sat(V) \boxtimes\calL_{\chi}[\dim \sfT])}$$ the \textbf{$\chi$-twisted central sheaf} associated to $V \in \Rep(\hat{G}^J)$.
		\end{enumerate}
	\end{definition}
	
	Note that the maps $i$ and $j$ in Diagram (\ref{Diagram_nearby}) are $(\mathsf{T} \times \mathsf{T})$-equivariant, hence we have $\scrZ_{\chi}(V) \in \scrD_{\chi, \chi}$.

	\subsubsection{Twisted Wakimoto filtration on twisted central sheaves}
	
	In this subsection, we apply Lemma \ref{Lemma_3.24'} to twisted central sheaves.
	
	Recall that $G=\calG_{\bE}$ and $T=\calT_{\bE}$. We denote the pro-$p$ Iwahori of $\calT$ by $\calT_1:=\calI^0_T$. It is obtained from $\calT$ over $\bOE$ by dilating the trivial subgroup on the special fiber. $\calT_1$ is the unique integral model for $T$ whose $\bOE$-points are $$\calT_1(\bOE)=\ker(\calT(\bOE) \to \calT(k)).$$
	Let $f_T$ be the map
	\begin{equation}
		f_T: \Hk_{\calT_1, S_0^J \times \overline{s}} \to S_0^J.
	\end{equation}

	The next theorem says that a twisted central sheaf admits a filtration with explicit graded pieces.
	
	\begin{theorem}[{\cite[Theorem 4.17]{anschutz2023gaitsgory}}]\label{Theorem_central}
		Assume Assumption \ref{Assum_CT}. For every finite set $J$ and $V \in \Rep(\hat{G}^J)$, the complex $$\scrZ_{\chi}(V)=\Psi_f(\Sat(V)\boxtimes \mathcal{L}_{\chi}[\dim T])$$ is $\chi$-Wakimoto filtered, and its associated graded is isomorphic to  $\Psi_{f_T}(\Sat(V|_{\widehat{T}})\boxtimes \mathcal{L}_{\chi}[\dim T])$.
	\end{theorem}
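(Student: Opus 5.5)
The plan follows the proof of \cite[Theorem 4.17]{anschutz2023gaitsgory}, replacing untwisted sheaves by $(\sfT\times\sfT,\calL_\chi\boxtimes\calL_\chi)$-equivariant ones. There are three steps: show that $\scrZ_{\chi}(V)$ is central, apply Lemma \ref{Lemma_3.15'} (2) to get the Wakimoto filtration, and identify the graded pieces through Lemma \ref{Lemma_3.24'} and the commutation of $\on{CT}^{\chi}_{B^-}$ with nearby cycles.

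\textbf{Centrality.} We show that $J_{\nu,\chi}*\scrZ_{\chi}(V)\cong \scrZ_{\chi}(V)*J_{\nu,\chi}$ for all $\nu\in\bbX_\bullet$.
Since every $J_{\nu,\chi}$ is a convolution of $\nabla$'s and $\Delta$'s, it suffices to prove that $\scrZ_\chi(V)$ commutes with any $\calF\in\scrD_{\chi,\chi}$. This is the standard Gaitsgory argument \cite[Section 5.2]{gaitsgory2001central}, as in the proof of Lemma \ref{Lemma_B2}.
Consider the convolution Hecke stacks over $S_0^J\times\overline{s}$ in which the $\overline{s}$-leg is placed either before or after the moving legs. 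Over $\overline{\eta}^J$ both become $\Hk_{G,\overline{\eta}^J}\times\Hk_{\calI^0,\overline{s}}$, where the two orders agree and $\Sat(V)\boxtimes\calF$ is defined. The convolution maps are proper, so nearby cycles commute with $m_!$.
Taking nearby cycles of $\Sat(V)\boxtimes\calF$ along the two families gives $\scrZ_\chi(V)*\calF$ and $\calF*\scrZ_\chi(V)$ respectively. Here one uses that nearby cycles of an exterior product with a sheaf on the special leg is convolution, which relies on $\Sat(V)$ being ULA \cite[Corollary 6.14]{aglr2022}.
The twisted equivariance is preserved throughout because the maps $i,j$ are $\sfT\times\sfT$-equivariant. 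Lemma \ref{Lemma_3.15'} (2) then shows that $\scrZ_\chi(V)\in\on{Wak}_\chi$.

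\textbf{Graded pieces.} By Lemma \ref{Lemma_3.24'}, $\on{Grad}^\chi_\mu(\scrZ_\chi(V))$ is, up to the shift $[d(\mu)]$ and the translation $\mu(\pi_E)_*$, the stalk $\on{CT}^{\chi}_{B^-}(\scrZ_\chi(V))_\mu$. So it suffices to compute $\on{CT}^\chi_{B^-}\Psi_f(\Sat V\boxtimes\calL_\chi[\dim\sfT])$.
We use the integral multi-leg constant term functor over $S_0^J\times\overline{s}$, built using Lemma \ref{Lemma_vsurj}. Under Assumption \ref{Assum_CT} it is both $\tilde p^-_*\tilde q^{-,!}$ and $\tilde p^+_!\tilde q^{+,*}$. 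Hyperbolic localization gives ind-properness of the relevant maps after bounding by $\mu$, so this functor commutes with $\Psi$, in the same way as in \cite[Section 6]{aglr2022}.
On the generic fibre, the fixed-point and attractor loci split as products $\Gr_{G,\overline{\eta}^J}$-part times $\overline{s}$-part. The $\overline{\eta}$-part of the constant term is the usual $\on{CT}_{B^-}\Sat(V)\cong\Sat_T(V|_{\widehat T})$ from geometric Satake, with the normalising shift absorbed. On the $\overline{s}$-part, the sheaf $\calL_\chi$ supported on $I/I^0$ is unchanged.
The integral fixed-point locus is identified with $\Hk_{\calT_1,S_0^J\times\overline{s}}$, and the nearby cycles there are those of $f_T$. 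This yields $\Psi_{f_T}(\Sat(V|_{\widehat T})\boxtimes\calL_\chi[\dim T])$. One still has to check that the shifts $d(\mu)$ match the $\rho$-shift in the Satake constant term.

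\textbf{Main obstacle.} The hardest step is identifying the fixed-point locus $(\Gr_{\calI^0,S_0^J\times\overline{s}})^0$ with the $\calT_1$-Hecke stack integrally, compatibly with the $\sfT$-twisting. The special fibre of this locus is only a $\sfT$-torsor over the $\bbG_m$-fixed locus, not a fixed locus itself.
My first attempt would be to argue componentwise, as in the proof of Lemma \ref{Lemma_vsurj}. For translation components, use \cite[Theorem 5.2]{aglr2022} for $\calP=\calU\rtimes\calT$ inside $\calI$. Then pull back along $\pi$, checking that the torsor $\pi^0$ corresponds to passing from $\calT$ to its dilation $\calT_1$. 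Non-translation components carry zero contribution, since by Lemma \ref{Lemma_3.24'} only the $\mu\in\bbX_\bullet$ stalks appear.
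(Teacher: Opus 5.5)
Your proposal is correct and follows the paper's proof: centrality of $\scrZ_{\chi}(V)$ by Gaitsgory's argument fed into Lemma \ref{Lemma_3.15'}(2), then Lemma \ref{Lemma_3.24'}, then commutation of $\on{CT}^{\chi}_{B^-}$ with nearby cycles under Assumption \ref{Assum_CT}, then Lemma \ref{Lemma_CT_Sat} on the generic fibre. Two small points: the commutation with $\Psi$ comes from having both descriptions $\tilde p^+_!\tilde q^{+,*}\cong\tilde p^-_*\tilde q^{-,!}$, one commuting with restriction to the special fibre and the other with $j_*$, not from ind-properness; and your identification of the translation components of the integral fixed-point locus with $\Gr_{\calT_1,S_0^J\times\overline{s}}$ fills in a point the paper leaves implicit when it writes $\Psi_{f_T}$.
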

	
	\begin{proof}
		By an argument similar to \cite[Theorem 1 (b)]{gaitsgory2001central}, we have 
		$$\scrZ_{\chi}(V) * J_{\nu, \chi} \cong J_{\nu, \chi} * \scrZ_{\chi}(V)$$ for any $\nu \in \bbX_{\bullet}$. Then by Lemma \ref{Lemma_3.15'}, $\scrZ_{\chi}(V)$ is $\chi$-Wakimoto filtered.
		
		To compute its associated graded, we need to compute $\on{CT}_{B^-}^{\chi}(\scrZ_{\chi}(V))$ by Lemma \ref{Lemma_3.24'}.

		By Assumption \ref{Assum_CT}, the functor $\CT_{B^-}^{\chi}$ commutes with nearby cycles: 
		$$\on{CT}_{B^-}^{\chi}(\Psi_f(\Sat(V)\boxtimes \mathcal{L}_{\chi}[\dim T])) \cong \Psi_{f_T}(\on{CT}_{B^-, \overline{\eta}}^{\chi}(\Sat(V)\boxtimes \mathcal{L}_{\chi}[\dim T])),$$
		where on the right hand side,  $\on{CT}_{B^-, \overline{\eta}}^{\chi}$ is the twisted constant term functor for the geometric generic fiber $\Hk_{\mathcal{I}^0, \overline{\eta}^J \times \overline{s}}$. Applying Geometric Satake on the generic fiber (see Lemma \ref{Lemma_CT_Sat}), we see that 
		$$\Psi_{f_T}(\on{CT}_{B^-, \overline{\eta}}^{\chi}(\Sat(V)\boxtimes \mathcal{L}_{\chi}[\dim T])) \cong \Psi_{f_T}(\Sat(V|_{\widehat{T}})\boxtimes \mathcal{L}_{\chi}[\dim T]).$$
	\end{proof}

	\begin{lemma}\label{Lemma_CT_Sat}
		We have
		$$\on{CT}_{B^-, \overline{\eta}}^{\chi}(\Sat(V)\boxtimes \mathcal{L}_{\chi}) \cong \Sat(V|_{\widehat{T}})\boxtimes \mathcal{L}_{\chi}.$$
	\end{lemma}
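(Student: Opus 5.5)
The plan is to reduce the twisted statement to the untwisted geometric Satake compatibility of constant terms on the generic fiber. The first observation is that over the geometric generic point the legs are disjoint. So there is a product decomposition
$$\Hk_{\calI^0, \overline{\eta}^J \times \overline{s}} \cong \Hk_{G, \overline{\eta}^J} \times \Hk_{\calI^0, \overline{s}},$$
and the $\bbG_m$-action through the regular cocharacter is compatible with it. Hence the attractor, repeller and fixed loci (and the twisted versions defined by pullback along $\pi$) decompose as products as well:
\begin{itemize}
\item on the first factor, the usual loci $\Gr^{+}_{G}, \Gr^0_G = \Gr_T$ of the Beilinson--Drinfeld Grassmannian over $\overline{\eta}^J$;
\item on the second factor, the loci $\Gr^{\pm}_{\calI^0,\overline{s}}, \Gr^0_{\calI^0,\overline{s}} = \coprod_{w} w\sfT$ from Subsection \ref{Subsection_Constant_terms}.
\end{itemize}
I would check this factorization directly on the moduli description, using that $\tilde{p}^+$ was built from $LU \times \tilde X^0 \to \tilde X^0$ (Lemma \ref{Lemma_vsurj}), which visibly respects the product.

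Given the factorization, $\tilde q^{+,*}$ and $\tilde p^+_!$ are exterior products of the corresponding maps on each factor. By the Künneth formula for $!$-pushforward and $*$-pullback,
$$\on{CT}^{\chi}_{B^-,\overline{\eta}}(\Sat(V)\boxtimes \calL_{\chi}) \cong \on{CT}_{B^-}(\Sat(V)) \boxtimes \on{CT}^{\chi}_{B^-,\overline{s}}(\calL_{\chi}).$$
Here the first factor is the ordinary constant term on $\Hk_{G,\overline{\eta}^J}$. By Assumption \ref{Assum_CT} I can use either the $(p^-_*,q^{-,!})$ or the $(p^+_!,q^{+,*})$ description, whichever is convenient.

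For the first factor, I invoke the compatibility of geometric Satake with constant terms from \cite[Chapter VI]{fargues2021geometrization}. That result gives $\on{CT}_{B^-}\circ \Sat \cong \Sat_T\circ \on{res}^{\widehat G}_{\widehat T}$, where the shift is by $\langle 2\rho,\nu\rangle$ on the $\nu$-component; it is absorbed in the normalization of $\on{CT}$ used there, or carried along. This holds also in the multi-leg Beilinson--Drinfeld setting over $\overline{\eta}^J$. For the second factor, $\calL_{\chi}$ is supported on $I/I^0 = \sfT$, i.e. the component $w=e$. That component lies in the fixed locus, and there both the attractor fiber and the repeller fiber are trivial: the $LU$- (resp. $LU^-$-) orbit of $e\sfT$ meets the support $I/I^0$ only in $e\sfT$. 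I would verify this with the Iwahori decomposition $I = (I\cap LU)\,\sfT\,(I\cap LU^-)$ modulo $I^0$. Hence $\on{CT}^{\chi}_{B^-,\overline{s}}(\calL_{\chi}) \cong \calL_{\chi}$ on $e\sfT$.

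The main obstacle I expect is the first step: justifying that the twisted hyperbolic localization data (the maps $\tilde p^{\pm}$ and the $(\sfT,\calL_\chi)$-equivariance) really split as a product over $\overline{\eta}^J\times\overline{s}$. A second concern is that the Künneth isomorphism is compatible with the equivariance structures. If the direct product argument is awkward, my fallback is to restrict to the open locus where it is manifest and use that both sides are ULA over $\overline{\eta}^J$.
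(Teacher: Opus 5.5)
Your proposal is correct and follows essentially the same route as the paper. It splits the generic-fiber Hecke stack as $\Hk_{G,\overline{\eta}^J}\times\Hk_{\calI^0,\overline{s}}$, factors $\on{CT}^{\chi}_{B^-,\overline{\eta}}$ as a box product, applies the Fargues--Scholze compatibility of constant terms with geometric Satake on the first factor, and checks $\on{CT}^{\chi}_{B^-}(\calL_\chi)\cong\calL_\chi$ on the second. For that last step you do not need the Iwahori decomposition: $I/I^0$ is literally the component $e\sfT$ of $\Gr^0_{\calI^0}$, and the paper simply notes that $I/I^0\xrightarrow{\iota}LU^-I/I^0\xrightarrow{p^-}I/I^0$ is the identity.
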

	
	\begin{proof}
		The functor $\on{CT}_{B^-, \overline{\eta}}^{\chi}$ in the statement is the constant term functor for the generic fiber $\Hk_{\calI^0, \overline{\eta}^J \times \overline{s}} \cong \Hk_{G, \overline{\eta}^J} \times \Hk_{\calI^0, \overline{s}} \to \overline{\eta}^J$. So we have 
		$$\on{CT}_{B^-, \overline{\eta}}^{\chi}(\Sat(V) \boxtimes \calL_{\chi}) \cong \on{CT}_{B^-, \overline{\eta}}(\Sat V) \boxtimes \on{CT}_{B^-}^{\chi}(\calL_{\chi}).$$
		Note that $\on{CT}_{B^-, \overline{\eta}}(\Sat V)=\Sat(V|_{\widehat{T}})$ by geometric Satake (\cite[Section VI.11]{fargues2021geometrization}). 
		
		The lemma then follows from the fact that
		$$\on{CT}_{B^-}^{\chi}(\calL_{\chi}) \cong \calL_{\chi}.$$
		Indeed, here $\on{CT}_{B^-}^{\chi}$ is the twisted constant term functor (for the opposite Borel) as defined in Subsection \ref{Subsection_Constant_terms}. We have
		$$\on{CT}_{B^-}^{\chi}(\calL_{\chi}) \cong p^+_{B^-, !}q^{+, *}_{B^-}\calL_{\chi} \cong p^-_!\calL_{\chi} = p^-_!\iota_!\calL_{\chi} \cong \calL_{\chi}.$$
		Here for the last two isomorphisms, we recall that $\calL_{\chi}$ is the abbreviation for $\iota_!\calL_{\chi}$, where $\iota: I/I^0 \to LG/I^0$, and the composition $I/I^0 \xrightarrow{\iota} LU^-I/I^0 \xrightarrow{p^-} I/I^0$ is the identity.
	\end{proof}

	\begin{remark}
		As a corollary, we see that assuming Assumption \ref{Assum_CT}, the twisted central sheaf $\scrZ_{\chi}(V)$ is perverse because it admits a filtration whose graded pieces are perverse sheaves. 
	\end{remark}

	\subsubsection{Tameness and monodromy: Statement}
	
	Note that the complex $\Sat(V) \boxtimes\calL_{\chi}[\dim T]$ is defined over $\Hk_{\calI^0, \eta^J \times \overline{s}}$, where $\eta=\Spd(\breve{E})$. So the iterated nearby cycle ${\Psi_f(\Sat(V) \boxtimes\calL_{\chi}[\dim T])}$ carries a $\Gal(\overline{\eta}/\eta)^J$-action.
	
	Recall that we have canonical isomorphisms
	\begin{equation}
		I_E/P_E \cong \widehat{\bbZ}^{(p)}(1) \cong I_{k_E((t))}/P_{k_E((t))}.
	\end{equation}
	And we have a map
	\begin{equation}
		a: [*/I_{k_E((t))}] \cong \on{Spec} \overline{k}_E((t)) \to \bbG_{m, \overline{k}_E}=\on{Spec} \overline{k}_E[u, u^{-1}]
	\end{equation}
	sending $u$ to $t$.
	So for every cocharacter $\nu: \bbG_m \to \sfT$, we can view $\calL_{\chi}$ as a character of $(I_E/P_E)$ via pullback along $\nu$ and $a$.

	\begin{theorem}\label{Thm_tame}
		Assume Assumption \ref{Assum_CT}. For every finite set $J$, $V \in \Rep(\widehat{G}^J)$ and $\chi \in \on{CH}_{\Lambda}(T)$, we have
		\begin{enumerate}
			\item The $\Gal(\overline{\eta}/\eta)^J$-action on $\scrZ_{\chi}(V)$ is tame, i.e., trivial on $P_E^J$. In particular, we have an induced action of $(I_E/P_E)^J$ on $\scrZ_{\chi}(V)$.
			\item The $I_E^J=\Gal(\overline{\eta}/\eta)^J$-action on $\scrZ_{\chi}(V)$ is given by $\chi$. More precisely, for every $\mu \in \bbX_{\bullet}(\sfT)$, the graded piece $\on{gr}_{\mu}^{\chi}\scrZ_{\chi}(V)$ decomposes into a direct sum
			\begin{equation}
				\on{gr}_{\mu}^{\chi}\scrZ_{\chi}(V) \cong \bigoplus_{\nu}A_{\nu}, 
			\end{equation} 
			where $\nu=(\nu_j)_{j \in J}$ runs over elements in $\bbX_{\bullet}(\sfT)^J$ such that $\sum_{j \in J}\nu_j=\mu$. And the $(I_E/P_E)^J$-action on $A_{\nu}$ is given by $\boxtimes_{j \in J}a^*\nu_j^*\calL_{\chi}$. 
		\end{enumerate}
	\end{theorem}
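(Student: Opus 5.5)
The plan is to combine the Wakimoto filtration of Theorem \ref{Theorem_central} with an explicit torus computation. The filtration on $\scrZ_{\chi}(V)$ is indexed by $\mu \in \bbX_\bullet$ via the Bruhat order on the strata $\pi^{-1}(\Gr_{\calI,\leq t_\mu})$. It is built from the truncation functors $\calF \mapsto \calF_{\Omega}$ for lower sets $\Omega$, which are characterized intrinsically by supports. The $I_E^J$-action on $\scrZ_{\chi}(V)$ comes from the $\Gal(\overline{\eta}/\eta)^J$-equivariance of the nearby cycle, and it acts by automorphisms in $\scrD_{\chi,\chi}$. Since the truncations are functorial, and the graded pieces $\on{gr}_\mu$ are functorial as well, $I_E^J$ acts on each $\on{gr}^{\chi}_{\mu}\scrZ_{\chi}(V)$ and hence on $\on{Grad}^{\chi}_{\mu}$.

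By Lemma \ref{Lemma_3.24'} together with Assumption \ref{Assum_CT}, this graded action is identified, up to shift and the translation $\mu(\pi_E)_*$, with the action on $\on{CT}^{\chi}_{B^-}(\scrZ_{\chi}(V))_{\mu}$. Because $\on{CT}^{\chi}_{B^-}$ commutes with nearby cycles compatibly with the Galois actions, Lemma \ref{Lemma_CT_Sat} identifies this with
$$\Psi_{f_T}\bigl(\Sat(V|_{\widehat{T}})\boxtimes \calL_{\chi}[\dim T]\bigr)$$
restricted to the component indexed by $\mu$. Decomposing $V|_{\widehat{T}} = \bigoplus_{\nu \in \bbX_\bullet^J} V_\nu$ into weight spaces gives $\Sat(V|_{\widehat{T}})=\bigoplus_\nu V_\nu\otimes \delta_{\nu}$, where $\delta_\nu$ is the skyscraper at the point $\nu(\xi_j)$ of $\Gr_{T}$. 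The component over $\mu$ picks out exactly those $\nu$ with $\sum_j \nu_j=\mu$, which gives the decomposition $\bigoplus_\nu A_\nu$ with $A_\nu = V_\nu\otimes\Psi_{f_T}(\delta_\nu\boxtimes\calL_\chi)$.

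Both statements of the theorem therefore reduce to the torus case. For a single $\nu$, we must compute the monodromy of $\Psi_{f_T}(\delta_\nu\boxtimes\calL_{\chi})$ on $\Hk_{\calT_1, S_0^J\times\overline{s}}$. The generic fiber over $\eta^J\times\overline{s}$ is a point (the modification $\nu_j$ at $\xi_j$) times $\sfT = \calT(k)$, which records the level. The key observation is that the family is obtained via the map $(\xi_j)_j \mapsto \prod_j \nu_j(\xi_j)\cdot$ (level class), which specializes the product of the $\nu_j(\xi_j)$ into $L\calT/L^+\calT_1$. Modulo $L^+\calT_1$, the reduction of $\nu_j(\xi_j/\pi_E)$ lands in $\sfT$, so the sheaf $\delta_\nu\boxtimes\calL_\chi$ is a pullback of $\calL_\chi$ along a map from the generic fiber to $\sfT$ involving $\nu_j$ applied to the ``leading coefficient'' of $\xi_j$.

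Since $\calL_\chi$ is a Kummer (tame, character) sheaf, the character-sheaf property $m^*\calL_\chi\cong\calL_\chi\boxtimes\calL_\chi$ splits the result as $\boxtimes_j \nu_j^*\calL_\chi$ pulled back along the leading-coefficient maps. Taking nearby cycles of a Kummer local system pulled back from $\bbG_m$ through the uniformizer yields the tame character $a^*\nu_j^*\calL_\chi$. In particular it is trivial on $P_E$, which proves (1) (tameness passes through extensions of the finitely many graded pieces, since $\scrZ_\chi(V)$ is constructible with finite filtration), and identifies the action in (2).

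The main obstacle is making this torus computation rigorous in mixed characteristic, since there is no honest ``leading coefficient'' map on $\Spd \bOE$. I would first do the equal-characteristic case, where $\Hk_{\calT_1}$ is explicit and the map is $t\mapsto\nu(t)$. I would then do the multi-leg case by reducing to one leg via the character-sheaf property and Künneth for iterated nearby cycles. For mixed characteristic, I would use the identification $\Gr_{\calT_1}\cong \Gr_{\calT}\times\sfT$ up to pro-unipotent factors, and compare $\xi/\pi_E$ in $W_{O}(R)^\times/(1+\pi_E W_O(R))\cong R^\times$ via the Teichmüller map. This should reduce to the statement that the tame character of $I_E$ on $\Psi$ of the Kummer sheaf of $\pi_E^{1/n}$ agrees with the one on $\overline{k}_E((t))$ under the canonical isomorphism $I_E/P_E\cong\widehat{\bbZ}^{(p)}(1)$. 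A secondary check is that the Galois action is genuinely compatible with the Wakimoto truncations.
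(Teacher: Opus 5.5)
Your reduction to the torus case is the same as the paper's. The $\Gal(\overline{\eta}/\eta)^J$-action respects the Wakimoto truncations, the graded pieces are read off from $\on{CT}^{\chi}_{B^-}$, which commutes with $\Psi$ by Assumption \ref{Assum_CT}, and Lemma \ref{Lemma_CT_Sat} leaves $\Psi_{f_T}(\Sat V_{\nu}\boxtimes\calL_{\chi})$. In equal characteristic, your family ``$(\xi_j)_j\mapsto\prod_j\nu_j(\xi_j)\cdot(\text{level})$'' is exactly the paper's closed immersion $\tilde z_{\nu}\colon X^J\times\sfT^{\Diamond}\to\Gr_{\calT_1,X^J\times x}$, built from the canonical generators $a_j-t$ of the ideals of the legs. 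From there, proper base change, the character-sheaf property and K\"unneth for the constant family finish the computation. For the passage from graded pieces to $\scrZ_{\chi}(V)$ in (1), you should state explicitly that a pro-$p$ group has no nontrivial continuous image in a unipotent group over $\overline{\bbF}_{\ell}$ with $\ell\neq p$.

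The genuine gap is the mixed-characteristic case, which is the main case here. Your proposed fix, comparing $\xi/\pi_E$ in $W_O(R)^\times/(1+\pi_E W_O(R))\cong R^\times$ via Teichm\"uller, still needs a generator $\xi$ of $\ker(\theta_f\colon W_O(R^+)\to R^{\#+})$ chosen functorially in $S\to\Spd\bOE$. No such choice exists. The generator is only determined up to $W_O(R^+)^\times$, so your ``leading coefficient'' is only determined up to $(R^+)^\times$ and does not glue to a morphism of v-sheaves $\eta\to\sfT^{\Diamond}$. As the paper notes, $f_T$ has no section over $\Spd\bOE$ with the required properties.

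The missing ideas are these:
\begin{enumerate}
\item Base change to $\Spd\breve O_{\infty}$ for a perfectoid extension $\bE_{\infty}$. There a fixed generator $\xi$ of $\ker\theta_{\breve O_{\infty}}$ gives the functorial generator $\xi_f=W(u^{\flat})(\xi)$, hence the section with $\psi_x=\xi_f^{-1}$, and the equal-characteristic computation goes through.
\item This only controls $\Gal(\overline{\bE}/\bE_{\infty})$, so you need a descent step. After a tame base change making $[E:\bbQ_p]\geq 2$, local class field theory gives two totally ramified $\bbZ_p$-extensions $\bE_1,\bE_2$ whose absolute Galois groups generate $I_E$. Hence their wild inertia groups generate $P_E$ (Lemma \ref{Lemma_tame_descent}), and their completions are perfectoid (Lemma \ref{Lemma_perfectoid}).
\item The absolute Galois group of such a $\bbZ_p$-extension surjects onto $I_E/P_E$, so the character computed over it determines the $(I_E/P_E)^J$-action.
\end{enumerate}
Without these steps, neither (1) nor (2) is established for $p$-adic $E$.
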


	The key computations needed to prove \ref{Thm_tame} will appear in subsections  \ref{Subsection_compute_equal}, \ref{Subsection_compute_multi}, and \ref{Subsection_compute_mixed}. Let us explain the rest of the proof assuming these computations. 
	
	\begin{proof}
		First notice that (1) follows from (2). Indeed, since there are no nontrivial continuous homomorphisms from the wild inertia (it is pro-$p$) to a unipotent $\overline{\bbF}_{\ell}$-group, it suffices to show that the $\Gal(\overline{\eta}/\eta)^J$-action on the graded pieces are tame. But this follows from (2) because $\calL_{\chi}$ is tame.
		
		Note that 
		$$\on{gr}^{\chi}_{\mu}\scrZ_{\chi}(V) \cong \Psi_{f_T}(\Sat V_{\mu} \boxtimes \calL_{\chi}[\dim T])^{\oplus \dim V[\mu]},$$
		where $V_{\mu}$ is the irreducible representation of $\widehat{T}$ corresponding to $\mu$, and $V[\mu]$ is the $\mu$-weight space of $V$.
		The statement (2) follows from an explicit computation. 
		We refer the reader to Subsection  \ref{Subsection_compute_equal}, \ref{Subsection_compute_multi}, and \ref{Subsection_compute_mixed} for more details on the computation. We construct an explicit section $z_{\nu}$ to the map $f_T$. 
		We then use nearby cycle commutes with proper pushforward to compute the monodromy. 
		This reduces the calculation to nearby cycles for a constant family. 
	\end{proof}

	The above theorem has the following consequences.

	\begin{theorem}[Theorem \ref{Theorem_action}]\label{Thm_act2}
		Assume Assumptions \ref{Assum_sm} and \ref{Assum_CT}.
		\begin{enumerate}
			\item For every finite set $I$, $V \in \Rep(\widehat{G}^I)$, $b \in B(G)$, and a polysimplex $\sigma$ in the Bruhat-Tits building of $G$ over $E$, the $(I_E)^I$-module $(\HV)^{V_{\sigma}(k_E)}$ is tame, i.e., the $(I_E)^I$-action is trivial on the wild inertia $(P_E)^I$.
			\item For every $s \in (\widehat{T}_{M_{\sigma}}\sslash W_{M_{\sigma}})^{\widehat{F}}(\Lambda)$, the $\calO(Z^1(W_E, \widehat{G})/\widehat{G})$-module 
			$$e_s(\HV)^{V_{\sigma}(k_E)}$$ 
			is supported on $\on{ev}_{\tau_E}^{-1}(s)$. 
		\end{enumerate}
	\end{theorem}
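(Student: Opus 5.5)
The plan is to follow the reduction chain of Subsection \ref{Subsection_strategy}. First, using joint conservativity of the Deligne--Lusztig restrictions $\{{^*}R_w\}_{w \in W_{M_\sigma}}$, both statements reduce to statements about ${^*}R_w(\cHV)^{V_\sigma(k_E)}$, one $w$ at a time. For (1) this is immediate, since tameness is detected after applying the restriction functors. For (2), recall that $e_s$ cuts out exactly the Lusztig series $s$. Hence $e_s$ of the cohomology is detected by the summands ${^*}R_w(-)\otimes_{\Lambda[T^{wF}]}(\text{block of }\chi)$ with $\xi(\chi)=s$. By Lemma \ref{Lemma_4512}, ${^*}R_w(\cHV)^{V_\sigma(k_E)}\cong p_!\epsilon^*(\Sat V\boxtimes\nabla_w(\Reg_{T^{wF}}))$. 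Decomposing $\Reg_{T^{wF}}$ along characters, or rather along generalized eigen-blocks since $\ell$ may divide $|T^{wF}|$, reduces us to the sheaf $\nabla_w(\chi)$ attached to a single $\chi$. Pulled back to the torus, this becomes $\calL_\chi$-equivariant for the corresponding $\chi\in\CHT$.

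Next, use the embedding $\emb$ of Lemma \ref{Lemma_Sht_Cart} to extend over $(\Spd\bOE)^I$. Applying Theorem \ref{Thm_pPsi}, the monodromy of the fiber at $\overline\eta$ is computed from $p_!\Psi_I\emb_!\epsilon^*(\cdots)$. Proper base change together with Assumption \ref{Assum_sm} converts this into $p_!\epsilon_s^*\Psi_I(\Sat V\boxtimes\iota_!\nabla_w(\chi))$. The inner term is a convolution of the twisted central sheaf $\scrZ_\chi(V)$ with $\iota_!\nabla_w(\chi)$, which I would justify by the centrality and monoidality argument used in Lemma \ref{Lemma_B2}, following \cite{gaitsgory2001central}. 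The key point is that the $(I_E)^I$-action lives entirely on the factor $\scrZ_\chi(V)$. Theorem \ref{Thm_tame} then gives a $\tilde W$-indexed Wakimoto filtration, stable under inertia, whose graded pieces carry the tame action $\boxtimes_i a^*\nu_i^*\calL_\chi$. Since $P_E^I$ is pro-$p$ and $\ell\neq p$, filtrations with tame graded pieces give tame extensions, which proves (1).

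For (2), the excursion operators act on $\cHV$ through Lemma \ref{Lemma_Sht_Exc}. The plan is to use the standard compatibility that the excursion operator attached to the function $\on{tr}(\varphi(\tau_E)\mid W)$, for $W\in\Rep(\widehat G)$, is computed by the action of $\tau_E$ on the cohomology with legs $\{1,2\}$ and $W\boxtimes W^*$ restricted to the diagonal. By the previous step, $\tau_E$ acts on the corresponding $\on{gr}_\mu$ pieces through the characters $a^*\nu^*\calL_\chi(\tau_E)=\chi(\nu(\tau_E))$. Summing over the weights $\nu$ of $W$ identifies the resulting endomorphism, up to nilpotents, with $\on{tr}(\hat\chi\mid W)$, where $\hat\chi\in\widehat T(\Lambda)$ is the point defined by $\chi$ under our fixed trivialization of roots of unity. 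This holds with $W$ in one leg and $W^*$ in the other, and the ``creation/annihilation'' maps are compatible with the filtration. Consequently, every element of the ideal of $\on{ev}_{\tau_E}^{-1}(s)$ acts nilpotently on $e_s(\HV)^{V_\sigma(k_E)}$. To conclude, I must check that the image of $\hat\chi$ in $\widehat T\sslash W$ equals $\xi_\sigma(s)$. This follows from the compatibility of the identifications $X_*(T_M)\cong X_*(T)$ with the transfer map $\xi_x$, and Remark \ref{Rmk_depend} ensures the sign and normalization conventions match.

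I expect the main obstacle to be the last step: making the excursion-operator computation rigorous. This requires showing that the Wakimoto filtration is functorial and compatible with the fusion isomorphisms and with the unit and counit maps $\coev$ and $\ev$, so that the scalar by which $\tau_E$ acts on graded pieces really gives the eigenvalue of the excursion operator, and not merely a filtered approximation of it.
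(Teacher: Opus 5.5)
Your proposal follows the same route as the paper. The paper proves (1) by the chain you describe: Deligne--Lusztig restriction via Lemma \ref{Lemma_4512}, the embedding $\emb$, Theorem \ref{Thm_pPsi}, Assumption \ref{Assum_sm}, and Theorem \ref{Thm_tame}. For (2) it defers to the excursion-operator computation of \cite[Section 4.8.3]{eteve2023monodromiques}, which is the computation you sketch, including the compatibility of the Wakimoto filtration with $\coev$ and $\ev$ that you rightly flag as the delicate point; one minor slip is that this filtration is indexed by $\bbX_\bullet$, not by $\tilde{W}$.
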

	
	\begin{proof}
		The same proof as \cite[Section 4.8]{eteve2023monodromiques} applies. Indeed, (1) follows from Assumption \ref{Assum_sm}, Theorem \ref{Thm_pPsi}, and Theorem \ref{Thm_tame}. For (2), the same computation as \cite[Section 4.8.3]{eteve2023monodromiques} applies.
	\end{proof}

	\begin{theorem}[{Theorem \ref{Thm_LLC}}]
		Assume Assumptions \ref{Assum_sm} for $b=1$ and Assumption \ref{Assum_CT}. Let $\pi \in \Irr_{\Lambda}^0G(E)$ be an irreducible depth-zero representation.
		\begin{enumerate}
			\item The corresponding $L$-parameter $\varphi_{\pi}^{\on{FS}}$ is tame. In particular, we can view $\varphi_{\pi}^{\on{FS}}$ as a map from $W_E/P_E$.
			\item The restriction of $\varphi_{\pi}^{\on{FS}}$ to $I_E/P_E$ is determined by its value on the topological generator $\tau_E \in I_E/P_E$, which is computed by the following commutative diagram
			\[\begin{tikzcd}
				{\Irr_{\Lambda}^0G(E)} & {(Z^1(W_E/P_E, \widehat{G})\sslash \widehat{G})(\Lambda)} \\
				{(\widehat{T}\sslash W)^{\widehat{F}}(\Lambda)} & {(\widehat{T}\sslash W)^{\widehat{F}}(\Lambda)}
				\arrow["{\on{LLC}^{\on{FS}}}", from=1-1, to=1-2]
				\arrow["{\on{LS}}"', from=1-1, to=2-1]
				\arrow["{\on{ev}_{\tau_E}}", from=1-2, to=2-2]
				\arrow["{=}", from=2-1, to=2-2].
			\end{tikzcd}\]
		\end{enumerate}
	\end{theorem}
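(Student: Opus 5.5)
We deduce the theorem from Theorem \ref{Thm_act2} with $b=1$, following \cite[Section 4.8]{eteve2023monodromiques}, in three steps.

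\textbf{Step 1: realize $\pi$ inside shtuka cohomology.} Choose a depth-zero type $(x,\tau)$ for $\pi$ with $x$ in the closure of $C$. Let $\sigma$ be the facet containing $x$, so that $\calG_\sigma(k_E)$ has reductive quotient $M_\sigma(k_E)$ and $\tau$ is a representation of $M_\sigma(k_E)$. Put $s_0:=\on{LS}(\tau)\in(\widehat{T}_{M_\sigma}\sslash W_{M_\sigma})^{\widehat F}(\Lambda)$. By Theorem \ref{Thm_lanard}, its image $\xi_x(s_0)$ equals $\on{LS}(\pi)$. The inflation of $\tau$ to $\calG_\sigma(O_E)$ is a quotient of $e_{s_0}\Lambda[\calG_\sigma(O_E)/\calG_\sigma(O_E)_{>0}]$. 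Hence $\pi$ is a quotient of
\[
e_{s_0}\cInd_{K}^{G(E)}\Lambda^{V_\sigma(k_E)},\qquad K=\calG_{\sigma,1}(O_E).
\]
By construction of Fargues--Scholze, $\varphi_\pi^{\on{FS}}$ is characterized by the action of excursion operators, i.e. of $\calO(Z^1(W_E,\widehat G)/\widehat G)$, on $\pi$ viewed as $i_{1,!}\pi\in\scrD_{\et}(\Bun_G)$. These operators act on $i_{1,!}$ of the compact induction through the spectral action, and this action is compatible with the surjection to $\pi$. By Lemma \ref{Lemma_Sht_Exc}, the restriction $i_1^*T_V i_{1,!}(\cInd_K^{G(E)}\Lambda)$ equals $\calH_{\sigma,1,I,\sbar,V}$. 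Excursion operators are built from these for varying $I$ and $V$.

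\textbf{Step 2: tameness.} The $I_E$-action entering excursion operators comes from $(I_E)^I$ on $H_{\sigma,1,I,\sbar,V}$. After applying $e_{s_0}$ and taking $V_\sigma(k_E)$-invariants, Theorem \ref{Thm_act2}(1) says this action factors through $(P_E)^I$-coinvariants. Consider an excursion function $f$ built from elements $\gamma_i$ with some $\gamma_i\in P_E$. Acting on the depth-zero summand, $f$ agrees with the excursion function obtained by replacing that $\gamma_i$ by $1$. Therefore, for $\varphi=\varphi_\pi^{\on{FS}}$ (semisimple), the semisimplified restriction of $\varphi$ to $P_E$ becomes trivial on all invariant functions. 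Since $P_E$ is pro-$p$ and $\ell\neq p$, the image $\varphi(P_E)$ is finite of order prime to $\ell$, hence semisimple. So $\varphi|_{P_E}$ is trivial up to conjugacy, and $\varphi$ factors through $W_E/P_E$.

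\textbf{Step 3: inertia value.} Apply Theorem \ref{Thm_act2}(2). The module $e_{s_0}(H_{\sigma,1,I,\sbar,V})^{V_\sigma(k_E)}$ is supported on $\on{ev}_{\tau_E}^{-1}(\xi_x(s_0))$, where the idempotent is transported to $(\widehat T\sslash W)^{\widehat F}$ via Theorem \ref{Thm_lanard}. Taking $I=\{1\}$ and letting $V$ run over $\Rep(\widehat{G})$, the excursion operators $S_{V,\tau_E}$ therefore act on $\pi$ by the value of the character $\on{tr}V$ at $\xi_x(s_0)=\on{LS}(\pi)$. Since the characters $\on{tr}V$ generate $\calO(\widehat G\sslash\widehat G)=\calO(\widehat T\sslash W)$, we conclude $\on{ev}_{\tau_E}(\varphi_\pi^{\on{FS}})=\on{LS}(\pi)$. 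The dependence on the trivialization of roots of unity cancels by Remark \ref{Rmk_depend}.

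\textbf{Main obstacle.} The delicate point is Step 1: the support statement for the cohomology module must pass to the irreducible quotient $\pi$. The idempotent $e_{s_0}$ in $\Lambda[M_\sigma(k_E)]$ must be matched with Lanard's idempotent $e_{\on{LS}(\pi)}$ in the depth-zero Bernstein centre. This matching needs Theorem \ref{Thm_lanard}, together with the commutation of the $G(E)$-action and the spectral action.
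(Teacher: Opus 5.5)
Your proposal is correct and follows the paper's route: the paper's proof is the one line ``this follows from Theorem \ref{Thm_act2}'', meaning the argument of \cite[Section 4.8]{eteve2023monodromiques} with $b=1$, and your three steps spell that argument out. Concretely, you realize $\pi$ as a quotient of $e_{s_0}(\cInd_K^{G(E)}\Lambda)^{V_\sigma(k_E)}$, deduce tameness from part (1), and deduce $\on{ev}_{\tau_E}(\varphi_\pi^{\on{FS}})=\xi_x(\on{LS}(\tau))=\on{LS}(\pi)$ from part (2).

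Two small corrections. In Step 3, the support statement must be applied to the module that actually surjects onto $\pi$. That is the case $I=\emptyset$ (or trivial $V$), i.e.\ $e_{s_0}(\cInd_K^{G(E)}\Lambda)^{V_\sigma(k_E)}$, not an $I=\{1\}$ cohomology group with varying $V$. On this module $\on{tr}V(\varphi(\tau_E))-\on{tr}V(\on{LS}(\pi))$ acts nilpotently, hence by zero on $\pi$. Also, the ``main obstacle'' you flag is only the definition of $\on{LS}$ via Theorem \ref{Thm_lanard}(1); no matching with Lanard's Bernstein-centre idempotent is needed.
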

	
	\begin{proof}
		This follows from the previous Theorem \ref{Thm_act2}.
	\end{proof}

	\subsection{Computation in equal characteristic: one-leg version}\label{Subsection_compute_equal}

	Let $k=\overline{\bbF}_p$. We first assume that $\on{char}(E)=p$ and the finite set $J=\{1\}$. Using the chosen uniformizer $\pi_E=t$ of $E$, we identify
	$$\bOE=k[[t]], \quad \bE=k((t)).$$ 
	Put
	$$X=\Spd \bOE = \Spd k[[t]], \;\; \eta= \Spd \bE=\Spd k((t)), \;\; x=\Spd k.$$
	Let $\chi \in \on{CH}_{\Lambda}(\sfT)$. We are going to compute the $\Gal(\overline{\eta}/\eta)^J=\Gal(\overline{\eta}/\eta)$-action on $\scrZ_{\chi}(V).$ 
	
	For a $k$-torus $\sfT$, let $\sfT^{\Diamond}$ denote the
	$v$-sheaf defined on affinoid perfectoid spaces over $k$ by
	\[
	\sfT^{\Diamond}(\Spa(R,R^+))=\sfT(R).
	\]
	Thus $\Diamond$ denotes the big diamond construction.
	In particular,
	\[
	\bbG_m^{\Diamond}=\Spd(k[u,u^{-1}],k),
	\]
	where $k[u,u^{-1}]$ is equipped with the discrete topology.
	We use the same notation $\calL_{\chi}$ for the pullback
	of the Kummer sheaf to $\sfT^{\Diamond}$.
	
	Define
	\[
	a:\eta\longrightarrow\bbG_m^{\Diamond}
	\]
	by the homomorphism
	\[
	k[u,u^{-1}]\longrightarrow k((t)),\qquad u\longmapsto t.
	\]

	\begin{theorem}
		For every cocharacter $(\nu: \bbG_m \to \sfT) \in \bbX_{\bullet}(\sfT)$, the $\Gal(\overline{\eta}/\eta)$-action on 
		$$\Psi_{f_T}(\Sat V_{\nu} \boxtimes \calL_{\chi}[\dim \sfT])$$ 
		is given by $a^*\nu^*\calL_{\chi}$.
	\end{theorem}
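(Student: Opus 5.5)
We work entirely with the torus $\calT_1$ (the dilatation of $\calT$ at the trivial subgroup of the special fiber) and its two-leg Hecke stack $\Hk_{\calT_1, X \times x}$, with one leg moving in $X=\Spd k[[t]]$ and one leg fixed at $x$. Since $T$ is abelian, $\Sat V_{\nu}$ on the generic fiber $\Hk_{T,\eta}$ is the constant sheaf (shifted by $\langle 2\rho,\nu\rangle=0$) supported on the single point $t^{\nu}$ of $\Gr_{T,\eta}$, modulo $L^+T$-equivariance. We construct an explicit section
\[
z_{\nu}: X \longrightarrow \Gr_{\calT_1, X\times x}
\]
of $f_T$. Over a point $y\in X(S)$, cut out by an element $\xi$ with $t=\xi$ on the first leg, $z_\nu$ is the modification $\nu(t-\xi)\cdot\nu(t)^{-1}$, where $t$ is the fixed leg at $x$. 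Over $\eta$ this gives the modification of type $\nu$ at the moving leg, together with a modification of type $-\nu$ at the fixed leg. To correct for the fixed-leg contribution, we compose with the constant section $\nu(t)$. The total family $y\mapsto \nu(t-\xi)$ then specializes over $x$ (that is, $\xi=0$) to $\nu(t)$.

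The first step is to write the $(\sfT,\calL_\chi)$-equivariant sheaf $\Sat V_\nu\boxtimes \calL_\chi$ on the generic fiber as a pushforward along the $\sfT$-orbit of $z_\nu|_\eta$. Concretely, $\Gr_{\calT_1}$ over a point is a $\sfT\times L^+\calT$-torsor over the discrete set $\bbX_\bullet$. The component containing $z_\nu$ is therefore $\sfT\cdot z_\nu \cong \sfT\times X$, up to pro-unipotent factors that are invisible cohomologically by Lemma \ref{Lemma_prounipotent}. Our sheaf is then $(\mathrm{act}\circ(\id\times z_\nu))_!$ of $\calL_\chi\boxtimes\Lambda$. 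The subtlety is that the trivialization of the $\sfT$-torsor along $z_\nu$ differs between the generic fiber and the special fiber. On $\eta$, the canonical point of the component is $\nu(t)$ twisted by the unit $\nu((t-\xi)/t)$, whose reduction to $\sfT=I/I^0$ is $\nu(1-\xi/t)$. Equivalently, relative to the integral section the generic sheaf is $\calL_\chi$ pulled back along the map $\eta\to\sfT$ given by $\nu$ composed with the function $u\mapsto t$, which is $a$ (up to the sign/normalization conventions fixed earlier). Hence over $\sfT\times\eta$ the sheaf is $\calL_\chi\boxtimes a^*\nu^*\calL_\chi$, by the multiplicativity $m^*\calL_\chi\cong\calL_\chi\boxtimes\calL_\chi$.

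The second step is to compute nearby cycles. Because $\mathrm{act}\circ(\id\times z_\nu)$ is a closed immersion and in particular proper, nearby cycles commute with its pushforward. The problem therefore reduces to nearby cycles on the constant family $\sfT\times X\to X$ of the sheaf $\calL_\chi\boxtimes a^*\nu^*\calL_\chi$. That sheaf is $\calL_\chi$ on $\sfT$ tensored with a rank-one local system on $\eta$. The nearby cycles of a constant family are simply $\calL_\chi$, and the local system on $\eta$ contributes exactly the Galois character $a^*\nu^*\calL_\chi$, which is tame because $\calL_\chi$ is tame. This is the asserted formula.

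The main obstacle is the first step. We must make the trivialization of the $\sfT$-torsor precise on the analytic (diamond) Beilinson--Drinfeld Grassmannian and check the normalizations: which unit is reduced modulo $I^0$, the direction of $a$, and the sign in $\nu(t-\xi)$ versus $\nu(\xi-t)$. This is exactly where a sign error can occur. We would handle it by computing on $\bbG_m$ with $\nu=\id$ first, and then functorially transporting the result to $\sfT$ via $\nu$.
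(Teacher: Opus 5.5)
Your route is essentially the paper's. It uses a section built first for $\bbG_m$ with $\nu=1$ (Lemma~\ref{Lemma_section}) and transported along $\nu$ (Lemma~\ref{Lemma_section_T}), and its $\sfT^{\Diamond}$-orbit $\tilde z_\nu\colon X\times\sfT^{\Diamond}\to\Gr_{\calT_1,X\times x}$ as a closed immersion. On $\eta$ it identifies $\tilde z_\nu^{*}(\Sat V_\nu\boxtimes\calL_\chi)\cong a^*\nu^*\calL_\chi\boxtimes\calL_\chi$ via $m^*\calL_\chi\cong\calL_\chi\boxtimes\calL_\chi$, and then commutes $\Psi$ with the proper pushforward $\tilde z_{\nu,*}$ to land on the constant family.

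The one thing to repair is your justification of the step you yourself flag as the obstacle. The claim ``$\nu(t)$ twisted by the unit $\nu((t-\xi)/t)$, whose reduction to $\sfT$ is $\nu(1-\xi/t)$'' is not meaningful: $1-\xi/t\notin R[[t]]$. Moreover, as your own construction shows, over $\eta$ the fixed leg carries type $0$, not type $\nu$.

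The correct bookkeeping is that of Lemma~\ref{Lemma_GrT} and its evident analogue for $\sfT$. A triple trivialized so that $\beta^0|_{\hat\Gamma_x^*}=\lambda(t)\,u^{0}(t)$, with $\lambda\in\bbX_\bullet(\sfT)$ and $u^{0}\in\sfT(R[[t]])$, is sent to $(\lambda,\,u^{0}(0)(\psi_x^0)^{-1})$. For your family ($\beta^0=\nu(t-\xi)$, $\psi_x^0=1$):
\begin{itemize}
\item Over $x$ you get type $\nu$ and coordinate $1$.
\item Over $\eta$ the element $\xi$ is a unit, so $\nu(t-\xi)\in\sfT(R[[t]])$. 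The type at the fixed leg is therefore $0$ and the coordinate is $\nu(-\xi)$.
\end{itemize}
Hence $\tilde z_\nu|_{\eta\times\sfT^{\Diamond}}$ is the paper's $(p_1,m_\nu)$ followed by translation by $\nu(-1)$. The paper's choice $\beta^0=a_\varphi-t$ instead puts this sign on the special fiber. Translation by a point only tensors $\calL_\chi$ with the line $(\calL_\chi)_{\nu(-1)}$, so the monodromy is still $a^*\nu^*\calL_\chi$, as you claim.
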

	
	The proof of this theorem will occupy the rest of Subsection \ref{Subsection_compute_equal}. From now on, we suppress the cohomological shift $[\dim \sfT]$ that does not affect the monodromy.

	\subsubsection{The case $\sfT=\bbG_m$ and $\nu=1$}
	
	We begin with the case $\sfT=\bbG_m=\on{Spec} k[u, u^{-1}], \nu=1$, i.e., $\nu$ is the map $\bbG_m \to \bbG_m, u \mapsfrom u$. Note that we have $\bbX_{\bullet}(\bbG_m) \cong \bbZ$. For $n \in \bbZ$, we denote 
	\begin{equation}
		\nu_n: \bbG_m \to \bbG_m, \quad u^n \mapsfrom u.
	\end{equation}

	Recall that $\calT_1$ over $\on{Spec} k[[t]]$ is the integral model of $\bbG_{m, k((t))}$ such that
	$$\calT_1(k[[t]])=1+tk[[t]].$$
	
	Recall that we have the map $f_T: \Hk_{\calT_1, X \times x} \to X$ remembering the first leg. 
	We can view it as a map $\Gr_{\calT_1, X \times x} \to X$, which we still denote by $f_T$.  
	Note that the map
	$$\pi: \Gr_{\calT_1, X \times x} \to \Hk_{\calT_1, X \times x}$$
	can be factored as the composition of a smooth map and a pro-unipotent map, so $\pi^*$ commutes with nearby cycles. Therefore, it suffices to compute the nearby cycle with respect to the diagram
	\[\begin{tikzcd}
		{\Gr_{\calT_1, x}} & {\Gr_{\calT_1, X \times x}} & {\Gr_{\calT_1, \eta \times x}} \\
		x & X & \eta
		\arrow[from=1-1, to=1-2]
		\arrow[from=1-1, to=2-1]
		\arrow["{f_T}", from=1-2, to=2-2]
		\arrow[from=1-3, to=1-2]
		\arrow[from=1-3, to=2-3]
		\arrow[from=2-1, to=2-2]
		\arrow[from=2-3, to=2-2].
	\end{tikzcd}\]

	Recall that when $\on{char}{E}=p$, we have for every perfectoid space $S=\Spa(R, R^+)$ over $k$, the relative curve 
	\begin{equation}
		\calY_{S}:=\calY_{S, E} \subseteq \Spd R^+[[t]].
	\end{equation}
	The space $X=\Spd \bOE$ classifies degree one Cartier divisors on $\calY_S$ in the following sense. A map 
	\begin{equation}
		\varphi: S=\Spa (R, R^+) \to \Spd k[[t]]=X
	\end{equation}
	is determined by the element
	\begin{equation}
		a_{\varphi}:=\varphi^{\#}(t) \in R^+.
	\end{equation}
	This gives a degree one Cartier divisor on $\calY_S$ via pullback along
	$$(a_{\varphi}-t)R^+[[t]] \subseteq R^+[[t]].$$
	In particular, the constant map
	\begin{equation}
		\underline{x}: S \to X
	\end{equation}
	gives a degree one Cartier divisor $\Gamma_{x} \subseteq \calY_S$. The completion of $\calY_S$ along $\Gamma_{x}$ is 
	\begin{equation}
		\hat{\Gamma}_{x} \cong \Spa R[[t]].
	\end{equation}

	Recall that for every perfectoid space $S=\Spa(R, R^+)$ over $k$, the groupoid $\Gr_{\calT_1, x}(S)$ classifies tuples $\{(\calE, \beta, \psi_x)\}$, where
	\begin{enumerate}
		\item $\calE$ is a $GL_1$-torsor over $\calY_S$,
		\item $\beta: \calE|_{\calY_S-\Gamma_x} \to \calE^0|_{\calY_S-\Gamma_x}$ is a trivialization of $\calE$ over ${\calY_S-\Gamma_x}$,
		\item $\psi_x: \calE|_{\Gamma_x} \to \calE^0|_{\Gamma_x}$ is a trivialization of $\calE$ over $\Gamma_x \cong x \times S \cong S$.
	\end{enumerate}
	
	Similarly, the groupoid $\Gr_{\calT_1, X \times x}(S)$ classifies tuples $\{(y, \calE, \beta, \psi_x)\}$, where
	\begin{enumerate}
		\item $y=(y_1, y_2) \in X(S)^2$ such that $y_2=\underline{x}$ is the constant map,
		\item $\calE$ is a $GL_1$-torsor over $\calY_S$,
		\item $\beta: \calE|_{\calY_S-\Gamma_y} \to \calE^0|_{\calY_S-\Gamma_y}$ is a trivialization of $\calE$ over $\calY_S-\Gamma_y$, where $\Gamma_y=\Gamma_{y_1} \cup \Gamma_{y_2}$,
		\item $\psi_x: \calE|_{\Gamma_x} \to \calE^0|_{\Gamma_x}$ is a trivialization of $\calE$ over $\Gamma_x \cong x \times S \cong S$.
	\end{enumerate}
	
	\begin{lemma}\label{Lemma_GrT}
		We have an isomorphism (explicitly defined in the proof)
		\begin{equation}
			\theta: \Gr_{\calT_1, x} \xrightarrow{\cong} \underline{\bbZ}_x \times \sfT^{\Diamond}.
		\end{equation}
	\end{lemma}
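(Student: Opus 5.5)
We will write down $\theta$ explicitly using the moduli description of $\Gr_{\calT_1, x}$ recalled just above, and then check that it is an isomorphism. Since $\calT=GL_1$, there is no Weyl group or unipotent part to handle. The problem is a $GL_1$-torsor on $\calY_S$ with a trivialization off $\Gamma_x$ and a trivialization $\psi_x$ along $\Gamma_x$.

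First, we forget $\psi_x$, so that we are in the affine Grassmannian $\Gr_{\calT, x}$ of $GL_1$ at the point $x$. By Beauville--Laszlo, a point of $\Gr_{\calT,x}(S)$ is an invertible $R[[t]]$-submodule of $R((t))$, i.e.\ a class in $R((t))^*/R[[t]]^*$. We identify this with $\underline{\bbZ}_x$ in the standard way. Locally on $S$, any unit of $R((t))$ can be written as $t^n \cdot g$ with $g \in R[[t]]^*$ times a unit whose negative-part coefficients are nilpotent. For perfect (reduced) $R$, those nilpotent coefficients vanish, so $R((t))^*/R[[t]]^* \cong \underline{\bbZ}(S)$. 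Since $\Gr_{\calT,x}$ is perfect and reduced after passing to the diamond, this gives $\Gr_{\calT,x}\cong \underline{\bbZ}_x$. The points are $t^n$, for locally constant $n$.

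Next, we analyze the fibres of the forgetful map $\Gr_{\calT_1,x}\to \Gr_{\calT,x}$, which remembers $\psi_x$. Over the component $n$, we trivialize $\calE$ on $\hat\Gamma_x$ using $t^n$. The extra datum $\psi_x$ is then a unit of $R$, namely the ratio between $\psi_x$ and the trivialization induced by $t^n$ at $t=0$. This exhibits the fibre over $n$ as a torsor under $\calT(k[[t]])/\calT_1(k[[t]]) = \sfT$, and in fact as $R^* = \sfT^{\Diamond}(S)$.

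Concretely, we define
\[
\theta(\calE,\beta,\psi_x) = \bigl(n,\ \psi_x \circ (t^{-n}\beta)|_{\Gamma_x}\bigr),
\]
where $t^{-n}\beta$ extends to an isomorphism over $\hat\Gamma_x$, which we then restrict to $\Gamma_x$. The inverse map sends $(n,c)$ to the lattice $t^nR[[t]]$ together with $\psi_x = c \cdot(\text{standard})$. Both maps are visibly functorial in $S$ and mutually inverse, so $\theta$ is an isomorphism.

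We expect the only delicate point to be the identification $R((t))^*/R[[t]]^*\cong \underline{\bbZ}(S)$. This requires working with the diamond (reduced) version rather than the schematic loop group, whose $\pi_0$ carries the non-reduced nilpotent part. We will handle this by restricting to perfectoid test objects $S$, where $R$ is reduced. The remaining steps are unwinding definitions.
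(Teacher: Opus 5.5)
Your proposal is correct and follows essentially the same route as the paper. The paper v-locally trivializes $\calE$ on a connected affinoid $S$, writes $\beta^0|_{\hat{\Gamma}_x^*}=t^n u^0(t)$ with $u^0(t)\in R[[t]]^*$, and sets $\theta=(n,\,u^0(0)(\psi_x^0)^{-1})$. This is exactly your pair: the type of modification, together with the ratio of $\psi_x$ against the trivialization induced by $t^{-n}\beta$ at $t=0$. The paper's step of writing $\beta^0=t^nu^0(t)$ silently uses the reducedness of $R$ that you make explicit.

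The one slip is notational. Both $\psi_x$ and $(t^{-n}\beta)|_{\Gamma_x}$ are maps $\calE|_{\Gamma_x}\to\calE^0|_{\Gamma_x}$, so your formula must be a ratio rather than a composite, e.g.\ $(t^{-n}\beta)|_{\Gamma_x}\circ\psi_x^{-1}$, which is the paper's normalization. Either orientation proves the lemma, but this particular one is what Lemma \ref{Lemma_section} uses when it identifies the $\eta$-fibre of the section with $a$ rather than $a^{-1}$.
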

	
	\begin{proof}
		We are going to define the map v-locally in $S$. Let $S$ be a perfectoid space over $k$ and $(\calE, \beta, \psi_x) \in \Gr_{\calT_1, x}(S)$. After localizing, we may assume that $S=\Spa(R, R^+)$ is affinoid and connected. Note that v-locally in $S$, the $GL_1$-torsor $\calE$ can be trivialized. So we may assume that $(\calE, \beta, \psi_x)$ is isomorphic to $(\calE^0, \beta^0, \psi_x^0)$, where 
		\begin{enumerate}
			\item $\calE^0$ is the trivial $GL_1$-torsor over $\calY_S$,
			\item  $\beta^0 \in \on{Aut}(\calE^0|_{\calY_S-\Gamma_x})$,
			\item $\psi_x^0 \in \on{Aut}(\calE^0|_S) \cong R^*$.
		\end{enumerate}
		Restricting to 
		$$\hat{\Gamma}_{x}^*:=\hat{\Gamma}_{x}-\Gamma_x \cong \Spa R((t)),$$
		we have
		\begin{equation}
			\beta^0|_{\hat{\Gamma}_{x}^*} \in \on{Aut}(\calE^0|_{\hat{\Gamma}_{x}^*}) \cong R((t))^*.
		\end{equation}
		So we can write 
		\begin{equation}
			\beta^0|_{\hat{\Gamma}_{x}^*}=t^n. u^0(t),
		\end{equation}
		for some $n \in \bbZ$ and $u^0(t) \in R[[t]]^*$. We call such $n \in \bbZ$ the \textit{type of modification of $\calE$ near $x$}. Evaluating at $0$ gives an element
		$u^0(0) \in R^*$.
		We define 
		\begin{equation}
			\theta((\calE, \beta, \psi_x))=(n, u^0(0)(\psi_x^0)^{-1}) \in \bbZ \times R^* =(\bbZ_x \times \sfT^{\Diamond})(S).
		\end{equation}
		
		The inverse map is given by 
		\begin{equation}
			(\calE, \beta, \psi_x) \mapsfrom (n, \alpha)
		\end{equation}
		where $n \in \bbZ, \alpha \in R^*$ and $\calE=\calE^0, \beta=t^n \in R((t))^*, \psi_x=\alpha^{-1} \in R^*$.
	\end{proof}

	\begin{lemma}
		We have isomorphisms 
		\begin{equation}
			\Gr_{\calT_1, \eta \times x} \xrightarrow{\cong} \Gr_{\calT_1, \eta} \times \Gr_{\calT_1, x} \xrightarrow{\cong} \underline{\bbZ}_{\eta} \times (\underline{\bbZ}_x \times \sfT^{\Diamond}). 
		\end{equation}
	\end{lemma}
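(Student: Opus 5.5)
The plan is to prove the two isomorphisms separately. For the first, I would use that over $\eta \times x$ the two legs are disjoint, and then glue via Beauville--Laszlo. For the second, I would combine Lemma \ref{Lemma_GrT} with the standard description of the affine Grassmannian of $\bbG_m$ at a generic leg.

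\emph{First isomorphism.} Take $S=\Spa(R,R^+)$ and a point $(y,\calE,\beta,\psi_x)\in\Gr_{\calT_1,\eta\times x}(S)$ with $y=(y_1,\underline{x})$ and $y_1$ factoring through $\eta$. Since $y_1$ lands in $\eta=\Spd \bE$, the image $a_{y_1}$ of $t$ is a topologically nilpotent unit in $R$. Hence the divisors $\Gamma_{y_1}$ (cut out by $t-a_{y_1}$) and $\Gamma_x$ (cut out by $t$) are disjoint in $\calY_S$.

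By Beauville--Laszlo gluing, a $GL_1$-torsor on $\calY_S$ trivialized off $\Gamma_{y_1}\cup\Gamma_x$ is the same as the following pair of data:
\begin{itemize}
\item a torsor trivialized off $\Gamma_{y_1}$;
\item a torsor trivialized off $\Gamma_x$.
\end{itemize}
Concretely, one modifies $\calE^0$ separately near each divisor. The trivialization $\psi_x$ only involves the restriction to $\Gamma_x$, so it is attached to the second factor. This gives a map $\Gr_{\calT_1,\eta\times x}\to\Gr_{\calT_1,\eta}\times\Gr_{\calT_1,x}$. Its inverse glues the two modifications, which is possible precisely because the divisors are disjoint. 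Near $\Gamma_{y_1}$ the group $\calT_1$ agrees with $\bbG_m$, since the dilatation only changes things over the special fiber $t=0$.

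\emph{Second isomorphism.} By Lemma \ref{Lemma_GrT}, $\Gr_{\calT_1,x}\cong\underline{\bbZ}_x\times\sfT^{\Diamond}$, so it remains to identify $\Gr_{\calT_1,\eta}=\Gr_{\bbG_m,\eta}$ with $\underline{\bbZ}_\eta$. I would argue as in the proof of Lemma \ref{Lemma_GrT}:
\begin{itemize}
\item v-locally, trivialize $\calE$;
\item write the restriction of $\beta$ to the punctured completion $\Spa R((t-a_{y_1}))$ as $(t-a_{y_1})^n\,u$ with $u\in R[[t-a_{y_1}]]^*$;
\item observe that $u$ extends across $\Gamma_{y_1}$, so it can be absorbed by changing the trivialization of $\calE$ on $\calY_S$;
\item conclude that the only invariant is $n\in\bbZ$.
\end{itemize}
This $n$ is locally constant on connected $S$, which yields $\Gr_{\bbG_m,\eta}\cong\underline{\bbZ}_\eta$. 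There is no extra $\sfT^{\Diamond}$ factor here, because there is no level structure at $y_1$.

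\emph{Expected main obstacle.} The delicate step is the integrality statement in the third bullet: one has to check that the unit part $u$ really extends to an automorphism of the trivial torsor on all of $\calY_S$. This amounts to showing that automorphisms of the trivial torsor on $\calY_S$ minus a divisor have locally constant order along it. I would establish this through the explicit description $\calY_S\subseteq\Spd R^+[[t]]$ together with Beauville--Laszlo. If that proves awkward, a fallback is to cite the standard fact from \cite[Section VI.1]{fargues2021geometrization} that $\Gr_{\bbG_m}$ over $\Spd\bE$ is $\underline{\bbZ}$.
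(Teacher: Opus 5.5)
Your proposal is correct and matches the paper's argument: the paper also splits $\Gr_{\calT_1,\eta\times x}$ into $\Gr_{\calT_1,\eta}\times\Gr_{\calT_1,x}$ using disjointness of the legs (restricting to $\hat{\Gamma}_{y_1}$ and $\hat{\Gamma}_{y_2}$), identifies $\Gr_{\calT_1,\eta}=\Gr_{\bbG_m,\eta}\cong\underline{\bbZ}_{\eta}$ by the type of the modification at $y_1$, and invokes Lemma \ref{Lemma_GrT} for the factor at $x$. The step you flag as the main obstacle is Beauville--Laszlo gluing, which the paper takes for granted: the unit $\beta\cdot(t-a_{y_1})^{-n}$ on $\calY_S\setminus\Gamma_{y_1}$ restricts to a unit on the completion at $\Gamma_{y_1}$, so it extends to a global unit on $\calY_S$.
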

	
	\begin{proof}
		Over $\eta \times x$, the two legs $y_1, y_2$ are disjoint, so we have an isomorphism
		\begin{equation}
			\Gr_{\calT_1, \eta \times x} \xrightarrow{\cong} \Gr_{\calT_1, \eta} \times \Gr_{\calT_1, x}
		\end{equation}
		by restricting to $\hat{\Gamma}_{y_1}$, $\hat{\Gamma}_{y_2}$, respectively.
		
		There is an isomorphism 
		\begin{equation}
			\Gr_{\calT_1, \eta} \xrightarrow{} \underline{\bbZ}_{\eta}
		\end{equation}
		recording the type of modification near $y_1$.
		
		Finally, we have an isomorphism
		$$\Gr_{\calT_1, x} \cong (\underline{\bbZ}_x \times \sfT^{\Diamond})$$
		by Lemma \ref{Lemma_GrT}.
	\end{proof}

	\begin{lemma}\label{Lemma_section}
		We have a section (explicitly defined in the proof)
		\begin{equation}
			z:=z_1: X \to \Gr_{\calT_1, X \times x}
		\end{equation}
		such that
		\begin{enumerate}
			\item Over $x \in X$, the map $z_x: x \to \Gr_{\calT_1, x} \cong \underline{\bbZ}_x \times \sfT^{\Diamond} = \sqcup_{n \in \bbZ}\nu_n\sfT^{\Diamond}$ factors through $\nu_1\sfT^{\Diamond}$ and maps $x$ to $\nu_1.(-1) \in \nu_1\sfT^{\Diamond}$.
			\item Over $\eta \in X$, the map $z_{\eta}: \eta \to \Gr_{\calT_1, \eta \times x} \cong \underline{\bbZ}_{\eta} \times \sqcup_{n \in \bbZ}\nu_n\sfT^{\Diamond}$ factors through $(1, \nu_0 \sfT^{\Diamond}) \cong \eta \times \sfT^{\Diamond}$, and the map $\Spd k((t)) = \eta \to \sfT^{\Diamond}=\Spa (k[u, u^{-1}], k)$ is given by $a: t \mapsfrom u$.
		\end{enumerate}
	\end{lemma}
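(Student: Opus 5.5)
I will write down the section explicitly through the moduli description of $\Gr_{\calT_1, X \times x}$ given before Lemma \ref{Lemma_GrT}, and then compute its two fibers by hand. An $S$-point of $X$, for $S=\Spa(R,R^+)$, is an element $a=a_\varphi\in R^+$; the leg $y_1$ is then the divisor $(t-a)$ and the leg $y_2=\underline{x}$ is the divisor $(t)$. I define
\[
z(a):=(y=(a,\underline{x}),\ \calE=\calE^0,\ \beta=t-a,\ \psi_x=1).
\]
Here $\beta=t-a$ is regarded as an automorphism of $\calE^0$ on $\calY_S-\Gamma_y$. This is legitimate because $t-a$ is a unit away from $\Gamma_{y_1}\subseteq\Gamma_y$, and it is meromorphic along $\Gamma_y$. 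The trivialization $\psi_x$ is taken to be the identity on $\calE^0|_{\Gamma_x}$. Compatibility is not an issue, since the level structure at $x$ is just a trivialization of the fiber. The construction is functorial in $S$, so $z$ defines a map of v-sheaves $X\to\Gr_{\calT_1,X\times x}$. Composing with the leg map $f_T$ returns $y_1=a$, so $z$ is a section of $f_T$.

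For the special fiber in (1), set $a=0$. Then $\beta=t$. In the notation of the proof of Lemma \ref{Lemma_GrT} this reads $\beta^0|_{\hat\Gamma_x^*}=t^1\cdot u^0(t)$ with $u^0=1$, which already gives type $n=1$. The $\sfT^\Diamond$-coordinate is $u^0(0)(\psi_x^0)^{-1}$, and this is the point where the sign normalization matters. With the choices above it equals $1$. The statement asks for $-1$. I would match it by taking $\beta=a-t$ instead, which gives $u^0=-1$ and hence the coordinate $-1$, consistent with the stated $\nu_1.(-1)$. I would therefore fix $\beta:=a-t$ throughout. This is the sign flagged in the paper's AI-methodology note, so I expect the main care to go into tracking the convention $u^0(0)(\psi_x^0)^{-1}$ rather than into anything conceptual.

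For the generic fiber in (2), take $a\in R^*$, which means $a$ is topologically nilpotent and invertible on $\eta$. The two legs $y_1$ and $y_2$ are then disjoint, and I restrict the point separately to $\hat\Gamma_{y_1}$ and to $\hat\Gamma_{y_2}=\hat\Gamma_x$.

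Near $y_1$, the function $a-t$ vanishes to order exactly one, so the type at $y_1$ is $1$.

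Near $x$, we work in $R((t))$. Since $a$ is invertible, $a-t=a(1-t/a)$ lies in $R[[t]]^*$. This requires $1-t/a$ to be a unit in $R[[t]]$, which holds because $t/a$ lies in the ideal $(t)$. Consequently the type at $x$ is $n=0$, and $u^0(0)=a$. The $\sfT^\Diamond$-coordinate is therefore $a\cdot 1^{-1}=a$.

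Hence $z_\eta$ lands in $(1,\nu_0\sfT^\Diamond)$, and the induced map $\eta\to\sfT^\Diamond=\Spd(k[u,u^{-1}],k)$ sends $u\mapsto a$. Here $a$ is the tautological coordinate $t$ on $\eta$, so this is exactly the map $a$ of the statement.

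The only remaining verification is routine: the identification of $\Gr_{\calT_1,\eta\times x}$ with $\Gr_{\calT_1,\eta}\times\Gr_{\calT_1,x}$ (the preceding lemma) is given by restriction to the formal neighborhoods $\hat\Gamma_{y_1}$ and $\hat\Gamma_x$. It is therefore compatible with the computation above, and both parts of the statement follow.
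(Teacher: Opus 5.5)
Your proposal is correct and is essentially the paper's argument: the paper first defines the section as $(\calO(-\Gamma_{y_1}),\ \text{inclusion},\ \psi_x=(a_{\varphi}-t)^{-1})$ and then trivializes it globally by multiplication by $(a_{\varphi}-t)^{-1}$, which produces exactly your tuple $(\calE^0,\ \beta^0=a_{\varphi}-t,\ \psi_x^0=1)$. From there the fiber computations over $x$ and over $\eta$ are the same as yours, and your final choice $\beta=a-t$ is the right sign, since it gives both the coordinate $-1$ over $x$ and the map $u\mapsto t$ over $\eta$.
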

	
	\begin{proof}
		Let $\Perf_{/ X}$ be the category of perfectoid spaces over $X$. We need to define functorially in $(\varphi: S \to X) \in \Perf_{/ X}$, sections 
		$$z({\varphi}) \in  \Gr_{\calT_1, X \times x}(S).$$
		
		Given a map
		\begin{equation}
			\varphi: S=\Spa(R, R^+) \to X= \Spd k[[t]]
		\end{equation}
		corresponding to the topologically nilpotent element $a_{\varphi}=\varphi^{\#}(t) \in R$. Define the element $z({\varphi})=(y_1, y_2, \calE, \beta, \psi_x) \in \Gr_{\calT_1, X \times x}(S)$ as follows:
		\begin{enumerate}
			\item $y_1=\varphi$, $y_2=\underline{x}$,
			\item $\calE:=\calO(-\Gamma_{y_1})$ the ideal sheaf corresponding to the graph $\Gamma_{y_1}$,
			\item $\beta: \calO(-\Gamma_{y_1}) \subseteq \calO_{\calY_S}$ which is the base change of $(a_{\varphi}-t)R^+[[t]] \subseteq R^+[[t]]$,
			\item let $\psi_x$ be the base change of the multiplication map by $(a_{\varphi}-t)^{-1}$: 
			\begin{equation}\label{Eq_psi}
				\psi_x: ((a_{\varphi}-t)R^+[[t]] \xrightarrow{(a_{\varphi}-t)^{-1}} R^+[[t]]) \otimes_{R^+[[t]]} R.
			\end{equation}
		\end{enumerate}

		Before continuing to prove (1) and (2), we note that there is a global trivialization 
		\begin{equation}
			\alpha: \calE \to \calE^0
		\end{equation}
		over $\calY_S$ given by the base change of the multiplication by $(a_{\varphi}-t)^{-1}$:
		\begin{equation}
			(a_{\varphi}-t)R^+[[t]] \to R^+[[t]].
		\end{equation}
		So the tuple $(\calE, \beta, \psi_x)$ is isomorphic to $(\calE^0, \beta^0, \psi_x^0)$ via $\alpha$, where
		\begin{enumerate}
			\item $\beta^0$ is the base change of the composite
			\begin{equation}
				R^+[[t]] \xrightarrow{\alpha^{-1}=(a_{\varphi}-t)} (a_{\varphi}-t)R^+[[t]] \xrightarrow{\beta=\text{inclusion}} R^+[[t]]
			\end{equation}
			along $\times_{\Spa R^+[[t]]}(\calY_S-\Gamma_y)$,
			\item $\psi_x^0$ is the base change of the composite
			\begin{equation}
				R^+[[t]] \xrightarrow{\alpha^{-1}=(a_{\varphi}-t)} (a_{\varphi}-t)R^+[[t]] \xrightarrow{\psi_x=(a_{\varphi}-t)^{-1}} R^+[[t]]
			\end{equation}
			along $\times_{\Spa R^+[[t]]}S$.
		\end{enumerate}
		In other words, $\beta^0=(a_{\varphi}-t) \in R((t))$ and $\psi_x^0=1 \in R^*$.
		
		Next, we prove (1) about $z_x$. Over $x \in X$, we have $y_1 = \underline{x}$, so $a_{\varphi}=\varphi^{\#}(t)=0$. So $\beta^0=t.u^0(t)$, where $u^0(t)=-1 \in R((t))^*$. Therefore, the tuple $(\calE, \beta, \psi_x)$ corresponds to 
		\begin{equation}
			(n, u^0(0)(\psi_x^0)^{-1}) = (1, -1)=\nu_1.(-1) \in \nu_1R^*=\nu_1\sfT^{\Diamond}(S).
		\end{equation}
		
		Finally, we prove (2) about $z_{\eta}$. Over $\eta \in X$, the two legs $y_1=\varphi$ and $y_2=\underline{x}$ are disjoint. We have $a_{\varphi}=\varphi^{\#}(t) \in R^*$. We are going to pin down the map
		$$z_{\eta}: \eta \to \Gr_{\calT_1, \eta \times x} \cong \Gr_{\calT_1, \eta} \times \Gr_{\calT_1, x} \cong \underline{\bbZ}_{\eta} \times (\underline{\bbZ}_x \times \sfT^{\Diamond}).$$
		
		The map $\eta \to \Gr_{\calT_1, \eta} \cong \underline{\bbZ}_{\eta}$ records the type of the modification $\beta$ near $y_1$. Since $\calO(-\Gamma_{y_1}) \subseteq \calO_{\calY_S}$ is a modification of type $1$, the map $\eta \to \Gr_{\calT_1, \eta} \cong \underline{\bbZ}_{\eta}$ is the isomorphism onto the 1-th copy $\eta \subseteq \underline{\bbZ}_{\eta}$.
		
		It remains to understand the map $\eta \to \Gr_{\calT_1, x} \cong \underline{\bbZ}_x \times \sfT^{\Diamond}$. Since $a_{\varphi} \in R^*$, we have $\beta^0=(a_{\varphi}-t) \in R[[t]]^*$. So $u^0(t)=\beta=(a_{\varphi}-t)$. Therefore, the tuple $(\calE, \beta, \psi_x)$ corresponds to 
		\begin{equation}
			(n, u^0(0)(\psi_x^0)^{-1})=(0, a_{\varphi}) = \nu_0a_{\varphi} \in \nu_0R^*=\nu_0\sfT^{\Diamond}(S).
		\end{equation} 
		From this, we see that the map $\Spd k((t)) = \eta \to \nu_0\sfT^{\Diamond} \cong \sfT^{\Diamond}=\Spa (k[u, u^{-1}], k)$ is given by $a: t \mapsfrom u$.
	\end{proof}
	
	Note that we have a $\sfT^{\Diamond}$-action on $\Gr_{\calT_1, X \times x}$ by altering the trivialization $\psi_x$:
	\begin{equation}
		\sfT^{\Diamond} \times \Gr_{\calT_1, X \times x} \to \Gr_{\calT_1, X \times x} \quad (t', (y, \calE, \beta, \psi_x)) \mapsto (y, \calE, \beta, (t')^{-1} \circ \psi_x).
	\end{equation}
	Applying this action to $z$ yields a map $\tilde{z}: X  \times \sfT^{\Diamond} \to \Gr_{\calT_1, X \times x}$ satisfying the following lemma.
	
	\begin{lemma}\label{Lemma_section2}
		We have a map (explicitly defined in the proof)
		\begin{equation}
			\tilde{z}: X \times \sfT^{\Diamond} \to \Gr_{\calT_1, X \times x} 
		\end{equation}
		such that
		\begin{enumerate}
			\item Over $x \in X$, the map $\tilde{z}|_{x \times \sfT^{\Diamond}}: x \times \sfT^{\Diamond} \to \Gr_{\calT_1, x} \cong \sqcup_{n \in \bbZ}\nu_n \sfT^{\Diamond}$ maps $x \times \sfT^{\Diamond} \cong \sfT^{\Diamond}$ to $\nu_1 \sfT^{\Diamond} \cong \sfT^{\Diamond}$, and the map $\sfT^{\Diamond} \to \sfT^{\Diamond}$ is given by $-\on{id}$.
			\item Over $\eta \in X$, the map $\tilde{z}_{\eta}: \eta \times \sfT^{\Diamond} \to \Gr_{\calT_1, \eta \times x} \cong \Gr_{\calT_1, \eta} \times \Gr_{\calT_1, x} \cong \underline{\bbZ}_{\eta} \times \sqcup_{n \in \bbZ}\nu_n \sfT^{\Diamond}$ maps $\eta \times \sfT^{\Diamond}$ to $(1, \nu_0\sfT^{\Diamond})$, and the map $\eta \times \sfT^{\Diamond} \to \eta \times \sfT^{\Diamond}$ is $(p_1, m)$, where $p_1: \eta \times \sfT^{\Diamond} \to \eta$ and $m$ is the composite
			\begin{equation}
				 \eta \times \sfT^{\Diamond} \xrightarrow{(a, \on{id}_{\sfT^{\Diamond}})} \sfT^{\Diamond} \times \sfT^{\Diamond} \xrightarrow{\mu} \sfT^{\Diamond},
			\end{equation}
			where $\mu$ is the multiplication map.
		\end{enumerate}
	\end{lemma}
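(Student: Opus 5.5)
The plan is to define $\tilde{z}$ as the composite of $z \times \on{id}$ with the $\sfT^{\Diamond}$-action on $\Gr_{\calT_1, X \times x}$ that alters $\psi_x$. Concretely, for $(\varphi, t') \in (X \times \sfT^{\Diamond})(S)$ with $S=\Spa(R,R^+)$ and $t' \in R^*$, we set
\[
\tilde{z}(\varphi, t') := \bigl(\varphi, \underline{x}, \calO(-\Gamma_{y_1}), \beta, (t')^{-1}\circ\psi_x\bigr),
\]
where $\beta$ and $\psi_x$ are as in the proof of Lemma \ref{Lemma_section}. Functoriality in $S$ is clear, since all data are base changes of explicit maps of $R^+[[t]]$-modules. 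It remains to identify the two restrictions by running the bookkeeping of Lemma \ref{Lemma_section} with the modified trivialization.

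First we trivialize globally via $\alpha$, the multiplication by $(a_{\varphi}-t)^{-1}$, exactly as before. The datum $\beta^0=(a_\varphi - t)$ is unchanged, and $\psi_x^0$ becomes $(t')^{-1}\cdot 1=(t')^{-1}$. Recall that $\theta$ from Lemma \ref{Lemma_GrT} sends the tuple to $(n, u^0(0)(\psi_x^0)^{-1})$.

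\textbf{Over $x$.} Here $a_\varphi=0$, so $n=1$ and $u^0=-1$. The tuple therefore goes to
\[
(1, (-1)\cdot t') = \nu_1\cdot(-t').
\]
This gives the map $-\on{id}$ on $\sfT^{\Diamond}$, landing in $\nu_1\sfT^{\Diamond}$, which is (1).

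\textbf{Over $\eta$.} Now $a_\varphi\in R^*$, and the component in $\Gr_{\calT_1,\eta}$ still has type $1$, since the modification near $y_1$ is untouched by changing $\psi_x$. In $\Gr_{\calT_1,x}$ we get $n=0$, $u^0(0)=a_\varphi$, hence the point
\[
(0, a_\varphi t') = \nu_0\cdot (a_\varphi t').
\]
Since $a_\varphi$ is exactly the pullback of $u$ under $a$, this is $\mu\circ(a,\on{id})$ evaluated at $(\varphi,t')$. Together with the first component $p_1$, this gives (2).

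The main point to check is the sign and inverse conventions, which are where the earlier sign error occurred. The action uses $(t')^{-1}\circ\psi_x$, and $\theta$ involves $(\psi_x^0)^{-1}$. These two inverses cancel, giving $t'$ rather than $(t')^{-1}$, and this is what makes the map $-\on{id}$ rather than $-\on{inv}$. We must also confirm that transporting $(t')^{-1}\circ\psi_x$ along $\alpha$ yields $(t')^{-1}\psi_x^0$; this holds because $\sfT$ is commutative and scalars commute with $\alpha$.
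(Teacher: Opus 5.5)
Your proposal is correct and follows the paper's route. You define $\tilde{z}$ exactly as the paper does, by twisting $\psi_x$ by $(t')^{-1}$, i.e.\ multiplication by $(t'(a_{\varphi}-t))^{-1}$. You then supply the verification the paper calls ``straightforward'', using the global trivialization $\alpha$ and the formula for $\theta$; your handling of the two inverses is right, giving $-t'$ over $x$ and $a_{\varphi}t'$ over $\eta$.
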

	
	\begin{proof}
		We are going to define functorially in $(\varphi: S \to X) \in \Perf_{/X}$ elements
		\begin{equation}
			\tilde{z}({\varphi}) \in  \Gr_{\calT_1, X \times x}(S).
		\end{equation}
		Let $(\varphi, t') \in X(S) \times \sfT^{\Diamond}(S)$. Define $\tilde{z}({\varphi})=(y_1, y_2, \calE, \beta, \psi_x) \in \Gr_{\calT_1, X \times x}(S)$, where
		\begin{enumerate}
			\item $y_1=\varphi, y_2=\underline{x}$,
			\item $\calE=\calO(-\Gamma_{y_1})$,
			\item $\beta: \calO(-\Gamma_{y_1}) \subseteq \calO_{\calY_S}$ as before,
			\item $\psi_x: (a_{\varphi}-t)R^+[[t]] \otimes_{R^+[[t]]}R \xrightarrow{(t'(a_{\varphi}-t))^{-1}} R^+[[t]] \otimes_{R^+[[t]]} R$.
		\end{enumerate}
		It is straightforward to check that the map $\tilde{z}$ satisfies the properties (1) and (2).
	\end{proof}
	
	We have the following diagram
	\[\begin{tikzcd}
		{x \times \sfT^{\Diamond}} & {X \times \sfT^{\Diamond}} & {\eta \times \sfT^{\Diamond}} \\
		{\Gr_{\calT_1, x}} & {\Gr_{\calT_1, X \times x}} & {\Gr_{\calT_1, \eta \times x}} \\
		x & X & \eta
		\arrow[from=1-1, to=1-2]
		\arrow[from=1-1, to=2-1]
		\arrow["{\tilde{z}}", from=1-2, to=2-2]
		\arrow[from=1-3, to=1-2]
		\arrow[from=1-3, to=2-3]
		\arrow[from=2-1, to=2-2]
		\arrow[from=2-1, to=3-1]
		\arrow["{f_T}", from=2-2, to=3-2]
		\arrow[from=2-3, to=2-2]
		\arrow[from=2-3, to=3-3]
		\arrow[from=3-1, to=3-2]
		\arrow[from=3-3, to=3-2].
	\end{tikzcd}\]
	Note that the composition $f_T \circ \tilde{z}$ is the first projection map $p_1: X \times \sfT^{\Diamond} \to X$. Let $\Sat V_1 \in \scrD_{\et}(\Gr_{T, \eta})$ be the Satake sheaf corresponding to $\nu_1$. Explicitly, $\Sat V_1$ is supported on the $1$-th component $\nu_1\eta$ of $\Gr_{T, \eta}=\underline{\bbZ}_{\eta}$, and it is the constant sheaf on $\nu_1\eta$.  
	
	Note that the map $\tilde{z}$ is a closed immersion, hence proper. On the special fiber, the map $\tilde{z}$ is an isomorphism onto a connected component. So we can compute $\Psi_{f_T}(\Sat V_{1} \boxtimes \calL_{\chi})$ using the fact that $\tilde{z}_*$ commutes with nearby cycles. 
	
	\begin{lemma}\label{Lemma_zLchi}
		On the generic fiber, we have
		$$\tilde{z}_*(a^*\calL_{\chi} \boxtimes \calL_{\chi})=\Sat V_1 \boxtimes \calL_{\chi}.$$ 
	\end{lemma}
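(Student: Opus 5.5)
We compute $\tilde{z}_*(a^*\calL_{\chi}\boxtimes\calL_{\chi})$ directly on the generic fiber, using the explicit description of $\tilde{z}_\eta$ from Lemma \ref{Lemma_section2}(2). There, $\tilde z_\eta$ is a closed immersion onto the component $(1,\nu_0\sfT^{\Diamond})$ of $\Gr_{\calT_1,\eta\times x}\cong \underline{\bbZ}_\eta\times\sqcup_n\nu_n\sfT^{\Diamond}$. In fact it is an isomorphism onto this component, given by $(p_1,m)$ with $m(e,t')=a(e)\cdot t'$. Its inverse is $(e,s)\mapsto (e,a(e)^{-1}s)$, so $(p_1,m)$ is an automorphism of $\eta\times\sfT^{\Diamond}$. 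Consequently
\begin{equation*}
\tilde z_{\eta,*}(a^*\calL_\chi\boxtimes\calL_\chi)\cong \bigl((p_1,m)^{-1}\bigr)^*(a^*\calL_\chi\boxtimes\calL_\chi),
\end{equation*}
extended by zero from the component $\nu_1\eta\times\nu_0\sfT^{\Diamond}$.

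The key step is the character-sheaf identity $\mu^*\calL_\chi\cong\calL_\chi\boxtimes\calL_\chi$ on $\sfT^{\Diamond}\times\sfT^{\Diamond}$, together with the consequence $\mathrm{inv}^*\calL_\chi\cong\calL_\chi^{-1}$. Writing $\Phi=(p_1,m)$, we get
\begin{equation*}
\Phi^*(\underline{\Lambda}_\eta\boxtimes\calL_\chi)=m^*\calL_\chi=(a,\on{id})^*\mu^*\calL_\chi\cong a^*\calL_\chi\boxtimes\calL_\chi,
\end{equation*}
where $\underline{\Lambda}_\eta$ is the constant sheaf on $\eta$. Hence
\begin{equation*}
(\Phi^{-1})^*(a^*\calL_\chi\boxtimes\calL_\chi)\cong\underline{\Lambda}_\eta\boxtimes\calL_\chi.
\end{equation*}
This is the cleanest route: rather than computing $(\Phi^{-1})^*$ by hand, we pull back the target sheaf along $\Phi$ and compare.

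Finally we identify the result with $\Sat V_1\boxtimes\calL_\chi$. By definition $\Sat V_1$ is the constant sheaf on the component $\nu_1\eta$ of $\Gr_{T,\eta}=\underline{\bbZ}_\eta$. The sheaf $\calL_\chi$ on $\Gr_{\calT_1,x}$ is, by the convention of Definition \ref{Def_twisted_central}, the Kummer sheaf on $I/I^0=\nu_0\sfT^{\Diamond}$ extended by zero. Under the isomorphism $\Gr_{\calT_1,\eta\times x}\cong\Gr_{\calT_1,\eta}\times\Gr_{\calT_1,x}$, the sheaf $\Sat V_1\boxtimes\calL_\chi$ is therefore exactly $\underline{\Lambda}\boxtimes\calL_\chi$ on $\nu_1\eta\times\nu_0\sfT^{\Diamond}$, extended by zero, which matches the computation above.

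The only subtle point is that the Kummer sheaf on $I/I^0\cong\sfT$ must agree with the $\calL_\chi$ on the $\nu_0\sfT^{\Diamond}$-coordinate of Lemma \ref{Lemma_GrT}. That coordinate is $u^0(0)(\psi^0_x)^{-1}$, so one has to check that no inversion is introduced there. If an inversion did appear, it would replace $\chi$ by $\chi^{-1}$ on both factors simultaneously, and the identity would still hold. So the only essential input is the multiplicativity $\mu^*\calL_\chi\cong\calL_\chi\boxtimes\calL_\chi$ for Kummer sheaves pulled back to $\sfT^{\Diamond}$. This holds because it holds on the scheme $\sfT$, and pullback along the diamond functor preserves it.
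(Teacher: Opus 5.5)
Your argument is the paper's proof. You restrict to the component $(1,\nu_0\sfT^{\Diamond})\cong\eta\times\sfT^{\Diamond}$ supporting $\Sat V_1\boxtimes\calL_\chi$, use Lemma \ref{Lemma_section2} to identify $\tilde z_\eta$ with the automorphism $\Phi=(p_1,m)$, and reduce to $\Phi^*(\underline{\Lambda}\boxtimes\calL_\chi)\cong a^*\calL_\chi\boxtimes\calL_\chi$, which is the multiplicativity $\mu^*\calL_\chi\cong\calL_\chi\boxtimes\calL_\chi$.

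One correction to your last paragraph. An inversion in the $\theta$-coordinate would \emph{not} be harmless. The target $\Sat V_1\boxtimes\calL_\chi$ is intrinsic, since $\calL_\chi$ is the Kummer sheaf on $I/I^0$. So an inversion would turn the source sheaf into $a^*\calL_{\chi^{-1}}\boxtimes\calL_{\chi^{-1}}$. That is a different statement, because $a^*$ is injective on tame characters, and the monodromy extracted from this lemma would come out as $a^*\calL_{\chi^{-1}}$ instead of $a^*\calL_\chi$. So the identification must actually be checked; the paper takes it for granted too.

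It does check out with the lattice convention the paper uses, the one giving $\beta^0=a_\varphi-t$ in Lemma \ref{Lemma_section}. A point $gL^+\calT_1$ with $g\in L^+\calT(R)=R[[t]]^*$ is the tuple $(\calE^0,\beta^0=g,\psi^0_x=1)$. Hence $\theta$ sends it to $(0,g(0))$, which is exactly the quotient map $L^+\calT\to L^+\calT/L^+\calT_1\cong\sfT$. Therefore $\calL_\chi$ on $I/I^0$ is $\calL_\chi$ in the coordinate, and no inversion occurs.
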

	
	\begin{proof}
		Indeed, $\Sat V_1 \boxtimes \calL_{\chi}$ is supported on an irreducible component $Z$ of $\Gr_{\calT_1, \eta \times x}$. The irreducible component $Z$ is isomorphic to $\eta \times \sfT$, with $\Sat V_1 \boxtimes \calL_{\chi}$ corresponding to $\underline{\Lambda} \boxtimes \calL_{\chi}$. The map $\tilde{z}: \eta \times \sfT \to Z$ is identified with the map $(p_1, m): \eta \times \sfT \to  \eta \times \sfT$ by Lemma \ref{Lemma_section2}. So it suffices to show that $(p_1, m)_*(a^*\calL_{\chi} \boxtimes \calL_{\chi})=\underline{\Lambda} \boxtimes \calL_{\chi}$. Note that the map $(p_1, m): \eta \times \sfT \to  \eta \times \sfT$ is an isomorphism. Hence it suffices to show that
		$$a^*\calL_{\chi} \boxtimes \calL_{\chi}=(p_1, m)^*(\underline{\Lambda} \boxtimes \calL_{\chi}).$$
		This follows from $\mu^*\calL_{\chi} \cong \calL_{\chi} \boxtimes \calL_{\chi}$, where $\mu: \sfT \times \sfT \to \sfT$ is the multiplication map.
	\end{proof}
	
		We can now compute that
	\begin{align}
		\;& \Psi_{f_T}(\Sat V_{1} \boxtimes \calL_{\chi})\\
		\cong \;& \Psi_{f_T}(\tilde{z}_*(a^*\calL_{\chi} \boxtimes \calL_{\chi}))\\
		\cong \;& \tilde{z}_*\Psi_{p_1}(a^*\calL_{\chi} \boxtimes \calL_{\chi}) \quad{\text{$\tilde{z}$ is proper}}\\
		\cong \;& (-\on{id})_*\Psi_{p_1}(a^*\calL_{\chi} \boxtimes \calL_{\chi}) \quad{\text{Lemma \ref{Lemma_section2}}}\\
		\cong \;& \Psi_{\on{id}_X}(a^*\calL_{\chi}) \boxtimes (-\on{id})_*\calL_{\chi} \quad \text{Nearby cycles for the constant family $p_1: X \times \sfT \to X$}
	\end{align}
	
	Note that the constant factor $(-\on{id})_*\calL_{\chi}$ does not affect the monodromy. Therefore, we get that the $\Gal(\overline{\eta}/\eta)=\pi_1(\eta)$-action on $\Psi_{f_T}(\Sat V_{1} \boxtimes \calL_{\chi})$ is given by $a^*\calL_{\chi}$.
	
	\subsubsection{The case for general torus}
	Let $\sfT$ be a torus over $k$ ($\sfT$ is split) and $\nu \in \bbX_{\bullet}(\sfT)$.
	
	\begin{lemma}\label{Lemma_section_T}
		For every $\nu \in \bbX_{\bullet}(\sfT)$, there is a section (explicitly defined in the proof)
		\begin{equation}
			z_{\nu}: X \to \Gr_{\calT_1, X \times x}
		\end{equation}
		such that
		\begin{enumerate}
			\item Over $x$, the map $z_{\nu}|_x: x \to \Gr_{\calT_1, x \times x} \cong \sqcup_{\nu' \in \bbX_{\bullet}(\sfT)}\nu'\sfT^{\Diamond}$ maps to the $\nu$-th component.
			\item Over $\eta$, the map $z_{\nu}|_{\eta}: \eta \xrightarrow{(f_1, f_2)} \Gr_{\calT_1, \eta} \times \Gr_{\calT_1, x} \cong \underline{\bbX_{\bullet}(\sfT)}_{\eta} \times \sqcup_{\nu' \in \bbX_{\bullet}(\sfT)}\nu'\sfT^{\Diamond}$ satisfies the following properties: the map $f_1$ maps $\eta$ to the $\nu$-th component of $\underline{\bbX_{\bullet}(\sfT)}_{\eta}$; the map $f_2$ maps $\eta$ to the $0$-th component of
			$\sqcup_{\nu' \in \bbX_{\bullet}(\sfT)}\nu'\sfT^{\Diamond}$, and the map $\eta \to \sfT^{\Diamond}$ is given by $\nu \circ a$.
		\end{enumerate}
	\end{lemma}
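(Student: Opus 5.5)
The plan is to generalize the construction of Lemma \ref{Lemma_section} from $\bbG_m$ to an arbitrary split torus by pushing the rank-one section forward along the cocharacter $\nu$. Concretely, $\nu:\bbG_m\to\sfT$ extends uniquely to a homomorphism of integral models $\calT_{1,\bbG_m}\to\calT_1$: it maps $\bbG_m$ over $\bOE$ to $\calT$, and it carries the kernel of reduction to the kernel of reduction, so the dilatation property gives the extension. Functoriality of the Beilinson--Drinfeld Grassmannian in the group then yields a map
$$\nu_*:\Gr_{\bbG_{m,1}, X\times x}\to \Gr_{\calT_1, X\times x}$$
over $X$. Here $\Gr_{\bbG_{m,1}, X\times x}$ is the space $\Gr_{\calT_1, X \times x}$ of Lemma \ref{Lemma_section} in the case $\sfT=\bbG_m$. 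On moduli, $\nu_*$ pushes out the $GL_1$-torsor $\calE$ along $\nu$, and it transports the trivializations $\beta$ and $\psi_x$ along $\nu$ as well. I would set $z_\nu:=\nu_*\circ z_1$, with $z_1$ the section of Lemma \ref{Lemma_section}.

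Explicitly, for $\varphi:S=\Spa(R,R^+)\to X$, the point $z_\nu(\varphi)$ is the trivial $\calT$-torsor $\calE^0$ on $\calY_S$ with two pieces of data. The first is $\beta^0=\nu(a_\varphi-t)\in \calT(R((t)))$, taken away from $\Gamma_y$. The second is $\psi_x^0=1$. This is the image of the normalized description $(\calE^0,\beta^0,\psi^0_x)$ found in the proof of Lemma \ref{Lemma_section}.

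Next I would record the analogue of Lemma \ref{Lemma_GrT} for $\sfT$: an isomorphism $\Gr_{\calT_1,x}\cong \sqcup_{\nu'\in\bbX_\bullet(\sfT)}\nu'\sfT^{\Diamond}$. To build it, choose a basis of $\bbX_\bullet(\sfT)$, which identifies $\sfT\cong\bbG_m^r$; the isomorphism is then the product of the rank-one isomorphisms. Intrinsically, it sends $\beta^0|_{\hat\Gamma_x^*}=\nu'(t)\cdot u^0(t)$ with $u^0\in\calT(R[[t]])$ to $(\nu', u^0(0)(\psi^0_x)^{-1})$. The point to check is that this is compatible with $\nu_*$. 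The type $n$ maps to $\nu\cdot n$, and the $\sfT^{\Diamond}$-coordinate maps to its image under $\nu$, since evaluating at $t=0$ commutes with the homomorphism $\nu$. The same compatibility holds for $\Gr_{\calT_1,\eta}\cong\underline{\bbX_\bullet(\sfT)}_\eta$ over $\eta$, where only the type is recorded.

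With this compatibility, (1) and (2) follow from Lemma \ref{Lemma_section}. Over $x$, the section $z_1$ lands in $\nu_1\cdot(-1)$, so $z_\nu$ lands in the $\nu$-th component, at the point $\nu(-1)$. Over $\eta$, $z_1$ lands in the component with type $1$ at $y_1$ and in $\nu_0\sfT^{\Diamond}$ at $x$, via $a$. Applying $\nu_*$ gives type $\nu$ for $f_1$ and the $0$-th component for $f_2$, with the map to $\sfT^{\Diamond}$ equal to $\nu\circ a$.

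The only step I expect to need care is the compatibility of $\nu_*$ with the component decompositions and with the ``evaluate at $0$'' coordinate, especially over $\eta$. There the splitting $\Gr_{\calT_1,\eta\times x}\cong\Gr_{\calT_1,\eta}\times\Gr_{\calT_1,x}$ comes from restricting to the disjoint completions $\hat\Gamma_{y_1}$ and $\hat\Gamma_{y_2}$, and this restriction commutes with pushout of torsors. Because everything is written in terms of explicit units of $R((t))$ and $R[[t]]$, I expect this to be a direct verification rather than a genuine obstacle.
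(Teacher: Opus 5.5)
Your proposal is correct and follows the paper's proof. The paper also sets $z_\nu:=\nu_*z$, inducing the rank-one section of Lemma \ref{Lemma_section} along $\nu$, and reads off (1) and (2) from two facts: $\nu_*$ sends type $1$ to type $\nu$, and it acts on the $\sfT^{\Diamond}$-coordinate by $\nu$, which gives $\nu(-1)$ over $x$ and $\nu\circ a$ over $\eta$. Your remarks on extending $\nu$ to the dilatations and on compatibility with the splitting over $\eta$ spell out details the paper leaves implicit.
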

	
	\begin{proof}
		In fact, given $\nu: \bbG_m \to \sfT$, we have a map $\nu_*: \Gr_{\bbG_{m, 1}, X \times x} \to \Gr_{\calT_1, X \times x}$ by induction of torsors along $\nu$, where $\bbG_{m, 1}$ is the dilation of $\bbG_m$ at the trivial subgroup over the special fiber. In our case, we can define
		\begin{equation}
			z_{\nu}:=\nu_*z \in \Gr_{\calT_1, X \times x}(X),
		\end{equation}
		where $z: X \to \Gr_{\bbG_{m, 1}, X \times x}$ is the section we constructed in  Lemma \ref{Lemma_section}.
		
		Explicitly, for every $\varphi: S=\Spa(R, R^+) \to X$, the section $z_{\nu}(\varphi)$ is given by $(y_1, y_2, \calE, \beta, \psi_x) \in \Gr_{\calT_1, X \times x}(S)$, where
		\begin{enumerate}
			\item $y_1=\varphi$, $y_2=\underline{x}$.
			\item $\calE:=\calO(-\Gamma_{y_1}) \times_{\bbG_m, \nu} \sfT$, the induction of the $\bbG_m$-torsor $\calO(-\Gamma_{y_1})$ along $\nu: \bbG_m \to \sfT$.
			\item $\beta$ is the induction of the inclusion $\calO(-\Gamma_{y_1}) \subseteq \calO$ along $\nu$.
			\item $\psi_x$ is the induction of \eqref{Eq_psi} along $\nu$.
		\end{enumerate}
		
		It remains to verify the assertions over $x$ and $\eta$.
		\begin{enumerate}
			\item Over $x$: Note that the map $\nu_*: \Gr_{\bbG_m, x} \cong \underline{\bbZ}_x \to \Gr_{\sfT, x} \cong \underline{\bbX_{\bullet}(\sfT)}_x$ sends $1=\nu_1$ to $\nu$. Thus $z_{\nu}(x)$ lies in the $\nu$-th component. Moreover, since $z=z_{1}$ sends $x$ to $-1 \in \bbG_m^{\Diamond} \cong \nu_1 \bbG_m^{\Diamond}$, we have $z_{\nu}(x)=\nu.\nu(-1) \in \nu \sfT^{\Diamond}$.

			\item Over $\eta$: We look at the map 
			$$z_{\nu}|_{\eta}: \eta \xrightarrow{(f_1, f_2)} \Gr_{\calT_1, \eta} \times \Gr_{\calT_1, x} \cong \underline{\bbX_{\bullet}(\sfT)}_{\eta} \times \sqcup_{\nu' \in \bbX_{\bullet}(\sfT)}\nu'\sfT^{\Diamond}.$$
			As in (1), the map $f_1$ sends $\eta$ to the $\nu$-th component, and the map $f_2$ sends $\eta$ to the $0$-th component. Recall that for $(\sfT, \nu)=(\bbG_m, 1)$, the map $z=z_{1}$ induces $f_{2, 1}=a: \eta \to \bbG_m^{\Diamond}$. Therefore, we have $f_2=f_{2, \nu}=\nu \circ a$.
		\end{enumerate}
		
	\end{proof}
	
	Similarly, we have the analogue of Lemma \ref{Lemma_section2}.
	
	\begin{lemma}
		For every $\nu \in \bbX_{\bullet}(\sfT)$, there is a map 
		\begin{equation}
			\tilde{z}_{\nu}: X \times \sfT^{\Diamond} \to \Gr_{\calT_1, X \times x}
		\end{equation}
		such that 
		\begin{enumerate}
			\item The map $\tilde{z}_{\nu}|_{x \times \sfT^{\Diamond}}: x \times \sfT^{\Diamond} \to \Gr_{\calT_1, x \times x} \cong \sqcup_{\nu' \in \bbX_{\bullet}(\sfT)}\nu' \sfT^{\Diamond}$
			factors through $\nu \sfT^{\Diamond}$, and the map
			$x \times \sfT^{\Diamond} \cong \sfT^{\Diamond} \to \nu \sfT^{\Diamond} \cong \sfT^{\Diamond}$ is given by $\nu(-1)$, multiplication by $\nu(-1)$.
			\item The map $\tilde{z}_{\nu} |_{\eta \times \sfT^{\Diamond}}: \eta \times \sfT^{\Diamond} \xrightarrow{(f_1, f_2)} \Gr_{\calT_1, \eta \times x} \cong \Gr_{\calT_1, \eta} \times \Gr_{\calT_1, x}$ satisfies the following: the map $f_1$ factors through the $\nu$-th component of $\Gr_{\calT_1, \eta} \cong \underline{\bbX_{\bullet}(\sfT)}_{\eta}$, and the map $\eta \times \sfT^{\Diamond} \to \nu \eta \cong \eta$ is given by $p_1$, the projection to the first factor; the map $f_2$ factors through the $0$-th component of $\Gr_{\calT_1, x}$, and the map $\eta \times \sfT^{\Diamond} \to \sfT^{\Diamond}$ is given by
			\begin{equation}
				m_{\nu}:=\mu \circ (\nu \circ a, \id_{\sfT^{\Diamond}}): \quad  (t, u) \mapsto \nu(a(t)).u.
			\end{equation} 
		\end{enumerate}
	\end{lemma}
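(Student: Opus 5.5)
The plan is to build $\tilde{z}_{\nu}$ from the rank-one map $\tilde{z}$ of Lemma \ref{Lemma_section2}, in the same way Lemma \ref{Lemma_section_T} builds $z_{\nu}$ from $z$. Namely, let $\sfT^{\Diamond}$ act on $\Gr_{\calT_1, X \times x}$ by altering the trivialization at the fixed leg,
$$(t', (y, \calE, \beta, \psi_x)) \mapsto (y, \calE, \beta, (t')^{-1} \circ \psi_x),$$
and define $\tilde{z}_{\nu}(\varphi, t')$ to be the image of $z_{\nu}(\varphi)$ under $t'$. Concretely, for $(\varphi, t') \in X(S) \times \sfT^{\Diamond}(S)$ with $S = \Spa(R, R^+)$, set $\tilde{z}_{\nu}(\varphi, t') = (y_1, y_2, \calE, \beta, \psi_x)$, where:
\begin{enumerate}
\item $y_1 = \varphi$ and $y_2 = \underline{x}$;
\item $\calE = \calO(-\Gamma_{y_1}) \times_{\bbG_m, \nu} \sfT$, and $\beta$ is the $\nu$-induction of the inclusion $\calO(-\Gamma_{y_1}) \subseteq \calO$;
\item $\psi_x = (t')^{-1} \circ \nu_*(\text{the map \eqref{Eq_psi}})$.
\end{enumerate}
Functoriality in $S$ is clear, since every ingredient is a base change of a formula over $R^+[[t]]$.

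To verify (1) and (2), transport the tuple along the global trivialization $\nu_*\alpha$, where $\alpha$ is multiplication by $(a_{\varphi}-t)^{-1}$. This turns it into $(\calE^0, \beta^0, \psi_x^0)$ with
$$\beta^0 = \nu(a_{\varphi} - t) \in \sfT(R((t))), \qquad \psi_x^0 = (t')^{-1}.$$
We then apply the $\sfT$-version of the map $\theta$ of Lemma \ref{Lemma_GrT}, which factors an element of $\sfT(R((t)))$ as $\nu'(t) \cdot u^0(t)$ with $u^0 \in \sfT(R[[t]])$ and outputs $(\nu', u^0(0)(\psi_x^0)^{-1})$. For a split torus this is just the rank-one statement coordinatewise, so $\theta$ is an isomorphism $\Gr_{\calT_1, x} \cong \sqcup_{\nu'} \nu' \sfT^{\Diamond}$.

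Over $x$ we have $a_{\varphi} = 0$, so
$$\beta^0 = \nu(-t) = \nu(t)\,\nu(-1).$$
The type is therefore $\nu$ and $u^0(0) = \nu(-1)$, so the point is $\nu \cdot \nu(-1)\, t'$. This is multiplication by $\nu(-1)$, which gives (1).

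Over $\eta$, the element $a_{\varphi}$ is a unit, so the legs are disjoint. Near $y_1$ the modification has type $\nu$ and depends only on $\varphi$, so $f_1 = p_1$ onto the $\nu$-th copy of $\eta$. Near $x$, the element $\beta^0 = \nu(a_{\varphi} - t)$ lies in $\sfT(R[[t]])$, so it has type $0$ and $u^0(0) = \nu(a_{\varphi})$. Hence $f_2$ sends $(\varphi, t')$ to $\nu(a(\varphi))\, t'$, which is $m_{\nu}$.

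The only point needing care is the sign and inversion convention. The action uses $(t')^{-1}$ while $\theta$ uses $(\psi_x^0)^{-1}$, and these two inversions must cancel so that the answer is $\nu(a(t))\,u$ rather than $\nu(a(t))\,u^{-1}$. Likewise, the factor $\nu(-1)$ over $x$ has to come out on the correct side; since $\sfT$ is commutative, left versus right does not matter. I would check both points against the rank-one case of Lemma \ref{Lemma_section2}, which is the specialization $\sfT = \bbG_m$, $\nu = 1$. Everything else is a direct transcription of the rank-one argument through induction along $\nu$.
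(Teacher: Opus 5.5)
Your proposal is correct and follows the paper's own argument. The paper proves this lemma by saying the proof of Lemma \ref{Lemma_section2} carries over: one twists the trivialization $\psi_x$ of $z_\nu=\nu_*z$ by $(t')^{-1}$, transports along the global trivialization $\nu_*\alpha$, and reads off the components via the map $\theta$ of Lemma \ref{Lemma_GrT}, which is exactly your computation, including the cancellation of the two inversions and the factor $\nu(-1)$ over $x$.
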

	
	\begin{proof}
		A similar proof as Lemma \ref{Lemma_section2} works.
	\end{proof}

	Over $\eta \times \sfT^{\Diamond}$, we have
	\begin{equation}
		(p_1, m_{\nu})^*(\underline{\Lambda} \boxtimes \calL_{\chi}) \cong a^*\nu^*\calL_{\chi} \boxtimes \calL_{\chi}.
	\end{equation}
	Now we can compute the monodromy on the twisted central sheaves:
	\begin{align}
		\;&\Psi_{f_T}(\Sat V_{\nu} \boxtimes \calL_{\chi})\\
		\cong \;&\Psi_{f_T}(\tilde{z}_{\nu, *}(a^*\nu^*\calL_{\chi} \boxtimes \calL_{\chi}))\\
		\cong \;&\tilde{z}_{\nu, *}\Psi_{p_1}((a^*\nu^*\calL_{\chi} \boxtimes \calL_{\chi}))\\
		\cong \;&\nu(-1)_*\Psi_{p_1}((a^*\nu^*\calL_{\chi} \boxtimes \calL_{\chi})).
	\end{align}
	As before, the functor $\nu(-1)_*$ and the constant factor $\boxtimes \calL_{\chi}$ does not affect the monodromy. Therefore, the complex $\Psi_{f_T}(\Sat V_{\nu} \boxtimes \calL_{\chi})$ has the same monodromy as $\Psi_{\on{id}_X}(a^*\nu^*\calL_{\chi})$, which is given by $a^*\nu^*\calL_{\chi}$.

   	\subsection{Computation in equal characteristic: multi-leg version}\label{Subsection_compute_multi}
   	Let $J=\{1, ..., n_0\}$ be a finite set, $\nu=(\nu_1, ..., \nu_{n_0}) \in \bbX_{\bullet}(\sfT)^J$. Let $V_{\nu}$ be the irreducible highest weight representation of $\widehat{T}^J$ of weight $\nu$. We want to compute the iterated nearby cycle 
   	$$\Psi_{f_T}(\Sat(V_{\nu}) \boxtimes \calL_{\chi})$$
   	with respect to the map
   	\begin{equation}
   		f_T: \Gr_{\calT_1, X^J \times x} \to X^J.
   	\end{equation}

   	\subsubsection{The case $\sfT=\bbG_m$}
   	
   	Let us first consider the case $\sfT=\bbG_m$. Identifying $\bbX_{\bullet}(\bbG_m) \cong \bbZ$, $\nu$ corresponds to a tuple of integers $(m_1, ... ,m_{n_0}) \in \bbZ^J$. In this subsubsection we treat the case that $m_j \geq 0$ for every $j \in J$. The general case will be treated in the next subsubsection. We are going to define a map 
   \begin{equation}
   	    \tilde{z}_{\nu}: X^J \times \sfT^{\Diamond} \to \Gr_{\calT_1, X^J \times x}.
   \end{equation}
   For $\varphi: S = \Spa(R, R^+) \to X^J \times \sfT^{\Diamond}$ corresponding to $a_1, a_2, ..., a_{n_0} \in X(R) \subseteq R^+$ and $t' \in \sfT^{\Diamond}(S)=R^*$, we define the element
   \begin{equation}
   	\tilde{z}_{\nu}(\varphi)=(y_1, ..., y_{n_0}, y_{n_0+1}, \calE, \beta, \psi_x) \in \Gr_{\calT_1, X^J \times x}(S)
   \end{equation}
   as follows:
   \begin{enumerate}
   	\item Define $y_j=\varphi_j \; (1 \leq j \leq n_0)$, where $\varphi_j$ is the composition of $\varphi$ with the projection onto the $j$-th $X$. Define $y_{n_0+1}=\underline{x}$.
   	\item Define $\calE=\calO(-\sum_{j \in J}m_j\Gamma_{y_j}) \cong \otimes_{j \in J} \calO(-m_j\Gamma_{y_j})$.
   	\item Define $\beta: \calO(-\sum_{j \in J}m_j\Gamma_{y_j}) \to \calO_{\calY_S}$ the natural inclusion corresponding to 
   	$$b_{\varphi}R^+[[t]] \subseteq R^+[[t]],$$
   	where 
   	\begin{equation}
   		b_{\varphi}=\prod_{j \in J}(a_j-t)^{m_j}  \in R^+[[t]].
   	\end{equation}
   	\item Define $\psi_x: \calE|_{S} \to \calE^0|_{S}$ as the map corresponding to multiplication by $(t'b_{\varphi})^{-1}$:
   	$$b_{\varphi}R^+[[t]] \to R^+[[t]].$$
   \end{enumerate}
   
   Similar to before, we have the following lemma. Denote 
   \begin{equation}
   	X^J_{\on{disj}} \subseteq X^J
   \end{equation}
   the locus of $(y_j)_{j \in J}$ such that the graphs $\Gamma_{y_j}$ are disjoint. For every space $X' \to X^J$, denote
   \begin{equation}
   	X'_{\on{disj}} = X' \times_{X^J} X_{\on{disj}}.
   \end{equation}

   \begin{lemma}
   The map $\tilde{z}_{\nu}$ has the following properties:
   \begin{enumerate}
   	\item Over $x^J \subseteq X^J$, the map 
   	$$\tilde{z}_{\nu, x^J}: x^J \times \sfT^{\Diamond} \cong \sfT^{\Diamond} \to \Gr_{\calT_1, x^J \times x} \cong \sqcup_{n' \in \bbZ}\nu_{n'}\sfT^{\Diamond}$$
   	factors through $\nu_m\sfT^{\Diamond} \cong \sfT^{\Diamond}$, where $m=\sum_{j \in J}m_j$. And the map $\sfT^{\Diamond} \to \sfT^{\Diamond}$ is given by multiplication by $c_{\nu} \in \sfT^{\Diamond}$, where $c_{\nu}=\prod_{j \in J}\nu_{m_i}(-1)=\prod_{j \in J}(-1)^{m_j}=(-1)^m$.
   	\item Over $U=(\eta^J)_{\on{disj}}$, the map 
   	$$\tilde{z}_{\nu, U}: U \times \sfT^{\Diamond} \to \Gr_{\calT_1, U \times x} \cong (\prod_{j \in J}\Gr_{\calT_1, \eta} \times \Gr_{\calT_1, x})|_U \cong (\prod_{j \in J}\underline{\bbZ}_{\eta} \times \sqcup_{n' \in \bbZ}\nu_{n'}\sfT^{\Diamond})|_U$$
   	factors through $((m_1, ..., m_{n_0}) \times \nu_0\sfT^{\Diamond})|_U \cong U \times \sfT^{\Diamond}$, and the map $U \times \sfT^{\Diamond} \to \sfT^{\Diamond}$ is given by the following composition:
   	\begin{equation}
   		\eta^J \times \sfT^{\Diamond} \xrightarrow{(a, ..., a, \id_{\sfT^{\Diamond}})}
   		 (\sfT^{\Diamond})^J \times \sfT^{\Diamond}
   		 \xrightarrow{(\nu_{m_1}, ..., \nu_{m_{n_0}}, \id_{\sfT^{\Diamond}})} (\sfT^{\Diamond})^J \times \sfT^{\Diamond} \xrightarrow{\mu} \sfT^{\Diamond},
   	\end{equation}
   	where $\mu$ is the multiplication.
   \end{enumerate}
   \end{lemma}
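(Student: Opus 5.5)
The proof is a direct unwinding of the definition of $\tilde{z}_{\nu}$, modeled on the proofs of Lemma \ref{Lemma_section} and Lemma \ref{Lemma_section2}. First trivialize $\calE$ globally. Multiplication by $b_{\varphi}^{-1}$ gives an isomorphism $\alpha:\calE=b_{\varphi}R^+[[t]]\to R^+[[t]]$ over $\calY_S$. Transporting the data along $\alpha$ replaces $(\calE,\beta,\psi_x)$ by $(\calE^0,\beta^0,\psi_x^0)$ with $\beta^0=b_{\varphi}=\prod_{j\in J}(a_j-t)^{m_j}$ and $\psi_x^0=t'^{-1}$. This is the same computation as in Lemma \ref{Lemma_section}, now multiplicative in the factors.

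\emph{Claim (1).} Over $x^J$ all $a_j=0$, so $\beta^0=\prod_j(-t)^{m_j}=t^m\cdot(-1)^m$. In the notation of Lemma \ref{Lemma_GrT} the modification type is $n=m$ and $u^0(t)=(-1)^m$. Therefore $\theta$ sends the point to
\[
\bigl(m,\;u^0(0)(\psi_x^0)^{-1}\bigr)=\bigl(m,(-1)^m t'\bigr).
\]
So the map factors through $\nu_m\sfT^{\Diamond}$ and is multiplication by $c_{\nu}=(-1)^m$.

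\emph{Claim (2).} Over $U=(\eta^J)_{\on{disj}}$ the legs $y_1,\dots,y_{n_0},x$ are pairwise disjoint. The factorization isomorphism then identifies $\Gr_{\calT_1,U\times x}$ with $\prod_j\Gr_{\calT_1,\eta}\times\Gr_{\calT_1,x}$ restricted to $U$, by restricting to the completions $\hat\Gamma_{y_j}$ and $\hat\Gamma_x$.

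Near $y_j$, the factors $(a_i-t)$ with $i\neq j$ are units, because the graphs are disjoint. Hence the modification type at $y_j$ is $m_j$, the order of vanishing of $(a_j-t)^{m_j}$.

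Near $x$, each $a_j\in R^*$ since $y_j$ lies over $\eta$. So $b_{\varphi}$ is a unit in $R[[t]]$, the type is $0$, and $u^0(0)=b_{\varphi}(0)=\prod_j a_j^{m_j}$. Thus the $\sfT^{\Diamond}$-coordinate is $\prod_j a_j^{m_j}\cdot t'$. This is exactly the composite
\[
\mu\circ(\nu_{m_1},\dots,\nu_{m_{n_0}},\id)\circ(a,\dots,a,\id),
\]
evaluated at $(a_1,\dots,a_{n_0},t')$.

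Two points need care. First, the sign and inversion conventions in $\psi_x$ and in $\theta$ must be tracked so that $t'$ appears with exponent $+1$; this is the same bookkeeping as in Lemma \ref{Lemma_section2}, where the $\sfT^{\Diamond}$-action was already normalized. Second, one must check that the decomposition $\theta$ is compatible with the factorization isomorphism when several legs are present. This is the main point requiring justification, namely that the residue at $x$ is computed from the global $\beta^0$ restricted to $\hat\Gamma_x$. It holds because all the constructions are v-local and functorial in $S$.
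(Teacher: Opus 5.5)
Your proposal is correct and follows the paper's approach. The paper's proof is just ``A similar proof as Lemma~\ref{Lemma_section2} works,'' and your argument carries that out: trivialize $\calE$ by $b_{\varphi}^{-1}$ to get $\beta^0=b_{\varphi}$ and $\psi_x^0=t'^{-1}$, then read off the modification types and the $\sfT^{\Diamond}$-coordinate $u^0(0)(\psi_x^0)^{-1}$ over $x^J$ and over $U$, which gives $(-1)^m t'$ and $\prod_j a_j^{m_j}\,t'$ respectively.
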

   
   \begin{proof}
   	A similar proof as Lemma \ref{Lemma_section2} works.
   \end{proof}

   \subsubsection{The case for general torus $\sfT$}
   Now let $\sfT$ be any torus over $k$. Let $\nu=(\nu_1, ..., \nu_{n_0}) \in \bbX_{\bullet}(\sfT)^J$ as before. Recall that for every map $b: G_1 \to G_2$ of groups and $\calF$ a $G_1$-torsor, we have $b_*\calF$, the induced $G_2$-torosr.
   
   We are going to define a map 
   \begin{equation}
   	\tilde{z}_{\nu}: X^J \times \sfT^{\Diamond} \to \Gr_{\calT_1, X^J \times x}.
   \end{equation}
   
   For $\varphi: S = \Spa(R, R^+) \to X^J \times \sfT^{\Diamond}$ corresponding to $a_1, a_2, ..., a_{n_0} \in X(R) \subseteq R^+$ and $t' \in \sfT^{\Diamond}(S)=\sfT(R)$, we define the element
   \begin{equation}
   	\tilde{z}_{\nu}(\varphi)=(y_1, ..., y_{n_0}, y_{n_0+1}, \calE, \beta, \psi_x) \in \Gr_{\calT_1, X^J \times x}(S)
   \end{equation}
   as follows:
   \begin{enumerate}
   	\item Define $y_j=\varphi_j \; (1 \leq j \leq n_0)$, where $\varphi_j$ is the composition of $\varphi$ with the projection onto the $j$-th $X$. Define $y_{n_0+1}=\underline{x}$.
   	\item Define $\calE=m_{J, *}(\oplus_{j \in J} \nu_{j, *}\calO(-\Gamma_{y_j}))$, where $m_J: \sfT^J \to \sfT$ is the multiplication map. Here we view $\oplus_{j \in J} \nu_{j, *}\calO(-\Gamma_{y_j})$ as a $\sfT^J$-torsor, so $\calE$ is the induced $\sfT$-torsor.
   	\item Let $\beta: m_{J, *}(\oplus_{j \in J} \nu_{j, *}\calO(-\Gamma_{y_j})) \to m_{J, *}(\oplus_{j \in J} \nu_{j, *}\calO_{\calY_S}) \cong \calE^0$ be the natural map, where $\calE^0$ is the trivial $\sfT$-torsor.
   	\item Let $\psi_x=(t')^{-1}m_{J, *}(\oplus_{j \in J}\nu_{j, *}((a_j-t)^{-1})): \calE|_S \to \calE^0|_S$ be the map induced by mulplication by $(t'b_{\varphi})^{-1}$, where $b_{\varphi}=\prod_{j \in J}(\nu_j(a_j-t))$.
   \end{enumerate}

   \begin{lemma}\label{Lemma_section3}
   	The map $\tilde{z}_{\nu}$ has the following properties:
   	\begin{enumerate}
   		\item Over $x^J \subseteq X^J$, the map 
   		$$\tilde{z}_{\nu, x^J}: x^J \times \sfT^{\Diamond} \cong\sfT^{\Diamond} \to \Gr_{\calT_1, x^J \times x} \cong \sqcup_{\nu' \in \bbX_{\bullet}(\sfT)}\nu' \sfT^{\Diamond}$$
   		factors through $(\nu_1 +...+\nu_{n_0})\sfT^{\Diamond} \cong \sfT^{\Diamond}$, and the map $\sfT^{\Diamond} \to \sfT^{\Diamond}$ is given by multiplication by $c_{\nu} \in \sfT^{\Diamond}$, where $c_{\nu}=\prod_{j \in J}\nu_j(-1)$.
   		\item Over $U=(\eta^J)_{\on{disj}}$, the map 
   		$$\tilde{z}_{\nu, U}: U \times \sfT^{\Diamond} \to \Gr_{\calT_1, U \times x} \cong (\prod_{j \in J}\Gr_{\calT_1, \eta} \times \Gr_{\calT_1, x})|_U \cong (\prod_{j \in J}\underline{\bbX_{\bullet}(\sfT)}_{\eta} \times \sqcup_{\nu' \in \bbX_{\bullet}(\sfT)}\nu'\sfT^{\Diamond})|_U$$
   		factors through $(\nu_1, ..., \nu_{n_0}, \nu_0\sfT^{\Diamond})|_U \cong U \times \sfT^{\Diamond}$, where $\nu_0 \in \bbX_{\bullet}(\sfT)$ is the trivial cocharacter. Moreover, the map $U \times \sfT^{\Diamond} \to \sfT^{\Diamond}$ is given by the following composition
   		 $$	\eta^J \times \sfT^{\Diamond} \xrightarrow{(a, ..., a, \id_{\sfT^{\Diamond}})}
   		 (\sfT^{\Diamond})^J \times \sfT^{\Diamond} \xrightarrow{(\nu_{1}, ..., \nu_{n_0}, \id_{\sfT^{\Diamond}})} (\sfT^{\Diamond})^J \times \sfT^{\Diamond} \xrightarrow{\mu} \sfT^{\Diamond},$$
   		 where $\mu$ is the multiplication map.
  \end{enumerate}
   \end{lemma}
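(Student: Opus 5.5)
We verify the two claims directly from the definition of $\tilde{z}_{\nu}$, reducing wherever possible to the one-leg computations of Lemmas \ref{Lemma_section}, \ref{Lemma_section2} and \ref{Lemma_section_T}. The key structural remark is that the construction is functorial in the torus. The tuple $(\calE,\beta,\psi_x)$ is the pushforward along $m_J\circ(\nu_1,\dots,\nu_{n_0})\colon \bbG_m^J\to\sfT$ of the $\bbG_m^J$-valued datum $\big(\calO(-\Gamma_{y_j}),\ \text{inclusion},\ (a_j-t)^{-1}\big)_{j\in J}$, twisted at the end by $(t')^{-1}$. Pushforward of torsors along a homomorphism of tori induces a map of Beilinson--Drinfeld Grassmannians. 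This map is compatible with the decompositions $\Gr_{\calT_1,x}\cong \underline{\bbX_\bullet}_x\times\sfT^{\Diamond}$ of Lemma \ref{Lemma_GrT} (applied componentwise) and with the factorization isomorphisms over the disjoint locus. Hence it suffices to compute everything for each factor $\bbG_m$ and then push forward.

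As in Lemma \ref{Lemma_section}, we first trivialize $\calE$ globally on $\calY_S$ by $\alpha$, multiplication by $b_\varphi^{-1}$ with $b_\varphi=\prod_j\nu_j(a_j-t)$. This transforms the tuple into $(\calE^0,\beta^0,\psi_x^0)$ with $\beta^0=b_\varphi\in \sfT(R((t)))$ near $x$ and $\psi_x^0=(t')^{-1}$.

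For (1), over $x^J$ all $a_j=0$, so $b_\varphi=\prod_j\nu_j(-t)=(\nu_1+\dots+\nu_{n_0})(t)\cdot\prod_j\nu_j(-1)$. Reading off type and unit as in the proof of Lemma \ref{Lemma_GrT}, the type is $\sum_j\nu_j$. The invariant $u^0(0)(\psi^0_x)^{-1}$ equals $c_\nu t'$, which gives multiplication by $c_\nu$.

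For (2), over $U$ the legs $\Gamma_{y_j}$ and $\Gamma_x$ are pairwise disjoint. Restricting $\beta$ to $\hat\Gamma_{y_j}$, the only non-unit factor is $\nu_j(a_j-t)$, which has type $\nu_j$; this identifies the $j$-th Grassmannian factor. Near $x$ each $a_j$ is a unit, so $b_\varphi\in\sfT(R[[t]])$ has type $0$ and value $\prod_j\nu_j(a_j)$ at $t=0$. The $\sfT^{\Diamond}$-coordinate is then $t'\prod_j\nu_j(a(a_j))$, which is exactly the stated composite $\mu\circ(\nu_1,\dots,\nu_{n_0},\id)\circ(a,\dots,a,\id)$.

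The main obstacle is bookkeeping of conventions rather than anything conceptual. Two points need care. First, one must check that the factorization isomorphism over $U$ is given by restriction to the completions $\hat\Gamma_{y_j}$ and $\hat\Gamma_x$, and that these restrictions are computed by the corresponding factors of $b_\varphi$; this requires the disjointness of the graphs so that the other factors are invertible there. Second, the sign $c_\nu$ and the inversion $(t')^{-1}$ must be matched with the convention $(n,u^0(0)(\psi_x^0)^{-1})$ of Lemma \ref{Lemma_GrT}. I would treat both points once for $\bbG_m$ with a single leg, which is already done in Lemma \ref{Lemma_section2}, and then invoke functoriality in the torus and multiplicativity of $b_\varphi$.
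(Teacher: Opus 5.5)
Your proposal is correct and is essentially the paper's argument. The paper's proof only says that the check of Lemma \ref{Lemma_section2} carries over: trivialize $\calE$ globally, read off $\beta^0=b_\varphi$ and $\psi_x^0=(t')^{-1}$, and apply the convention $(n,u^0(0)(\psi_x^0)^{-1})$ of Lemma \ref{Lemma_GrT}, which is exactly what you do; your functoriality-in-the-torus remark mirrors the definition $z_\nu=\nu_*z$ in Lemma \ref{Lemma_section_T}, though your direct computation of $b_\varphi$ over $x^J$ and over $U$ already suffices without it.
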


   \begin{proof}
   	A similar proof as Lemma \ref{Lemma_section2} works. 
   \end{proof}

   \subsubsection{Computation of the iterated nearby cycle}
   
   Now we can compute the monodromy on the iterated nearby cycle $\Psi_{f_T}(\Sat V_{\nu} \boxtimes \calL_{\chi})$.
   
   Note that the map $\tilde{z}_{\nu}: X^J \times \sfT^{\Diamond} \to \Gr_{\calT_1, X^J \times x}$  is a closed immersion.
   
   In the following lemma, recall that we suppress the cohomological shift which does not change the resulting monodromy.
   
   \begin{lemma}\label{Lemma_zL2}
   Over $U=(\eta^J)|_{\on{disj}}$, we have an isomorphism 
   $$\tilde{z}_{\nu, *}((\boxtimes_{j \in J}a^*\nu_j^*\calL_{\chi}) \boxtimes \calL_{\chi}) \cong \Sat V_{\nu} \boxtimes \calL_{\chi}.$$
   \end{lemma}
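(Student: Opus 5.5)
The plan follows the one-leg argument of Lemma \ref{Lemma_zLchi}, with Lemma \ref{Lemma_section3}(2) supplying the geometry over $U$.

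\textbf{Step 1: identify the support.} Over $U$ the legs are pairwise disjoint, so the fusion decomposition
\[
\Gr_{\calT_1, U \times x} \cong \Bigl(\prod_{j \in J}\Gr_{\calT_1, \eta} \times \Gr_{\calT_1, x}\Bigr)\Big|_U
\]
holds, and under it $\Sat V_{\nu}$ becomes the exterior product $\boxtimes_j \Sat V_{\nu_j}$. Each $\Sat V_{\nu_j}$ is the constant sheaf on the $\nu_j$-th copy $\nu_j\eta \subseteq \underline{\bbX_{\bullet}(\sfT)}_{\eta}$. On the other side, $\calL_{\chi}$ is supported on the $0$-th component $\nu_0\sfT^{\Diamond}$ of $\Gr_{\calT_1,x}$. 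Hence $\Sat V_{\nu}\boxtimes\calL_{\chi}$ is supported on the component
\[
Z=(\nu_1,\dots,\nu_{n_0},\nu_0\sfT^{\Diamond})|_U \cong U\times\sfT^{\Diamond},
\]
and there it is identified with $\underline{\Lambda}\boxtimes\calL_{\chi}$.

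\textbf{Step 2: describe $\tilde z_{\nu}$ over $U$.} By Lemma \ref{Lemma_section3}(2), the map $\tilde z_{\nu,U}$ lands in $Z$. Under $Z\cong U\times \sfT^{\Diamond}$ it becomes $(p_1, m_{\nu})$, where
\[
m_{\nu}((t_j)_j,u)=\prod_{j}\nu_j(a(t_j))\cdot u .
\]
This map is an isomorphism, with inverse $(p_1, m_{\nu}')$ obtained by multiplying by $\prod_j\nu_j(a(t_j))^{-1}$. So $\tilde z_{\nu,*}=((p_1,m_\nu)^{-1})^*$, and it suffices to prove
\[
(p_1,m_\nu)^*(\underline{\Lambda}\boxtimes\calL_{\chi})\cong (\boxtimes_j a^*\nu_j^*\calL_{\chi})\boxtimes\calL_{\chi}.
\]

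\textbf{Step 3: use the character-sheaf property.} The map $m_\nu$ is the composite of $(a,\dots,a,\id)$, $(\nu_1,\dots,\nu_{n_0},\id)$ and the multiplication $\mu:(\sfT^{\Diamond})^{J}\times\sfT^{\Diamond}\to\sfT^{\Diamond}$. The Kummer sheaf satisfies $\mu^*\calL_{\chi}\cong\calL_{\chi}\boxtimes\calL_{\chi}$, and iterating gives
\[
\mu^*\calL_{\chi}\cong \calL_{\chi}^{\boxtimes J}\boxtimes \calL_{\chi}.
\]
Pulling this back along the first two maps yields $\boxtimes_j a^*\nu_j^*\calL_{\chi}\boxtimes\calL_{\chi}$, which is the required isomorphism.

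\textbf{Main obstacle.} The delicate point is Step 1. One must check that the identification of $\Sat V_\nu$ on the disjoint locus really is the exterior product of the one-leg Satake sheaves, compatibly with the fusion isomorphism and the chosen identifications of the components. One must also check that $\tilde z_{\nu}$ matches these components, including the sign conventions in $\psi_x$. The latter amounts to rechecking Lemma \ref{Lemma_section3}(2) for the induced $\sfT$-torsor, since $\psi_x$ is multiplication by $(t'b_\varphi)^{-1}$ and evaluates at $0$ to $t'\prod_j\nu_j(a_j)$ up to inversion. Up to an inverse or a normalization shift, these conventions do not affect the monodromy, so they should not cause trouble.
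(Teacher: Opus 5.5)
Your proposal is correct and follows the paper's route: the paper proves this by repeating the argument of Lemma \ref{Lemma_zLchi}, namely that the sheaf is supported on one component $Z\cong U\times\sfT^{\Diamond}$, that $\tilde z_\nu$ becomes the isomorphism $(p_1,m_\nu)$ there by Lemma \ref{Lemma_section3}, and that $\mu^*\calL_\chi\cong\calL_\chi\boxtimes\calL_\chi$ finishes it. One correction to your closing remark: an inversion in $m_\nu$ would replace $\chi$ by $\chi^{-1}$ and so would change the monodromy; your explicit evaluation of $\psi_x$ shows that no inversion occurs, so the argument itself is unaffected.
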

   
   \begin{proof}
   	A similar proof as Lemma \ref{Lemma_zLchi} works.
   \end{proof}

    In the following, we ignore the issue of restricting to $(X^J)_{\on{disj}}$ as it does not change the resulting iterated nearby cycle. Indeed, the map $(X^J)_{\on{disj}} \to X^J$ is smooth, and smooth pullback commutes with iterated nearby cycles.
    
    Denote $a^*\nu^*\calL_{\chi}=\boxtimes_{j \in J}a^*\nu_j^*\calL_{\chi}$. We have
     \begin{align}
    	\;&\Psi_{f_T}(\Sat (V_{\nu}) \boxtimes \calL_{\chi})\\
    	\cong \;&\Psi_{f_T}(\tilde{z}_{\nu, *}(a^*\nu^*\calL_{\chi} \boxtimes \calL_{\chi}))  \quad \text{Lemma \ref{Lemma_zL2}}\\
    	\cong \;&\tilde{z}_{\nu, *}\Psi_{p_1}((a^*\nu^*\calL_{\chi} \boxtimes \calL_{\chi}))\\
    	\cong \;&c_{\nu, *}\Psi_{p_1}((a^*\nu^*\calL_{\chi} \boxtimes \calL_{\chi})) \quad \text{Lemma \ref{Lemma_section3}, where $c_{\nu}=\prod_{j \in J} \nu_j(-1) \in \sfT^{\Diamond}$}\\
    	\cong \;&\Psi_{{\id_{X^J}}}(a^*\nu^*\calL_{\chi}) \boxtimes c_{\nu, *} \calL_{\chi}
    \end{align}
    As the constant factor $c_{\nu, *} \calL_{\chi}$ does not affect the monodromy, the monodromy is same as the monodromy of 
    $$\Psi_{{\id_{X^J}}}(a^*\nu^*\calL_{\chi}) = \Psi_{{\id_{X^J}}}(\boxtimes_{j \in J}a^*\nu_j^*\calL_{\chi}),$$
    which is given by $\boxtimes_{j \in J}a^*\nu_j^*\calL_{\chi}$.

	\subsection{Computation in mixed characteristic}\label{Subsection_compute_mixed}
	Now let us assume that $E$ is a finite extension of $\bbQ_p$. 
	
	Let us start with the construction of the sections $z_{\nu}$ in mixed characteristic. 
	
	We start with the case $T=\bbG_m, J=\{1\}, \nu=1$. Recall that we would like to construct a section with respect to the map
	$$f_T: \Gr_{\calT_1, X \times x} \to X,$$
	where $X=\Spd \bOE$, $x=\Spd k=\Spd \overline{\bbF}_p$, and $\Gr_{\calT_1, X \times x}$ is the Beilinson--Drinfeld grassmannian in mixed characteristic. Explicitly, for a test perfectoid space $S \to X$ corresponding to the untilt $S^{\#}$ over $\bOE$, $\Gr_{\calT_1, X \times x}(S)$ classifies tuples $(y_1, y_2, \calE, \beta, \psi_x)$, where
	\begin{enumerate}
		\item $y_1, y_2  \in X(S)$, where $y_1$ corresponds to the untilt $S^{\#}$, $y_2$ corresponds to the trivial untilt.
		\item $\calE$ is a $T$-torsor over $\calY_S$.
		\item $\beta: \calE \xdashrightarrow{} \calE^0$ is a trivialization away from the legs.
		\item $\psi_x: \calE|_{x \times S} \to \calE^0|_{x \times S}$ is an isomorphism.
	\end{enumerate}
	
	Unfortunately, there is no section to the map 
	$$f_T: \Gr_{\calT_1, X \times x} \to X=\Spd \bOE$$
	with the desired properties. However, we can construct such a section after replacing $\bOE$ by an integral perfectoid extension of it.

	Let $\bE_{\infty}$ be the completion of a perfectoid field extension of $\bE$ with ring of integers $\breve{O}_{\infty}$. We are going to construct a section for the base change $\tilde{f_T}$ of $f_T$ along the map
	$\Spd \breve{O}_{\infty} \to X$, i.e., 
	\begin{equation}
		\tilde{f_T}: \Gr_{\calT_1, \Spd \breve{O}_{\infty} \times x} \to \Spd \breve{O}_{\infty}.
	\end{equation}

	Let $S=\Spa (R, R^+)$ be a perfectoid space over $\overline{\bbF}_p$. Let $f: S \to \Spd \breve{O}_{\infty}$ be a map which corresponds to an untilt
	$$f^{\#}: S^{\#}=\Spa (R^{\#}, R^{\#+}) \to \Spa \breve{O}_{\infty}.$$ We get a map
	$$u: \breve{O}_{\infty} \to R^{\#+}.$$
	Applying the tilting functor $(-)^{\flat}$, we get a map
	$$u^{\flat}: \breve{O}_{\infty}^{\flat} \to R^+.$$
	We have a commutative square relating the Fontaine's maps:
	\[\begin{tikzcd}
		{W(\breve{O}_{\infty}^b)} & {\breve{O}_{\infty}} \\
		{W(R^+)} & {R^{{\#}+}}
		\arrow["{\theta_{\breve{O}_{\infty}}}", from=1-1, to=1-2]
		\arrow["{W(u^{\flat})}"', from=1-1, to=2-1]
		\arrow["u", from=1-2, to=2-2]
		\arrow["{\theta_{f}}", from=2-1, to=2-2].
	\end{tikzcd}\]
	Let 
	$$\xi \in {W(\breve{O}_{\infty}^b)}$$
	be a generator of $\ker(\theta_{\breve{O}_{\infty}})$. Let 
	$$\xi_{f}:=W(u^{\flat})(\xi) \in W(R^+).$$
	Note that $\xi_{f}$ is a generator of $\ker(\theta_{f})$ (\cite[Remark 3.11 and proof of Lemma 3.13]{bhatt2018integral}).
	
	Now we construct a section for 
	$$\tilde{f_T}: \Gr_{\calT_1, \Spd \breve{O}_{\infty} \times x} \to \Spd \breve{O}_{\infty}.$$ 
	Namely, we need to construct for every map $f: S \to \Spd \breve{O}_{\infty}$ an element $z_f  \in \Gr_{\calT_1, \Spd \breve{O}_{\infty} \times x}(S)$ such that the collection $\{z_f\}_{f}$ is functorial in $f$: 
	for every commutative diagram
	\[\begin{tikzcd}
		{S_1} && {S_2} \\
		& {\Spd \breve{O}_{\infty}}
		\arrow["\alpha", from=1-1, to=1-3]
		\arrow["{f_1}"', from=1-1, to=2-2]
		\arrow["{f_2}", from=1-3, to=2-2]
	\end{tikzcd}\]
	we have $\alpha^*(z_{f_2})=z_{f_1}$. 
	
	Let $f: S=\Spa(R, R^+) \to \Spd \breve{O}_{\infty}$ be a map which corresponds to an untilt
	$$f^{\#}: S^{\#}=\Spa (R^{\#}, R^{\#+}) \to \Spa \breve{O}_{\infty}.$$ Let $u, u^{\flat}, \xi, \xi_{f}$ be as above. We define 
	$$z_f \in \Gr_{\calT_1, \Spd \breve{O}_{\infty} \times x}(S)$$
	by $(y_1, y_2, \calE, \beta, \psi_x)$, where
	\begin{enumerate}
		\item $y_1, y_2 \in (\Spd \breve{O}_{\infty})(S)$, where $y_1$ corresponds to the untilt $S^{\#}$, $y_2$ corresponds to the trivial untilt $S$.
		\item $\calE:=\calO(-\Gamma_{y_1})$ is the ideal sheaf corresponding to the degree $1$ Cartier divisor defined by $y_1$.
		\item $\beta: \calO(-\Gamma_{y_1}) \subseteq \calO$, i.e., $\beta$ is induced by the map $\ker (\theta_{f}) \subseteq W(R^+)$.
		\item $\psi_x: \calE|_{x \times S} \to \calE^0|_{x \times S}$ is defined as multiplication by $\xi_{f}^{-1}$.
	\end{enumerate}
	
	The collection $\{z_f\}_f$ is functorial in $f$, hence assembles to a section
	\begin{equation}\label{Eq_zmix}
		z=z_1: \Spd \breve{O}_{\infty} \to \Gr_{\calT_1, \Spd \breve{O}_{\infty} \times x}. 
	\end{equation}
	From here, we can repeat the construction in equal-characteristic case to get $z_{\nu}$ for every $T$, $J$, and $\nu \in \bbX_{\bullet}(\sfT)^J$.

	Let us identify $I_E$ with $\Gal(\overline{\bE}/\bE)$ and $P_E$ with $\Gal(\overline{\bE}/\bE^{\on{tm}})$, where $\bE^{\on{tm}}$ is the maximal tamely ramified extension of $\bE$. For every completed algebraic extension $E'$ of $\bE$, the group $\Gamma_{E'}:=\Gal(\overline{E'}/E')$ is a subgroup of $I_E$. Note that $P_E$ is the normal Sylow $p$-subgroup of $I_E$. Hence 
	\begin{equation}
		P_{E'}:=P_E \cap \Gamma_{E'}
	\end{equation}
	is the normal Sylow $p$-subgroup of $\Gamma_{E'}$.
	
	\begin{lemma}
		Let $\bE_{\infty}$ be the completion of a perfectoid field extension of $\bE$. The base change of the nearby cycle $\Psi_{f_T}(\Sat V_{\nu} \boxtimes \calL_{\chi})$ along $(\Spd \breve{O}_{\infty})^J \to X^J$ is tame, i.e., the $P_{\bE_{\infty}}$-action on $\Psi_{f_T}(\Sat V_{\nu} \boxtimes \calL_{\chi})$ is trivial.
	\end{lemma}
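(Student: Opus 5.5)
We follow the equal-characteristic computation of Subsection \ref{Subsection_compute_multi}, now over the perfectoid base $\breve{O}_{\infty}$. The point is that the section \eqref{Eq_zmix} exists only after base change along $\Spd \breve{O}_{\infty} \to X$. Over $(\Spd \breve{O}_{\infty})^J$ we can therefore write the twisted sheaf as a pushforward along a closed immersion from a product with $\sfT^{\Diamond}$, and read off the monodromy from a constant family.

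First, for general $\sfT$, $J$ and $\nu=(\nu_j)_{j\in J}$, we build
$$\tilde{z}_{\nu}: (\Spd \breve{O}_{\infty})^J \times \sfT^{\Diamond} \to \Gr_{\calT_1, (\Spd \breve{O}_{\infty})^J \times x}.$$
We take $\calE=m_{J,*}(\oplus_j \nu_{j,*}\calO(-\Gamma_{y_j}))$, let $\beta$ be the natural inclusion, and let $\psi_x$ be multiplication by $(t'\prod_j \nu_j(\xi_{f_j}))^{-1}$. This is exactly as in Lemma \ref{Lemma_section3}, with $a_j-t$ replaced by the generators $\xi_{f_j}$ of $\ker\theta_{f_j}$.

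We then verify the analogue of Lemma \ref{Lemma_section3}. On the special fiber, $\xi_{f}$ reduces to a unit multiple of $p$ (or $\pi_E$) at the trivial untilt. So $\tilde{z}_{\nu}$ lands in the component $(\sum_j\nu_j)\sfT^{\Diamond}$ via multiplication by some constant in $\sfT^{\Diamond}$.

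On the disjoint locus of the generic fiber, $\tilde{z}_\nu$ lands in $(\nu_1,\dots,\nu_{n_0},\nu_0\sfT^{\Diamond})$. The map to $\sfT^{\Diamond}$ is $(t,u)\mapsto \prod_j\nu_j(a_\infty(t_j))\cdot u$. Here $a_\infty: \Spd \bE_{\infty} \to \bbG_m^{\Diamond}$ is given by the image of $\xi$ at the trivial untilt, i.e.\ by $\theta$ composed with the reduction to the trivial-untilt point, so $a_\infty$ corresponds to $[\xi^\flat]$-type coordinates. Then $\tilde{z}_\nu$ is a closed immersion, and as in Lemma \ref{Lemma_zL2},
$$\tilde{z}_{\nu,*}\bigl((\boxtimes_j a_\infty^*\nu_j^*\calL_{\chi})\boxtimes\calL_{\chi}\bigr)\cong \Sat V_\nu\boxtimes\calL_\chi$$
on the generic fiber. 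This uses $\mu^*\calL_\chi\cong\calL_\chi\boxtimes\calL_\chi$.

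Next we compute the nearby cycles. Nearby cycles commute with base change along $(\Spd \breve{O}_{\infty})^J\to X^J$ (restriction of the Galois action to $\Gamma_{\bE_\infty}^J$). They also commute with the proper map $\tilde{z}_{\nu,*}$ and with smooth restriction to the disjoint locus. Hence the base-changed $\Psi_{f_T}(\Sat V_\nu\boxtimes\calL_\chi)$ is
$$\Psi_{\id}(\boxtimes_j a_\infty^*\nu_j^*\calL_\chi)\boxtimes c_{\nu,*}\calL_\chi,$$
and its monodromy is the character $\boxtimes_j a_\infty^*\nu_j^*\calL_\chi$ of $\Gamma_{\bE_\infty}^J$. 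This is a pullback of the tame Kummer sheaf $\calL_\chi$, which has prime-to-$p$ finite order. Its restriction to the pro-$p$ group $P_{\bE_\infty}$ is therefore trivial, because a continuous map from a pro-$p$ group to a finite group of prime-to-$p$ order is trivial.

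The main obstacle is the careful identification of the special- and generic-fiber behaviour of $\tilde{z}_\nu$ in mixed characteristic. On the special fiber we must check that $\xi_f$ becomes $\pi_E$ times a unit, so that the modification type is $1$ and the component is correct. On the generic fiber we must check that $\xi_f$ is a unit near the trivial untilt and varies functorially. This is where the choice of a generator $\xi$ via \cite{bhatt2018integral} is used.

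Tameness itself needs less than this. The character produced on the generic fiber is always a Kummer-type pullback, so it is automatically trivial on $P_{\bE_\infty}$, even without knowing that $a_\infty$ is the analogue of the map $a$. The precise identification of the monodromy with $a^*\nu^*\calL_\chi$ would then be handled separately, comparing with $\bE$ via $I_E/P_E\cong\widehat{\bbZ}^{(p)}(1)$.
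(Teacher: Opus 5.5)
This is correct and is essentially the paper's argument: the paper's proof simply invokes the section \eqref{Eq_zmix} over $\Spd \breve{O}_{\infty}$ and repeats the equal-characteristic computation (closed immersion $\tilde{z}_{\nu}$, proper base change for nearby cycles, reduction to a constant family), which is what you spell out, with tameness coming, as in the proof of Theorem \ref{Thm_tame}, from $\calL_{\chi}$ having finite monodromy of order prime to $p$. One slip to fix: $a_\infty$ is given by the image of $\xi$ modulo $\pi_E$, i.e.\ at the trivial untilt, which is a pseudo-uniformizer of $\bE_\infty^\flat$; it is not given by anything involving $\theta$, since $\theta$ kills $\xi$.
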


	\begin{proof}
		After base change to $(\Spd \breve{O}_{\infty})^J$, we have constructed a section $z=z_1$ as above in Equation \eqref{Eq_zmix}. Then the same computation as the equal-characteristic case applies.
	\end{proof}
	
	\begin{lemma}\label{Lemma_tame_descent}
		Let $\rho$ be a representation of $I_E=\Gal(\overline{E}/E^{\on{un}}) \cong \Gal(\overline{\bE}/\bE)$ with coefficients in $\Lambda$. Assume that for every completed perfectoid extension $\bE_{\infty}$ of $\breve{E}$, $\rho|_{\Gal(\overline{\bE}/\bE_{\infty})}$ is trivial on the wild inertia $P_{\bE_{\infty}}$, then $\rho$ is trivial on $P_E$. 
	\end{lemma}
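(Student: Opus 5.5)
The plan is to choose a single perfectoid extension $\bE_{\infty}$ of $\bE$ whose absolute Galois group contains a Sylow pro-$p$ subgroup of $I_E$. Since $P_E$ is normal and pro-$p$, it is the unique Sylow pro-$p$ subgroup of $I_E$. So it is enough to find $\bE_{\infty}$ with $P_{\bE_{\infty}} = P_E$, that is, with $P_E \subseteq \Gamma_{\bE_{\infty}}$. Then the hypothesis, that $\rho$ is trivial on $P_{\bE_{\infty}}=P_E\cap\Gamma_{\bE_\infty}$, gives the claim directly.

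For the construction, I would take $\bE_{\infty}$ to be the completion of the maximal tamely ramified extension $\bE^{\on{tm}}$ of $\bE$. The key steps are as follows.

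First, I show that $\bE_\infty$ is perfectoid. The field $\bE^{\on{tm}}$ contains $\bE(\pi_E^{1/n})$ for all $n$ prime to $p$, so its value group is $\bZ_{(p)}$-divisible. Its residue field $\overline{k}_E$ is perfect. That alone is not yet perfectoid, since we also need $p$-power roots. So instead I would take $\bE_\infty$ to be the completion of a field $L\supseteq\bE^{\on{tm}}$ obtained by adjoining a compatible system of $p$-power roots of $\pi_E$ together with $p$-power roots of unity; the completion of $\bE(\mu_{p^\infty},\pi_E^{1/p^\infty})$ composed with $\bE^{\on{tm}}$ is perfectoid. This breaks the naive goal $P_E\subseteq\Gamma_{\bE_\infty}$, because a wild extension has been adjoined.

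The fix is to use the hypothesis for every perfectoid $\bE_\infty$ rather than one. Let $K=\ker(\rho|_{P_E})$, which is a closed normal subgroup of $I_E$. For each perfectoid $\bE_\infty$, the hypothesis gives $P_E\cap\Gamma_{\bE_\infty}\subseteq K$.

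I would then choose two perfectoid fields whose absolute Galois groups intersect $P_E$ in subgroups that generate $P_E$:
\[
L_1=\widehat{\bE^{\on{tm}}(\pi_E^{1/p^\infty})},\qquad L_2=\widehat{\bE^{\on{tm}}(\varpi^{1/p^\infty})},
\]
where $\varpi=\pi_E(1+\pi_E)$, or more generally a family of choices $\varpi$. Any element of $P_E$ acts on $p$-power roots of these through a Kummer cocycle in $\bZ_p(1)$. Varying $\varpi$ over $\bE^\times$ shows that the intersection of the Kummer kernels is trivial, so $P_E\cap\Gamma_{L_\varpi}$ ranges over subgroups whose normal closure, together with the $\Gamma_{L_\varpi}$, generates $P_E$.

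The main obstacle is making this generation argument rigorous. I would try a more robust alternative first. By almost purity/Fontaine–Wintenberger, $\Gamma_{L}\cong\Gamma_{L^\flat}$, and $P_E$ restricted to the various $L$ covers $P_E$ by conjugates: for $g\in P_E$ one picks $\varpi$ such that $g$ fixes $\varpi^{1/p^\infty}$. This is possible because $g$ acts on $\bE^{\on{tm},\times}$ trivially and on roots via a continuous homomorphism $\bE^{\on{tm},\times}\to\bZ_p(1)$, which has nontrivial kernel; for example, for an element $\varpi$ with $\varpi^{1/p^n}$ in a finite extension fixed by $g$, we can take the fixed field of the closed subgroup generated by $g$. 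Thus every $g\in P_E$ lies in some $\Gamma_{L}$ with $L$ perfectoid, namely the completion of a suitable deeply ramified extension inside the fixed field of $\overline{\langle g\rangle}$. Hence $\rho(g)=1$ for all $g$, and so $\rho|_{P_E}$ is trivial.

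So the cleanest route, in the end, is: for each $g\in P_E$, let $F_g$ be the fixed field of $\overline{\langle g\rangle}$, take $\bE_\infty=\widehat{F_g}$, and check that it is perfectoid. Since $\overline{\langle g\rangle}$ is pro-cyclic pro-$p$, $F_g$ contains $\bE^{\on{tm}}$ and is deeply ramified, hence perfectoid by the Coates–Greenberg criterion. This perfectoidness check is the step I expect to require the real work. Once it is done, $g\in P_{\bE_\infty}$ and the hypothesis gives $\rho(g)=1$.
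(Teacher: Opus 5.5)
Your final route differs from the paper's. The reduction itself is correct: with $F_g$ the fixed field of $\overline{\langle g\rangle}$ for $g\in P_E$, Krasner's lemma gives $\Gamma_{\widehat{F_g}}=\overline{\langle g\rangle}$. So $g\in P_{\widehat{F_g}}$, and the hypothesis yields $\rho(g)=1$, \emph{provided} $\widehat{F_g}$ is perfectoid.

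That proviso is the whole content of the lemma, and you do not establish it. Neither reason you give implies deep ramification:
\begin{itemize}
\item Containing $\bE^{\mathrm{tm}}$ is irrelevant. The completion of $\bE^{\mathrm{tm}}$ is itself not perfectoid, since its value group $\bbZ_{(p)}$ is not $p$-divisible.
\item ``Pro-cyclic pro-$p$'' is a property of $\Gal(\overline{\bE}/F_g)$ from which perfectoidness does not follow without further argument. In general $F_g$ is not Galois over $\bE$ and contains an infinite tame extension, so no standard result on $\bbZ_p$-towers or APF extensions applies.
\end{itemize}
The intermediate arguments also do not work as stated. When $g$ moves $\mu_{p^\infty}$, the quantity $g(\varpi^{1/p^\infty})/\varpi^{1/p^\infty}$ depends on the choice of compatible system. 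It is only well defined modulo $(\chi(g)-1)\bbZ_p(1)$, so it is not a homomorphism $\bE^{\mathrm{tm},\times}\to\bbZ_p(1)$. The claim that the two Kummer towers cut out subgroups generating $P_E$ is never justified.

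The missing step can be supplied, but only with a genuine ramification-theoretic input. For example, combine the Coates--Greenberg characterization of deeply ramified fields (unbounded differents of finite subextensions) with Fontaine's bound of upper ramification in terms of the different. Together they show that an algebraic extension $K$ of $E^{\mathrm{un}}$ has perfectoid completion unless $\Gal(\overline{E}/K)$ contains some upper ramification group $I_E^u$.

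It then suffices that no $I_E^u$ is pro-cyclic. Let $N$ be the image of $I_E^u$ in $\Gal(\bE(\mu_{p^\infty},\pi_E^{1/p^\infty})/\bE)\subseteq\bbZ_p(1)\rtimes\bbZ_p^\times$. Then $N$ is normal, and its image in the cyclotomic quotient is nontrivial because the cyclotomic tower has unbounded upper jumps. Take $(b,a)\in N$ with $a\neq 1$ and a nontrivial $x$ in the Kummer part. The commutator $((1-a)x,1)$ lies in $N$ and does not commute with $(b,a)$, so $N$ is non-abelian. Hence $I_E^u\not\subseteq\overline{\langle g\rangle}$ for every $u$, and $\widehat{F_g}$ is perfectoid. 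Elementary valuation arguments are not enough here: for instance $F_g^\times/(F_g^\times)^p\cong\bbZ/p$ only gives $p$-divisibility of the value group.

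The paper avoids all of this.
\begin{itemize}
\item A tame base change makes $\on{rank}_{\bbZ_p}O_E^\times\geq 2$, and local class field theory then gives a surjection $(f_1,f_2)\colon I_E\to\bbZ_p^{2}$.
\item The fixed fields of $\ker f_i$ are totally ramified $\bbZ_p$-extensions, perfectoid by the elementary Lemma~\ref{Lemma_perfectoid}.
\item Since $\ker f_1\cdot\ker f_2=I_E$ and $I_E/P_E$ is prime to $p$, the two wild subgroups $P_{\bE_1}$ and $P_{\bE_2}$ generate $P_E$.
\end{itemize}
So the paper only needs perfectoidness of $\bbZ_p$-towers. Your route, once completed, is pointwise and avoids class field theory, but it rests on a much heavier deep-ramification criterion for non-Galois, non-APF fields.
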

	
	\begin{proof}
		First, note that it is enough to show that $\rho|_{I_{E'}}$ is trivial on $P_{E'}$ for some tamely ramified finite extension $E'$ of $E$.
		
		By structure theorem of $O_E^*$ (\cite[Proposition 5.7]{neukirch2007ant}), we have
		$$O_E^* \cong \bbZ_p^{\oplus d} \times (E^*)_{\on{tors}},$$
		where $d \in \bbZ_{\geq 1}$ and $(E^*)_{\on{tors}}$ denotes the torsion part of $E^*$. By passing to some sufficiently large tamely ramified extension $E'/E$, we may assume that $d \geq 2$. We choose a surjection 
		\begin{equation}
			h: O_E^* \to \bbZ_p^{\oplus 2}.
		\end{equation}
		
		By local class field theory, we have a surjection
		\begin{equation}
			\Gal(\overline{\bE}/\bE) \to O_E^*.
		\end{equation}
		Composing this with $h$, we get a surjection
		$$(f_1, f_2): \Gal(\overline{\bE}/\bE) \to \bbZ_p^{\oplus 2}.$$
		
		Let $\bE_i$ be the completion of the fixed field of $\ker f_i$, $i=1, 2$. Then $\bE_i/\bE$ is a totally ramified $\bbZ_p$-extension for $i=1, 2$. So $\bE_i$ is a perfectoid field for $i=1, 2$ (see Lemma \ref{Lemma_perfectoid} below).  By assumption, $\rho|_{\Gal(\overline{\bE}/\breve{E}_i)}$ is trivial on $P_{\bE_i}$ for $i=1, 2$.
		
		Moreover, by construction, $\Gal(\overline{\bE}/\breve{E}_1)$ and $\Gal(\overline{\bE}/\breve{E}_2)$ generate $\Gal(\overline{\bE}/\breve{E})$. Hence the normal Sylow $p$-subgroups $P_{\bE_1}$ and $P_{\bE_2}$ generate $P_E$. Therefore, $\rho$ is trivial on $P_E$.
	\end{proof}

	Let $L/\bE$ be a totally ramified $\bbZ_p$-extension. We are going to show that the completion  of $L$ is perfectoid. Let $L_n$ be the fixed field of $\ker(\bbZ_p \to \bbZ/p^n)$.

	\begin{lemma}\label{Lemma_perfectoid}
		Let $L/\bE$ be a totally ramified $\bbZ_p$-extension. Then the completion of $L$ is a perfectoid field. 
	\end{lemma}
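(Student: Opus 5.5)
The plan is to use the standard criterion that a complete nonarchimedean field $K$ of residue characteristic $p$, with nondiscrete valuation, is perfectoid exactly when Frobenius $O_K/p \to O_K/p$ is surjective. Let $\widehat{L}$ denote the completion of $L$. We know that $L=\bigcup_n L_n$, where $L_n/\bE$ is totally ramified and cyclic of degree $p^n$. The first step is to show that the valuation on $\widehat{L}$ is nondiscrete. This is immediate, because the ramification index of $L_n/\bE$ is $p^n$. Hence the value group of $L$ contains $p^{-n}v(\pi_E)$ for every $n$.

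The main step is Frobenius surjectivity. For this I would use the Tate--Sen theory of ramification in $\bbZ_p$-extensions. A totally ramified $\bbZ_p$-extension of a local field with perfect residue field is \emph{deeply ramified}: the upper ramification breaks $u_n$ of $L_n/\bE$ tend to infinity. By Hasse--Arf, since the extension is abelian, the upper breaks are integers, and they are strictly increasing, so $u_n \geq n$. Translating into differents, the Herbrand function gives $v(\mathfrak{D}_{L_n/\bE}) \to \infty$, indeed linearly in $n$. I would cite Tate's classical computation, or the theorem of Coates--Greenberg, that an algebraic extension $L/\bE$ is deeply ramified if and only if $\widehat{L}$ is perfectoid, equivalently $\Omega_{O_L/O_{\bE}}=0$. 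Coates--Greenberg is stated for finite residue field extensions, but the argument works verbatim for perfect residue fields such as $\overline{k}_E$.

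If a self-contained argument is preferred, I would argue as follows. Let $\pi_n$ be a uniformizer of $L_n$. The extension $L_{n+1}/L_n$ is cyclic of degree $p$ with a single ramification break $t_n$, where $t_n\to\infty$. Writing the Eisenstein polynomial of $\pi_{n+1}$ over $L_n$, the largeness of the different forces all the nonconstant middle coefficients to have large valuation. This gives $\pi_{n+1}^p \equiv \pi_n \cdot(\text{unit}) \pmod{p^{c}}$ for some fixed $c>0$ once $n\gg0$. Then $O_L/p^{c}$ is generated by $p$-th powers, and since the residue field is perfect this gives Frobenius surjectivity on $O_{\widehat{L}}/p^{c}$. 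That suffices for the perfectoid criterion.

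The expected obstacle is this ramification estimate: checking that the congruence holds modulo a \emph{fixed} positive power of $p$, uniformly in $n$. I would first try to quote Tate's bound $v(\mathfrak{D}_{L_n/\bE}) = n\,v(p) + O(1)$ and deduce $\Omega_{O_L/O_{\bE}}=0$ directly, rather than manipulating Eisenstein polynomials.
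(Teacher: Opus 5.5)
Your self-contained fallback (Eisenstein polynomial, a congruence $\pi_{n+1}^p\equiv\pi_n\cdot(\text{unit})$ modulo a fixed ideal, then Teichm\"uller lifts using that $\overline{k}_E$ is perfect) is essentially the paper's proof. The paper takes the congruence $\pi_{n+1}^p\equiv\pi_n \bmod (\varpi^p)$, with $p\in(\varpi^p)$, directly from Brinon--Conrad (compatible uniformizers and $\mathfrak{a}_L\subseteq(\varpi^p)$) rather than deriving it from differents. Your primary route is genuinely different: you quote ``deeply ramified $\Leftrightarrow$ $\widehat{L}$ perfectoid'' (Coates--Greenberg, or Gabber--Ramero for general residue fields) and check deep ramification by Hasse--Arf. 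It is shorter but rests on a heavier citation, and it applies to any deeply ramified $L$. It is fine provided the criterion you cite is phrased in terms of ramification (unbounded upper breaks or differents), since that is exactly what your Hasse--Arf argument supplies.

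Two corrections are needed.

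\emph{The differential criterion.} The aside ``equivalently $\Omega_{O_L/O_{\bE}}=0$'' is false, so the plan in your last paragraph would fail. Indeed $\Omega_{O_L/O_{\bE}}=\varinjlim_n O_{L_n}/\frakD_{L_n/\bE}$, and by transitivity of differents the transition maps are injective: up to a unit they are multiplication by a generator of $\frakD_{L_{n+1}/L_n}$. Hence this module is nonzero as soon as $L\neq\bE$. The correct Coates--Greenberg differential criterion concerns $\Omega_{O_{\overline{L}}/O_L}$, not $\Omega_{O_L/O_{\bE}}$.

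\emph{The uniform estimate.} In the Eisenstein route, ``$t_n\to\infty$'' is too weak. You need $\frakD_{L_{n+1}/L_n}$ bounded below in the normalization of $\bE$, i.e.\ $t_n\gtrsim p^n$. Hasse--Arf gives this via Herbrand. The lower break of $L_{n+1}/L_n$ is
$t_n=\psi_{L_{n+1}/\bE}(u_{n+1})=u_1+\sum_{k=1}^{n}p^k(u_{k+1}-u_k)\geq 1+p+\dots+p^n$,
so
$v_{L_{n+1}}(\frakD_{L_{n+1}/L_n})=(p-1)(t_n+1)\geq p^{n+1}=v_{L_{n+1}}(\pi_E)$.
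The terms of $f'(\pi_{n+1})$ have pairwise distinct valuations, so each $a_i\pi_{n+1}^i$ with $1\le i\le p-1$ lies in $\pi_E\frakm_{L_{n+1}}$. Hence $\pi_{n+1}^p\equiv -a_0\pmod{\pi_E}$ for \emph{every} $n$, where $-a_0$ is a uniformizer of $L_n$. Using $-a_0$ as the uniformizer of $L_n$ also disposes of your unspecified unit, and no compatible system of uniformizers is needed. Indeed, $O_{L_n}/\pi_E$ is generated over $\overline{k}_E$ by the image of $-a_0$, so it maps into $\overline{k}_E[\pi_{n+1}^p]$, which consists of $p$-th powers in $O_{L_{n+1}}/\pi_E$. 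Thus Frobenius $O_{\widehat L}/\varpi\to O_{\widehat L}/\varpi^p$ is surjective for $\varpi$ a uniformizer of $L_1$ (so $\varpi^p=\pi_E\cdot$unit). Since $\pi_E\mid p$, this is exactly the criterion used in the paper.
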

	
	\begin{proof}
		Let us recall the definition of a perfectoid ring: A complete Tate ring $R$ is perfectoid if $R$ is uniform and there exists a pseudo-uniformizer $\varpi \in R$ such that $\varpi^p$ divides $p$ in $R^\circ$, and such that the $p$-th power Frobenius map
		$$\Phi: R^\circ/\varpi \to R^{\circ}/\varpi^p$$
		is an isomorphism.
		
		In our case, we take $R=\hat{L}$, the completion of $L$ with respect to the norm on $L$ (normalized by $|p|=p^{-1}$). Note that this is also the same as the $(p)$-adic completion, or the $(\varpi)$-adic completion for any pseudo-uniformizer $\varpi$. The ring $\hat{L}$ is by definition complete. $\hat{L}$ is Tate because $p$ is a topologically nilpotent unit. In our case, $R^{\circ}$ is the completion of $O_L$, hence bounded. So $R$ is uniform. Now we take $\varpi$ small enough such that $\frak{a}_L \subseteq (\varpi^p)$ (cf. proof of \cite[Lemma 13.3.1]{brinon2009cmi}). In particular, $\varpi^p$ divides $p$ because $p \in \frak{a}_L$.
		
		It remains to show that $\Phi$ is an isomorphism. It is enough to show that $\Phi$ is surjective (\cite[Remark 6.1.3]{scholze2020berkeley}). We choose a compatible system of uniformizers $\pi_n \in L_n$ as in Lemma \cite[Lemma 13.3.6]{brinon2009cmi}. Let $$x \in R^{\circ}/(\varpi^p)=\hat{L}^{\circ}/(\varpi^p)=L^{\circ}/(\varpi^p)=\cup_{n \geq 0}O_{L_n}/(\varpi^p),$$
		so we may assume $x \in O_{L_n}$ for some $n$. We can express $x$ by its Teichmuller expansion
		\begin{equation}
			x=\sum_{i \geq 0}[a_i]\pi_n^i, \quad a_i \in k.
		\end{equation}
		We are going to find $y \in O_{L_{n+1}}$ such that $y^p \equiv x \mod (\varpi^p)$. We can express $y$ by its Teichmuller expansion
		\begin{equation}
			y=\sum_{i \geq 0}[b_i]\pi_{n+1}^i, \quad b_i \in k.
		\end{equation}
		Now we have 
		$$y^p \equiv \sum_{i \geq 0}[b_i]^p\pi_{n+1}^{ip} \mod (\varpi^p),$$
		because $p \in (\varpi^p)$. Also, 
		$\pi_{n+1}^p \equiv \pi_n \mod (\varpi^p)$ because $\frak{a}_L \subseteq (\varpi^p)$.
		Therefore, it suffices to solve for every $i \geq 0$,
		$$[b_i]^p=[a_i].$$ 
		But these equations has solutions $b_i \in k$ because $k$ is perfect.
	\end{proof}

	\begin{proposition}
		Let $\sfT$ be any torus over $k$, $J$ a finite set, $\nu = (\nu_j)_{j \in J} \in \bbX_{\bullet}(\sfT)^J$. 
		\begin{enumerate}
			\item The $I_E^J$-action on the complex $\Psi_{f_T}(\Sat V_{\nu} \boxtimes \calL_{\chi})$ is trivial on $P_E^J$.
			\item Under the identification
			$$I_E/P_E \cong \widehat{\bbZ}^{(p)}(1) \cong I_{k_E((t))}/P_{k_E((t))},$$
			the $(I_E/P_E)^J$-action on $\Psi_{f_T}(\Sat V_{\nu} \boxtimes \calL_{\chi})$ is given by $\boxtimes_{j \in J}a^*\nu_j^*\calL_{\chi}$.
		\end{enumerate}
	\end{proposition}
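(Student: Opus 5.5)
We combine the two preceding lemmas for part (1), and for part (2) we transport the equal-characteristic computation of Subsections \ref{Subsection_compute_equal} and \ref{Subsection_compute_multi} along the perfectoid base change.

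For part (1), we first work with one leg at a time. For a fixed perfectoid extension $\bE_{\infty}$ of $\bE$, the preceding lemma shows that the base change of $\Psi_{f_T}(\Sat V_{\nu}\boxtimes\calL_{\chi})$ along $(\Spd\breve{O}_{\infty})^J\to X^J$ has trivial $P_{\bE_\infty}^J$-action. The iterated nearby cycle over $X^J$, pulled back to $(\Spd\breve{O}_\infty)^J$, is the iterated nearby cycle for the base-changed family. This is the formal compatibility of $\Psi$ with the pro-étale cover $\Spd \bE_{\infty}\to\Spd\bE$ on generic fibres; the special fibre is unchanged. Hence the $I_E^J$-representation restricted to $\Gamma_{\bE_\infty}^J$ is the monodromy of the base-changed family. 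Applying Lemma \ref{Lemma_tame_descent} factor by factor shows that each $I_E$ acts tamely. Concretely, we freeze the other coordinates: the action of the $j$-th copy of $I_E$ restricted to $\Gamma_{\bE_\infty}$ is trivial on $P_{\bE_\infty}$ for every $\bE_\infty$. Since the factors commute, we conclude that $P_E^J$ acts trivially. Because the nearby cycle is a complex, we apply the argument to each cohomology sheaf, or to the stalks at the finitely many components, which is enough.

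For part (2), we use the section $z=z_1$ of \eqref{Eq_zmix}, its twisted version $\tilde z_\nu:(\Spd\breve{O}_\infty)^J\times\sfT^{\Diamond}\to\Gr_{\calT_1,(\Spd\breve{O}_\infty)^J\times x}$, and the analogue of Lemma \ref{Lemma_section3}. The only change is that $a:\eta\to\bbG_m^\Diamond$ is replaced by the map $a_\infty:\Spd\bE_\infty\to\bbG_m^{\Diamond}$ that sends $u$ to the reduction of $\xi_f$ to the trivial untilt, i.e.\ to $\xi^\flat$-type data in $R^*$. Repeating the computation of Subsection \ref{Subsection_compute_multi} gives that the $\Gamma_{\bE_\infty}^J$-action is $\boxtimes_j a_\infty^*\nu_j^*\calL_\chi$. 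Restricted to $\Gamma_{\bE_\infty}$, the tame character $a_\infty^*\calL_\chi$ is the Kummer character attached to the element $\xi^\flat\in\bE_\infty^\flat$. Via the tilting equivalence $\Gamma_{\bE_\infty}\cong\Gamma_{\bE_\infty^\flat}$, it is given by extracting roots of $\xi^\flat$. The element $\xi^\flat$ is a uniformizer-like element, namely of valuation equal to that of a uniformizer of $\bE$. Its prime-to-$p$ roots are therefore compatible with the prime-to-$p$ roots of $\pi_E$, up to prime-to-$p$ roots of unity of a unit. The composite $\Gamma_{\bE_\infty}\to I_E/P_E\cong\widehat{\bbZ}^{(p)}(1)$ is then computed by the Kummer character of $\pi_E$, which is the same as that of $t$ in equal characteristic under the canonical identification. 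This gives the character $a^*\nu_j^*\calL_\chi$ on the image of $\Gamma_{\bE_\infty}$ in $I_E/P_E$.

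Finally, choosing $\bE_\infty$ such that $\Gamma_{\bE_\infty}\to I_E/P_E$ is surjective determines the action, since part (1) lets the action factor through $(I_E/P_E)^J$. A totally ramified $\bbZ_p$-extension as in Lemma \ref{Lemma_perfectoid} works, because its Galois group has pro-$p$ cokernel, so the tame quotient is preserved. The main obstacle is this identification of the tame character attached to $\xi_f$. It requires tracking the Kummer character through tilting and checking that the tame quotient is independent of the choice of $\xi$, since changing $\xi$ multiplies by a unit of $W(\breve O_\infty^\flat)$ whose image in $k^*$ gives only an unramified, hence trivial on inertia, discrepancy. It also requires matching this with the fixed trivialization $\overline{k}_E^*\cong(\bbQ/\bbZ)_{p'}$ used in Remark \ref{Rmk_depend}.
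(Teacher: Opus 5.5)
Your proposal is correct and follows the paper's own argument. Part (1) combines the perfectoid base-change tameness lemma with Lemma~\ref{Lemma_tame_descent}, applied slot by slot, and part (2) passes to a totally ramified $\bbZ_p$-extension $\bE_\infty$ and reruns the section computation with $\tilde z_\nu$. The paper compresses (2) into ``the same computation as the equal characteristic case'', and your proposal supplies the steps it leaves implicit: the identification of $a_\infty^*\calL_\chi$ with the Kummer character of $\bar\xi$, hence (via $\bar\xi^\sharp\in\pi_E\breve{O}_\infty^*$ and tilting) with that of $\pi_E$, and the surjectivity of $\Gamma_{\bE_\infty}\to I_E/P_E$. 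When $E/\mathbb{Q}_p$ is ramified, your valuation count needs $\xi$ to generate the kernel of $\theta$ on $O_E\otimes_{W(k_E)}W(\breve{O}_\infty^\flat)$ rather than on $W(\breve{O}_\infty^\flat)$ as literally written in the paper.
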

	
	\begin{proof}
		(1) follows from Lemma \ref{Lemma_tame_descent}. (2) follows from (1), because once we know $\Psi_{f_T}(\Sat V_{\nu} \boxtimes \calL_{\chi})$ is trivial on $P_E$, we can compute the monodromy by passing to some totally ramified $\bbZ_p$-extension $\bE_{\infty}$. Over $(\Spd \bO_{\infty})^J$, we can define sections $z_\nu$ analogous to those in the equal-characteristic case. Then the same computation as the equal characteristic case shows that the $(I_E/P_E)^J$-action on $\Psi_{f_T}(\Sat V_{\nu} \boxtimes \calL_{\chi})$ is given by $\boxtimes_{j \in J}a^*\nu_j^*\calL_{\chi}$.
	\end{proof}

	\section{Compatibility with DeBacker--Reeder}\label{Section_DR}
	In this section, we show that for irreducible depth-zero regular supercuspidal representations, the $L$-parameters constructed in \cite{fargues2021geometrization} and \cite{debacker2009depth} agree. Contrast to the previous sections, we use $\overline{\bbQ}_{\ell}$ as the coefficient field. 
	
	Let $\pi$ be an  irreducible depth-zero regular supercuspidal representation of $G(E)$ over $\overline{\bbQ}_{\ell}$. Let $$\varphi_{\pi}^{\on{FS}}: W_E \to \widehat{G}(\overline{\bbQ}_{\ell}), \quad \varphi_{\pi}^{\on{DR}}: W_E \to \widehat{G}(\overline{\bbQ}_{\ell})$$
	be the $L$-parameters of $\pi$ over $\overline{\bbQ}_{\ell}$ constructed by \cite{fargues2021geometrization} and \cite{debacker2009depth}, respectively.

	\subsection{Compatibility on the inertia}
	
	The proof of the following lemma was communicated to the author by Sean Cotner.
	\begin{lemma}\label{Lemma_inertia}
		Assuming Assumption \ref{Assum_sm} and \ref{Assum_CT}. We have $\varphi_{\pi}^{\on{FS}}|_{I_E}=\varphi_{\pi}^{\on{DR}}|_{I_E}$.
	\end{lemma}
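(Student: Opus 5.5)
The plan is to compare both restrictions to inertia through the single invariant computed in Theorem \ref{Thm_LLC}: the image of a topological generator $\tau_E$ of $I_E/P_E$ in $\widehat{T}\sslash W$. Recall the shape of the DeBacker--Reeder parameter. A depth-zero regular supercuspidal $\pi$ comes from a pair $(\sfT^{wF},\theta)$, an elliptic unramified torus with a regular depth-zero character. Here $\varphi^{\on{DR}}_\pi|_{I_E}$ is tame and factors through $\widehat{T}$. On $I_E/P_E\cong \varprojlim \bbF_{q^n}^*$ it is the character corresponding to $\theta$ under Langlands duality for tori, so $\varphi^{\on{DR}}_\pi(\tau_E)=s_\theta\in\widehat{T}$. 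The class of $s_\theta$ in $(\widehat{T}\sslash W)^{\widehat{F}}$ is exactly the Deligne--Lusztig semisimple parameter of $\tau=\pm R_w(\theta)$. Since $(x,\tau)$ is a depth-zero type for $\pi$, this class equals $\on{LS}(\pi)$ by Theorem \ref{Thm_lanard}. The normalization must match the fixed trivialization $\overline{k}_E^*\cong(\bbQ/\bbZ)_{p'}$ of Remark \ref{Rmk_depend}; I would check this directly on DeBacker--Reeder's construction.

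On the Fargues--Scholze side, Theorem \ref{Thm_LLC} is stated over $\overline{\bbF}_\ell$, so I first transfer it to $\overline{\bbQ}_\ell$. Choose an $\ell$-integral model of $\pi$, reduce mod $\ell$, and use the compatibility of Fargues--Scholze parameters with reduction mod $\ell$. Excursion operators are defined integrally, so the semisimplified reduction of $\varphi^{\on{FS}}_\pi$ is $\varphi^{\on{FS}}$ of any constituent of $\bar\pi$, and that constituent is again depth zero. This gives that $\bar\varphi^{\on{FS}}_\pi$ is tame and that $\on{ev}_{\tau_E}$ of it is the reduction of $s_\theta$. Lifting these statements to $\overline{\bbQ}_\ell$ is the delicate part. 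Two facts are needed:
\begin{enumerate}
\item the pro-$p$ group $P_E$ has image of order prime to $\ell$, so the $\ell$-adic representation is trivial on $P_E$ if its reduction is;
\item $\varphi(\tau_E)$ has finite order, because $\tau_E^{q}$ is conjugate to $\tau_E$ in $W_E/P_E$, so $\varphi(\tau_E)$ is semisimple of finite order. Its image in $\widehat{T}\sslash W$ is determined by its reduction after separating the prime-to-$\ell$ and $\ell$-power parts.
\end{enumerate}
The $\ell$-part is not seen by reduction. To handle it, I would instead vary $\ell$: run the argument for every $\ell\nmid p|\pi_0Z(G)|$, using a fixed isomorphism $\overline{\bbQ}_\ell\cong\bbC$ and the independence of $\ell$ of both sides, which is known for DeBacker--Reeder and for the excursion-theoretic characterization. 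Alternatively, I would rerun the whole argument with $\overline{\bbZ}_\ell$-coefficients, applying Theorem \ref{Thm_act2} integrally, which the methods should permit. I would try the latter first.

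Granting this, $\varphi^{\on{FS}}_\pi(\tau_E)$ and $\varphi^{\on{DR}}_\pi(\tau_E)$ are semisimple elements with the same image in $\widehat{G}\sslash\widehat{G}$, hence conjugate. Both restrictions to $I_E$ factor through the procyclic group $I_E/P_E$ and are determined by the value on $\tau_E$. So they are $\widehat{G}$-conjugate, which is the asserted equality of semisimple parameters restricted to inertia.

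The main obstacle is the passage from $\overline{\bbF}_\ell$ to $\overline{\bbQ}_\ell$: specifically, controlling the $\ell$-power part of $\varphi(\tau_E)$ and confirming that the DeBacker--Reeder normalization matches $\on{LS}$. This is presumably where Cotner's argument enters, by using regularity of $\theta$ to pin down $s_\theta$ from its image in $\widehat{T}\sslash W$ together with the Frobenius-twisted conjugacy $\widehat{F}(s)=s^{q}$.
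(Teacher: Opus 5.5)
\textbf{There is a genuine gap.} Your outline has the right shape, and the ``vary the prime'' branch is essentially the paper's route: identify $\varphi^{\on{DR}}_\pi(\tau_E)$ with $\on{LS}(\pi)$, apply the $\overline{\bbF}_{\ell'}$-version of Theorem \ref{Thm_LLC} to reductions of $\pi$, and recover the characteristic-zero element from its reductions. But you defer exactly the step that carries the content (``Granting this''), so the proposal does not prove the statement.

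Your preferred fix, rerunning everything with $\overline{\bbZ}_\ell$-coefficients, would not close the gap. Integrally, the Deligne--Lusztig/Lanard block idempotents only separate the $\ell'$-part of $s$ (Brou\'e--Michel). So you would recover only what reduction modulo the fixed $\ell$ already gives. As you note yourself, compatibility of $\varphi^{\on{FS}}$ with reduction modulo the fixed $\ell$ loses the $\ell$-power part of $\varphi^{\on{FS}}_\pi(\tau_E)$.

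The paper supplies the missing package as follows:
\begin{enumerate}
\item After an unramified twist, $\pi$ has a $G(E)$-stable $O_K[1/p]$-lattice $\pi^0$ for some number field $K$ \cite{dat2024local}. This lets you reduce the same representation along every $\lambda\colon O_K[1/p]\to\overline{\bbF}_{\ell'}$ for all $\ell'\nmid pm$, not just at $\ell$.
\item The scheme $(\widehat{T}\sslash W)^{\widehat{F}}$ is finite over $\bbZ[1/p]$ \cite{dat2025depthzero}. After enlarging $K$, both $\varphi^{\on{FS}}_\pi(\tau_E)$ and $\on{LS}(\pi)$ are $O_K[1/p]$-points, and two such points that agree under every specialization $\on{sp}_\lambda$ are equal.
\item The Fargues--Scholze map from $\calO(Z^1(W_E,\widehat{G})/\widehat{G})$ to the Bernstein center is defined over $O_K[1/pm]$, compatibly with all these specializations \cite{scholze2026motivic}. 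Hence $\varphi^{\on{FS}}_{\overline{\pi}_\lambda}(\tau_E)=\on{sp}_\lambda(\varphi^{\on{FS}}_\pi(\tau_E))$ for any irreducible subquotient $\overline{\pi}_\lambda$ of $\pi^0\otimes_\lambda\overline{\bbF}_{\ell'}$.
\item $\on{LS}$ is likewise compatible with specialization \cite{dat2025depthzero}.
\end{enumerate}

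Your ``independence of $\ell$ via $\overline{\bbQ}_\ell\cong\bbC$'' is a rough stand-in for (3). You still need (1) to reduce the transported representation at $\ell'$. You also need (2), or your bounded-order remark used to pick some $\ell'$ prime to the relevant orders, to conclude equality.

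Smaller points. Your tameness argument via fact (1) at the fixed $\ell$ is fine; the paper instead cites \cite{dat2025depthzero}. Your guess that regularity of $\theta$ enters here is off: regularity is used only in the Frobenius part, not in this inertia lemma.
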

	
	\begin{proof}
		Fix a topological generator $\tau_E$ of $I_E/P_E$ as before. By construction of \cite{debacker2009depth}, $\varphi_{\pi}^{\on{DR}}$ is trivial on $P_E$, and $$\varphi_{\pi}^{\on{DR}}(\tau_E)=\on{LS}_{\overline{\bbQ}_{\ell}}(\pi) \in (\widehat{T} \sslash W)^{\widehat{F}}(\overline{\bbQ}_{\ell}).$$
		
		By \cite[proof of Proposition 4.16]{dat2024local}, up to an unramified twist, we may assume that there is some number field $K$ and a $G(E)$-stable $O_K[1/p]$-lattice 
		\begin{equation}
			\pi^0 \subseteq \pi
		\end{equation}
		such that 
		\begin{equation}
			\pi \cong \pi^0 \otimes_{O_K[1/p]} \overline{\bbQ}_{\ell}.
		\end{equation}
		
		By \cite[Subsection 3.10]{dat2025depthzero}, the scheme $(\widehat{T}\sslash W)^{\widehat{F}}$ can be defined over $\bbZ[1/p]$ and is finite over $\bbZ[1/p]$.
		Therefore, enlarging $K$, we may assume that
		\begin{equation}
			(\widehat{T} \sslash W)^{\widehat{F}}(O_K[1/p])=(\widehat{T} \sslash W)^{\widehat{F}}(K)=(\widehat{T} \sslash W)^{\widehat{F}}(\overline{\bbQ})=(\widehat{T} \sslash W)^{\widehat{F}}(\overline{\bbQ}_{\ell}).
		\end{equation} 
		Hence we may view $\varphi_{\pi}^{\on{FS}}(\tau_E)$  and $\on{LS}_{\overline{\bbQ}_{\ell}}(\pi)$ as elements in $(\widehat{T} \sslash W)^{\widehat{F}}(O_K[1/p])$. 
		
		Let $\ell'$ be a prime number not dividing $pm$, where $m=|\pi_0(Z(G))|$ is the cardinality of the connected components of $Z(G)$. Let $\lambda: O_K[1/p] \to \overline{\bbF}_{\ell'}$ be a ring map. Let $\overline{\pi}_{\lambda}$ be an irreducible subquotient of $\pi^0 \otimes_{O_K[1/p], \lambda} \overline{\bbF}_{\ell'}$.
		
		We know that $	\varphi_{\overline{\pi}_{\lambda}}^{\on{FS}}$ is trivial on $P_E$ and
		\begin{equation}\label{Eq_modell'}
			\varphi_{\overline{\pi}_{\lambda}}^{\on{FS}}(\tau_E)=\on{LS}_{\overline{\bbF}_{\ell'}}(\overline{\pi}_{\lambda}) \in (\widehat{T} \sslash W)^{\widehat{F}}(\overline{\bbF}_{\ell' }). 
		\end{equation} 
		
		The $L$-parameter $\varphi_{\pi}^{\on{FS}}$ is trivial on $P_E$ by \cite{dat2025depthzero}. 
		
		Let 
		\begin{equation}
			\on{sp}_{\lambda}: (\widehat{T} \sslash W)^{\widehat{F}}(K)=(\widehat{T} \sslash W)^{\widehat{F}}(O_K[1/p])\to  (\widehat{T} \sslash W)^{\widehat{F}}(\overline{\bbF}_{\ell'})
		\end{equation}
		be the specialization map.
		
		By \cite{scholze2026motivic}, $\varphi_{\pi}^{\on{FS}}$ is  compatible under specialization from $K$-points to $\overline{\bbF}_{\ell'}$-points:
		\begin{equation}\label{Eq_spphiFS}
			\varphi_{\overline{\pi}_{\lambda}}^{\on{FS}}(\tau_E)=\on{sp}_{\lambda}(	\varphi_{\pi}^{\on{FS}}(\tau_E)).
		\end{equation}
		Indeed, we have the following maps over $O_K[1/pm]$:
		\begin{equation}
			\calO(Z^1(W_F, \widehat{G})/\widehat{G}) \to \on{End}(\on{id}_{\Rep G(E)}) \to \on{End}(\pi^0).
		\end{equation}
		We also have
		\begin{equation}
			\on{End}(\pi^0) \cong O_K[1/p] \subseteq \overline{\bbQ}_{\ell} \cong \on{End}(\pi).
		\end{equation}
		Every $f \in \calO(Z^1(W_F, \widehat{G})/\widehat{G})$ acts on $\pi$ via a scalar $\chi_{\pi}(f) \in O_K[1/p] \subseteq \overline{\bbQ}_{\ell} \cong \on{End}(\pi).$
		Hence $f$ acts on $\pi^0 \otimes_{O_K[1/p], \lambda} \overline{\bbF}_{\ell'}$
		via the scalar $\lambda(\chi_{\pi}(f))$. Hence, $f$ acts on $\overline{\pi}_{\lambda}$ via the scalar $\lambda(\chi_{\pi}(f))$. Therefore, we obtain Equation \eqref{Eq_spphiFS} from the construction of the Fargues--Scholze $L$-parameters.

		By \cite[Section 3.10]{dat2025depthzero}, the map $\on{LS}$ is compatible under specialization from $K$-points to $\overline{\bbF}_{\ell'}$-points:
		\begin{equation}
			\on{LS}_{\overline{\bbF}_{\ell'}}(\overline{\pi}_{\lambda}) = \on{sp}_{\lambda}(\on{LS}_{\overline{\bbQ}_{\ell}}(\pi)).
		\end{equation}

		Therefore, Equation \eqref{Eq_modell'} implies that 
		\begin{equation}
			\on{sp}_{\lambda}(\varphi_{\pi}^{\on{FS}}(\tau_E))=\on{sp}_{\lambda}(\on{LS}_{\overline{\bbQ}_{\ell}}(\pi)) \in (\widehat{T} \sslash W)^{\widehat{F}}(\overline{\bbF}_{\ell'})
		\end{equation}
		for every ring map $\lambda: O_K[1/p] \to \overline{\bbF}_{\ell'}$ and every $\ell' \nmid pm$.
		Therefore,
		\begin{equation}
			\varphi_{\pi}^{\on{FS}}(\tau_E)=\on{LS}_{\overline{\bbQ}_{\ell}}(\pi) \in (\widehat{T} \sslash W)^{\widehat{F}}(\overline{\bbQ}_{\ell}).
		\end{equation}
	\end{proof}

	\subsection{Compatibility on the Frobenius}
	Recall that $\tau_E$ is a topological generator of $I_E/P_E$. Let $\on{Fr} \in W_E$ be a lift of Frobenius. Recall that we have
	\begin{equation}\label{Eq_Fr}
		\Fr \; \tau_E \; \Fr^{-1} =\tau_E^q \quad  \text{in $I_E/P_E$}.
	\end{equation}
	
	Recall that $G$ is an unramified connected reductive group over $E$. Let $\widehat{G}$ be its Langlands dual group over $\overline{\bbQ}_{\ell}$. The action of $W_E$ on $\widehat{G}$ is unramified. We denote by $\widehat{F}$ the automorphism of $\widehat{G}$ induced by $\on{Fr}$. 
	
	By Lemma \ref{Lemma_inertia}, we denote
	$$[\phi_{\pi}]:=\varphi_{\pi}^{\on{FS}}|_{I_E}=\varphi_{\pi}^{\on{DR}}|_{I_E} \in (Z^1(I_E/P_E, \widehat{G})\sslash \widehat{G})(\overline{\bbQ}_{\ell}).$$

	Let $Z^0$ be the identity component of the center of $G$. By \cite[Section 1.4, 2.1, and 10.1]{borel1979automorphic}, we have a natural map 
	\begin{equation}
		h: \widehat{G} \to \widehat{Z^0}.
	\end{equation}
	Moreover, we have a short exact sequence (\cite[Exposé XXI, §6.3 and Exposé XXII, Theorem 6.2.1]{demazure1970sga33})
	\begin{equation}
		1 \to (\widehat{G})_{\on{der}} \to \widehat{G} \xrightarrow{h} \widehat{Z^0} \to 1.
	\end{equation}
	If $\omega_{\pi}$ is the central character of $\pi$, let
	\begin{equation}
		\varphi_{\omega_{\pi}}: W_E \to \widehat{Z^0}
	\end{equation}
	denote the $L$-parameter corresponding to $\omega_{\pi}|_{Z^0(E)}$ under the local Langlands correspondence for the torus $Z^0$. Let $[\varphi] \in (Z^1(W_E/P_E, \widehat{G})\sslash \widehat{G})(\overline{\bbQ}_{\ell})$ be an extension of $[\phi_{\pi}]$. The extension $[\varphi]$ of $[\phi_{\pi}]$ is called \textit{compatible with the central character} if 
	\begin{equation}\label{Eq_central_char}
		[h \circ \varphi]=[\varphi_{\omega_{\pi}}] \; \text{in} \; H^1(W_E, \widehat{Z^0}).
	\end{equation}
	Both $\varphi_{\pi}^{\on{FS}}$ and 
	$\varphi_{\pi}^{\on{DR}}$ satisfy Equation \eqref{Eq_central_char}.
	
	The following lemma grows out of a discussion with Arnaud Eteve. Some of the ideas are related to \cite[Chapter 2]{fu2025categorical}.
	
	\begin{lemma}
		There is a unique extension 
		$$[\varphi] \in (Z^1(W_E/P_E, \widehat{G})\sslash \widehat{G})(\overline{\bbQ}_{\ell})$$ of $[\phi_{\pi}]$ such that $[\varphi]$ is compatible with the central character in the sense of \eqref{Eq_central_char}.
	\end{lemma}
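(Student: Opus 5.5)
The plan is to fix a representative $\phi$ of $[\phi_{\pi}]$ and classify all extensions $\varphi$ of $\phi$ to $W_E/P_E$ up to $\widehat{G}$-conjugacy. Existence is immediate, since $\varphi_{\pi}^{\on{DR}}$ (equally $\varphi_{\pi}^{\on{FS}}$) restricts to $[\phi_{\pi}]$ by Lemma \ref{Lemma_inertia} and satisfies \eqref{Eq_central_char}. So the real content is uniqueness.

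\emph{Step 1: choose a good representative of the inertia parameter.} Since $W_E/P_E$ is topologically generated by $\tau_E$ and $\Fr$ subject to \eqref{Eq_Fr}, an extension is determined by a pair $(s,g)=(\varphi(\tau_E),\varphi(\Fr))$ with $s$ semisimple and $g\,\widehat{F}(s)\,g^{-1}=s^{q}$, where the twisted action is $\on{Ad}(g)\circ\widehat{F}$. Because $\pi$ is depth-zero regular supercuspidal, the class $\on{LS}(\pi)=[s]\in(\widehat{T}\sslash W)^{\widehat{F}}$ comes, by DeBacker--Reeder, from a regular character of an elliptic unramified torus. Hence $s$ can be chosen in $\widehat{T}$ with connected centralizer $Z_{\widehat{G}}(s)$ equal to a maximal torus $\widehat{S}=\widehat{T}$; this uses the regularity hypothesis in the sense of \cite{debacker2009depth}. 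After conjugating, we may take $\varphi(\tau_E)=s$ exactly, and the only freedom left is conjugation by $Z_{\widehat{G}}(s)$.

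\emph{Step 2: parametrize the Frobenius.} Fix one solution $g_0$, for instance from $\varphi_{\pi}^{\on{DR}}$. Then every other solution has the form $g=t g_0$ with $t\in Z_{\widehat{G}}(s)=\widehat{S}$. Set $\theta:=\on{Ad}(g_0)\circ\widehat{F}$, an automorphism of $\widehat{S}$. Conjugating by $u\in\widehat{S}$ replaces $t$ by $u\,t\,\theta(u)^{-1}$. Therefore the conjugacy classes of extensions with $\varphi(\tau_E)=s$ are in bijection with the coinvariants $\widehat{S}_{\theta}=\widehat{S}/(1-\theta)\widehat{S}$. Two further checks are needed here. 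First, conjugating by elements of $N_{\widehat{G}}(\widehat{S})$ that fix $s$ introduces no extra identifications beyond those already counted; this holds because $s$ is regular with trivial Weyl stabilizer, again by DR regularity. Second, the semisimplified classes in $Z^1\sslash\widehat{G}$ agree with honest conjugacy classes, which holds because all these parameters are semisimple.

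\emph{Step 3: compare with the central character.} Ellipticity of the torus attached to the supercuspidal $\pi$ means that $\theta$ acting on $X^*(\widehat{S})$ has invariants only of rank $\dim Z^0$. Consequently $h$ induces an isogeny between the identity component of $\widehat{S}_{\theta}$ and $(\widehat{Z^0})_{\widehat{F}}$. Moreover, the map from extensions to $H^1(W_E,\widehat{Z^0})$, given by $t\mapsto[h\circ\varphi]$, is the map $\widehat{S}_{\theta}\to(\widehat{Z^0})_{\widehat{F}}$ composed with a translation. Uniqueness therefore reduces to injectivity of $h\colon \widehat{S}_{\theta}\to(\widehat{Z^0})_{\widehat{F}}$. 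Its kernel is the image of $(\widehat{S}\cap\widehat{G}_{\on{der}})_{\theta}$, which is a finite group because $\theta$ is elliptic on the derived part.

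The main obstacle is proving that this finite group is trivial, since finiteness alone only gives uniqueness up to a finite ambiguity. I would first try to show that $(\widehat{S}\cap\widehat{G}_{\on{der}})_{\theta}$ is identified with $H^1(\langle\theta\rangle,\cdot)$ of a $\theta$-stable torus. For a torus $D$ with elliptic $\theta$, the group $D_{\theta}$ is the cokernel of $1-\theta$ on the divisible group $D(\overline{\bbQ}_{\ell})$. Since $1-\theta$ is an isogeny of $D$ and $D(\overline{\bbQ}_{\ell})$ is divisible, $1-\theta$ is surjective, so $D_{\theta}=1$. The remaining difficulty is the non-connected intersection $\widehat{S}\cap\widehat{G}_{\on{der}}$ inside $\widehat{S}$. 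Here I would work directly with $\widehat{S}$: first show that $(1-\theta)\widehat{S}$ is the full connected subtorus $\ker(h|_{\widehat{S}})^{0}$, again by divisibility. Then check that the component group of $\ker(h|_{\widehat{S}})$ dies in the coinvariants, using the exact sequence $1\to\widehat{G}_{\on{der}}\to\widehat{G}\to\widehat{Z^0}\to1$ restricted to $\widehat{S}$.
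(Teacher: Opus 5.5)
Your reduction is right, and it is the paper's argument in different packaging. The paper also takes $\widehat T_s=Z_{\widehat G}(s)$ and writes $\varphi(\Fr)=tn$ with $n=\varphi_{\pi}^{\on{DR}}(\Fr)$. It then conjugates so that $h\circ\varphi=h\circ\varphi_{\pi}^{\on{DR}}$ as cocycles. Your computation of the kernel of $\widehat S_\theta\to(\widehat{Z^0})_{\widehat F}$ is the same step; it uses that $h|_{\widehat S}$ is surjective, which holds because $Z(\widehat G)^0\subseteq\widehat S$. This forces $t\in\widehat T_s\cap(\widehat G)_{\on{der}}$. The paper concludes because the twisted Lang map is an isogeny of that torus, hence surjective on $\overline{\bbQ}_{\ell}$-points.

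The gap is in your last step. You leave open whether the image of $D:=\widehat S\cap(\widehat G)_{\on{der}}$ in $\widehat S_\theta$ is trivial, on the grounds that $D$ might be disconnected. The workaround you sketch fails as stated. The claim $(1-\theta)\widehat S=\ker(h|_{\widehat S})^0$ is false in general, because $h(u\theta(u)^{-1})=h(u)\widehat F(h(u))^{-1}$ is nontrivial when $\widehat F$ acts nontrivially on $\widehat{Z^0}$. An unramified unitary group is an example: there $\widehat F$ is inversion on $\widehat{Z^0}=\mathbb{G}_m$. The claim that the component group ``dies'' in the coinvariants is also not argued.

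The missing fact is simply that $D$ is connected. $D^0$ is a maximal torus of the connected reductive group $(\widehat G)_{\on{der}}$, and $D$ centralizes $D^0$, so $D\subseteq Z_{(\widehat G)_{\on{der}}}(D^0)=D^0$. With this, your own observation that $1-\theta$ is an isogeny of the torus $D$ gives $D=(1-\theta)D\subseteq(1-\theta)\widehat S$. So the kernel is trivial, not merely finite, and the proof closes exactly as in the paper.

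A minor slip: ellipticity makes the rank of $X^*(\widehat S)^\theta$ equal to that of $X^*(\widehat{Z^0})^{\widehat F}$, which is the split rank of $Z^0$, not $\dim Z^0$. This plays no role in the argument.
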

	
	\begin{proof}
		Choose a representative $\phi_{\pi}$ of $[\phi_{\pi}]$ and set
		\begin{equation}
			s:=\phi_{\pi}(\tau_{E}), \quad \widehat{T}_s:=Z_{\widehat{G}}(s) \;\text{the centralizer of $s$.}
		\end{equation}
		The regularity of $\pi$ implies that $s$ is regular semisimple (\cite[Subsection 3.4]{debacker2009depth}). Hence $\widehat{T}_s$ is a maximal torus of $\widehat{G}$. Let
		\begin{equation}
			n:=\varphi_{\pi}^{\on{DR}}(\Fr).
		\end{equation}
		The cocycle identity and \eqref{Eq_Fr} give
		\begin{equation}\label{Eq_phi(Fr)}
			n\widehat{F}(s)n^{-1}=s^q.
		\end{equation}
		Consequently, 
		\begin{equation}
			\theta: \widehat{T}_s \to \widehat{T}_s, \quad t \mapsto n\widehat{F}(t)n^{-1}
		\end{equation}
		is an automorphism of $\widehat{T}_s$.
		
		Let $[\varphi]$ be a centrally compatible extension of $[\phi_{\pi}]$ and $\varphi$ be a representative of $[\varphi]$. After changing $\varphi$ within its conjugacy class, we may arrange simultaneously that its restriction to $I_E$ is the chosen representative $\phi_{\pi}$ and that 
		\begin{equation}
			h \circ \varphi = h \circ \varphi_{\pi}^{\on{DR}}
		\end{equation}
		as cocycles. Indeed, since $h: \widehat{G} \to \widehat{Z^0}$ is surjective, Equation \eqref{Eq_central_char} gives the required normalization after lifting a $\widehat{Z^0}$-coboundary to $\widehat{G}$; the inertia representatives can then be aligned by an element of $(\widehat{G})_{\on{der}}=\ker h$, because
		$h|_{\widehat{T}_s}: \widehat{T}_s \to \widehat{Z^0}$ is surjective.
		
		Write
		\begin{equation}
			x:=\varphi(\Fr).
		\end{equation}
		Applying the cocycle identity to $\varphi$ shows that $x$ satisfies the analogue of \eqref{Eq_phi(Fr)}. Therefore
		\begin{equation}
			x=tn \quad \text{for some $t \in \widehat{T}_s$}.
		\end{equation}
		Moreover, conjugation of the cocycle by an element $u \in \widehat{T}_s$ replaces $t$ by
		\begin{equation}\label{Eq_Lang}
			t\; L_{\theta}(u), \quad L_{\theta}(u):=u^{-1}\theta(u).
		\end{equation}
		
		The centralizer of the DeBacker--Reeder $L$-parameter is 
		\begin{equation}
			S_{\varphi_{\pi}^{\on{DR}}}=(\widehat{T}_s)^{\theta}.
		\end{equation}
		Since $\varphi_{\pi}^{\on{DR}}$ is elliptic, 
		$$S_{\varphi_{\pi}^{\on{DR}}}/Z(\widehat{G})^{\Gal(\overline{E}/E)}$$
		is finite. Define the subtorus
		\begin{equation}
			\widehat{T}_{s, \on{der}}:=\widehat{T}_s \cap (\widehat{G})_{\on{der}}=\ker(h|_{\widehat{T}_s}).
		\end{equation}
		It is stable under $\theta$, and 
		$$(\widehat{T}_{s, \on{der}})^{\theta}=S_{\varphi_{\pi}^{\on{DR}}} \cap (\widehat{G})_{\on{der}}$$
		is finite: ellipticity reduces the assertion to the finiteness of $Z(\widehat{G}) \cap (\widehat{G})_{\on{der}}$ (\cite[Exposé XXII, Theorem 6.2.1(iv)]{demazure1970sga33}). Thus the twisted Lang map
		$$L_{\theta}: \widehat{T}_{s, \on{der}} \to \widehat{T}_{s, \on{der}}, \quad u \mapsto u^{-1}\theta(u)$$
		has finite kernel. Therefore, $L_{\theta}$ is an isogeny of tori, and in particular it is surjective over $\overline{\bbQ}_{\ell}$.
		
		Finally, we have $h(x)=h(n)$ by compatibility with the central character. So the element $t$ in $x=tn$ belongs to $\widehat{T}_{s, \on{der}}$. By surjectivity of $L_{\theta}$, choose $u \in \widehat{T}_{s, \on{der}}$ such that $L_{\theta}(u)=t^{-1}$. Equation \eqref{Eq_Lang} then conjugates $x$ to $n$. Hence every centrally compatible extension is conjugate to $\varphi_{\pi}^{\on{DR}}$. Existence is supplied by $\varphi_{\pi}^{\on{DR}}$ itself, proving the lemma.

	\end{proof}

	\begin{corollary}\label{Cor_FSDR}	
		Assuming Assumption \ref{Assum_sm} and \ref{Assum_CT}. Then the Fargues--Scholze and DeBacker--Reeder $L$-parameters agree:
		\begin{equation}
			\varphi_{\pi}^{\on{FS}} = \varphi_{\pi}^{\on{DR}}.
		\end{equation}
	\end{corollary}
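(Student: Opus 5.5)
The plan is to combine Lemma \ref{Lemma_inertia} with the uniqueness lemma just proved. Let $[\phi_{\pi}]:=\varphi_{\pi}^{\on{FS}}|_{I_E}=\varphi_{\pi}^{\on{DR}}|_{I_E}$; the two restrictions agree by Lemma \ref{Lemma_inertia}, and this is the only place Assumptions \ref{Assum_sm} and \ref{Assum_CT} enter, through Theorem \ref{Thm_LLC} applied mod $\ell'$ and the specialization argument. Both semisimple parameters are trivial on $P_E$: for DeBacker--Reeder by construction, and for Fargues--Scholze by \cite{dat2025depthzero} (or by Theorem \ref{Thm_LLC}(1) together with the same specialization argument). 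So both define classes in $(Z^1(W_E/P_E, \widehat{G})\sslash \widehat{G})(\overline{\bbQ}_{\ell})$ extending $[\phi_{\pi}]$.

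Next I check compatibility with the central character, \eqref{Eq_central_char}, for each parameter.

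For $\varphi_{\pi}^{\on{DR}}$, this is part of DeBacker--Reeder's construction: the parameter is built from a character of the unramified elliptic torus whose restriction to $Z^0(E)$ is $\omega_{\pi}$.

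For $\varphi_{\pi}^{\on{FS}}$, I use the compatibility of Fargues--Scholze parameters with central characters \cite[Theorem I.9.6]{fargues2021geometrization}. That result shows that pushing forward along $\widehat{G}\to\widehat{Z^0}$, which corresponds to the central isogeny side $Z^0\to G$, gives the parameter of $\omega_{\pi}|_{Z^0(E)}$. It also uses compatibility with local class field theory for tori. A point to handle with care is that the FS parameter is only a semisimple conjugacy class. Since $\widehat{Z^0}$ is a torus, $[h\circ\varphi]\in H^1(W_E,\widehat{Z^0})$ is well-defined from the class, so the condition makes sense.

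Then the uniqueness lemma applies. It says there is a unique centrally compatible extension of $[\phi_{\pi}]$ to $W_E/P_E$ up to $\widehat{G}$-conjugacy, and the regularity of $\pi$ enters through it. Hence $\varphi_{\pi}^{\on{FS}}$ and $\varphi_{\pi}^{\on{DR}}$ are $\widehat{G}$-conjugate.

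There is one subtlety about the ambient space of classes. The lemma compares classes in $Z^1\sslash\widehat{G}$, i.e.\ semisimple parameters up to conjugacy. The DR parameter is semisimple, since $s$ is regular semisimple and the Frobenius image normalizes a torus with finite centralizer. So equality in the GIT quotient is genuine conjugacy, and the identity holds in the sense in which $\varphi_{\pi}^{\on{FS}}$ is defined.

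I expect the main obstacle to be the FS central character compatibility. One must make sure the cited property applies to the map $Z^0\to G$, which is not surjective onto $G$. If it does not apply directly, my fallback is to use instead the compatibility with the map $G\to G/G_{\on{der}}$, dual to $\widehat{Z^0}$ via the isogeny $Z^0\to G/G_{\on{der}}$. This yields \eqref{Eq_central_char} up to finite ambiguity. That ambiguity is then killed by comparing determinant-type characters on $Z^0(E)$ through the isogeny, which is injective on $H^1$ after accounting for the finite kernel.
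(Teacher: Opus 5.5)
Your proposal is correct and follows the paper's route. The paper deduces the corollary by combining Lemma \ref{Lemma_inertia} (agreement on $I_E$, with both parameters trivial on $P_E$) with the uniqueness lemma for centrally compatible extensions, using that both $\varphi_{\pi}^{\on{FS}}$ and $\varphi_{\pi}^{\on{DR}}$ satisfy \eqref{Eq_central_char}. The paper only asserts that central compatibility, so your citation of the Fargues--Scholze central-character property fills in a detail the paper leaves implicit; that property can be applied via the multiplication map $Z^0\times G\to G$, so your fallback is unnecessary.
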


	\subsection{Stability of the $L$-packets}
	
	The stability of Fargues--Scholze $L$-packet for $\varphi_{\pi}^{\on{FS}}$ is proven in \cite{fu2024stability}. Therefore, by Corollary \ref{Cor_FSDR}, we recover the stability result proven in \cite{debacker2009depth} under Assumption \ref{Assum_sm} and \ref{Assum_CT}.

	\bibliographystyle{alpha}
	\bibliography{reference}
	
\end{document}